\documentclass{amsart}

\calclayout

\usepackage{amsxtra,amssymb,mathrsfs}
\usepackage{enumitem}
\setlist[enumerate]{font=\upshape}

\usepackage{asymptote}

\usepackage{tikz}
\usetikzlibrary{arrows}
\usepackage{tikz-cd}
\tikzcdset{diagrams={nodes={inner sep=3pt}}}

\usepackage[pagebackref]{hyperref}
\usepackage[dvipsnames]{xcolor}

\definecolor{ceruleanblue}{rgb}{0.16, 0.32, 0.75}
\hypersetup{colorlinks=true,linkcolor=ceruleanblue,citecolor=ceruleanblue,urlcolor=Plum}

\newtheorem{theorem}{Theorem}[section]
\newtheorem{lemma}[theorem]{Lemma}
\newtheorem{proposition}[theorem]{Proposition}
\newtheorem{prop-def}[theorem]{Proposition-Definition}
\newtheorem{corollary}[theorem]{Corollary}

\theoremstyle{definition}
\newtheorem{definition}[theorem]{Definition}
\newtheorem{remark}[theorem]{Remark}
\newtheorem{remarks}[theorem]{Remarks}
\newtheorem{example}[theorem]{Example}

\usepackage{manfnt}

\makeatletter
 \newenvironment{caution}[1][]{
    \begin{trivlist} \item[] \noindent%
      \begingroup\hangindent=2pc\hangafter=-2
      \clubpenalty=10000%
      \hbox to0pt{\hskip-\hangindent\dbend\hfill}\ignorespaces%
      \refstepcounter{theorem}\textbf{Remark~\thetheorem}%
      \@ifnotempty{#1}{\the\thm@notefont \ (#1)}\textbf{.}
      \let\p@@r=\par \def\p@r{\p@@r \hangindent=0pc} \let\par=\p@r}%
    {\hspace*{\fill}
     \endgraf\endgroup\end{trivlist}}
\makeatother

\newcommand\abs[1]{\lvert#1\rvert}

\newcommand\biggabs[1]{\biggl\lvert#1\biggr\rvert}

\newcommand\inner[1]{\langle#1\rangle}
\newcommand\biginner[1]{\bigl\langle#1\bigr\rangle}
\newcommand\Biginner[1]{\Bigl\langle#1\Bigr\rangle}

\newcommand\longto{\longrightarrow} 

\newcommand\pardif[2]{\frac{\partial#1}{\partial#2}}

\newcommand{\Lie}{\mathscr{L}}
\newcommand\orbit{\mathscr{O}}

\DeclareMathOperator\ad{ad}
\DeclareMathOperator\Ad{Ad}

\newcommand\pr{\mathrm{pr}}
\DeclareMathOperator{\tot}{\mathbb{T}ot}

\DeclareMathOperator\Sym{Sym}
\DeclareMathOperator\Exp{Exp}
\DeclareMathOperator\Stab{\mathrm{Stab}}
\DeclareMathOperator\Alg{\mathrm{Alg}}

\newcommand{\bT}{\mathbb{T}}

\newcommand\X{\mathfrak{X}}

\newcommand\qu[1][\kern.3ex]{/\kern-.7ex/_{\kern-.4ex#1}}
\newcommand\bigqu[1][\,\,]{\big/\kern-.85ex\big/_{\!\!#1}}

\let\oldtocsection=\tocsection

\let\oldtocsubsection=\tocsubsection

\let\oldtocsubsubsection=\tocsubsubsection

\renewcommand{\tocsection}[2]{\bf\hspace{0em}\oldtocsection{#1}{#2}}
\renewcommand{\tocsubsection}[2]{\hspace{1em}\oldtocsubsection{#1}{#2}}
\renewcommand{\tocsubsubsection}[2]{\hspace{2em}\oldtocsubsubsection{#1}{#2}}

\newcommand\ca{\mathcal}
\newcommand\bu{{\scriptscriptstyle\bullet}}
\newcommand\ann{ann}
\newcommand{\R}{\mathbb{R}} 
\newcommand{\N}{\mathbb{N}} 

\DeclareMathOperator{\Hom}{Hom}

\DeclareMathOperator{\Kan}{Kan}     

\DeclareMathOperator*{\colim}{\mathrm{colim}}

\newcommand{\huaB}{\mathcal{B}}
\newcommand{\huaS}{\mathcal{S}}
\newcommand{\huaA}{\mathcal{A}}
\newcommand{\huaL}{\mathcal{L}}

\newcommand{\huaF}{\mathcal{F}}
\newcommand{\huaG}{\mathcal{G}}

\newcommand{\huaU}{\mathcal{U}}
\newcommand{\huaV}{\mathcal{V}}
\newcommand{\huaW}{\mathcal{W}}
\newcommand{\huaX}{\mathcal{X}}

\newcommand{\huaQ}{\mathcal{Q}}
\newcommand{\huaP}{\mathcal{P}}
\newcommand{\huaC}{\mathcal{C}}

\newcommand{\huaH}{\mathcal{H}}
\newcommand{\huaK}{\mathcal{K}}

\newcommand{\huaT}{\mathcal{T}}
\newcommand{\huaZ}{\mathcal{Z}}
\newcommand{\huaM}{\mathcal{M}}
\newcommand{\huaN}{\mathcal{N}}

\makeatletter
\def\namedlabel#1#2{\begingroup
    #2%
    \def\@currentlabel{#2}%
    \phantomsection\label{#1}\endgroup
}

\newcommand{\pt}{\mathrm{pt}}
\newcommand{\red}{\mathrm{red}}

\newcommand{\pts}{\mathsf{P}}
\newcommand{\ppt}{\mathsf{p}}

\newcommand\diag{\mathrm{diag}}    

\newcommand*{\Simpc}{\triangle}
\newcommand*{\Simp}[1]{{\Delta[#1]}}
\newcommand*{\Horn}[2]{{\Lambda^{#1}_{#2}}}
\newcommand*{\face}{\textsf{\textup d}}
\newcommand*{\de}{\textsf{\textup s}}
\newcommand\id{\mathrm{id}}

\DeclareMathOperator\sgn{sgn}
\DeclareMathOperator\Ksgn{Ksgn}

\newcommand{\Z}{\ensuremath{\mathbb Z}}

\newcommand{\Cat}{\mathsf{C}}
\newcommand{\Catd}{\mathsf{D}}
\newcommand{\Mfd}{\mathsf{Mfd}}
\newcommand{\DMfd}{\mathsf{DMfd}}
\newcommand{\covers}{\mathsf{T}}
\newcommand{\etale}{\textup{\'et}}

\newcommand{\Sh}{\mathsf{Sh}}
\newcommand{\PSh}{\mathsf{PSh}}

\newcommand{\arrow}[1]{\mathrm{\Theta(#1)}} 
\newcommand{\g}{\ensuremath{\mathfrak{g}}}
\newcommand{\h}{\ensuremath{\mathfrak{h}}}
\newcommand{\m}{\ensuremath{\mathfrak{m}}}

\newcommand{\maps}{\colon}
\newcommand{\xto}{\xrightarrow}
\newcommand{\yon}{\mathbf{y}}
\newcommand{\gpd}[1]{{\mathsf{Gpd}_\infty[#1]}}

\newcommand{\po}{\ar@{}[dr]|{\ulcorner}}
\newcommand{\pb}{\ar@{}[dr]|{\lrcorner}}

\makeatletter
\let\@wraptoccontribs\wraptoccontribs
\makeatother

\newcommand{\Set}{\mathsf{Set}}

\newcommand{\Cosk}{\mathsf{Cosk}}

\newcommand{\dCalg}{\mathsf{dC^\infty Alg}}

\newcommand\lsf{{\mathrm{lsf}}}
\newcommand\sfib{{\mathrm{sf}}}
\newcommand\open{{\mathrm{open}}}
\newcommand\sursub{\mathrm{ss}}
\newcommand\et{\mathrm{\acute{e}t}}
\newcommand\new{{\mathrm{new}}}

\newcommand\can{\mathrm{can}}

\newcommand\lin{\mathrm{lin}}

\newcommand\op{{\mathrm{op}}}

\newcommand{\dgngpds}{derived Lie $n$-groupoids}

\newcommand{\dnGpd}{\mathsf{Gpd}_n[\DMfd, \covers_\lsf]} 

\newcommand{\dgnGpdCscho}{\mathsf{hGpd}_n[\dCalg^\op_0, \covers_\new]}

\title{The shifted symplectic geometry of derived higher groupoids}

\author{Miquel Cueca Ten}

\address{Departement of Mathematics, KU Leuven. Celestijnenlaan 200B, Leuven (Heverlee), B-3001,
Belgium}

\email{miquel.cuecaten@kuleuven.be}

\author{Florian Dorsch}

\address{Georg-August-Universit\"at G\"ottingen}

\email{florian.dorsch@mathematik.uni-goettingen.de}

\author{Reyer Sjamaar}

\address{Cornell University, Ithaca, New York 14853, USA}

\email{sjamaar@math.cornell.edu}

\author{Chenchang Zhu}

\address{Georg-August-Universit\"at G\"ottingen}

\email{chenchang.zhu@mathematik.uni-goettingen.de}

\date{\today}

\begin{document}

\begin{abstract} 
The main goal of this work is to introduce \emph{derived Lie $n$-groupoids} and their shifted symplectic structures. We further define shifted lagrangian structures and prove that their composition is well defined under suitable conditions. As an application, we show that our framework incorporates several reduction procedures at critical values, including: classical Hamiltonian reduction, group valued moment maps, Poisson Lie group valued moment maps and Mikami-Weinstein for proper symplectic groupoids. 
\end{abstract}

\maketitle

\tableofcontents

\section{Introduction}

A central theme in symplectic geometry is Weinstein’s proposal \cite{wei:sympcat} that symplectic manifolds should form a category whose morphisms are given by lagrangian correspondences. A major obstacle to realizing this idea is that composition is  not well defined in general \cite{blandW:rel, wei:sympcat}. In derived algebraic geometry, see e.g. \cite{Calaque:14, cal:lag, ptvv}, this issue is resolved via a two-step construction. First, one introduces the notion of shifted symplectic derived stacks and their lagrangian morphisms, which generalizes symplectic varieties and classical lagrangian submanifolds. Secondly, ordinary fibre products are replaced by their homotopy-theoretic counterparts. As a result, one obtains a well-defined  higher category of iterated spans \cite{cal:AKSZ}. From the differential geometric point of view, this last step is analogous in spirit to the use of perturbations in symplectic geometry, where one replaces a non-transversal fiber product by a better one in which composition becomes well defined \cite{ww:func}.

Another fundamental aspect of symplectic geometry is the reduction procedure introduced in \cite{mw:red}.  For a Hamiltonian $G$-space $(M,\omega,\mu)$ it is shown that, when $0$ is a regular value of the moment map $\mu:M\to\g^*$ and the action is free and proper, the reduced space
\[M_\red=M\qu[0] G=\mu^{-1}(0)/G\]
is again a symplectic manifold. In order to perform the aforementioned reduction, first one forms a fiber product by passing to $\mu^{-1}(0)$ and then performs a quotient. The conditions of zero being a regular value and the action being free and proper guarantee that the quotient is a manifold. Nevertheless, one desires to perform this construction without further assumptions. 
It is well known that Lie $n$-groupoids allow us to consider the singularities coming from quotients \cite{milnor:uniI, mil:geo}. In order to additionally consider singularities arising from non-transversal fiber products, we need derived direction \cite{kos:homo, tate:homo}.

Derived smooth manifolds were introduced in \cite{spi:der} in order to obtain a category that is closed under taking arbitrary intersections. As observed in \cite{blx, carchedi:23},  derived manifolds admit a concrete presentation in terms of non-positively graded manifolds with a cohomological vector field, see \S \ref{sec:derman}. It was shown in \cite{blx} that the category of derived manifolds $\DMfd$ forms a Category of Fibrant Objects (CFO) in the sense of \cite{brown:1973}. Nevertheless, what we want is to consider $n$-groupoid objects in $\DMfd$.

As we recall in \S \ref{sec:points}, given a category with a pretopology and a jointly conservative family of points $(\Cat, \covers, \pts)$ it was shown in  \cite{Rogers-Zhu:2016} 
that $\mathsf{Gpd}_n[\Cat, \covers]$, the category of n-groupoid objects on it,  forms an incomplete category of fibrant objects (iCFO). Our first main result shows that one can endow $\DMfd$ with a pretopology $\covers_\lsf$ whose covers are locally split fibrations, see \S \ref{subsection;topology}. The choice of the pretopology $\covers_\lsf$ is restrictive enough to admit a
good theory of points and stalkwise weak equivalences to build the iCFO by the method in \cite{Rogers-Zhu:2016}, but still flexible enough to contain relevant geometric examples.  

\begin{theorem}[Thm.~\ref{thm:iCFO-n-details}]\label{thm:iCFO-n-intro}
Derived Lie $n$-groupoids  form an iCFO $\mathsf{Gpd}_n[\DMfd, \covers_\lsf]$ in which:
\begin{itemize}
\item the weak equivalences are the stalkwise weak equivalences;

\item the fibrations are the Kan fibrations;

\item the acyclic fibrations are hypercovers; and

\item the path object of a derived Lie $n$-groupoid $\huaG_\bu$ is $\huaG_\bu^I$ with 
\begin{equation*}
    \huaG^I_k = \Hom(\Simp{k}\times \Simp{1}, \huaG_\bu). 
\end{equation*}
\end{itemize}
\end{theorem}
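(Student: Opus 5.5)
The plan is to reduce the statement to the general machinery of \cite{Rogers-Zhu:2016}. There it is shown that for a category $\Cat$ with a pretopology $\covers$ and a jointly conservative family of points $\pts$, satisfying some mild axioms, the category $\mathsf{Gpd}_n[\Cat,\covers]$ of $n$-groupoid objects is an iCFO with exactly the listed structure. Concretely:
\begin{itemize}
\item the weak equivalences are the stalkwise weak equivalences;
\item the fibrations are the Kan fibrations, i.e.\ the maps $\Hom(\Simp{k},X)\to\Hom(\Horn{k}{j},X)\times_{\Hom(\Horn{k}{j},Y)}\Hom(\Simp{k},Y)$ are covers;
\item the acyclic fibrations are the hypercovers;
\item the path object is $\Hom(\Simp{\bu}\times\Simp{1},-)$.
\end{itemize}
So the work is to verify the hypotheses for $(\DMfd,\covers_\lsf,\pts)$.

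The verification proceeds in the following order.
\begin{enumerate}
\item Show that $\covers_\lsf$ is a pretopology. Locally split fibrations contain the isomorphisms and are closed under composition. Their pullbacks along arbitrary maps of derived manifolds exist and are again locally split fibrations. For this I use the CFO structure of \cite{blx}: fibrations admit pullbacks in $\DMfd$, and local splittings pull back along the base map.
\item Check the extra axioms needed by \cite{Rogers-Zhu:2016}. If $g\circ f$ is a cover and $f$ is a cover, then $g$ is a cover; in the presence of local sections this follows by composing sections. The terminal object must exist, and maps to it must be covers where required.
\item Construct the family of points $\pts$ and show it is jointly conservative. I expect the points to be stalk functors at points of the body, sending a derived manifold to its germ, or to the associated local dg algebra at $x$. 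Points must preserve finite limits of covers and send covers to surjections. Locally split fibrations are exactly what makes the latter hold stalkwise, since a local splitting gives a section of germs. Joint conservativity amounts to the statement that a map which is a weak equivalence on every stalk is a weak equivalence of derived manifolds. This should follow from the fact that quasi-isomorphism of the structure sheaves is local.
\end{enumerate}

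The main obstacle is the matching of finite limits in the third step. Points must commute with the pullbacks that appear in the Kan conditions, namely the iterated fibre products $\Hom(\Horn{k}{j},\huaG_\bu)$. These are genuine fibre products in $\DMfd$ only along fibrations, so one has to track that every limit formed is along covers. I would handle this by induction on the horn, building $\Hom(\Horn{k}{j},\huaG_\bu)$ as iterated pullbacks along face maps, which are covers by the Kan condition. Then the germ/stalk functor commutes with these pullbacks because locally split fibrations are locally products. Once this is checked, the theorem follows directly from \cite{Rogers-Zhu:2016}, including the identification of the path object $\huaG^I_\bu$ via the Kan condition for $\Simp{k}\times\Simp{1}$.
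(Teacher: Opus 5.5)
Your overall strategy, deducing the statement from \cite[Thm.~7.1]{Rogers-Zhu:2016} (Thm.~\ref{thm:RZ}), is exactly the paper's, and your step 1 matches Lemma~\ref{lemma;cover-sections}. But the hypotheses you then set out to check are not the ones Thm.~\ref{thm:RZ} requires, and its central hypothesis is missing. That theorem needs $\covers_\lsf$ to be \emph{subcanonical} and \emph{locally stalkwise} (i.e.\ satisfying \ref{LSP1} and \ref{LSP2}) with respect to a jointly conservative family of points.

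Your step 2 gives only a weakened form of \ref{LSP1}. There $f$ is merely assumed to have $\yon(f)$ stalkwise surjective, i.e.\ to be locally split but not necessarily a fibration. Your argument still goes through: composing sections makes $g$ locally split, and $g$ is a fibration because $f$ is surjective.

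\ref{LSP2}, the locality of covers, never appears. It says: if $q\colon\huaN\to\huaP$ is locally split and the base change $\yon(\huaM)\times_{\yon(\huaP)}\yon(\huaN)\to\yon(\huaN)$ of $p\colon\huaM\to\huaP$ is a locally stalkwise cover, then $p$ is a locally split fibration. Proving this is the real content of Thm.~\ref{pro:LSW-pret-dmfd}. One lifts the germ of $(j_\huaU,s\circ f_\huaU)$ through the stalkwise surjection $\zeta$, differentiates $f\circ\tilde g=g\circ k$ to get surjectivity of all $T^l_xf$, and builds the splitting $\tilde g\circ t\circ s$.

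Subcanonicity is also absent; the paper obtains it from $\Sh(\DMfd,\covers_\lsf)=\Sh(\DMfd,\covers_\open)$ (Lemma~\ref{lemma;sheaves}). Finally, ``maps to the terminal object are covers'' cannot be verified because it is false: $\huaM\to *$ has a splitting only if $\pi_0(\huaM)\neq\emptyset$ (Rem.~\ref{remark;terminal}). One has to check instead that \ref{P4} suffices for the results of \cite{Rogers-Zhu:2016} being used (Rem.~\ref{rmk:p4}).

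Step 3 misreads joint conservativity. The points are set-valued functors on $\Sh(\DMfd,\covers_\lsf)$, namely $\ppt_{\huaX,x}(F)=\colim_{\huaU\ni x}F(\huaU)$ for all pairs $(\huaX,x)$, including non-classical $x$. A ``local dg algebra at $x$'' is not a point. Joint conservativity says that a morphism of \emph{sheaves of sets} is an isomorphism iff it is bijective on all stalks. It has nothing to do with quasi-isomorphisms: the weak equivalences of \cite{blx} are invisible to this iCFO (for $n=0$ its weak equivalences are just the isomorphisms, Rem.~\ref{rmk:2icfo}). So ``quasi-isomorphism of structure sheaves is local'' proves the wrong statement.

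What must actually be shown is:
\begin{itemize}
\item $\ppt_{\huaX,x}$ preserves small colimits of sheaves. This rests on the stalks of a presheaf and of its sheafification agreeing (Lemma~\ref{lem:sheafification}, which uses local splittings of covers over small balls).
\item The standard stalkwise-isomorphism argument over the small family $\huaB^{n,Q}_r$.
\end{itemize}

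Conversely, the ``main obstacle'' you identify is not one. Finite limits of sheaves are computed objectwise and a stalk is a filtered colimit, so points preserve all finite limits automatically. Representability of $\Hom(\Horn{k}{j},\huaG_\bu)$ comes from the Kan conditions via \cite{henriques}, not from the points. Moreover, your claim that locally split fibrations are locally products is false. For instance, $(x,y)\mapsto x$ on $\huaZ(\R^2,\underline{\R},xy)$ is a locally split fibration whose fibres have classical locus a point for $x\neq0$ and a line for $x=0$.
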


This theorem gives the homotopical foundation for the rest of the paper. It provides a workable
differential-geometric model for higher derived smooth geometry. Notice that the above iCFO does not know of the CFO structure on derived manifolds. Hence, we generalize the notion of a \emph{Morita morphism between derived Lie $n$-groupoids}, see \S \ref{subsection;ME}, as being either a simplicial morphism which at each level is a weak equivalence in the CFO of $\DMfd$  or a weak equivalence
in $\mathsf{Gpd}_n[\DMfd, \covers_\lsf]$. 

Our second main achievement is to extend the shifted symplectic structures on Lie \(n\)-groupoids introduced in \cite{Lesdiablerets} (see also \cite{cueca-zhu}) to derived Lie \(n\)-groupoids, see \S \ref{sec:symp}. Our construction builds on adding a third direction to the simplicial de Rham complex. Similarly as in \cite{Lesdiablerets}, one uses an adjusted version of the Eilenberg-Zilber map to define an IM-pairing for an $m$-shifted $2$-form on a derived Lie $n$-groupoid, see \S\,\ref{subsection;IM}. The nondegeneracy is expressed by requiring the associated map from the tangent complex to the shifted cotangent complex to be a quasi-isomorphism. In \S\,\ref{shifted-lagrangian} we extend the notion of $m$-shifted lagrangian structures on Lie groupoid morphisms introduced in \cite{ABC:lag} to derived Lie $n$-groupoids and prove a composition theorem for shifted lagrangian correspondences, c.f.~\cite{cal:lag, ptvv}.

\begin{theorem} [See Thm.~\ref{thm:complag}]\label{thm:main}
Consider two $m$-shifted lagrangian correspondences
\[\begin{tikzcd}
	{\mathcal{G}^1_\bu}\ar[r,dashed,"{\mathcal{L}_\bu}"]& {\mathcal{G}^2_\bu}\ar[r,dashed,"{\mathcal{L}_\bu'}"]&{\mathcal{G}^3_\bu}
\end{tikzcd}\]
between $m$-shifted symplectic derived Lie $n$-groupoids. Under a levelwise transversality hypothesis and a mild  assumption, the composition
$$\huaL_\bu\times_{\huaG^2_\bu}\huaL'_\bu\colon\huaG^1_\bu\dashrightarrow\huaG^3_\bu$$
is an $m$-shifted lagrangian correspondence.
\end{theorem}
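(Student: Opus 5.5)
The composition will be treated as in the derived algebraic setting (\cite{cal:lag, ptvv}), but carried out levelwise on the simplicial derived manifolds and then checked at the level of tangent complexes. A lagrangian correspondence $\huaL_\bu\colon\huaG^1_\bu\dashrightarrow\huaG^2_\bu$ is taken to be a morphism $\huaL_\bu\to\huaG^1_\bu\times\huaG^2_\bu$. It carries an isotropic structure: a form $\eta$ in the total complex of the simplicial de Rham complex of degree $m+1$ (shifted down by one) with $D\eta=\pr_2^*\omega_2-\pr_1^*\omega_1$ restricted to $\huaL$. It also satisfies a nondegeneracy condition: the induced map from the tangent complex $T\huaL$ to the shifted conormal/cotangent complex of $\huaL$ relative to $\huaG^1\times\huaG^2$ is a quasi-isomorphism. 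Likewise $\huaL'_\bu$ carries $\eta'$.

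\emph{Step 1: the composite object.} Set $\huaK_\bu=\huaL_\bu\times_{\huaG^2_\bu}\huaL'_\bu$. The levelwise transversality hypothesis makes each $\huaK_k$ a derived manifold, computed as a fibre product in $\DMfd$. The mild assumption (for instance, that one of the maps to $\huaG^2_\bu$ is a Kan fibration, or a fibration in the iCFO of Theorem~\ref{thm:iCFO-n-intro}) ensures that $\huaK_\bu$ is again a derived Lie $n$-groupoid. It also ensures that this strict fibre product presents the homotopy fibre product, so that the tangent complex of $\huaK_\bu$ is the homotopy pullback $T\huaL\times^h_{T\huaG^2}T\huaL'$. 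I will use the path-object description of Theorem~\ref{thm:iCFO-n-intro} to justify this.

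\emph{Step 2: the isotropic structure.} Put $\theta=p^*\eta+p'^*\eta'$, where $p,p'$ are the projections from $\huaK_\bu$. Then $D\theta=\omega_3-\omega_1+(\omega_2-\omega_2)$ on $\huaK$, and the $\omega_2$ terms cancel because the two maps to $\huaG^2$ agree on $\huaK$. This is a direct computation using that pullback commutes with the simplicial and de Rham differentials in the total complex.

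\emph{Step 3: nondegeneracy, the main obstacle.} The IM-pairing and the map $T\huaK\to T^*\huaK[m-1]$-type map are defined through the adjusted Eilenberg--Zilber map. I would first reduce to linear algebra: restrict along the unit/degenerate embedding to the base and express everything via the tangent complexes at objects. These complexes are finite-length complexes of vector bundles assembled from normalized tangent complexes in all simplicial and derived degrees. The key task is a commutative diagram of exact triangles. The nondegeneracy of $\eta$ identifies $T\huaL$ with the fibre of $T\huaG^1\oplus T\huaG^2\to T^*\huaL[m]$, and similarly for $\eta'$. Combining the two triangles by the octahedral axiom, and using that $\omega_2$ identifies $T\huaG^2\simeq T^*\huaG^2[m]$, gives a quasi-isomorphism from $T\huaK$ to the cone defining the relative cotangent complex of $\huaK\to\huaG^1\times\huaG^3$. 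This is the standard argument of \cite{cal:lag}. The delicate point is checking that the map so obtained agrees up to homotopy with the one induced by $\theta$. Here I expect having to track the homotopy $\eta$ through the Eilenberg--Zilber correction terms, and showing that the cross terms coming from $\omega_2$ cancel strictly on the fibre product is where transversality is used again.

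Finally, quasi-isomorphism is detected pointwise, so it suffices to check at each point of the base of $\huaK$. Since the constructions are natural, the result is also invariant under Morita morphisms.
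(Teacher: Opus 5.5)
Your Step~2 is the paper's isotropic structure $\pr_1^*\beta_\bu+\pr_2^*\beta'_\bu$, and the shape of Step~3 (combine the two lagrangian conditions with $\lambda^{\omega^2,\sharp}$) is right. However, Steps~1 and~3 each have a real problem.

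\textbf{Step~1.} You get the tangent complex of $\huaK_\bu$ from the claim that the strict fibre product presents the homotopy fibre product. Under the hypotheses of Thm.~\ref{thm:complag} this is false. In Rem.~\ref{rmk:compare-ptvv-comp}, take a discrete group $G$: the diagonal of $N_\bu G$ is levelwise transversal to itself and the strict fibre product $N_\bu G$ is a Lie $1$-groupoid, yet the homotopy pullback is $G\ltimes G$. Replacing the mild assumption by ``one leg is a Kan fibration'' narrows the theorem. Even then, passing from a statement about groupoids to tangent complexes would need Lemma~\ref{lem:MEtcom}, with its finiteness hypothesis.

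None of this is needed; levelwise transversality suffices. Since $N$ and $\tot$ preserve finite limits, $\bT(\huaK)=\bT\huaL\times_{\bT\huaG^2}\bT\huaL'$ on the nose. Levelwise surjectivity of $T\Phi^2-T\Psi^2$ passes to normalized complexes through the functorial splitting $A_k=N_k(A)\oplus D_k(A)$. Hence $\bT\Phi^2-\bT\Psi^2$ is surjective, and its kernel is quasi-isomorphic to its cone via $J(u,v)=(u,v,0)$ (Lemma~\ref{lem:transversality}). This is the only place transversality enters.

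\textbf{Step~3.} The step you defer, checking that the octahedral quasi-isomorphism ``agrees up to homotopy with the one induced by $\theta$'', is the whole content of nondegeneracy. Your plan for it is also misdirected: there are no Eilenberg--Zilber correction terms to track. The missing observation is that IM-pairings are strictly natural:
\[
\lambda^{F^*\beta,\sharp}=(\bT F)^*\circ\lambda^{\beta,\sharp}\circ\bT F .
\]
This holds because $\widetilde{EZ}$ only involves degeneracies, which commute with $TF$, and because $(F^*\beta)_x(u,v)=\beta_{F(x)}(TF\,u,TF\,v)$.

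With this, the map for the composite factors on the nose as $J^*\circ\Lambda\circ(\Gamma,\id,\id)\circ(J,\id,\id)$. Here $\Gamma$ is as in \eqref{eq:Gmap}: it inserts $w=\bT\Phi^2u=\bT\Psi^2v$ into an auxiliary copy of $\bT\huaG^2$. The map $\Lambda$ is as in \eqref{eq:Lmap}, assembled from the two given lagrangian maps and $\lambda^{\omega^2,\sharp}$. $\Lambda$ respects the extension of the auxiliary complex, with the two normal complexes as subcomplex and $\bT\huaG^2$ as quotient, so it is a quasi-isomorphism by the five lemma.

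The cancellation of the $\omega^2$ cross terms you anticipate is what makes $J^*$ a cochain map. It uses only $\bT\Phi^2\circ\pr_1=\bT\Psi^2\circ\pr_2$ on the fibre product, not transversality.

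Finally, your closing claim of Morita invariance is unsupported. The paper states that shifted lagrangians are not Morita invariant, and it leaves transport of forms under Morita morphisms open.
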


 The above result provides a smooth transversal version of \cite{wei:sympcat} in the shifted setting. The point is not to remove the obstruction to composition abstractly, but to control it geometrically. Rem.\ \ref{rmk:ptvv-thm5} explains how our result together with a fibrant replacement recovers the homotopy-pullback picture in \cite{cal:lag, ptvv}.

The third and last main accomplishment of this work is to provide explicit examples coming from various sorts of symplectic reductions. Recall that a quasi-symplectic groupoid $\Gamma_\bu$, as introduced in \cite{BCWZ, moxu}, is the same as a $1$-shifted symplectic Lie $1$-groupoid, see \cite{cueca-zhu, Lesdiablerets}, while a hamiltonian $\Gamma_\bu$-space $(M,\omega,\mu)$ is equivalent to a $1$-shifted lagrangian structure on the morphism $$\mu: \Gamma_\bu\ltimes M\to \Gamma_\bu,$$ where $\Gamma_\bu\ltimes M$ stands for the action groupoid, see \cite{ABC:lag, cal:lag}. Each pre-symplectic orbit $\orbit\subseteq \Gamma_0$ together with the inclusion provides an example of a hamiltonian $\Gamma_\bu$-space. Hence, for a hamiltonian $\Gamma_\bu$-space $(M,\omega,\mu)$ and an orbit $\orbit$ one should compute the reduced space as the homotopy fiber product of the two induced $1$-shifted lagrangian structures \cite{cal:lag}, i.e.\
\begin{equation}\label{eq:introred}
  (\Gamma_\bu\ltimes M)\times_{\Gamma_\bu}^h\Gamma_\bu|_\orbit.  
\end{equation}
Observe that transversality for the $1$-shifted lagrangian structures $\mu:\Gamma_\bu\ltimes M\to \Gamma_\bu$ and $i:\Gamma_\bu|_\orbit\to \Gamma_\bu$ is equivalent to the points in the orbit being regular values for the moment map $\mu$. Our main idea is to fibrantly replace the $1$-shifted lagrangian structure $$i:\Gamma_\bu|_\orbit\to \Gamma_\bu$$ by a derived Lie $1$-groupoid ensuring transversality. We achieve this by using linearization results of presymplectic orbits in quasi-symplectic groupoids \cite{amm, am:linpl, pmct1, pmct3, del:riemgpd}.

\begin{theorem}[Thm.~\ref{theorem;reduction}]
Let $(\Gamma_\bu,\Omega_\bu)$ be a $1$-shifted symplectic Lie $1$-groupoid and let $(M,\mu,\omega)$ be a hamiltonian $\Gamma_\bu$-space. 
Let $\orbit$ be a $\Gamma_\bu$-orbit in $\Gamma_0$ and suppose that $\Gamma_\bu$ is weakly symplectically linearizable at $\orbit$. Then the following  holds: 
\begin{enumerate}
\item The symplectic quotient $(M\qu[\orbit]\Gamma_\bu,\omega^\red_\bu)$ is a  $0$-shifted symplectic quasi-smooth Lie groupoid and  it gives the homotopy pullback \eqref{eq:introred}. 
\item If $\mu$ is transverse to $\orbit$, then $M\qu[\orbit]\Gamma$ is symplectically Morita equivalent to the $0$-shifted symplectic Lie groupoid $\Gamma|_\orbit\ltimes\mu^{-1}(\orbit)$ of Thm.~\textup{\ref{theorem;regular-reduction}}.
\end{enumerate}
\end{theorem}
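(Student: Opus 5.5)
The plan is to build an explicit derived model of $\Gamma_\bu|_\orbit$ and then apply the composition result, Thm.~\ref{thm:main}. Weak symplectic linearizability gives a neighbourhood $U$ of $\orbit$ in $\Gamma_0$ together with a local model of $\Gamma_\bu|_U$ near $\orbit$: a vector bundle $\nu\to\orbit$, namely the normal bundle, which is identified with the conormal data of the isotropy, together with the linearized groupoid, whose $1$-shifted symplectic form is the standard one. In this model $\orbit$ is the zero section, cut out transversally by the tautological section $s$ of the pullback of $\nu$ to $U$.

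\textbf{Step 1 (derived orbit).} I would replace $\Gamma_\bu|_\orbit$ by the derived Lie $1$-groupoid $\Gamma_\bu|_U^{\mathrm{der}}$. Its objects form the derived manifold $(U,\nu[1],s)$, i.e.\ the Koszul-type dg manifold with degree $-1$ coordinates in $\nu^*$ and differential given by $s$. Its arrows are obtained by pulling this back along the source or target map. I then check two things.
\begin{itemize}
\item The levelwise maps to $\Gamma_\bu|_\orbit$ are weak equivalences in the CFO of $\DMfd$, since $s$ is transverse to zero. So this object is Morita equivalent to $\Gamma_\bu|_\orbit$ in the sense of \S\ref{subsection;ME}.
\item The map $\Gamma_\bu|_U^{\mathrm{der}}\to\Gamma_\bu$ carries a $1$-shifted lagrangian structure. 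Its isotropic homotopy is the restriction of the primitive of $\Omega_\bu$ supplied by the linear model, and nondegeneracy follows because the tangent complex of the derived orbit differs from that of $\orbit$ only by an acyclic piece.
\end{itemize}

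\textbf{Step 2 (transversality and composition).} The map $\Gamma_\bu|_U^{\mathrm{der}}\to\Gamma_\bu$ is levelwise a fibration, or locally split, in $\DMfd$. Hence the levelwise fibre product $(\Gamma_\bu\ltimes M)\times_{\Gamma_\bu}\Gamma_\bu|_U^{\mathrm{der}}$ exists and computes the homotopy pullback \eqref{eq:introred}, using the iCFO structure of Thm.~\ref{thm:iCFO-n-intro} and Rem.~\ref{rmk:ptvv-thm5}. Concretely, its objects are the derived manifold $\mu^{-1}(U)$ with the section $s\circ\mu$, and I define this to be $M\qu[\orbit]\Gamma_\bu$. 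It is quasi-smooth because only degree $-1$ generators appear. I would verify the transversality hypothesis and the mild assumption of Thm.~\ref{thm:complag}. Then both $M\to\Gamma$ and the derived orbit are $1$-shifted lagrangian correspondences from $\Gamma_\bu$ to the point, and composing them gives a $0$-shifted lagrangian correspondence $\pt\dashrightarrow\pt$. That is, the composite is a $0$-shifted symplectic structure $\omega^\red_\bu$, which proves (1).

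\textbf{Step 3 (regular case).} If $\mu\pitchfork\orbit$, then $s\circ\mu$ is transverse to zero. The natural inclusion $\Gamma|_\orbit\ltimes\mu^{-1}(\orbit)\to M\qu[\orbit]\Gamma$ is then levelwise a quasi-isomorphism of derived manifolds, so it is a Morita morphism. The symplectic forms match because the reduced form pulls back to the one of Thm.~\ref{theorem;regular-reduction}: both are built from $\omega$ and the primitive of $\Omega$ restricted to $\mu^{-1}(\orbit)$, and the degree $-1$ correction terms vanish on the underived locus. This proves (2).

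I expect the main obstacle to be Step 1's lagrangian structure, together with the nondegeneracy of the composite. One must write the isotropic homotopy in the three-direction simplicial de Rham complex, including terms involving the degree $-1$ coordinates, and check that the IM-pairing quasi-isomorphism holds. I would first try to use the linear model, where $\Omega$ is explicitly exact near $\orbit$, to write the homotopy as $\langle \xi, d s\rangle$ plus a pullback of the linearization primitive, where $\xi$ denotes the odd coordinates.
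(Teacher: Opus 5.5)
Your overall route is the paper's. The derived orbit is the open derived subgroupoid $\huaU_\bu$ of $\huaZ(\nu_\bu,\pi_\bu^*\nu_\bu,\epsilon_{\nu_\bu})$; your ``pull back along the source'' is the trivial-core identification $\nu_1\cong s^*\nu_0$. The quotient is its fibre product with $\mu_\bu$, part (1) comes from Thm.~\ref{theorem;lagintersection}, and part (2) from pulling back the levelwise weak equivalence $j_\bu$.

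However, the step you postpone as ``the main obstacle'' is the actual content of the proof, and the ansatz you propose for it would not work. The linearization data do not give you a primitive of $\Phi^*\Omega_\bu$. They give $\Phi_1^*\Omega_1=\Omega_\lin+\delta\eta$ and $\Phi_0^*\Omega_0=d\eta$, and the form $\Omega_\lin$ has to be absorbed by the derived directions. In an isotropic structure (a $0$-shifted $2$-form on $\huaU_\bu$), the only component whose $\Lie_Q$-image lies in $\Omega^{2,0,1}$, where $\Omega_\lin$ lives, is the $\Omega^{2,-1,1}$-component. That is a form of internal degree $-1$ on the arrows. An expression like $\langle\xi,ds\rangle$ plus a pullback of $\eta$ has the wrong tridegree and does not involve $\Omega_\lin$ at all.

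The missing idea is \S\,\ref{subsection;linear}: for a \emph{linear} form $\phi$ on a vector bundle, $\epsilon_*\phi$ satisfies $\Lie_Q\epsilon_*\phi=\phi$ on $\huaZ(\epsilon)$, see \eqref{equation;primitive}. This is exactly why (L2) demands that $\Omega_\lin$ be linear. One takes $\eta_\bu=(\epsilon_{\nu_1})_*\Omega_\lin+\eta$. Then $\delta\Omega_\lin=\Phi_2^*\delta\Omega_1=0$ and $d\Omega_\lin=\Phi_1^*(d\Omega_1-\delta\Omega_0)=0$ give $\delta\eta_1=d\eta_1=0$. Hence $D\eta_\bu=\Omega_\lin+\delta\eta+d\eta=\Phi_\bu^*\Omega_\bu$.

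You also never use the normalization $\eta|_\orbit=\Omega_\orbit$ from (L2), and both parts need it.
\begin{itemize}
\item It gives $j_\bu^*\eta_\bu=\Omega_\orbit$, since the $(2,-1,1)$-part pulls back to zero on an ordinary groupoid. So $\bT j_\bu$ intertwines $\lambda^{\eta_\bu,\Phi_\bu,\Omega_\bu}$ with $\lambda^{\Omega_\orbit,i_\bu,\Omega_\bu}$, and nondegeneracy transfers by Lemma~\ref{lem:MEtcom}. Knowing only that the tangent complexes ``differ by an acyclic piece'' says nothing about the pairing.
\item It is also what makes $\omega^\red_\bu$ pull back to $\omega|_{\mu^{-1}(\orbit)}-\mu^*\Omega_\orbit$ in part (2).
\end{itemize}

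Smaller points:
\begin{itemize}
\item The homotopy-pullback claim in the sense of \S\,\ref{subsection;pull} needs $\Phi_\bu$ to be a strong fibration, i.e.\ a Kan fibration with $\Phi_0$ a fibration. Being levelwise a fibration is not enough.
\item That the fibre product is a derived Lie groupoid comes from the quasi-smooth $n=1$ case of the remark after Thm.~\ref{theorem;lagintersection}.
\item Weak linearizability only embeds an open subgroupoid $U_\bu\subseteq\nu_\bu$ into $\Gamma_\bu$; it is not a model of all of $\Gamma_\bu|_U$.
\item Composing $*\dashrightarrow\Gamma_\bu\dashrightarrow *$ gives a $1$-shifted (not $0$-shifted) lagrangian on the point. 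By Ex.~\ref{ex:lagpt} this is the $0$-shifted symplectic form $\pr_1^*\omega-\pr_2^*\eta_\bu$.
\end{itemize}
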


Therefore, when the moment map has critical values, Theorem \ref{theorem;reduction} gives a unified framework to deal with ordinary hamiltonian reduction \cite{mw:red} at the origin \S \ref{sec:hamred}, quasi-hamiltonian reduction \cite{amm} at the unit \S \ref{sec:quamat1}, Lu reduction for Poisson-Lie group actions \cite{lu:mom} at the unit \S \ref{subsection;lu} and Mikami–Weinstein reduction \cite{MiWe88} for proper symplectic groupoids \S \ref{sec:proper}.

Our work is an attempt to produce a concrete model for derived higher geometry that is friendly for differential geometers. As such, there are many questions that remain open and we plan to further investigate in the future. We introduced a triple complex encoding the de Rham cohomology of a derived Lie $n$-groupoid. We expect this complex to be invariant under Morita maps as this happens for Lie $n$-groupoids \cite{weiershausen2025}, see also \cite{taroyan;de-rham}. Another point to be clarified is the relation between derived Lie $n$-groupoids as introduced here and derived differentiable stacks as introduced in \cite{porta1, Pridham:higher-stack} using homotopy derived Lie $n$-hypergroupoids, see Rem.\ \ref{rmk:ME}. 

Theorem \ref{thm:main} still requires transversality, one can remove that hypothesis at the price of having no control on the fibrant replacement which is how this problem is dealt with in algebraic geometry \cite{cal:lag, ptvv}. In differential geometry perturbation theory is the usual way of dealing with the lack of transversality \cite{wehrheim-woodward:10}. It would be interesting to find a way of unifying these two approaches. Finally, a natural next step is to consider the quantization of $m$-shifted symplectic structures in derived Lie $n$-groupoids, see \cite{cal:shi, prid:shift-nq, saf:geoqua} for related constructions.

\subsection*{Acknowledgements}

We thank Daniel \'Alvarez, David Carchedi, Ezra Getzler, Eugene Lerman, Yiannis Loizides,  Tony Pantev, Jonathan Pridham, Stefano Ronchi, Pelle Steffens and Hao Xu for helpful discussions.

M.C. was partially supported by FWO grants 1249325N and  G014726N. F.D. and C.Z. are supported by DFG grant ZH 274/5-1 and RTG 2491.  M.C., F.D., and C.Z.\ thank the Centre de Recerca Matem\`atica in Barcelona for its hospitality.

\section{Preliminaries} 

This section is a review of pretopologies and related notions, including that of  higher groupoids in a category equipped with a pretopology.  This material is based on \cite{Duskin,Glenn:Realization,henriques}.  We also review the notion of an incomplete category of fibrant objects introduced in \cite{Rogers-Zhu:2016}.

\subsection{Pretopologies on categories}\label{sec:pre-points}

A (\emph{singleton}) \emph{pretopology}  on a category $\Cat$ is a collection $\covers$ of arrows, called \emph{covers}, with the following properties:
\begin{itemize}
\item[\namedlabel{P1}{(P1)}] isomorphisms are covers;
\item[\namedlabel{P2}{(P2)}] the composition of two covers is a cover;
\item[\namedlabel{P3}{(P3)}] pullbacks of covers are covers; more precisely, for every cover \(U\to X\) and every morphism \(Y\to X\), the pullback (fibred product) \(Y\times_X U\) exists in $\Cat$ and the canonical morphism \(Y\times_X U \to Y\) is a cover.
\end{itemize}
In addition we will assume that a pretopology satisfies the following axiom:
\begin{itemize}
\item[\namedlabel{P4}{(P4)}] products of covers are covers; that is, for all covers $f\colon U\to X$ and $g\colon V\to Y $, the product $f\times g\colon U\times V\to X\times Y$ exists and is a cover. 
\end{itemize}

\begin{remark}\label{rmk:p4}
This is the version of the pretopology axioms given in \cite[Def.\ 2.1]{henriques} except that, instead of axiom \ref{P4}, \cite[Assumptions 2.2]{henriques} imposes the following axiom:
\begin{itemize}
\item[\namedlabel{P4'}{(P4)$'$}] $\Cat$ has a terminal object $*$ and for every object \(X\) of \(\Cat\) the unique morphism \(X\to *\) is a cover.
\end{itemize}
This axiom is stronger than \ref{P4} (see \cite[Lemma~2.9]{meyer-zhu}), but it fails in many contexts.  In particular it fails for derived manifolds, whereas the weaker axiom \ref{P4} holds (see Rem.~\ref{remark;terminal}). We shall frequently rely on \cite{Rogers-Zhu:2016}, which also imposes axiom \ref{P4'}.  None of the results of \cite{Rogers-Zhu:2016} appeal to \ref{P4'}, except for two: 
\begin{enumerate}
\item It follows from axiom \ref{P4'} that group objects of $\Cat$ are $1$-group objects in $(\Cat,\covers)$ (\cite[Def.~3.4]{Rogers-Zhu:2016}). However, when $\Cat$ is the category of derived manifolds, this is automatically true even though \ref{P4'} fails (see Rem.~\ref{rmk:gp}). 
\item The proof of \cite[Prop.~7.13]{Rogers-Zhu:2016} uses that the product of covers is a cover, which is guaranteed by our weaker axiom \ref{P4}.
\end{enumerate} 
Thus we may safely replace \ref{P4'} by \ref{P4} and use all the results of \cite{Rogers-Zhu:2016} in this article.
\end{remark}
A \emph{refinement} of a cover $c\colon U\to X$ in a pretopology is a cover $c'\colon U'\to X$ together with a morphism $f\colon U'\to U$ such that $c'=c\circ f$.

We denote the category of presheaves of sets on a category $\Cat$ (i.e.\ contravariant functors $\Cat\to\Set$) by $\PSh(\Cat)$.  We denote by 
\[\yon\colon\Cat\longto\PSh(\Cat)\]
the Yoneda embedding given by $\yon(X)=\Hom_\Cat(\cdot, X)$.  A presheaf is \emph{representable} if it is isomorphic to a presheaf of the form $\yon(X)$ for some $X\in\Cat$.  If $\Cat$ is equipped with a pretopology $\covers$, we call a presheaf $F$ a \emph{sheaf (with respect to $\covers$)} if the diagram 
\[
\begin{tikzcd}
F(X)\ar[r,"F(c)"]&F(U)\ar[r,shift left=0.8ex,"F(\pr_1)"]\ar[r,shift right=0.8ex,"F(\pr_2)"']&F(U\times_X U)
\end{tikzcd}
\]
is an equalizer for all covers $c\colon U\to X$ in $\covers$.  We denote the category of sheaves by $\Sh(\Cat,\covers)$, or $\Sh(\Cat)$ when the pretopology $\covers$ is clear from the context.  

\begin{caution}\label{caution;sheaf}
Let $S$ be any set.  The constant presheaf $\underline{S}$ defined by $\underline{S}(X)=S$ for all $X\in\Cat$ is a sheaf with respect to any pretopology on $\Cat$.  But if $\Cat$ is the category of topological spaces, then $\underline{S}$ restricts to the constant presheaf on every object of $\Cat$, which is not a sheaf in the classical sense unless $S$ is empty.  This discrepancy is caused by our insistence that a cover should consist of a single morphism, whereas in Grothendieck's original definition of a pretopology a cover is a (possibly empty) collection of morphisms.  See \cite{waldorf;internal} for a comparison between various notions of pretopologies and sheaves.
\end{caution}

The pretopology $\covers$ is called \emph{subcanonical} if for every object $X\in\Cat$ the presheaf $\yon(X)$ is a sheaf with respect to $\covers$.   An \emph{effective epimorphism} is a morphism $f\colon X\to Y$ such that the fibred product $X\times_YX$ exists and $f$ is the colimit of the diagram $X\times_YX\rightrightarrows X$.  A pretopology is subcanonical if and only if every cover is an effective epimorphism; see e.g.~\cite[Def.-Lemma~2.2]{meyer-zhu}.

\subsection{Higher groupoids in a category with pretopology}\label{sec:higgrou}

Let $\Cat$ be a category. A \emph{simplicial object} in~\(\Cat\) is a
functor \(X_\bu\colon \Simpc^\op \to \Cat\), where~\(\Simpc\) is the
category of finite ordinals
\[
[0]=\{0\}, \qquad [1]=\{0, 1\},\quad \dotsc,\quad
[l]=\{0, 1,\dotsc, l\},
\]
with order-preserving maps as morphisms.  More explicitly, \(X_\bu\)~consists of a collection of objects~\(X_l\) in~\(\Cat\) and arrows \(\face^l_i\colon X_l \to X_{l-1}\) and \(\de^l_i\colon X_l \to X_{l+1}\) for \(i\in\{0, 1, 2,\dots, l\}\), respectively, called
\emph{face and degeneracy maps}, that satisfy
the \emph{simplicial identities}
\begin{equation}
  \label{eq:face-degen}
  \begin{alignedat}{2}
    \face^{l-1}_i \face^{l}_j &= \face^{l-1}_{j-1} \face^l_i &\quad
    &\text{if } i<j,\\
    \de^{l}_i \de^{l-1}_j &= \de^{l}_{j+1} \de^{l-1}_i &\quad
    &\text{if } i\le j,
  \end{alignedat}\qquad
  \face^l_i \de^{l-1}_j =
  \begin{cases}
    \de^{l-2}_{j-1} \face^{l-1}_i &\text{if } i<j,\\
    \id&\text{if } i=j\text{ or } j+1,\\
    \de^{l-2}_j \face^{l-1}_{i-1} &\text{if } i>j+1.
  \end{cases}
\end{equation}

A \emph{simplicial morphism} \(f_\bu\colon X_\bu\to Y_\bu\) is a
family of morphisms \(f_l\colon X_l\to Y_l\) in $\Cat$ that intertwine the face and
degeneracy maps of \(X_\bu\) and~\(Y_\bu\).

When~\(\Cat\) is the category of sets we refer to $X_\bu$ as a \emph{simplicial set}. Basic examples of simplicial sets are 
\begin{align*}
  (\Simp{k})_l &= \{f\colon \{0,1,\dotsc,l\} \to \{0,1,\dotsc, k\}
  \mid f(i)\le f(j) \text{ for all }i \le j\},\\
  (\partial\Simp{k})_l &=
  \bigl\{f\in (\Simp{k})_l \bigm| \{0,\dotsc,k\}
  \nsubseteq \{f(0),\dotsc, f(l)\} \bigr\},\\
  (\Horn{k}{j})_l &=
  \bigl\{f\in (\Simp{k})_l\bigm| \{0,\dotsc,j-1,j+1,\dotsc,k\}
  \nsubseteq \{f(0),\dotsc, f(l)\} \bigr\}.
\end{align*}
known respectively as the \emph{simplicial \(k\)-simplex}, its
\emph{boundary} and the $(k,j)$-\emph{horn}. Note that the
 $(k,j)$-horn is obtained from the
\(k\)-simplex by taking away its interior and
its \(j\)-th face.

  For $(\Cat, \covers)$ a category with pretopology and $X_\bu$ a simplicial object, we use the following notation for the sheaves $$\partial_j(X) := \Hom(\partial\Simp{j}, X)\quad \text{and}\quad  \Horn{j}{i}(X): = \Hom(\Horn{j}{i}, X).$$ 
  
  A simplicial object~\(X_\bu\) in~\((\Cat, \covers)\) satisfies the
  \emph{Kan condition} \(\Kan(k,j)\) if the restriction map
  \[
  p_{k,j}\colon\Hom(\Simp{k},X)\longto\Hom(\Horn{k}{j},X)
  \]
  is a cover and it satisfies the \emph{unique
    Kan condition} \(\Kan!(m,j)\) if $ p_{k,j}$ is an
  isomorphism.  In both cases, the existence of the limit
  \(\Hom(\Horn{m}{j},X)\) in~\(\Cat\) is assumed.

\begin{definition}[\cite{henriques}]\label{def:ngpd} 
Let $n\in\N\cup\{\infty\}$.  An \emph{\(n\)-groupoid} in a category with pretopology \((\Cat, \covers)\) is a simplicial object~\(X_\bu\) in~\(\Cat\) that satisfies 
\begin{itemize}
    \item  \(\Kan(k,j)\) for all \(k\ge1\)
  and \(0\le j\le k\); and
    \item \(\Kan!(k,j)\) for all \(k>n\) and
  \(0\le j\le k\).
\end{itemize}
We denote by $ \mathsf{Gpd}_n[\Cat, \covers]$  the full subcategory of simplicial objects in $\Cat$ whose objects are \(n\)-groupoids in \((\Cat, \covers)\). 
\end{definition}

We say that a simplicial morphism $f_\bu\colon X_\bu\to Y_\bu$ satisfies the  \emph{Kan condition} $\Kan(k,j)$ if the sheaf $\Hom(\Horn{k}{j}\to\Simp{k}, X\to Y)$ is representable and the canonical map (i.e., the horn projection)
\begin{equation}
    \label{eq:Kan_arrow}
    X_k = \Hom(\Simp{k},X)\xrightarrow{(\iota^\ast_{k,j},f_\ast)} 
\Hom(\Horn{k}{j}\xto{\iota_{k,j}} \Simp{k}, X\xto{f} Y)
\end{equation}
is a cover.  We say that $f_\bu$ satisfies the \emph{unique Kan condition} $\Kan!(k, j)$ if it satisfies $\Kan(k,j)$ and~\eqref{eq:Kan_arrow} is an isomorphism. 

Here for simplicial sets $S_\bu, T_\bu$ and simplicial objects $X_\bu, Y_\bu$, $\Hom(S\to T, X\to Y) $ is defined as the following fiber product (in the world of sheaves) with natural morphisms among the spaces (see \cite[\S\,2.1]{z:tgpd-2} for an explanation):
\begin{equation} \label{eq:2-arrow-prod}
    \Hom(S\to T, X\to Y) := \Hom(S, X) \times_{\Hom(S, Y)} \Hom(T, Y).
\end{equation} 
\begin{definition}[\cite{henriques}]
  \label{def:Kan_arrow}
A simplicial morphism $f_\bu \maps X_\bu \to Y_\bu$ of simplicial objects in $(\Cat, \covers)$ is a \emph{Kan fibration} if it satisfies $\Kan(k,j)$
  for all $k\ge 1$ and $0\le j\le k$.
\end{definition}

After introducing Kan fibrations we can also state that a simplicial object $X_\bu$ in $\Cat$ is  an $n$-groupoid in $(\Cat, \covers)$ if and only if the terminal morphism
\[
X_\bu \to \ast
\]
satisfies the Kan condition $\Kan(k,j)$ for $1\leq k \leq n$, $0 \leq
j \leq k$, and the unique Kan condition $\Kan!(k,j)$ for all $k >n$,
$0 \leq j \leq k$. In this case,  $\Hom(\Horn{k}{j}, X)$ is automatically representable, see \cite[Cor.2.5]{henriques} for more details.

\begin{definition} \label{def:equivalence}
A morphism \(f_\bu\colon X_\bu\to Y_\bu\) of simplicial objects in $(\Cat,\covers)$ is a  \emph{hypercover} if for all $j\ge0$ the sheaf \(\Hom(\partial\Simp{j}\to \Simp{j},X\to Y)\) is representable and the canonical boundary projection
  \begin{equation}
    \label{eq:hypercover-general}
   X_j= \Hom(\Simp{j}, X) \xrightarrow{q(f)_j:=(\jmath^\ast_j,f_\ast)}
    \Hom(\partial\Simp{j}\xto{\jmath_j} \Simp{j},X \xto{f} Y),
  \end{equation}
  is a cover  in \(\covers\).   (For \(j=0\) we define \(\Hom(\partial\Simp{0}\to \Simp{0},X\to Y)\) to be \(Y_0\).)
\end{definition}
\begin{remark}\label{rm:h-cover-rep}
   The fiber product $\partial_j(X)\times_{\partial_j(Y)} Y_j$, which is by \eqref{eq:2-arrow-prod} the right hand side of \eqref{eq:hypercover-general}, is not necessarily representable even if $X_\bu, Y_\bu\in \mathsf{Gpd}_n[\Cat, \covers]$. However, one can show inductively that the above  fiber product is always representable if $X_\bu$ and $Y_\bu$ are $n$-groupoid objects and \(f_\bu\colon X_\bu\to Y_\bu\) is a hypercover, see \cite[\S\,2.1]{z:tgpd-2} for details.
\end{remark}

\subsubsection{An explicit combinatoric formula for hypercovers}

In this subsection let $(\Cat,\covers)$ be a category with a pretopology.
We have the following lemma which clarifies the condition for hypercover in the  situation when $n$ is finite. It is analogous to \cite[Lemma~3.13]{Behrend-Getzler:2015},  however since it is an important technical lemma and it is in a different setting, we still give a detailed proof here.

\begin{lemma} \label{lem:hypercover-n}
If $f_\bu\colon X_\bu\to Y_\bu$ is a hypercover of $n$-groupoid objects in $(\Cat, \covers)$, then $q(f)_j$ in \eqref{eq:hypercover-general}  is automatically an isomorphism  for $j\geq n$. 
\end{lemma}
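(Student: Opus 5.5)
The approach is to treat the case $j=n$ separately and then reach $j>n$ by induction, using the unique Kan conditions $\Kan!(k,i)$ for $k>n$ in both $X_\bu$ and $Y_\bu$. We already know $q(f)_j$ is a cover, so it suffices to show it is a monomorphism for $j=n$ and to build an explicit inverse for $j>n$. All arguments are carried out on generalized elements, i.e.\ morphisms $T\to X_j$ from an arbitrary object $T$. Where a lift only exists after pulling back along a cover $U\to T$, we conclude using that covers are epimorphisms. (This needs $\covers$ to be subcanonical, or at least that covers are epi; if necessary I would phrase the argument in sheaves over $\Cat$.)

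\emph{Step 1: $q(f)_n$ is a monomorphism.} Let $x,x'\colon T\to X_n$ have the same boundary, $\face_i x=\face_i x'$ for all $i$, and the same image $f(x)=f(x')=:y$. Consider the boundary datum in $X$ of dimension $n+1$ given by
\[(x,\;x',\;\de_0\face_1x,\;\dots,\;\de_0\face_n x).\]
It is compatible by the simplicial identities, since $x$ and $x'$ share faces. Its image in $Y$ is $\partial(\de_0 y)$, because $\face_0\de_0y=\face_1\de_0y=y$ and $\face_i\de_0 y=\de_0\face_{i-1}y$ for $i\ge2$. So this datum together with $\de_0 y$ gives a $T$-point of $\Hom(\partial\Simp{n+1}\to\Simp{n+1},X\to Y)$. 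Since $q(f)_{n+1}$ is a cover, after passing to a cover $U\to T$ we obtain $z\in X_{n+1}(U)$ with $\partial z$ equal to this datum.

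Now $z$ and $\de_0x'$ have the same faces $\face_1,\dots,\face_{n+1}$, namely $x'$ and $\de_0\face_{i-1}x$ for $i\ge 2$. By $\Kan!(n+1,0)$ for $X_\bu$ they coincide, so $x=\face_0z=\face_0\de_0x'=x'$ on $U$, and hence on $T$. A cover that is monic is an isomorphism (it is an effective epimorphism with trivial kernel pair), which gives the case $j=n$.

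\emph{Step 2: induction for $j>n$.} Assume $q(f)_{j-1}$ is an isomorphism, with $j-1\ge n$. Given compatible boundary data $(b_0,\dots,b_j)$ in $X$ and $y\in Y_j$ with $f(b_i)=\face_i y$, use $\Kan!(j,0)$ in $X$ to fill the horn $(b_1,\dots,b_j)$ uniquely to some $x\in X_j$. Then $\face_0x$ and $b_0$ have the same boundary. Moreover $f(x)=y$ by $\Kan!(j,0)$ in $Y$, since both fill the same horn, so $f(\face_0x)=\face_0y=f(b_0)$. The induction hypothesis, which says injectivity of $q(f)_{j-1}$, forces $\face_0x=b_0$. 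This gives an inverse to $q(f)_j$ that is natural in $T$ and therefore a morphism, so $q(f)_j$ is an isomorphism. Representability of the target is supplied by Remark~\ref{rm:h-cover-rep}.

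The main obstacle is Step 1. There the surjectivity of $q(f)_{n+1}$ only exists after refining along a cover, so one must check that this suffices to conclude $x=x'$, i.e.\ that covers are epimorphisms in the relevant sense. One must also verify carefully that the chosen degenerate faces form a compatible boundary in $X$ lying over $\partial(\de_0y)$.
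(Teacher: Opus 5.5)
Your proposal is correct, and it runs in the opposite direction from the paper's proof.

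The paper's route is top-down. It imports the fact that $n$-groupoid objects are $(n+1)$-coskeletal, which makes $q(f)_j$ an isomorphism for $j\ge n+2$ for free. It then descends twice through the pullback square \eqref{diag:hypercover-induction}. Because $p(f)^j_i$ is an isomorphism for $j\ge n+1$, the map $\tilde q$ is an isomorphism whenever $q(f)_j$ is. Since $\tilde q$ is the base change of $q(f)_{j-1}$ along the cover $\delta_j$ (essentially $q(f)_{j-1}\circ\face_i$), locality of isomorphisms yields first $q(f)_{n+1}$ and then $q(f)_n$.

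Your route is bottom-up, with generalized elements:
\begin{itemize}
\item The comparison boundary $(x,x',\de_0\face_1x,\dots,\de_0\face_nx)$ over $\partial(\de_0 y)$ is lifted through the cover $q(f)_{n+1}$ and compared with $\de_0x'$ via $\Kan!(n+1,0)$. This shows $q(f)_n$ is monic, and a monic cover is an isomorphism. The compatibility checks you flag as the main obstacle do go through as you say.
\item Above level $n$ you build the inverse by unique horn filling plus injectivity one level down.
\end{itemize}

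Both proofs use subcanonicity exactly once, in equivalent guises. Your hedge ``or at least that covers are epi'' would not suffice for ``monic cover $\Rightarrow$ iso''. That step needs covers to be effective epimorphisms, which is what you actually invoke, and it corresponds to the paper's use of locality of isomorphisms. Both proofs also use that $q(f)_n$ and $q(f)_{n+1}$ are covers.

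Your version is more elementary and self-contained: it needs neither the coskeletality result nor the external locality lemma. It also makes visible that above level $n$ only unique horn filling and injectivity of $q(f)_{j-1}$ matter. Your Step~2 is precisely the upward propagation that the paper obtains in the proof of Cor.~\ref{cor:hypercover-n} by rerunning its diagram. The paper's diagrammatic argument, for its part, avoids element-chasing and sets up the square that is reused in that corollary.
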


\begin{proof}
We have a commutative diagram, in which the square is a pullback:
\begin{equation}\label{diag:hypercover-induction}
\begin{tikzcd}
X_j\ar[r,"q(f)_j"]\ar[dr,"p(f)^j_i"']&
\partial_j(X)\times_{\partial_j(Y)}Y_j\ar[r]\ar[d,"\tilde{q}"]&
X_{j-1}\ar[d,"q(f)_{j-1}"]\\
&\Horn{j}{i}(X)\times_{\Horn{j}{i}(Y)}Y_{j}\ar[r,"\delta_j"]&
\partial_{j-1}(X)\times_{\partial_{j-1}(Y)} Y_{j-1}. 
\end{tikzcd}
\end{equation}

Here $\delta_j$ is the natural map induced by face map $d^j_i\colon Y_j \to Y_{j-1}$. 

Recall that $n$-groupoids in $(\Cat, \covers)$ are $(n+1)$-coskeletal \cite[\S\,2.3]{z:tgpd-2},  i.e.\ $\Cosk^{n+1}(X)_\bu \cong X_\bu$. As
\[
\Cosk^{n+1}(X)_j= 
\begin{cases}
X_j&\text{for $j\leq n+1$}\\
\partial_{j} (X)&\text{for $j \geq n+2$}, 
\end{cases}
\]
we conclude that for an $n$-groupoid object $X_\bu$ we have
\[X_j \cong \partial_j(X)\quad \text{for $j\geq n+2$}.\]
Thus $q(f)_j$ is an isomorphism for $j\geq n+2$ in diagram \eqref{diag:hypercover-induction}. Moreover, $p(f)^j_i$ is an isomorphism for $j\geq n+1$ by the strict Kan condition. Thus $\tilde{q}$ is an isomorphism for $j\geq n+2$. 
When $j\ge n+1$, the map $\delta_j\colon X_j \to \partial_{j-1} (X)\times_{\partial_{j-1}(Y)} Y_{j-1}$ is simply the composition $q(f)_{j-1} \circ d^j_i$, thus a cover. 
Since our pretopology is subcanonical, the property of being an isomorphism is local; see \cite[Prop.~2.5]{meyer-zhu}.  This means that if the pullback $X\times_Y U \to U$  of a morphism $f\colon X\to Y$ by a cover $U\to Y$ is an isomorphism, then $f$ itself is an isomorphism.  It follows that $q(f)_{j-1}$ is an isomorphism for $j\ge n+2$.

Therefore, if we input the fact that $q(f)_{n+1}$ is an isomorphism and run the above argument again, we obtain that $q(f)_n$ is an isomorphism.
\end{proof}

The above Lemma~\ref{lem:hypercover-n} allows us to simplify the condition for hypercover in the case of $n$-groupoid objects.

\begin{corollary}\label{cor:hypercover-n}
    A morphism $f_\bu\colon X_\bu\to Y_\bu$ between $n$-groupoid objects in $(\Cat, \covers)$ is a hypercover if and only if the map
    \begin{equation}\label{eq:hyper-n}
    q(f)_j\colon X_j \to \partial_j(X)\times_{\partial_j(Y)} Y_j, 
\end{equation}
is a cover for $0\le j\le n-1$ and an isomorphism for $j= n$. In this case, $\partial_j(X)\times_{\partial_j(Y)} Y_j$ is automatically representable for all $j$ and $q(f)_j$ is automatically an isomorphism for all $j\ge n$. 
\end{corollary}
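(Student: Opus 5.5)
The plan is to prove the two directions separately. Lemma~\ref{lem:hypercover-n} handles the forward direction, and an induction on $j$ combined with coskeletality handles the converse.

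\emph{Forward direction.} Suppose $f_\bu$ is a hypercover. Then each $q(f)_j$ is a cover by Definition~\ref{def:equivalence}. By Lemma~\ref{lem:hypercover-n}, $q(f)_j$ is an isomorphism for all $j\ge n$, in particular for $j=n$. Representability of the targets is part of the definition of a hypercover (cf.\ Rem.~\ref{rm:h-cover-rep}), so this direction is immediate.

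\emph{Converse: inductive set-up.} Suppose $q(f)_j$ is a cover for $0\le j\le n-1$ and an isomorphism for $j=n$. We must show two things: that $\partial_j(X)\times_{\partial_j(Y)}Y_j$ is representable for every $j$, and that $q(f)_j$ is a cover for every $j$. In fact we show it is an isomorphism for $j\ge n$. We proceed by induction on $j$. Representability for $j\le n$ is implicit in the hypothesis that $q(f)_j$ is a cover, since that hypothesis is formulated in $\Cat$. Alternatively, it follows from the inductive argument of \cite[\S\,2.1]{z:tgpd-2}: the square in \eqref{diag:hypercover-induction} presents $\partial_j(X)\times_{\partial_j(Y)}Y_j$ as the pullback of $q(f)_{j-1}$, a cover, along $\delta_j$. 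Its other corner is $\Horn{j}{i}(X)\times_{\Horn{j}{i}(Y)}Y_j$, and this is representable because $X_\bu$ and $Y_\bu$ satisfy the Kan conditions, so the horn spaces are representable (\cite[Cor.~2.5]{henriques}). Axiom \ref{P3} then guarantees that the pullback exists.

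\emph{Converse: the case $j\ge n+1$.} Here we use $(n+1)$-coskeletality of $n$-groupoids together with the unique Kan conditions. For $j\ge n+2$ we have $X_j\cong\partial_j(X)$ and $Y_j\cong\partial_j(Y)$, so $q(f)_j$ is an isomorphism directly. For $j=n+1$ we use diagram \eqref{diag:hypercover-induction}. The map $p(f)^{n+1}_i$ is an isomorphism, because $\Kan!(n+1,i)$ holds for both $X_\bu$ and $Y_\bu$: a horn in $X$ together with a filler in $Y$ determines a unique filler in $X$ compatible with $f$. The map $\tilde q$ is the base change of $q(f)_n$, which is an isomorphism by hypothesis, so $\tilde q$ is an isomorphism as well. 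Hence $q(f)_{n+1}=\tilde q^{-1}\circ p(f)^{n+1}_i$ is an isomorphism.

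The main obstacle is checking that $p(f)^j_i$ is an isomorphism, i.e.\ that the relative horn map for $f$ is invertible given only the absolute unique Kan conditions. I would verify this by writing $\Horn{j}{i}(X)\times_{\Horn{j}{i}(Y)}Y_j$ as a pullback of $Y_j\cong\Horn{j}{i}(Y)$ and identifying it with $\Horn{j}{i}(X)\cong X_j$. Once this is established, the remaining steps are formal consequences of \ref{P3}, coskeletality, and the locality of isomorphisms.
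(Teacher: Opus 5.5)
Your proof is correct and follows essentially the paper's argument. The forward direction comes from Lemma~\ref{lem:hypercover-n}. For the converse, $\tilde q$ is the base change of $q(f)_n$ and $p(f)^{n+1}_i$ is invertible by $\Kan!(n+1,i)$, so $q(f)_{n+1}$ is an isomorphism; representability follows from the inductive pullback argument of \cite[\S\,2.1]{z:tgpd-2}, which the paper simply cites as \cite[Lemma~2.4]{z:tgpd-2}. Your only deviation is treating $j\ge n+2$ directly via coskeletality instead of iterating the same diagram, and that is harmless.
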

\begin{proof}
 The representability of $\partial_j(X)\times_{\partial_j(Y)} Y_j$ is proven in \cite[Lemma~2.4]{z:tgpd-2}.  If $q(f)_n$ is an isomorphism, then $\tilde{q}$ in diagram \eqref{diag:hypercover-induction} is an isomorphism.  Since $p(f)^{n+1}_i$ is an isomorphism, $q(f)_{n+1}$ is an isomorphism. Thus inductively, all $q(f)_{\geq n}$ are isomorphisms. 
\end{proof}

\begin{example}
 Let $X_\bu$ and $Y_\bu$ be $1$-groupoid objects in a category with pretopology $(\Cat,\covers)$. An immediate consequence of Cor.~\ref{cor:hypercover-n} is that a morphism $f_\bu\colon X_\bu\to Y_\bu$ is an hypercover if and only if the following two conditions hold:
\begin{itemize}
    \item the map $f_0\colon X_0\to Y_0$ is a cover; and
    \item the map $q(f)_1=((d_0,d_1),f_1)\colon X_1\to (X_0\times X_0)\times_{(Y_0\times Y_0)}Y_1$ is an isomorphism.
\end{itemize}
\end{example}

\subsection{Incomplete categories of fibrant objects}\label{def:cfo}

Let $\Catd$ be a category with finite products and terminal object $\ast
\in \Catd$ equipped with two distinguished classes of morphisms called
\emph{weak equivalences} and \emph{fibrations}. A morphism which is both a weak equivalence and a fibration is called an
\emph{acyclic fibration}.  Following \cite{Rogers-Zhu:2016}, we say $\Catd$ is an
\emph{ incomplete category of fibrant objects} (iCFO) if the following conditions holds.
\begin{enumerate}
\setlength{\itemindent}{1em}
\item[\namedlabel{iCFO1}{(iCFO1)}]{Every isomorphism in $\Catd$ is an acyclic fibration.}

\item[\namedlabel{iCFO2}{(iCFO2)}]{The class of weak equivalences has the two-out-of-three property.  That is, if $f$ and $g$ are composable morphisms in $\Catd$ and any two of the morphisms $f$, $g$, $g\circ f$ are weak equivalences, then so is the third.}

\item[\namedlabel{iCFO3}{(iCFO3)}]{The composition of two fibrations is a fibration.}

\item[\namedlabel{iCFO4}{(iCFO4)}]{If the pullback along a fibration exists, then it is a fibration.
That is, if $f\colon X\to Z$ and $g\colon Y\to Z$ are morphisms in $\Catd$, $f$ is a fibration, and $X \times_{Z} Y$ exists, then the induced morphism $X\times_ZY\to Y$ is a fibration.}

\item[\namedlabel{iCFO5}{(iCFO5)}]{The pullback along an acyclic fibration exists and is an acyclic fibration again.
That is, if $f\colon X\to Z$ and $g\colon Y\to Z$ are morphisms in $\Catd$ and $f$ is an acyclic fibration, the pullback $X\times_ZY$ exists, and the induced morphism $X\times_ZY \to Y$ is an acyclic fibration.}

\item[\namedlabel{iCFO6}{(iCFO6)}]{For every object $X \in \Catd$ there exists a \emph{path object}, that is, an object $X^{I}$ equipped with morphisms
\[
\begin{tikzcd}
X\ar[r,"s"]&X^{I}\ar[r,"{(d_0,d_1)}"]&X\times X,    
\end{tikzcd}
\]
such that $s$ is a weak equivalence, $(d_0,d_1)$ is a fibration, and their composite is the diagonal.}

\item[\namedlabel{iCFO7}{(iCFO7)}]{All objects of $\Catd$ are \emph{fibrant}, that is for any $X \in \Catd$ the unique map 
$ X \to \ast$ is a fibration.}
\end{enumerate}
This is a generalization of the notion of a category of fibrant objects introduced in \cite{brown:1973}.  An iCFO is a \emph{category of fibrant objects} (CFO) if it satisfies the following stronger version of \ref{iCFO4}:
\begin{enumerate}
\setlength{\itemindent}{1em}
\item[\namedlabel{CFO4}{(CFO4)}]The pullback along every fibration exists and is a fibration.
\end{enumerate}

\subsection{Points and locally stalkwise pretopologies}\label{sec:points}

The main result of \S \ref{section;derived-higher} states that the category of higher groupoids of derived manifolds with respect to a suitable pretopology forms an iCFO.  Our proof will rely on the following notions introduced in \cite{Rogers-Zhu:2016}.

\begin{definition}\label{definition;point}
Let $\Cat$ be a category equipped with a pretopology $\covers$.  A \emph{point} of $(\Cat,\covers)$ is a functor
\[
\ppt\colon\Sh(\Cat, \covers) \longto \Set
\]
which preserves finite limits and small colimits.  A collection of points $\pts$ of $(\Cat,\covers)$ is \emph{jointly conservative} if a morphism $\phi\colon F
\to G$ in $\Sh(\Cat, \covers)$ is an isomorphism if and only if for all $\ppt \in \pts$ the map
\[
\ppt(\phi)\colon \ppt(F) \longto \ppt(G) 
\]
is a bijection.
\end{definition}

\begin{definition}\label{definition;stalkwise-surjection}
Let $\Cat$ be a category equipped with a subcanonical pretopology $\covers$ and a jointly conservative collection of points $\pts$.
\begin{enumerate}
\item A morphism of sheaves $\phi\colon F\to G$ is a \emph{stalkwise surjection (with respect to $\pts$)} if for all points $\ppt \in \pts$ the map
\[
\ppt(\phi)\colon \ppt (F) \longto  \ppt (G) 
\]
is surjective.
\item Let $X\in\Cat$.  A morphism $\phi\colon F\to\yon(X)$ of sheaves is a \emph{locally stalkwise cover (with respect to $\pts$)} if and only if there exists a stalkwise surjection $\psi\colon\yon(U)\to F$ such that the composition $\phi\circ\psi\colon\yon(U)\to\yon(X)$ is represented by a cover $c\colon  U\to X$, that is $\yon(c)=\phi\circ\psi$. 
\end{enumerate}
\end{definition}

\begin{definition}\label{definition;stalkwise}
Let $\Cat$ be a category equipped with a subcanonical pretopology $\covers$ and a jointly conservative collection of points $\pts$.  The pretopology $\covers$ is \emph{locally stalkwise (with respect to $\pts$)} if the following conditions hold:
\begin{enumerate}\setlength{\itemindent}{1em}
\item[\namedlabel{LSP1}{(LSP1)}] for all morphisms $f\colon U\to V$ and $g\colon V\to W$,
if $g\circ f$ is a cover and $\yon(f)\colon\yon(U)\to\yon(V)$ is a stalkwise surjection of sheaves, then $g$ is a cover;
\item [\namedlabel{LSP2}{(LSP2)}] (locality of covers) for all morphisms $p\colon U\to W$ and $q\colon V\to W$, if $\yon(q)\colon\yon(V)\to\yon(W)$ is a stalkwise surjection and the map $\yon(U)\times_{\yon(W)}\yon(V)\to\yon(V)$ induced by $p$ is a locally stalkwise cover, then $p$ is a cover.  (It follows that $U\times_WV$ exists and that $\yon(U)\times_{\yon(W)}\yon(V)=\yon(U\times_WV)$.)
\end{enumerate}
\end{definition}

\begin{remarks}\label{rk:2-out-of-3}
\begin{enumerate}[wide,labelwidth=0pt,labelindent=0pt]
\item\label{item;2-out-of-3} If $\phi\colon F\to G$ and $\psi\colon G\to H$ are morphisms of sheaves and $\psi\circ\phi$ is stalkwise surjective, then $\psi$ is stalkwise surjective. 
\item\label{item;stalkwise-cover} It follows from \ref{LSP1} that if a morphism $c\colon U\to X$ has the property that $\yon(c)\colon\yon(U)\to\yon(X)$ is a locally stalkwise cover, then $c$ is a cover.  But the converse is false: there may exist covers $c\colon U\to X$ such that $\yon(c)\colon\yon(U)\to\yon(X)$ is not a locally stalkwise cover.
\end{enumerate}
\end{remarks}

\subsubsection{Stalkwise weak equivalences}

Here we give a notion of ``weak equivalence'' which will turn out to be equivalent to a more combinatorical one (see Prop.~\ref{prop:w-eq-comb}), but provides us with an easy connection with the well-established theory of simplicial sets and will help to simplify proofs.  In this subsection $(\Cat,\covers, \pts)$ denotes a category equipped with a collection of jointly conservative points $\pts$ and a locally stalkwise pretopology $\covers$.

\begin{definition} \label{def:stalk_weq}
 A morphism $f_\bu \maps X_\bu
\to Y_\bu$ of higher groupoids in $(\Cat,\covers, \pts)$ is a \emph{stalkwise
  weak equivalence} if and only if $\ppt(f_\bu)\maps\ppt(X_\bu)\to\ppt(Y_\bu)$ is a weak
homotopy equivalence of simplicial sets for all points $\ppt \in \pts$.
\end{definition}

By \cite[Prop.~6.7]{Rogers-Zhu:2016}, a Kan fibration which is at the same time a stalkwise weak equivalence is precisely a hypercover as introduced in Definition \ref{def:equivalence}. In \cite{Behrend-Getzler:2015}, there is a combinatoric version of weak equivalence. Moreover, for Lie groupoids, there is also a notion of essential equivalence or Morita equivalence map \cite{moerdijk-mrcun}. All these turn out to coincide with  our stalkwise weak equivalence. We have the following results. 

\begin{proposition}\label{prop:w-eq-comb}
   Let $\gpd{\Cat, \covers}$ denote $\infty$-groupoid objects in a category $\Cat$ equipped with a locally
   stalkwise pretopology $\covers$ with respect to a jointly conservative collection of points $\pts$.  A map $f_\bu \maps X_\bu\to Y_\bu$  is a stalkwise weak equivalence if and only if the morphism
\begin{equation}\label{eq:w-eq-infty}
    r(f)_j=(\partial_j, p_{j+1}^{j+1})\colon X_j\times_{Y_j, d_{j+1}} Y_{j+1} \to \partial_j(X)\times_{\partial_j(Y)} \Lambda^{j+1}_{j+1}(Y)
\end{equation}
is a cover for all $j\ge 0$.  Moreover, the right hand side is automatically representable when $f_\bu$ is a stalkwise weak equivalence.
\end{proposition}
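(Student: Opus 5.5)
The plan is to reduce both conditions to statements about simplicial sets, by applying the points $\ppt\in\pts$ stalkwise. Each point $\ppt$ preserves finite limits, so for every finite simplicial set $S$ we have
\[\ppt(\Hom(S,X))=\Hom(S,\ppt(X)).\]
Since $\ppt$ preserves colimits and the pretopology is subcanonical, every cover becomes a surjection after applying $\ppt$. Hence each $\ppt(X_\bu)$ and $\ppt(Y_\bu)$ is a Kan complex. Moreover, $\ppt$ carries $r(f)_j$ to the corresponding map of simplicial sets
\[\ppt(X)_j\times_{\ppt(Y)_j}\ppt(Y)_{j+1}\to \partial_j(\ppt X)\times_{\partial_j(\ppt Y)}\Lambda^{j+1}_{j+1}(\ppt Y).\]

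The combinatorial core is the following statement about simplicial sets: a map of Kan complexes $g\colon K\to L$ is a weak homotopy equivalence if and only if every map $r(g)_j$ is surjective. This is the Behrend--Getzler characterisation \cite{Behrend-Getzler:2015}. I would prove it directly. An element of the source of $r(g)_j$ consists of a boundary $\partial\Delta[j]\to K$ together with a $(j+1)$-simplex $y$ in $L$ whose last face is the image of a filler $x$, and whose horn $\Lambda^{j+1}_{j+1}$ is specified. Surjectivity then says: whenever the image of a sphere $\partial\Delta[j]\to K$ is homotopic in $L$, via a prescribed homotopy, to the boundary of a $j$-simplex, the sphere can be filled in $K$ compatibly. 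Using the Kan property, this is equivalent to the vanishing of the relative homotopy groups $\pi_j(L,K)$ at all basepoints, together with $\pi_0$-surjectivity for $j=0$. By the long exact sequence, this in turn is equivalent to $g$ being a weak equivalence. So the stalkwise weak equivalence condition is equivalent to: $\ppt(r(f)_j)$ is surjective for all $\ppt$ and all $j$.

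It remains to show that, for the sheaf map $r(f)_j$, stalkwise surjectivity is equivalent to being a cover, and to handle representability. In the direction ``cover $\Rightarrow$ stalkwise surjective'', points send covers, which are effective epimorphisms, to surjections. For the converse, I would argue by induction on $j$, in the style of Lemma~\ref{lem:hypercover-n} and of the proof in \cite{Rogers-Zhu:2016} that Kan fibrations that are weak equivalences are hypercovers. Suppose representability of the target $T_j$ of $r(f)_j$ is known. Then $T_j$ fits in a pullback diagram over $T_{j-1}$-type objects and horn spaces, using the Kan covers $Y_{j+1}\to\Lambda^{j+1}_{j+1}(Y)$ and $X_j\to\Lambda^j_i(X)$. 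This yields a cover $U\to T_j$ through which the source factors stalkwise surjectively. Then \ref{LSP1} and \ref{LSP2} upgrade the locally stalkwise cover $r(f)_j$ to an honest cover. Representability of $T_j$ would come from the same inductive diagram: $T_j$ is a pullback of $\partial_j(X)\to\partial_j(Y)$, where $\partial_j(X)$ is built from $X_{j-1}$ and $r(f)_{j-1}$ being a cover, along a cover. By \ref{P3} this pullback exists.

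I expect the main obstacle to be this last step: constructing the cover witnessing that $r(f)_j$ is a locally stalkwise cover, and establishing representability of $\partial_j(X)\times_{\partial_j(Y)}\Lambda^{j+1}_{j+1}(Y)$ inductively. My first attempt would be to mimic the inductive pullback square \eqref{diag:hypercover-induction}, replacing $\partial_j$ by the mixed boundary/horn object.
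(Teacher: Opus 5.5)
Your reduction to stalks and the characterization of weak equivalences of Kan complexes by surjectivity of the $r(g)_j$ (essentially \cite[Thm.~5.1]{Behrend-Getzler:2015}) are fine. They prove the easy direction: if all $r(f)_j$ are covers, then $f_\bu$ is a stalkwise weak equivalence. The converse is where the proposition has content, and both things you defer there are missing.

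First, representability. $T_j=\partial_j(X)\times_{\partial_j(Y)}\Lambda^{j+1}_{j+1}(Y)$ is not a pullback to which \ref{P3} applies. The objects $\partial_j(X)$ and $\partial_j(Y)$ are in general not representable. Moreover, neither $\partial_j(X)\to\partial_j(Y)$ nor the restriction $\Lambda^{j+1}_{j+1}(Y)\to\partial_j(Y)$ to the base of the cone is a cover. The latter is not even stalkwise surjective, since a sphere in a Kan complex extends over a cone only if it is null-homotopic. What works, and what the paper does, is an induction interleaved with the cover statement. View $T_j$ as a bicoloured horn and build it, as in \cite[Cor.~2.5]{henriques}, as an iterated fibre product of the sources $S_i=X_i\times_{Y_i}Y_{i+1}$ along the maps $r(f)_i$, $i<j$, which are already known to be covers. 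For example, $T_1=S_0\times_{Y_0}S_0$, formed along $r(f)_0$. This is why representability is only asserted for weak equivalences. It also has to come before any use of \ref{LSP1} or \ref{LSP2}, which concern morphisms in $\Cat$.

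Second, and more seriously, stalkwise surjectivity of $r(f)_j$ does not make it a cover. In $(\DMfd,\covers_\lsf)$ it only means locally split (Lemma~\ref{lem:stalkwise-surj-prop}), and $\R\sqcup\R\to\R$, $x\mapsto x$ resp.\ $x\mapsto x^2$, is locally split but not a fibration. You never produce what \ref{LSP1} requires: a stalkwise surjection $U\to S_j$ whose composite to $T_j$ is an honest cover. The covers supplied by the Kan conditions of $X$ and $Y$ land in horn spaces, not in $T_j$, and the weak-equivalence hypothesis is available only on stalks. So no such $U$ arises from the diagram you describe.

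The paper avoids this by citing \cite[Lemma~2.4, Prop.~2.5, Thm.~7.1]{Rogers-Zhu:2016}: $f_\bu$ is a stalkwise weak equivalence iff the Kan fibration $p(f)\colon X_\bu\times_{Y_\bu}Y^I_\bu\to Y_\bu$ is a hypercover. That is where the passage from stalks to covers happens, once and for all, for Kan fibrations. After that everything is internal:
\begin{itemize}
\item the level-$k$ hypercover map of $p(f)$ has target $\partial_k(X)\times_{\partial_k(Y)}\Hom(B_k,Y)$, where $B_k=\Delta[k]\sqcup_{\partial\Delta[k]}(\Delta[1]\times\partial\Delta[k])$ is the bucket;
\item $B_k\subset\Delta[1]\times\Delta[k]$ is a collapsible extension (successive pushouts along horn inclusions);
\item this factors the hypercover map into pullbacks of horn covers of $Y$ followed by a pullback of $r(f)_k$.
\end{itemize}
So only stability of covers under pullback and composition is needed.

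To stay on your route you would need \ref{LSP2} rather than \ref{LSP1}. Pull $r(f)_j$ back along itself, so that only a cover of $S_j$ is required. Build that cover from fillers $z\in X_{j+1}$ of a $\Lambda^{j+1}_j$-horn determined by $x'$, and $w\in Y_{j+2}$ of a $\Lambda^{j+2}_j$-horn determined by $(y',f(z))$, via $(x',y',z,w)\mapsto((d_jz,d_jw),(x',y'))$. Its stalkwise surjectivity onto $S_j\times_{T_j}S_j$ comes from injectivity on $\pi_j$ and surjectivity on $\pi_{j+1}$ of $\ppt(f)$. That construction is the real content of the step, and it is absent from your proposal.
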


This also yields us a simplified version of stalkwise weak equivalence in the truncated case for the  $n$-groupoid objects in $(\Cat, \covers, \pts)$. 

\begin{corollary}\label{cor:w-eq-comb}
Let $X_\bu,Y_\bu$ be $n$-groupoid objects in $(\Cat, \covers, \pts)$.
\begin{enumerate}
    \item A map $f_\bu\colon X_\bu\to Y_\bu$ is a stalkwise weak equivalence if and only if the maps $r(f)_j$ in \eqref{eq:w-eq-infty} are  covers for $j\le n-1$ and an isomorphism for $j=n$.
    \item  The map $r(f)_n$ is an isomorphism if and only if the map $q(f)_n$ given in \eqref{eq:hyper-n} is an isomorphism.
\end{enumerate}
\end{corollary}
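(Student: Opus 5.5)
The plan is to deduce both parts from Prop.~\ref{prop:w-eq-comb} together with the coskeletality of $n$-groupoid objects, using the same diagram-chase as in the proof of Lemma~\ref{lem:hypercover-n}.

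For part (1), the starting point is that $f_\bu$ is a stalkwise weak equivalence if and only if every $r(f)_j$, $j\ge 0$, is a cover. Only the degrees $j\ge n$ need analysis. Since $X_\bu$ and $Y_\bu$ are $(n+1)$-coskeletal and satisfy $\Kan!(k,i)$ for $k>n$, the horn projection $Y_{j+1}\to\Horn{j+1}{j+1}(Y)$ is an isomorphism for $j\ge n$. Consequently $r(f)_j$ is identified with the map $X_j\to\partial_j(X)$ (after base change along $\partial_j(Y)$), i.e.\ with the boundary projection $X_j\to \partial_j(X)\times_{\partial_j(Y)}Y_j$. This is an isomorphism for $j\ge n+2$ by coskeletality of both $X_\bu$ and $Y_\bu$.

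For the direction ``$\Leftarrow$'': if $r(f)_n$ is an isomorphism, run the square of diagram~\eqref{diag:hypercover-induction}, now with the roles of $X$ and $Y$ arranged as in $r(f)$, to propagate this to $r(f)_{n+1}$ and beyond. Isomorphisms are covers, so Prop.~\ref{prop:w-eq-comb} applies. For the direction ``$\Rightarrow$'': all $r(f)_j$ are covers. We must upgrade $r(f)_n$ to an isomorphism, and this is the main obstacle. I would first prove that $r(f)_n$ is a monomorphism, by passing to points: each $\ppt(X_\bu)$ is a Kan complex whose homotopy groups vanish above $n$, and $\ppt(f)$ is a weak equivalence, so the stalk of $r(f)_n$ is injective. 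Indeed, two $n$-simplices with the same boundary and the same image are homotopic rel boundary, and uniqueness of $(n+1)$-fillers in an $n$-groupoid forces them to be equal. A cover that is stalkwise injective is then stalkwise bijective, so joint conservativity and subcanonicity make $r(f)_n$ an isomorphism.

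For part (2), compare $r(f)_n$ and $q(f)_n$ through the pullback square
\[
\begin{tikzcd}
X_n\times_{Y_n}Y_{n+1}\ar[r,"r(f)_n"]\ar[d]& \partial_n(X)\times_{\partial_n(Y)}\Horn{n+1}{n+1}(Y)\ar[d]\\
X_n\ar[r,"q(f)_n"]&\partial_n(X)\times_{\partial_n(Y)}Y_n,
\end{tikzcd}
\]
where the vertical maps are induced by $d_{n+1}\colon Y_{n+1}\to Y_n$ and by the corresponding face of the horn. The first step is to check that this square is cartesian, which uses that $Y_{n+1}\cong\Horn{n+1}{n+1}(Y)$ by $\Kan!(n+1,n+1)$. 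The second step is to note that the right vertical map is a cover: it is a base change of the horn projection $\Horn{n+1}{n+1}(Y)\to\Horn{n}{?}$-type map coming from the Kan condition, or more directly of $d_{n+1}$ composed with the Kan cover. Given these two facts, if $q(f)_n$ is an isomorphism then so is its pullback $r(f)_n$. Conversely, if $r(f)_n$ is an isomorphism, the locality of isomorphisms \cite[Prop.~2.5]{meyer-zhu} along the cover shows that $q(f)_n$ is an isomorphism. The delicate point in this part is verifying representability and the cartesian property, for which I would cite \cite[Lemma~2.4]{z:tgpd-2}.
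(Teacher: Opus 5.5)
Your proposal is correct. Part (2) is essentially the paper's argument. The paper also writes $r(f)_n$, up to the isomorphism induced by $p^{n+1}_{n+1}$, as the base change of $q(f)_n$ along $\id\times\tau$, with $\tau=d_{n+1}\circ(p^{n+1}_{n+1})^{-1}$. This $\tau$ is a cover because $d_{n+1}$ is, which is the clean replacement for your ``$\Horn{n}{?}$-type map''. The paper then concludes by locality of isomorphisms, as you do.

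For part (1) your route is genuinely different.

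\emph{The paper's route.} It does not take Prop.~\ref{prop:w-eq-comb} as a black box, but proves the proposition and the corollary together:
\begin{enumerate}
\item It uses that $f_\bu$ is a stalkwise weak equivalence if and only if $p(f)\colon X_\bu\times_{Y_\bu}Y_\bu^I\to Y_\bu$ is a hypercover.
\item It applies Cor.~\ref{cor:hypercover-n} to $p(f)$, so the ``covers up to level $n-1$, isomorphism at level $n$'' pattern comes directly from the hypercover truncation lemma.
\item It transfers this pattern to $r(f)_k$ through the collapsible filtration of the bucket inside $\Delta[1]\times\Delta[k]$. The intermediate horn-filling steps there are covers for all $k$ and isomorphisms for $k\ge n$.
\end{enumerate}

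\emph{Your route.} You truncate Prop.~\ref{prop:w-eq-comb} directly.
\begin{enumerate}
\item For ``$\Leftarrow$'' you combine coskeletality, the base-change relation between $r(f)_j$ and $q(f)_j$ for $j\ge n$, and the induction of Lemma~\ref{lem:hypercover-n}. This uses your part (2) first, to get from $r(f)_n$ to $q(f)_n$, so present it in that order. Also, ``identified with'' should read ``is a base change of''.
\item For ``$\Rightarrow$'' you argue stalkwise. The stalks are Kan complexes with unique fillers above $n$. Two $n$-simplices with the same boundary and the same image under a weak equivalence are homotopic rel boundary. Then $\Kan!(n+1,n+1)$ forces the homotopy $(n+1)$-simplex to be degenerate, so the two simplices are equal.
\end{enumerate}

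This stalkwise argument is sound, but you should make two facts explicit. First, covers are stalkwise surjective: by subcanonicity $\yon(c)$ is the coequalizer of its kernel pair in sheaves, and points preserve colimits. Second, points commute with the finite limits $\partial_n$ and $\Horn{n+1}{n+1}$, so the stalk of $r(f)_n$ really is the corresponding map of simplicial sets.

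\emph{Comparison.} Your route is more conceptual: it explains the isomorphism at level $n$ as the vanishing of the relative $\pi_n$-obstruction plus unique fillers, and joint conservativity gives representability of the target for free. The paper's route is uniform in $n$, including $n=\infty$. It also needs no stalk computation for the truncation step and proves Prop.~\ref{prop:w-eq-comb} along the way instead of assuming it.
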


The proof of Prop.~\ref{prop:w-eq-comb} is analogous to that of \cite[Thm.~5.1]{Behrend-Getzler:2015} or \cite[Cor.~7.16]{Rogers-Zhu:2016}. But since we are in a different setting from \cite{Behrend-Getzler:2015}, e.g.\ the representability of the right-hand side is an issue, and \cite{Rogers-Zhu:2016} only gives a brief summary, we give a detailed proof in App.~\ref{app:comb} together with a proof of Cor.~\ref{cor:w-eq-comb}. See also \cite[Lemma 1.2.50]{Ronchi:thesis} for a similar statement. Taking $n=1$ in Cor.~\ref{cor:w-eq-comb} gives the following result.

\begin{corollary}\label{cor:w-eq-1gpd}
Let $X_\bu$ and $Y_\bu$ be $1$-groupoid objects in $(\Cat,\covers,\pts)$. A morphism $f_\bu\colon X_\bu\to Y_\bu$ is a stalkwise weak equivalence if and only if the following two conditions hold:
\begin{itemize}
    \item the map $d_0\circ\pr_2:X_0\times_{Y_0} Y_1 \to Y_0 $ is a cover; 
    \item the map $((d_0,d_1),f_1): X_1 \to (X_0\times X_0) \times_{(Y_0\times Y_0)} Y_1$ is an isomorphism. 
\end{itemize}
\end{corollary}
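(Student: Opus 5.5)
The plan is to specialize Cor.~\ref{cor:w-eq-comb} to $n=1$ and compute the two maps $r(f)_0$ and $r(f)_1$ explicitly. Part~(1) of that corollary says $f_\bu$ is a stalkwise weak equivalence exactly when $r(f)_0$ is a cover and $r(f)_1$ is an isomorphism. Part~(2) says $r(f)_1$ is an isomorphism exactly when $q(f)_1$ is. So the whole proof comes down to identifying $r(f)_0$ with $d_0\circ\pr_2$ and $q(f)_1$ with $((d_0,d_1),f_1)$.

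For $j=0$, I first unwind the target of \eqref{eq:w-eq-infty}. We have $\partial_0(X)=\Hom(\partial\Simp{0},X)=\ast$, and likewise for $Y$, since $\partial\Simp{0}$ is empty. Also $\Horn{1}{1}=\Simp{0}$ embedded as the vertex $0$, so $\Horn{1}{1}(Y)=Y_0$ and the horn projection $p^1_1\colon Y_1\to Y_0$ is the face map $d_1$... but the convention needs care here. The vertex $0$ of $\Simp{1}$ is the face opposite vertex $1$, so restricting to $\Horn{1}{1}$ is $d_1$. Meanwhile the source $X_0\times_{Y_0,d_{1}}Y_1$ uses $d_{j+1}=d_1$ for the fiber product.

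So the map $r(f)_0$ is $X_0\times_{Y_0,d_1}Y_1\to Y_0$, given by the horn projection. One of $d_0,d_1$ attaches $Y_1$ to $f_0(X_0)$, and the other lands in $Y_0$. I will check the simplicial conventions of the appendix to confirm that the resulting map is $d_0\circ\pr_2$, as in the statement: the fiber product glues along one endpoint, and the map remembers the other endpoint. The existence of the fiber product follows from the representability statement in Prop.~\ref{prop:w-eq-comb}, or from the fact that $d_1$ is a cover by $\Kan(1,\cdot)$.

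For $j=1$, I use $\partial_1(X)=\Hom(\partial\Simp{1},X)=X_0\times X_0$, with the map $X_1\to\partial_1(X)$ given by $(d_0,d_1)$. Then $q(f)_1=((d_0,d_1),f_1)$ lands in $(X_0\times X_0)\times_{(Y_0\times Y_0)}Y_1$, which is precisely the second condition.

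The only real obstacle is the bookkeeping of face-index conventions in $r(f)_0$. That step should be checked carefully against the proof in App.~\ref{app:comb}; everything else is direct substitution.
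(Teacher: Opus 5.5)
Your route is the paper's: the corollary is obtained by setting $n=1$ in Cor.~\ref{cor:w-eq-comb}. Your treatment of $j=1$, with $\partial_1(X)=X_0\times X_0$ and $q(f)_1=((d_0,d_1),f_1)$, is correct.

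The problem is the $j=0$ step, which is the only place where anything has to be computed, and there your identification of the horn is backwards. By definition $\Horn{1}{1}$ is what remains of $\Simp{1}$ after removing the interior and the $1$-st face. The $1$-st face is the vertex $0$, so what remains is the vertex $1$; equivalently, $(\Horn{1}{1})_l$ consists of those $f$ with $0\notin\{f(0),\dots,f(l)\}$. Hence $\Horn{1}{1}(Y)=Y_0$ via evaluation at vertex $1$, and the horn projection is $p^1_1=d_0$, not $d_1$.

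With your identification $p^1_1=d_1$, the map $r(f)_0$ would send $(x,g)\in X_0\times_{Y_0,d_1}Y_1$ to $d_1g=f_0(x)$. That is, it would be $f_0\circ\pr_1$, and the corollary would come out with the wrong first condition. So this is not a cosmetic convention to be checked later but the actual content of the $j=0$ case, and as written your argument does not reach the stated map.

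Once corrected, everything is immediate:
\begin{itemize}
\item $\partial_0(X)=\partial_0(Y)=*$, so the target of $r(f)_0$ is $Y_0$.
\item The fibre product glues vertex $0$ of $g$ to $f_0(x)$ via $d_1$. It exists because $d_1=p^1_0$ is a cover by $\Kan(1,0)$.
\item Therefore $r(f)_0(x,g)=d_0g$, i.e.\ $r(f)_0=d_0\circ\pr_2$.
\end{itemize}
This agrees with the general shape of $r(f)_j\colon X_j\times_{Y_j,d_{j+1}}Y_{j+1}\to\partial_j(X)\times_{\partial_j(Y)}\Horn{j+1}{j+1}(Y)$. The face $d_{j+1}$ used in the fibre product is exactly the face omitted from $\Horn{j+1}{j+1}$, so the horn projection never involves $d_{j+1}$.
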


If $\Cat$ is the category of Lie groupoids, this means that a morphism is a stalkwise weak equivalence if and only if it is a weak equivalence in the sense of \cite[\S\,5.4]{moerdijk-mrcun}.

\subsubsection{The iCFO of a locally stalkwise pretopology}

The following is one of the main results of \cite{Rogers-Zhu:2016} and will play an important role in \S\,\ref{section;derived-higher}.  (See Rem.~\ref{rmk:p4} concerning the validity of this result under our slightly different axioms for pretopologies.)

\begin{theorem}[{\cite[Thm.~7.1]{Rogers-Zhu:2016}}]\label{thm:RZ}
Let $\Cat$ be a category equipped with a pretopology $\covers$ that is  locally stalkwise with respect to a jointly conservative collection of points $\pts$.  The category $\mathsf{Gpd}_n[\Cat,\covers]$ is an iCFO in which
    \begin{itemize}
        \item the weak equivalences are the stalkwise weak equivalences;
        \item the fibrations are the Kan fibrations;
        \item the acyclic fibrations are hypercovers.
    \end{itemize}
\end{theorem}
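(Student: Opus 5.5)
The plan is to check the axioms \ref{iCFO1}--\ref{iCFO7} one at a time. Throughout, I would move freely between $\Cat$ and $\Sh(\Cat,\covers)$ using the Yoneda embedding, which is possible because $\covers$ is subcanonical. The proof splits into three kinds of axioms.

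\emph{The purely formal axioms.} \ref{iCFO1}: isomorphisms satisfy every $\Kan(k,j)$ by \ref{P1}, and they are stalkwise weak equivalences. \ref{iCFO7}: for an $n$-groupoid $X_\bu\to\ast$ is Kan by Definition~\ref{def:ngpd}, since in degrees $k>n$ the horn projections are isomorphisms and hence covers. \ref{iCFO2}: applying a point $\ppt\in\pts$ gives a functor to simplicial sets, and weak homotopy equivalences of simplicial sets satisfy two-out-of-three. \ref{iCFO3} and \ref{iCFO4}: for a pullback $X\times_ZY\to Y$ of a Kan fibration, the relative horn space $\Hom(\Horn{k}{j}\to\Simp{k},X\times_ZY\to Y)$ is isomorphic to the relative horn space of $X\to Z$ pulled back along $Y_k\to Z_k$. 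By \ref{P3} it is therefore representable and the horn projection is a cover. For a composite $X\to Y\to Z$, the relative horn projection factors as $X_k\to\Hom(\Horn{k}{j}\to\Simp{k},X\to Y)$ followed by a pullback of the relative horn projection of $Y\to Z$. This is a composite of covers by \ref{P2} and \ref{P3}.

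\emph{Acyclic fibrations and \ref{iCFO5}.} I would first identify acyclic fibrations with hypercovers. A point preserves finite limits, so it sends hypercovers to stalkwise surjective boundary-filling maps, that is, to trivial Kan fibrations of simplicial sets; hence hypercovers are stalkwise weak equivalences. Conversely, if $f$ is a Kan fibration and a stalkwise weak equivalence, each $\ppt(f)$ is a trivial fibration. The boundary projection $q(f)_j$ is then stalkwise surjective, and \ref{LSP1} and \ref{LSP2} upgrade this to $q(f)_j$ being a cover, inductively in $j$. Representability at each stage comes from Remark~\ref{rm:h-cover-rep}. This is the content of Prop.~6.7 of Rogers--Zhu. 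For \ref{iCFO5}, the pullback of a hypercover $X\to Z$ along an arbitrary $Y\to Z$ is built levelwise as $Y_j\times_{Z_j}X_j$. I would prove representability inductively using Cor.~\ref{cor:hypercover-n}: writing $X_j\to\partial_j(X)\times_{\partial_j(Z)}Z_j$ as a cover, the level-$j$ pullback is a pullback of that cover over a space already known to be representable. The boundary projections of the pullback are again pullbacks of the $q(f)_j$, so the result is again an $n$-groupoid and a hypercover.

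\emph{The path object, \ref{iCFO6}, which I expect to be the main obstacle.} Set $X^I_k=\Hom(\Simp{k}\times\Simp{1},X)$. The difficulty is to show that these are representable, that $X^I$ is an $n$-groupoid, and that $(d_0,d_1)\colon X^I\to X\times X$ is a Kan fibration. I would handle this by Henriques' technique of collapsible extensions. The inclusions
\[
\Simp{k}\times\partial\Simp{1}\cup\Horn{k}{j}\times\Simp{1}\hookrightarrow\Simp{k}\times\Simp{1}
\]
and $\Simp{k}\times\{0\}\hookrightarrow\Simp{k}\times\Simp{1}$ are obtained by successively attaching simplices along horns, using the standard shuffle decomposition of the prism. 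For each such inclusion $S\hookrightarrow T$, the restriction $\Hom(T,X)\to\Hom(S,X)$ is then a finite composite of pullbacks of horn projections of $X$. By \ref{P2} and \ref{P3} this gives both representability and the cover property, and in the same way the Kan conditions for $X^I$ and for $(d_0,d_1)$. For unique fillers in degrees $k>n$, I would note that in this range all the attached horns have dimension $>n$, so each step is an isomorphism. Finally, $s\colon X\to X^I$ is a section of $d_0\colon X^I\to X$. On stalks, $\ppt(X^I)$ is the simplicial path space of the Kan complex $\ppt(X)$, because points preserve these finite limits, so $\ppt(d_0)$ and hence $\ppt(s)$ are weak equivalences. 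The delicate part will be the bookkeeping that every intermediate $\Hom(S,X)$ in the collapsible filtration is representable, which is exactly where the pullback axioms are needed.
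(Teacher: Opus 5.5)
Your outline is essentially the Rogers--Zhu argument that the paper cites for this statement: their Prop.~6.7 for acyclic fibrations being hypercovers, collapsible extensions for the path object, and levelwise pullbacks for \ref{iCFO3}--\ref{iCFO5}, so the approach is the same. The only point the paper itself adds, which you skip, is that an iCFO must have finite products (you use $X_\bu\times X_\bu$ as the target of the fibration $(d_0,d_1)$); since \ref{P4'} is not assumed, products of $n$-groupoids have to come from \ref{P4}, which makes $X_k\times Y_k\to\Horn{k}{j}(X)\times\Horn{k}{j}(Y)$ a cover.
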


\section{Derived manifolds}\label{sec:derman}

This section is a quick tutorial on derived manifolds based mostly on 
\cite{blx}, \cite{bursztyn-cueca-mehta;n-manifolds-frobenius}, \cite{carchedi:23}, and \cite{cattaneo-schatz;supergeometry}.  Derived manifolds can be described in
many different but equivalent ways.  
For brevity we concentrate on the point of view that a derived manifold
is a differential graded manifold of nonnegative amplitude. 
Broadly this means an ordinary smooth manifold to which we adjoin a number of linear coordinates 
of \emph{negative} degree, and which we equip with a vector field of degree $+1$ that generates a ``flow''.  
The simplest derived manifolds, and the most important for the 
applications covered in this paper, are the quasi-smooth ones (see 
\S\,\ref{subsection;derived-zero}), where the adjoined variables are all
of degree $-1$.  
In plainest terms, a quasi-smooth derived manifold can be described as a manifold $M$ furnished 
with a vector bundle $E$ (whose fibres provide the extra coordinates) and a section $s$ of $E$ (the ``vector field'').  
Derived manifolds form a category of fibrant objects (see \S\,\ref{subsection;fibrant}) and a quasi-smooth derived manifold is to be regarded 
as a fibrant resolution of the zero locus of the section $s$, which is usually not a smooth submanifold of~$M$.  
 
\subsubsection*{Cohomological grading conventions}

In agreement with our main references, our grading conventions are chosen so as to make all complexes \emph{cochain} complexes.

\subsection{Non-positively graded manifolds}\label{subsection;graded}

Let $V_0$, $V_1,\dots$ be a sequence of finite-dimensional real vector
spaces with $V_n=0$ for all but finitely many $n$.  Consider the
positively graded vector space $V=\bigoplus_{n=1}^\infty V_n$ and its
graded dual
\[V^*=\bigoplus_{n=-\infty}^{-1}(V^*)_n,\]
where $(V^*)_n=(V_{-n})^*$.  A \emph{graded chart modelled on}
$(V_0,V_1,\dots)$ is a ringed space $(O,\ca{S}^\bu_O)$, where $O$ is
an open subset of $V_0$ and the structure sheaf $\ca{S}^\bu_O$ is the
sheaf of graded commutative $\R$-algebras over $O$ associated with the presheaf 
\[
U\longmapsto C^\infty(U)\otimes_\R\Sym^\bu(V^*).
\]
Here $U$ is an open subset of $O$, $C^\infty(U)$ denotes the algebra of smooth functions on $U$, and
$\Sym^\bu(V^*)$ denotes the \emph{graded symmetric algebra} of $V^*$,
which is the universal graded commutative $\R$-algebra generated by
$V^*$.  Given a choice of bases
\begin{equation}\label{notation;chart}
\begin{split}
x_1,x_2,\dots,x_{k_0}&\quad\text{of\/ $(V_0)^*$}\quad\text{(degree
  $0$)}\\
\xi_1,\xi_2,\dots,\xi_{k_1}&\quad\text{of $(V_1)^*$}\quad\text{(degree
  $-1$)}\\
\xi_{k_1+1},\xi_{k_1+2},\dots,\xi_{k_1+k_2}&\quad\text{of
  $(V_2)^*$}\quad\text{(degree $-2$)},\quad\dots,
\end{split}
\end{equation}
each section of $\ca{S}^\bu_O$ over $U$ can be written, in a sufficiently small neighbourhood of any point of $U$, as a function
\[
f(x,\xi)
\]
which is a polynomial in the $\xi$-variables whose coefficients are smooth functions in the $x$-variables.  The variables $x_i$ are
central and the variables $\xi_j$ anticommute according to the Koszul
sign rule,
\begin{equation}\label{equation;koszul}
ba=(-1)^{\abs{a}\abs{b}}ab
\end{equation}
for homogeneous elements $a$, $b$ of $V^*$, where $\abs{a}$ denotes
the degree of $a$.

A \emph{graded manifold} is a ringed space $(M,\ca{S}^\bu_M)$, where
$M$ is a paracompact space and $\ca{S}^\bu_M$ is a
sheaf of graded commutative $\R$-algebras, which locally near every
point in $M$ is isomorphic to a graded chart.  The \emph{body} of a
graded manifold $(M,\ca{S}^\bu_M)$ is the ringed space
$(M,\ca{S}^0_M)$, where $\ca{S}^0_M$ is the degree $0$ part of the
sheaf $\ca{S}^\bu_M$.  The body is an ordinary smooth manifold with
structure sheaf $\ca{C}^\infty_M=\ca{S}^0_M$.  A \emph{morphism} of
graded manifolds $(M,\ca{S}^\bu_M)\to(N,\ca{S}^\bu_N)$ is a pair
$(f,f^\sharp)$ consisting of a smooth map $f\colon M\to N$ and a
morphism of sheaves of graded commutative $\R$-algebras
$f^\sharp\colon f^{-1}\ca{S}^\bu_N\to\ca{S}^\bu_M$ which in degree $0$
agrees with the pullback map $f^{-1}\ca{C}^\infty_N\to\ca{C}^\infty_M$.
By abuse of notation we will often write a morphism as
$f\colon(M,\ca{S}^\bu_M)\to(N,\ca{S}^\bu_N)$.

\subsection{Vector fields}\label{sec:Dman-vf}

Let $(M,\ca{S}^\bu_M)$ be a graded manifold.  A \emph{vector field of
degree $n$} on $M$ is an $\R$-linear left derivation of degree $n$ of
the structure sheaf
\[X\colon\ca{S}^\bu_M\to\ca{S}^\bu_M[n].\]
Vector fields of degree $n$ form a sheaf denoted by $\X_M^n$.  We
call the direct sum
\[\X_M^\bu=\bigoplus_{n\in\Z}\X_M^n\]
the \emph{tangent sheaf} of $M$ and its sections \emph{vector fields}.
Calculating in a graded chart as in~\eqref{notation;chart}, we can
express a vector field as
\begin{equation}\label{notation;vector-field}
X=\sum_if_i(x,\xi)\pardif{}{x_i}+ \sum_jg_j(x,\xi)\pardif{}{\xi_j}
\end{equation}
where $\pardif{}{\xi_j}$ denotes the \emph{left} partial derivative
with respect to $\xi_j$. (E.g.\ $\pardif{}{\xi_j}\xi_j\xi_k=\xi_k$, but
$\pardif{}{\xi_k}\xi_j\xi_k=(-1)^{\abs{\xi_j}\abs{\xi_k}}\xi_j$.)
Equipped with the graded commutator
\begin{equation}\label{equation;commutator}
[X_1,X_2]=X_1\circ X_2-(-1)^{\abs{X_1}\abs{X_2}}X_2\circ X_1
\end{equation}
the tangent sheaf $\X^\bu_M$ is a sheaf of $\Z$-graded real Lie
algebras.

The tangent sheaf is also a left $\ca{S}^\bu_M$-module, which because
of~\eqref{notation;vector-field} is locally freely generated by
elements $\pardif{}{x_i}$ and $\pardif{}{\xi_j}$ of degrees
\[
\biggabs{\pardif{}{x_i}}=0,\qquad\biggabs{\pardif{}{\xi_j}}=-\abs{\xi_j}.
\]
It follows that $\X_M^\bu$ is the sheaf of smooth sections of an
$\N$-graded vector bundle over the graded manifold $(M,\ca{S}^\bu_M)$,
which is called the \emph{graded tangent bundle} and denoted by
\[T^\bu M=\bigoplus_{n=0}^\infty T^nM.\]
Its
fibre at $x\in M$ is an $\N$-graded vector space, called the
\emph{graded tangent space} at $x$ and denoted by
\begin{equation}\label{notation;tangent}
T^\bu_xM=\bigoplus_{n=0}^\infty T^n_xM.
\end{equation}
The piece $T^n_xM$ is equal to the space of pointwise derivations
$\ca{S}^\bu_{M,x}\to\R[n]$, where $\ca{S}^\bu_{M,x}$ is the stalk of
the structure sheaf at $x$.  In particular $T^0_xM=T_xM$ is the
ordinary tangent space to the body of $M$ at $x$.  In a graded chart
we find the graded tangent space by evaluating the
expressions~\eqref{notation;vector-field} at $x$ and setting $\xi=0$,
which yields
\begin{equation}\label{equation;tangent}
T^n_xM\cong V_n.
\end{equation}
A morphism of graded manifolds $f\colon M\to N$ induces a morphism of
graded tangent bundles $T^\bu f\colon T^\bu M\to T^\bu N$, which we refer to as the
\emph{tangent map} of~$f$.

\subsection{Derived manifolds}\label{sec:derdef}

Let $(M,\ca{S}^\bu_M)$ be a graded manifold.  A \emph{cohomological
vector field} on $M$ is a vector field $Q$ of degree $1$ which
satisfies the integrability condition $[Q,Q]=2Q^2=0$.  A \emph{derived manifold} is a graded
manifold equipped with a cohomological vector field.  We will usually
denote a derived manifold by a script letter, often, but not always,
the script letter $\ca{M}$ corresponding to the body $M$.  If $x$ is a point of the body of $\huaM$, we will by abuse of language say that $x$ is a point of $\huaM$ and write $x\in\huaM$.

Let $\ca{M}=(M,\ca{S}^\bu_M,Q)$ be a derived manifold.  The
cohomological vector field $Q$ makes the structure sheaf a
(nonpositively graded) cochain complex, called the \emph{structure
complex},
\begin{equation}\label{equation;structure-complex}
\begin{tikzcd}
\cdots\ar[r,"Q"]&\ca{S}^{-2}_M\ar[r,"Q"]&\ca{S}^{-1}_M\ar[r,"Q"]&
\ca{S}^0_M.
\end{tikzcd}
\end{equation}
Equipped with the operator $\Lie_Q=\ad_Q=[Q,\bu]$, the tangent sheaf
becomes a cochain complex, called the \emph{tangent sheaf complex},
\begin{equation}\label{equation;tangent-sheaf-complex}
\begin{tikzcd}
\cdots\ar[r,"\Lie_Q"]&\X^{-1}_M\ar[r,"\Lie_Q"]&\X^0_M\ar[r,"\Lie_Q"]&
\X^1_M\ar[r,"\Lie_Q"]&\X^2_M\ar[r,"\Lie_Q"]&\cdots
\end{tikzcd}
\end{equation}
A \emph{classical point} of $\ca{M}$ is a point $x$ where $Q$
vanishes, i.e.\ where the pointwise derivation
$Q_x\colon\ca{S}^\bu_{M,x}\to\R[1]$ induced by $Q$ is equal to $0$.
The \emph{classical locus} or \emph{Maurer-Cartan locus}
$\pi_0(\ca{M})$ of $\ca{M}$ is the vanishing locus of $Q$,
\begin{equation}\label{notation;classical}
\pi_0(\ca{M})=\{\,x\in\ca{M}\mid Q_x=0\,\}.
\end{equation}
At a classical point $x$ the operation $\Lie_Q$ linearizes to an operator
denoted by $\ell_{Q,x}$ or simply $\ell_Q$, making the graded tangent space
a cochain complex, called the \emph{tangent complex} at $x$,
\begin{equation}\label{notation;tangent-complex}
\begin{tikzcd}
T^\bu_x\ca{M}\colon\quad T^0_xM\ar[r,"\ell_Q"]&T^1_xM\ar[r,"\ell_Q"]&T^2_xM
\ar[r,"\ell_Q"]&\cdots
\end{tikzcd}
\end{equation}
The Euler characteristic of this complex is called the \emph{virtual
dimension} $\dim_x\ca{M}$ of $\ca{M}$ at $x$, which is independent of
$x$ if $M$ is connected.

Let $\ca{M}=(M,\ca{S}^\bu_M,Q_M)$ and $\ca{N}=(N,\ca{S}^\bu_N,Q_N)$ be
derived manifolds and let $f\colon(M,\ca{S}^\bu_M)\to(N,\ca{S}^\bu_N)$ be a morphism of the underlying graded manifolds.  We say that the
cohomological vector fields $Q_M$ and $Q_N$ are \emph{$f$-related}, and we write $Q_M\sim_fQ_N$, if
\begin{equation}\label{equation;related}
Q_M(f^\sharp(g))=f^\sharp(Q_N(g))
\end{equation}
for all sections $g$ of $\ca{S}^\bu_N$. If $Q_M$ and $Q_N$ are $f$-related, we call $f\colon\ca{M}\to\ca{N}$ a \emph{morphism of derived manifolds} or a \emph{derived morphism}.  We denote the category of derived manifolds by $\DMfd$.  We will view an ordinary manifold $M$ as a derived manifold with
trivial grading ($\ca{S}^\bu_M=\ca{S}^0_M=\ca{C}^\infty_M)$ and zero
cohomological vector field.  Accordingly we will regard the category of manifolds as a full subcategory of the category of derived manifolds,
\begin{equation}\label{equation;mfd-dmfd}
\begin{tikzcd}
\Mfd\ar[r,hook]&\DMfd.
\end{tikzcd}
\end{equation}
A morphism of derived manifolds $f\colon\ca{M}\to\ca{N}$ induces a map
of classical loci
\begin{equation}\label{equation;morphism-loci}
f\colon\pi_0(\ca{M})\longto\pi_0(\ca{N})
\end{equation}
and for each $x\in\pi_0(\ca{M})$ a morphism of tangent complexes
\begin{equation}\label{equation;morphism-tangent}
T^\bu_xf\colon T^\bu_x\ca{M}\longto T^\bu_{f(x)}\ca{N}.
\end{equation}

\subsubsection{Amplitude and quasi-smoothness}

Following \cite[\S\,1.1]{wei:hom} we say that a cochain complex $V$ in an additive
category has \emph{amplitude} $I$ (where $I$ is an interval in the extended
integers $\Z\cup\{\pm\infty\}$) if $V^n=0$ for $n\not\in I$.  By abuse of language,
if $V$ has amplitude $[0,k]$ we also say it has amplitude $k$.  We say that a derived manifold has amplitude $k$ if its graded tangent bundle has amplitude $k$. 
We call a derived manifold \emph{quasi-smooth} if it has amplitude $1$, i.e.\ its graded tangent bundle is zero in degrees $\ge2$.

\subsection{Derived zero loci}
\label{subsection;derived-zero}

Let $E\to M$ be a smooth real vector bundle and $s$ a smooth section
of $E$.  Let $\ca{E}$ be the sheaf of smooth sections of $E$, let $\ca{E}^*[1]$ be a copy of the dual sheaf $\ca{E}^*$ placed in
degree $-1$, and let
\[\ca{S}^\bu_M=\Sym^\bu(\ca{E}^*[1])\]
be its sheaf of graded symmetric $\R$-algebras.  A vector bundle chart
$U\times\R^r$ on $E$ amounts to the same thing as a graded chart on
the ringed space $(M,\ca{S}^\bu_M)$ with coordinates $x_1$,
$x_2,\dots$, $x_n$ (of degree~$0$) on the base $U$ and coordinates
$\xi_1$, $\xi_2,\dots$, $\xi_r$ (of degree $-1$) along the fibre
$\R^r$.  Thus $(M,\ca{S}^\bu_M)$ is a graded manifold modelled on
$(\R^n,\R^r,0,\dots)$, where $n$ is the dimension of $M$ and $r$ is
the rank of $E$.  The degree $-k$ part of the structure sheaf is
\[\ca{S}^{-k}_M=\Sym^{-k}(\ca{E}^*[1])=\Lambda^k(\ca{E}^*),\]
the $k$-th exterior power of $\ca{E}^*$.  A section of $\ca{S}^{-k}_M$
over an open subset $U\subseteq M$ is nothing but an alternating
$C^\infty(U)$-multilinear form
\[
\underbrace{\ca{E}(U)\times\ca{E}(U)\times\cdots\times\ca{E}(U)}_k
\longto C^\infty(U).
\]
It follows that contraction with the section $s$ defines a
cohomological vector field
\[\iota_s\colon\ca{S}^\bu_M\longto\ca{S}^\bu_M[1].\]
In a graded chart $U$ we can express $s$ as a linear combination
$s=\sum_js_j(x)\xi^*_j$ with $s_j\in C^\infty(U)$.  For a section $f(x,\xi)$ of $\ca{S}^\bu_M$
we then have $\iota_sf=\sum_js_j(x)\pardif{f}{\xi_j}$, which we will write symbolically as
\[\iota_sf=\pardif{f}{s}.\]
Thus the triple $(M,\ca{S}^\bu_M,\iota_s)$ is a derived manifold and,
since the structure sheaf is generated by elements of degrees $0$ and
$-1$, it is quasi-smooth.  A converse of this
statement is as follows; see e.g.\ \cite[Ex.~3.11]{bursztyn-cueca-mehta;n-manifolds-frobenius}.

\begin{proposition}\label{prop:quasi-smooth}
Let $\mathsf{C}$ be the category whose objects are triples $(M,E,s)$ consisting of a manifold $M$, a vector bundle $E$, and a smooth section $s$, and whose morphisms $(M,E,s)\to(M',E',s')$ are pairs consisting of a smooth map $f\colon M\to M'$ and a vector bundle map $f^\sharp\colon E\to E'$ over $f$ such that $s'\circ f=f^\sharp\circ s$.  The functor $\ca{Z}\colon\mathsf{C}\to\DMfd$ defined by
\[\ca{Z}(M,E,s)=(M,\Sym^\bu(\ca{E}^*[1]),\iota_s)\]
is fully faithful and its essential image is the category of quasi-smooth derived manifolds.
\end{proposition}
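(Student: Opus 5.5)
The plan is to check three things in turn: that $\ca{Z}$ is a functor, that it is fully faithful, and that its essential image is exactly the quasi-smooth derived manifolds. All of these reduce to the local structure of the sheaf $\Sym^\bu(\ca{E}^*[1])$, which is generated in degrees $0$ and $-1$.

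\emph{Functoriality.} Let $(f,f^\sharp)\colon(M,E,s)\to(M',E',s')$ be a morphism in $\mathsf{C}$. The fibrewise transpose of $f^\sharp$ gives a map $f^{-1}\ca{E}'^*\to\ca{E}^*$ of $C^\infty$-modules. I extend it multiplicatively to a morphism of graded algebras $f^{-1}\Sym^\bu(\ca{E}'^*[1])\to\Sym^\bu(\ca{E}^*[1])$. Since both cohomological vector fields $\iota_s$ and $\iota_{s'}$ are derivations, the relatedness condition \eqref{equation;related} only needs to be checked on generators:
\begin{itemize}
\item in degree $0$ both sides vanish;
\item in degree $-1$, for a section $\alpha$ of $\ca{E}'^*$, the condition reads $\inner{(f^\sharp)^*\alpha,s}=\inner{\alpha,s'\circ f}$, which is exactly $s'\circ f=f^\sharp\circ s$.
\end{itemize}
The same computation read backwards shows that relatedness forces $s'\circ f=f^\sharp\circ s$.

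\emph{Full faithfulness.} A morphism of graded manifolds $\ca{Z}(M,E,s)\to\ca{Z}(M',E',s')$ consists of a smooth map $f$ together with an algebra map $\phi$ on structure sheaves. Such a $\phi$ is determined by its degree $0$ part, which is $f^*$, and its degree $-1$ part, since these generate the source. Its degree $-1$ part is a map $f^{-1}\Lambda^1\ca{E}'^*\to\Lambda^1\ca{E}^*$ that is $C^\infty$-linear over $f^*$, and this is the same as a vector bundle map $E\to E'$ covering $f$. The only subtle point is that a graded-algebra morphism could a priori send degree $-1$ generators to something other than degree $-1$ elements. Degree considerations exclude this, because $\Sym^\bu(\ca{E}^*[1])$ in degree $-1$ is exactly $\ca{E}^*$, with no mixing. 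Combined with the equivalence between relatedness and the section condition, this gives a bijection on hom-sets.

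\emph{Essential image.} This is the main obstacle. Let $\ca{M}=(M,\ca{S}^\bu_M,Q)$ be quasi-smooth. By \eqref{equation;tangent} its local models have $V_n=0$ for $n\ge2$, so $\ca{S}^\bu_M$ is locally $C^\infty\otimes\Lambda^\bu(V_1^*)$. The steps are then:
\begin{enumerate}
\item I claim that $\ca{S}^{-1}_M$ is a locally free $\ca{C}^\infty_M$-module of finite rank. Set $E:=(\ca{S}^{-1}_M)^*$.
\item The algebra map $\Sym^\bu(\ca{S}^{-1}_M[1])\to\ca{S}^\bu_M$ is an isomorphism, since it is one in every chart.
\item The degree $1$ derivation $Q$ sends $\ca{S}^{-1}_M$ into $\ca{S}^0_M$, and this map is $C^\infty$-linear: $Q$ kills $\ca{S}^0_M$ because there is nothing in degree $1$. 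So $Q$ restricted to $\ca{S}^{-1}_M$ is a section $s$ of $E$.
\item A derivation is determined by its values on generators, so $Q=\iota_s$, and $Q^2=0$ holds automatically.
\end{enumerate}
The delicate part is steps 1 and 2. The transition functions between graded charts could mix degree $-1$ generators with products of them, but such products have degree $\le-2$, so no mixing occurs. The transition maps on $\ca{S}^{-1}$ are therefore $C^\infty$-linear and glue to a genuine vector bundle. Paracompactness is not needed beyond the definition.
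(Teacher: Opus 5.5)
Your proof is correct. The paper gives no proof of this proposition and simply cites \cite[Ex.~3.11]{bursztyn-cueca-mehta;n-manifolds-frobenius}; your argument is the standard self-contained one.

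\begin{itemize}
\item \emph{Full faithfulness.} Degree preservation forces the degree $-1$ component of any graded morphism to be an $f^*$-semilinear map $f^{-1}\ca{E}'^*\to\ca{E}^*$, i.e.\ a bundle map $E\to E'$ over $f$. Relatedness of $\iota_s$ and $\iota_{s'}$ is an identity between two derivations along the algebra map. It therefore reduces to the degree $-1$ generators, where it is exactly $s'\circ f=f^\sharp\circ s$.
\item \emph{Essential image.} In the quasi-smooth case, $\ca{S}^{-1}_M$ is a locally free $\ca{S}^0_M$-module whose exterior algebra maps isomorphically onto $\ca{S}^\bu_M$. The restriction $Q|_{\ca{S}^{-1}_M}$ is a $C^\infty$-linear functional, i.e.\ a section $s$ of $E$.
\end{itemize}

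Two small points. First, you never state the easy inclusion that $\ca{Z}(M,E,s)$ is itself quasi-smooth. This holds because its graded tangent bundle is $TM$ in degree $0$, $E$ in degree $1$, and zero above.

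Second, your worry about transition functions mixing generators is unnecessary. The sheaf $\ca{S}^{-1}_M$ is defined intrinsically as a sheaf of $\ca{S}^0_M$-modules. The canonical map $\Lambda^\bu_{\ca{C}^\infty_M}(\ca{S}^{-1}_M)\to\ca{S}^\bu_M$ can be checked to be an isomorphism through any chart isomorphism, and no choices are involved.
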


We will call the quasi-smooth derived manifold $\ca{Z}(M,E,s)$ the \emph{derived
zero locus} of the section $s$.  Depending on the context we will adopt various 
notations such as $\ca{Z}(M,E,s)$, or $\ca{Z}(s)$, or $(E[-1],s)$.  
The classical locus of $\ca{Z}(s)$ is
\[\pi_0(\ca{Z}(s))=Z(s)=\{\,x\in M\mid s(x)=0\,\},\]
the usual zero locus of $s$.  The structure complex of $\ca{Z}(s)$ is
the Koszul complex of the section $s$,
\[
\begin{tikzcd}
\cdots\ar[r,"\iota_s"]&\Lambda^3(\ca{E}^*)\ar[r,"\iota_s"]&
\Lambda^2(\ca{E}^*)\ar[r,"\iota_s"]&\ca{E}^*\ar[r,"\iota_s"]&
\ca{C}^\infty_M,
\end{tikzcd}
\]
which is exact off the zero locus.  The graded tangent bundle is given
by
\[T^0\ca{Z}(s)=TM,\qquad T^1\ca{Z}(s)=E.\]
To find the tangent complex at a classical point $x\in Z(s)$ we let
$X=\sum_if_i(x)\pardif{}{x_i}$ and we compute
\[
[\iota_s,X]=\Bigl[\pardif{}{s},X\Bigr]=
\biggl[\sum_js_j(x)\pardif{}{\xi_j},\sum_if_i(x)\pardif{}{x_i}\biggr]=
-\sum_jX(s_j)\pardif{}{\xi_j}.
\]
This yields $\ell_{\iota_s}=-d_xs$, minus the derivative at $x$ of the
section $s$, which is well-defined because $s(x)=0$.  Thus the tangent
complex at $x$ is 
\[
\begin{tikzcd}
T^\bu_x\ca{Z}(s)\colon\quad T_xM\ar[r,"-d_xs"]&E_x.
\end{tikzcd}
\]
The virtual dimension of $\ca{Z}(s)$ is $n-r$.  If the section $s$ is
transverse to the zero section at $x$, then $Z(s)$ is an
$n-r$-dimensional submanifold of $M$ near $x$ and the cohomology of the tangent
complex is
\[H^0(T^\bu_x\ca{Z}(s))=T_xZ(s),\qquad H^1(T^\bu_x\ca{Z}(s))=0.\]
If $s$ is not transverse to the zero section, then $Z(s)$ is typically
not a manifold, and we regard the derived zero locus $\ca{Z}(s)$ as a
``smooth resolution'' of $Z(s)$ in the category of derived manifolds.

\subsection{Submanifolds and products}

\subsubsection{Submanifolds}\label{subsubsection;submanifolds}

Let $(M,\ca{S}^\bu_M)$ be a graded manifold.  A \emph{graded submanifold}
of $(M,\ca{S}^\bu_M)$ is a graded manifold $(P,\ca{S}^\bu_P)$ together 
with a morphism $i\colon(P,\ca{S}^\bu_P)\to(M,\ca{S}^\bu_M)$ such that $P$ is an embedded submanifold of $M$ with inclusion map $i\colon P\to M$ and the tangent map $T^\bu i\colon T^\bu P\to T^\bu M$ is injective.  If $(P,\ca{S}^\bu_P)$ is a graded submanifold of $(M,\ca{S}^\bu_M)$, then the sheaf map $i^\sharp\colon\ca{S}^\bu_M\to i_*\ca{S}^\bu_P$ is surjective, and its kernel is called the \emph{ideal sheaf} of $(P,\ca{S}^\bu_P)$. 

As a special case, let $P$ be a submanifold of (the body of) a graded manifold
$(M,\ca{S}^\bu_M)$.  Let $\ca{I}^\bu_P$  be the ideal of $\ca{S}^\bu_M$ generated by the
sections of $\ca{S}^0_M=\ca{C}^\infty_M$ that vanish on $P$ and let $\ca{S}^\bu_P$ be the restriction to $P$ of the quotient sheaf $\ca{S}^\bu_M\big/\ca{I}^\bu_P$.  The ringed space $(P,\ca{S}^\bu_P)$ is a graded submanifold of $(M,\ca{S}^\bu_M)$; we call
$\ca{S}^\bu_P$ the \emph{induced graded structure}.  If $M$ is
modelled on the tuple $(V_0,V_1,V_2,\dots)$, then $P$ is modelled on
the tuple $(W_0,V_1,V_2,\dots)$ for a suitable subspace $W_0$
of~$V_0$.  The graded tangent bundle of $(P,\ca{S}^\bu_P)$ is given by 
\begin{equation}\label{equation;tangent-submanifold}
T^0P=TP,\qquad T^nP=T^nM|_P\quad\text{for $n\ge1$}.
\end{equation}

Let $Q_M$ be a cohomological vector field on $M$, making $\ca{M}=(M,\ca{S}^\bu_M,Q_M)$ a derived manifold.  We say that $Q_M$ is \emph{tangent} to a graded submanifold $(P,\ca{S}^\bu_P)$ if $Q_M$ preserves the ideal sheaf of $(P,\ca{S}^\bu_P)$.  If this is the case, then $Q_M$ restricts to a cohomological vector field $Q_P$ on $P$, making the inclusion $i\colon\ca{P}=(P,\ca{S}^\bu_P,Q_P)\to\ca{M}$ a morphism of derived manifolds.  The classical locus of $\ca{P}$ is then
\begin{equation}\label{equation;classical-submanifold}
\pi_0(\ca{P})=\pi_0(\ca{M})\cap P,    
\end{equation}
and for each $x\in\pi_0(\ca{P})$ the tangent complex $T^\bu_x\ca{P}$ is a subcomplex of $T^\bu_x\ca{M}$.  If $P$ is open in $M$, then the induced graded structure is $\ca{S}_P^\bu=\ca{S}_M^\bu|_P$ and $Q_M$ is automatically tangent to $P$, making $\ca{P}$ an \emph{open derived submanifold} of $\ca{M}$.

Now suppose the submanifold $P$ of $M$ is equipped with the induced graded structure.  
Then $Q_M$ is automatically tangent to $P$ and we call $(\ca{S}^\bu_P,Q_P)$ the \emph{induced derived structure} on $P$.  In this case the tangent complex at $x\in\pi_0(\ca{P})$ is
\begin{equation}\label{notation;tangent-complex-submanifold}
\begin{tikzcd}
T^\bu_x\ca{P}\colon\quad T^0_xP\ar[r,"\ell_{Q_P}"]&T^1_xP=T^1_xM\ar[r,"\ell_{Q_M}"]&T^2_xP=T^2_xM
\ar[r,"\ell_{Q_M}"]&\cdots
\end{tikzcd}
\end{equation}
where $\ell_{Q_P}=\ell_{Q_M}\big|_{T^0_xP}$. The map $H^k(T^\bu_x\ca{P})\to H^k(T^\bu_x\ca{M})$ induced by the inclusion is injective in degree $k=0$, surjective in degree $k=1$, and an isomorphism in degree $k\ge2$.
In particular, if $\ca{M}$ has amplitude $k\ge1$, then $\ca{P}$ has amplitude (at most) $k$, but if $\ca{M}$ has amplitude $0$, then $\ca{P}$ may have amplitude $1$.

\subsubsection{Products}\label{subsubsection;products}

The product of a graded chart $(O,\ca{S}^\bu_O)$ modelled on $(V_0,V_1,\dots)$ and a graded chart $(O',\ca{S}^\bu_{O'})$ modelled on $(V'_0,V'_1,\dots)$ is a graded chart $(O\times O',\ca{S}^\bu_{O\times O'})$ modelled on $(V_0\times V_0',V_1\times V_1',\dots)$.  It follows that the product of graded manifolds $(M,\ca{S}^\bu_M)$ and $(N,\ca{S}^\bu_N)$ is a graded manifold $(M\times N,\ca{S}^\bu_{M\times N})$ in a natural way.
Its graded tangent bundle is
\begin{equation}\label{equation;tangent-product}
T^\bu(M\times N)=\pr_M^*T^\bu M\oplus\pr_N^*T^\bu N.
\end{equation}
Let $Q_M$, resp.\ $Q_N$, be a cohomological vector field on $M$, resp.\ $N$.  Then $M\times N$ is equipped with a cohomological vector field $Q_M\oplus Q_N$, making $\ca{M}\times\ca{N}=(M\times N,\ca{S}^\bu_{M\times N},Q_M\oplus Q_N)$ a product of $\ca{M}=(M,\ca{S}^\bu_M,Q_M)$ and $\ca{N}=(N,\ca{S}^\bu_N,Q_N)$ in the category of derived manifolds.  The classical locus of $\ca{M}\times\ca{N}$ is 
\begin{equation}\label{equation;classical-product}
\pi_0(\ca{M}\times\ca{N})=\pi_0(\ca{M})\times\pi_0(\ca{N}),    
\end{equation}
and for each $(x,y)\in\pi_0(\ca{M}\times\ca{N})$ the tangent complex at $x$ is the direct sum
\begin{equation}\label{notation;tangent-complex-product}
T^\bu_{(x,y)}(\ca{M}\times\ca{N})=T^\bu_x\ca{M}\oplus T^\bu_y\ca{N}.
\end{equation}
This shows that the amplitude of $\ca{M}\times\ca{N}$ is equal to the maximum of the amplitudes of $\ca{M}$ and $\ca{N}$.

\subsection{Transversality and fibred products}\label{subsection;transverse}

Two morphisms of graded manifolds
\[
f\colon(M,\ca{S}^\bu_M)\to(P,\ca{S}^\bu_P),\qquad
g\colon(N,\ca{S}^\bu_N)\to(P,\ca{S}^\bu_P)
\]
%
are said to be \emph{transverse} if for all $(x,y,z)\in M\times
N\times P$ with $f(x)=g(y)=z$ we have
\begin{equation}\label{equation;transverse}
%
T^\bu_xf(T^\bu_xM)+T^\bu_yg(T^\bu_yN)=T^\bu_zP.
\end{equation}
Transversality allows us to form the fibred product $M\times_PN$,
which is a submanifold of $M\times N$ equal to the inverse image $(f\times g)^{-1}(\Delta)$ of the diagonal $\Delta$ of $P\times P$.  
Let $\ca{I}^\bu_\Delta\subset\ca{S}^\bu_{P\times P}$ be the ideal sheaf
of $\Delta$ and let $\ca{I}^\bu=\ca{S}^\bu_{M\times N}\cdot(f\times g)^\sharp(\ca{I}^\bu_\Delta)$ be the sheaf of ideals on $M\times N$ generated by the image of $\ca{I}^\bu_\Delta$.  
The support of the quotient sheaf $\ca{S}^\bu_{M\times_PN}=\ca{S}^\bu_{M\times N}/\ca{I}^\bu$ is the submanifold $M\times_PN$ and the pair $(M\times_PN,\ca{S}^\bu_{M\times_PN})$ is a graded submanifold of $M\times N$.  
A proof of the following statement can be found e.g.\ in~\cite[\S\,7.6]{vysoky;global-graded}.

\begin{proposition}\label{proposition;graded-fibred-product}
Let $f\colon(M,\ca{S}^\bu_M)\to(P,\ca{S}^\bu_P)$ and $g\colon(N,\ca{S}^\bu_N)\to(P,\ca{S}^\bu_P)$ be transverse morphisms of graded manifolds.  Then $(M\times_PN,\ca{S}^\bu_{M\times_PN})$ is a fibred product in the category of graded manifolds.  The graded tangent space to $M\times_PN$ at $(x,y)$
is
\begin{equation}\label{equation;fibred-tangent}
%
T^\bu_{(x,y)}(M\times_PN)=T^\bu_xM\times_{T^\bu_zP}T^\bu_yN.
\end{equation}
\end{proposition}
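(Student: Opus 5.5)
The plan is to reduce everything to the classical transversal fibre product in local graded charts and then glue, using that graded manifolds are ringed spaces locally isomorphic to graded charts.

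\emph{Step 1 (the body and the submanifold structure).} At degree $0$, condition \eqref{equation;transverse} says that the underlying smooth maps $f\colon M\to P$ and $g\colon N\to P$ are transverse. Hence $M\times_PN=(f\times g)^{-1}(\Delta)$ is an embedded submanifold of $M\times N$ of the expected dimension.

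To describe the graded structure, I will work near a point $(x,y)$ with $f(x)=g(y)=z$. Take a graded chart of $P$ near $z$ with coordinates $(u,\eta)$, and charts $(x,\xi)$, $(y,\zeta)$ on $M$ and $N$. Then $\ca{I}^\bu_\Delta$ is generated by $u\otimes1-1\otimes u$ and $\eta\otimes1-1\otimes\eta$. So $\ca{I}^\bu$ is generated by the differences $f^\sharp(u_a)-g^\sharp(u_a)$ and $f^\sharp(\eta_b)-g^\sharp(\eta_b)$.

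Transversality in each degree $n\ge1$ says that $T^nf+T^ng$ surjects onto $T^n_zP$. At $\xi=\zeta=0$, the linear part of $f^\sharp(\eta_b)-g^\sharp(\eta_b)$ in the odd and negative-degree generators of $M\times N$ is the dual of this map, so it is injective. Thus the graded inverse function theorem (equivalently, a graded implicit function theorem, proved by the usual Taylor-in-$\xi$ argument) lets us choose new coordinates on $M\times N$ near $(x,y)$ in which these generators become coordinate functions. In these coordinates $\ca{I}^\bu$ is generated by a subset of the coordinates, so the quotient $\ca{S}^\bu_{M\times N}/\ca{I}^\bu$ restricted to $M\times_PN$ is locally a graded chart, and $M\times_PN$ is a graded submanifold.

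\emph{Step 2 (universal property).} Let $h\colon K\to M$ and $k\colon K\to N$ be morphisms of graded manifolds with $f\circ h=g\circ k$. The pair $(h,k)\colon K\to M\times N$ exists by the product structure of \S\ref{subsubsection;products}. Its pullback kills $(f\times g)^\sharp(\ca{I}^\bu_\Delta)$ because $h^\sharp f^\sharp=k^\sharp g^\sharp$, and hence it kills the whole ideal $\ca{I}^\bu$. On bodies, $(h,k)$ lands in $M\times_PN$. Therefore $(h,k)^\sharp$ factors uniquely through the quotient sheaf supported on $M\times_PN$. Uniqueness follows because $\ca{S}^\bu_{M\times N}\to\ca{S}^\bu_{M\times_PN}$ is surjective, so the factorization is determined. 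This argument is local, so the local statements glue.

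\emph{Step 3 (tangent space).} By \eqref{equation;tangent-submanifold}-type reasoning in the adapted coordinates of Step 1, $T^\bu_{(x,y)}(M\times_PN)$ is the common kernel of the differentials of the generators of $\ca{I}^\bu$ inside $T^\bu_xM\oplus T^\bu_yN$. This kernel is $\{(v,w)\mid T f(v)=Tg(w)\}$, which is \eqref{equation;fibred-tangent}.

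The main obstacle is Step 1: establishing the graded implicit function theorem, that is, showing that functions whose linearizations at the body are independent can be completed to a graded coordinate system. I would prove it by first straightening the degree-$0$ equations with the classical theorem, and then solving for negative-degree coordinates order by order in the nilpotent generators. Since the $\xi$-polynomials have bounded degree, this recursion terminates.
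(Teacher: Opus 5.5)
Your argument is correct. The paper gives no proof of its own and simply cites \cite[\S\,7.6]{vysoky;global-graded}; your route is the standard one for this result, as far as I can tell the same as in that reference. You pull back local defining functions of the diagonal, use transversality degree by degree to make them part of a graded coordinate system via the graded inverse function theorem, and then read off the universal property and the tangent space from the quotient description.

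One terminological slip: coordinates of even negative degree (e.g.\ $-2$) are neither odd nor nilpotent. This does not affect the proof, because your termination argument only uses the bound on polynomial degree within each fixed internal degree.
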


Now suppose that $M$, $N$, and $P$ are equipped with cohomological vector fields $Q_M$, $Q_N$, and $Q_P$ such that $f\colon\ca{M}=(M,\ca{S}^\bu_M,Q_M)\to\ca{P}=(P,\ca{S}^\bu_P,Q_P)$ and $g\colon\ca{N}=(N,\ca{S}^\bu_N,Q_N)\to\ca{P}$ are morphisms of derived manifolds.  
We call $f$ and $g$ \emph{transverse} if the underlying morphisms of graded manifolds are transverse.  
If this is the case, the cohomological vector field $Q_M\oplus Q_N$ of $M\times N$ is tangent to the graded submanifold $M\times_PN$, 
thus turning the latter into a derived manifold, which we denote by $\ca{M}\times_{\ca{P}}\ca{N}$. 

\begin{proposition}\label{proposition;derived-fibred-product}
Let $f\colon\ca{M}\to\ca{P}$ and $g\colon\ca{N}\to\ca{P}$ be transverse morphisms of derived manifolds.  Then $\ca{M}\times_{\ca{P}}\ca{N}$ is a fibred product in the category of derived manifolds $\DMfd$.  Its classical locus is
the fibred product of sets,
\begin{equation}\label{equation;fibred-classical}
\pi_0(\ca{M}\times_{\ca{P}}\ca{N})=\pi_0(\ca{M})\times_{\pi_0(\ca{P})}\pi_0(\ca{N})
,
\end{equation}
and its tangent complex at $(x,y)\in\pi_0(\ca{M}\times_{\ca{P}}\ca{N})$ is the fibred product of complexes,
\[T^\bu_{(x,y)}(\ca{M}\times_{\ca{P}}\ca{N})=T^\bu_x\ca{M}\times_{T^\bu_z\ca{P}}T^\bu_y\ca{N},\]
where $z=f(x)=g(y)$.  The amplitude of $\ca{M}\times_{\ca{P}}\ca{N}$ is less than or equal to the maximum of the amplitudes of $\ca{M}$, $\ca{N}$, and $\ca{P}$.
\end{proposition}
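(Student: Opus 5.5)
The plan is to reduce everything to the graded statement, Prop.~\ref{proposition;graded-fibred-product}, and then keep track of the cohomological vector fields. First I would check that $Q_M\oplus Q_N$ is tangent to the graded submanifold $M\times_PN$, so that $\ca{M}\times_{\ca{P}}\ca{N}$ is a derived manifold. By definition the ideal sheaf is $\ca{I}^\bu=\ca{S}^\bu_{M\times N}\cdot(f\times g)^\sharp(\ca{I}^\bu_\Delta)$. Since $Q_M\oplus Q_N$ is a derivation, it suffices to show that it sends generators $(f\times g)^\sharp(h)$, with $h\in\ca{I}^\bu_\Delta$, into $\ca{I}^\bu$. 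The map $f\times g$ relates $Q_M\oplus Q_N$ to $Q_P\oplus Q_P$. So the claim reduces to $Q_P\oplus Q_P$ preserving $\ca{I}^\bu_\Delta$. This holds because the diagonal is the graph of the identity, and $\ca{I}^\bu_\Delta$ is locally generated by the elements $\pr_1^\sharp u-\pr_2^\sharp u$ for coordinates $u$. Applying $Q_P\oplus Q_P$ to such an element gives $\pr_1^\sharp(Q_Pu)-\pr_2^\sharp(Q_Pu)$, which again lies in $\ca{I}^\bu_\Delta$.

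Next comes the universal property. Take derived morphisms $a\colon\ca{X}\to\ca{M}$ and $b\colon\ca{X}\to\ca{N}$ with $fa=gb$. Prop.~\ref{proposition;graded-fibred-product} gives a unique graded morphism $c\colon X\to M\times_PN$. It remains to check $Q_X\sim_c Q$. Write $i\colon M\times_PN\to M\times N$ for the inclusion. The composite $i\circ c=(a,b)$ relates $Q_X$ to $Q_M\oplus Q_N$, and the relation $Q_M\oplus Q_N\sim_i Q$ holds by construction. Because $i^\sharp$ is surjective on sheaves, every local section of $\ca{S}^\bu_{M\times_PN}$ has the form $i^\sharp(g)$. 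Then $Q_X c^\sharp i^\sharp g=c^\sharp i^\sharp(Q_M\oplus Q_N)g=c^\sharp Q\, i^\sharp g$, which is the required relation. Uniqueness follows from uniqueness in the graded category.

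For the classical locus, a point $(x,y)$ is classical exactly when $Q$ vanishes there. Since $i^\sharp$ is surjective, this is equivalent to the vanishing of $(Q_M\oplus Q_N)_{(x,y)}$, i.e.\ of $Q_{M,x}$ and $Q_{N,y}$. By \eqref{equation;classical-submanifold} and \eqref{equation;classical-product} this gives \eqref{equation;fibred-classical}. Given a classical point, \eqref{equation;fibred-tangent} identifies the graded tangent space with the fibred product of graded spaces. The linearized differential is the restriction of $\ell_{Q_M}\oplus\ell_{Q_N}$, because $T^\bu_{(x,y)}\ca{M}\times_{\ca{P}}\ca{N}$ is a subcomplex of the product complex (\S\,\ref{subsubsection;submanifolds}). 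This is precisely the fibred product of complexes.

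The amplitude statement follows degreewise, since a fibred product of graded spaces vanishes in every degree where both factors vanish; in fact its amplitude is bounded by that of $\ca{M}$ and $\ca{N}$ alone. I expect the main technical point to be the tangency step, specifically the local description of $\ca{I}^\bu_\Delta$ as generated by differences of pulled-back coordinates, including the negative-degree coordinates. I would verify this in product graded charts on $P\times P$.
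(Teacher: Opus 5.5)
Your proposal is correct and follows the paper's proof. Both get the unique graded factorization from Prop.~\ref{proposition;graded-fibred-product}, note that its composite with the inclusion into $M\times N$ is the product morphism (hence derived), conclude that the factorization is derived, and read off the classical locus and tangent complex from \eqref{equation;fibred-tangent}. You go a little further than the paper in three places: you verify that $Q_M\oplus Q_N$ is tangent to $M\times_PN$, which the paper only asserts before the statement; you make explicit the surjectivity of $i^\sharp$ that the paper's ``it follows that $h$ is a morphism of derived manifolds'' leaves implicit; and your sharper amplitude bound, in terms of $\ca{M}$ and $\ca{N}$ alone, is also correct.
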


\begin{proof}
Let $f'\colon\ca{W}\to\ca{M}$ and $g'\colon\ca{W}\to\ca{N}$ be morphisms of derived manifolds such that $f\circ f'=g\circ g'$.  
By Prop.~\ref{proposition;graded-fibred-product} there is a unique morphism of graded manifolds $h\colon W\to M\times_PN$ with $\pi_M\circ h=f'$ and $\pi_N\circ h=g'$, where $\pi_M,\,\pi_N$ denote the natural morphisms from $M\times_PN$ to $M,N$, respectively. 
Composing $h$ with the inclusion $M\times_PN\to M\times N$ we get a map $k\colon W\to M\times N$, which is just the product of $f'$ and $g'$.   
Since the cohomological vector fields $Q_W$ and $Q_M\oplus Q_N$ are $k$-related, $k$ is a morphism of derived manifolds.  It follows that $h$ is a morphism of derived manifolds.  This establishes the universal property of $\ca{M}\times_{\ca{P}}\ca{N}$.  A point $(x,y)$ of $\ca{M}\times_{\ca{P}}\ca{N}$ is classical if and only if $Q_M(x)=Q_N(y)=0$.  Hence also $Q_P(z)=0$, where $z=f(x)=g(y)$, which implies~\eqref{equation;fibred-classical}.  The remaining assertions now follow from~\eqref{equation;fibred-tangent}.   
\end{proof}

\subsection{Differential forms and Cartan calculus}
\label{subsection;cartan}

Let $(M,\ca{S}_M^\bu)$ be a graded manifold.  A \emph{differential form of bidegree $(k,l)$}, or a \emph{$(k,l)$-form}, or a
\emph{$k$-form of internal degree $l$}, on $M$ is a collection of maps
\[
\alpha\colon\X^{i_1}(M)\times\X^{i_2}(M)\times\cdots\times
\X^{i_k}(M)\longto\ca{S}^{i_1+i_2+\cdots+i_k+l}(M),
\]
defined for all $(i_1,i_2,\dots,i_k)\in\Z^k$, satisfying
\begin{gather*}
\alpha(\dots,X_{i+1},X_i,\dots)=
-(-1)^{\abs{X_i}\abs{X_{i+1}}}\alpha(\dots,X_i,X_{i+1},\dots),\\
\alpha(fX_1,X_2,\dots,X_k)=(-1)^{\abs{f}l}f\alpha(X_1,X_2,\dots,X_k)
\end{gather*}
for all $X_i\in\X^\bu(M)$ and $f\in\ca{S}^\bu(M)$.  We refer to
$k\in\N$ as the \emph{de Rham degree} and to $l\in\Z$ as the
\emph{internal degree}.  We denote the set of forms of bidegree
$(k,l)$ on $M$ by $\Omega^{k,l}(M)$.  We have
$\Omega^{0,l}(M)=\ca{S}^l(M)$.  For each $k\in\N$ we make
$\Omega^{k,\bu}(M)=\bigoplus_{l\in\Z}\Omega^{k,l}(M)$ a graded
\emph{left} $\ca{S}^\bu$-module by defining
\[(f\alpha)(X_1,X_2,\dots,X_k)=f\alpha(X_1,X_2,\dots,X_k)\]
for $X_i\in\X^\bu(M)$ and $f\in\ca{S}^\bu(M)$.  For
$\alpha\in\Omega^{k,l}(M)$ and $\beta\in\Omega^{p,q}(M)$ we define the
\emph{wedge product} $\alpha\wedge\beta\in\Omega^{k+p,l+q}(M)$ by
\begin{multline*}
(\alpha\wedge\beta)(X_1,\dots,X_k,X_{k+1},\dots,X_{k+p})=\\
\sum_{\sigma\in\Sh(k,p)}\Ksgn(\sigma)
\alpha(X_{\sigma(1)},\dots,X_{\sigma(k)})
\beta(X_{\sigma(k+1)},\dots,X_{\sigma(k+p)}).
\end{multline*}
Here $\Sh(k,p)\subseteq S_{k+p}$ is the set of $(k,p)$-shuffles and
$\Ksgn(\sigma)=\pm1$ is the product of $\sgn(\sigma)$ and the Koszul
sign of the permutation
\[
(\alpha,\beta,X_1,\dots,X_k,X_{k+1},\dots,X_{k+p})\longmapsto
(\alpha,X_{\sigma(1)},\dots,X_{\sigma(k)},\beta,X_{\sigma(k+1)},\dots,
X_{\sigma(k+p)}),
\]
as defined in~\cite{berger;koszul}.  With these sign conventions the
wedge product is \emph{bigraded commutative} in the sense that
\[\beta\wedge\alpha=(-1)^{kp+lq}\alpha\wedge\beta.\]
(This is analogous to the sign convention
of~\cite{deligne-morgan;supersymmetry} for graded supercommutativity.)
For every open subset $U$ of $M$ (endowed with the induced graded
structure) we have a bigraded algebra of forms $\Omega^{\bu,\bu}(U)$,
and the presheaf $U\mapsto\Omega^{\bu,\bu}(U)$ is a sheaf on $M$
denoted by $\Omega_M^{\bu,\bu}$.  A function $f\in\ca{S}^l(U)$ defines
a $(1,l)$-form $df$ by the formula
\begin{equation}\label{equation;df}
df(X)=(-1)^{kl}X(f)
\end{equation}
for $X\in\X^k(U)$.  If $U$ is a graded chart, we can take $f$ to be
any of the coordinate functions $x_i$ or $\xi_j$, and we see
from~\eqref{notation;vector-field} that the forms
\[dx_i\in\Omega^{1,0}(U),\qquad d\xi_j\in\Omega^{1,\abs{\xi_j}}(U)\]
are a basis of the $\ca{S}^\bu$-module $\Omega^{1,\bu}(U)$ and a set
of generators of the $\ca{S}^\bu$-algebra $\Omega^{\bu,\bu}(U)$.

For a vector field $X\in\X^j(M)$ and a $(k,l)$-form $\alpha$ we define
the \emph{contraction} $\iota_X\alpha\in\Omega^{k-1,l+j}(M)$ and the
\emph{Lie derivative} $\Lie_X\alpha\in\Omega^{k,l+j}(M)$ by
\begin{align*}
(\iota_X\alpha)(X_2,\dots,X_k)&=(-1)^{jl}\alpha(X,X_2,\dots,X_k),\\
X(\iota_{X_1}\cdots\iota_{X_k}\alpha)&=
(-1)^{j(\abs{X_1}+\cdots+\abs{X_k})}\iota_{X_1}\cdots\iota_{X_k}\Lie_X\alpha\\
&\qquad\qquad+\sum_{l=1}^k(-1)^{j(\abs{X_1}+\cdots+\abs{X_{l-1}})}
\iota_{X_1}\cdots\iota_{[X,X_l]}\cdots\iota_{X_k}\alpha.
\end{align*}
In particular, $\Lie_Xf=X(f)$ for $f\in\Omega^{0,l}(M)$.  The
\emph{exterior derivative} $d\alpha\in\Omega^{k+1,l}(M)$ is defined by
\begin{multline*}
\iota_{X_0}\cdots\iota_{X_k}d\alpha=
\sum_{l=1}^k(-1)^{l+\abs{X_l}(\abs{X_0}+\cdots+\abs{X_{l-1}})}
\Lie_{X_l}\iota_{X_0}\cdots\widehat{\iota_{X_l}}\cdots\iota_{X_k}\alpha\\
+\sum_{1\le j<l\le
  k}(-1)^{j+1+\abs{X_j}(\abs{X_{j+1}}+\cdots+\abs{X_{l-1}})}
\iota_{X_0}\cdots\widehat{\iota_{X_j}}\cdots\iota_{[X_j,X_l]}\cdots
\iota_{X_k}\alpha,
\end{multline*}
which for $k=0$ coincides with~\eqref{equation;df}.  The next
proposition is a summary of the Cartan calculus for graded manifolds.
See~\cite[\S\,2.6]{cueca-maglio-valencia;symplectic-graded}
or~\cite[\S\,6]{vysoky;global-graded} for a proof and further
discussion.  By a \emph{bigraded left derivation of bidegree $(p,q)$}
of the algebra of forms we mean an $\R$-linear map
$D\colon\Omega^{\bu,\bu}(M)\to\Omega^{\bu+p,\bu+q}(M)$ satisfying
\[
D(\alpha\wedge\beta)=D\alpha\wedge\beta+(-1)^{kp+lq}\alpha\wedge
D\beta
\]
for $\alpha\in\Omega^{k,l}(M)$.  The \emph{bigraded commutator} of
operators $F$ of bidegree $(k,l)$ and $G$ of bidegree $(p,q)$ is
\[[F,G]=F\circ G-(-1)^{kp+lq}G\circ F.\]

\begin{proposition}\label{proposition;cartan}
The operators 
\begin{gather*}
\iota_X\colon\Omega^{\bu,\bu}(M)\longto\Omega^{\bu-1,\bu+\abs{X}}(M),
\qquad
\Lie_X\colon\Omega^{\bu,\bu}(M)\longto\Omega^{\bu,\bu+\abs{X}}(M),\\
d\colon\Omega^{\bu,\bu}(M)\longto\Omega^{\bu+1,\bu}(M)
\end{gather*}
are bigraded left derivations of the algebra $\Omega^{\bu,\bu}(M)$.
For all vector fields $X$, $Y\in\X^\bu(M)$ we have
\begin{gather*}
[\iota_X,\iota_Y]=0,\quad\iota_{[X,Y]}=[\Lie_X,\iota_Y],
\quad[\Lie_X,\Lie_Y]=\Lie_{[X,Y]},\\
[\Lie_X,d]=0,\quad\Lie_X=[\iota_X,d],\quad[d,d]=2d^2=0,
\end{gather*}
where the brackets denote bigraded commutators.
\end{proposition}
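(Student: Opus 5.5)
The plan is to reduce everything to local computations in a graded chart and to let the derivation property do the globalizing. An $\R$-linear operator on $\Omega^{\bu,\bu}(M)$ that is a bigraded left derivation is local: it commutes with restriction to open sets. It is therefore determined by its values on the local generators $x_i$, $\xi_j$, $dx_i$, $d\xi_j$ of the sheaf of algebras $\Omega^{\bu,\bu}_M$. Likewise, every bigraded commutator of two bigraded derivations is again a bigraded derivation; this is a routine sign check using the Koszul rule.

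So each identity in the list reduces to checking it on functions $f\in\ca{S}^\bu$ and on exact $1$-forms $df$, with the signs fixed by the convention \eqref{equation;df}.

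First step: I would verify that $\iota_X$, $\Lie_X$ and $d$, as defined by the explicit formulas above, are derivations of the stated bidegrees.
\begin{itemize}
\item For $\iota_X$ this follows directly from the shuffle formula for the wedge product. Contracting into the first slot splits the shuffles according to which factor receives $X$, and the Koszul sign for moving $X$ past $\alpha$ produces $(-1)^{k\cdot(-1)+l\abs{X}}$.
\item For $d$ and $\Lie_X$, I would first check that the defining formulas give well-defined tensorial objects, i.e.\ that they satisfy the graded skew-symmetry and $\ca{S}$-linearity axioms. Then I would show they are derivations on generators.
\end{itemize}
An alternative for this step is to show directly, in a chart, that $d$ is the unique derivation of bidegree $(1,0)$ with $d(f)=df$ and $d(df)=0$. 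One then checks that the invariant formula agrees with this derivation on generators.

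Second step: the identities themselves. Each one is a commutator of derivations, hence a derivation, so it suffices to evaluate both sides on $f$ and on $dg$.
\begin{itemize}
\item $[\iota_X,\iota_Y]$ kills both $f$ and $dg$ for degree reasons: there are no $(-1)$-forms, and $\iota_X\iota_Y dg$ is a $(-1)$-form.
\item $d^2=0$ on $f$ holds by construction, and $d^2(dg)=0$ is immediate.
\item For Cartan's formula $\Lie_X=[\iota_X,d]$ on $f$, we get $\iota_X df=(-1)^{\cdot}X(f)=\Lie_Xf$. On $dg$, it reduces to $d\Lie_X g=\Lie_X dg$, which comes from the defining formula for $\Lie_X$ applied to a $1$-form.
\item The remaining identities $[\Lie_X,d]=0$, $\iota_{[X,Y]}=[\Lie_X,\iota_Y]$ and $[\Lie_X,\Lie_Y]=\Lie_{[X,Y]}$ follow formally from the previous ones together with the graded Jacobi identity for bigraded commutators. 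For example, $[\Lie_X,d]=[[\iota_X,d],d]=\tfrac12[\iota_X,[d,d]]=0$.
\item Alternatively, $\iota_{[X,Y]}=[\Lie_X,\iota_Y]$ is precisely the recursive definition of $\Lie_X$ read off for $k=1$.
\end{itemize}

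The main obstacle I expect is sign bookkeeping, in particular reconciling the global defining formulas with the chart-level derivation description. This includes the sign $(-1)^{kl}$ in $df(X)$ and the left-derivative convention for $\partial/\partial\xi_j$.

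To keep this under control, I would first settle the derivation property of $d$ on generators in a chart, where $\Omega^{\bu,\bu}(U)$ is the free bigraded commutative algebra on $x_i,\xi_j,dx_i,d\xi_j$. I would then prove uniqueness of a derivation with prescribed values on these generators. That done, every remaining claim becomes a two-line generator check; as a fallback, the full argument can be cited from \cite[\S\,6]{vysoky;global-graded}.
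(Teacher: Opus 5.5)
The paper gives no argument of its own here. It states the proposition as a summary and refers to \cite[\S\,2.6]{cueca-maglio-valencia;symplectic-graded} and \cite[\S\,6]{vysoky;global-graded}, so your fallback citation is exactly what the paper does. Your main outline is the standard self-contained route: bigraded derivations are local and determined by their values on $\ca{S}^\bu$ and $d\ca{S}^\bu$; commutators of derivations are derivations; each identity is checked on $f$ and $dg$; graded Jacobi finishes the job. It makes explicit where the Koszul signs in \eqref{equation;df} and in $\iota_X$ enter, at the cost of bookkeeping the paper avoids. Two steps, however, do not go through as written.

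First, $\iota_{[X,Y]}=[\Lie_X,\iota_Y]$ is not a formal consequence of $[\iota_X,\iota_Y]=0$, $[d,d]=0$, $\Lie_X=[\iota_X,d]$ and Jacobi. Those relations never involve the bracket of vector fields, and Jacobi only yields $[\Lie_X,\iota_Y]=[\iota_X,\Lie_Y]$. This identity needs its own generator check, which is short. Both sides kill functions. On $dg$, use $\iota_Y dh=Y(h)$ (the signs $(-1)^{\abs{Y}\abs{h}}$ from \eqref{equation;df} and from the definition of $\iota_Y$ cancel) and $\Lie_X dg=d(X(g))$. This gives $[\Lie_X,\iota_Y]dg=X(Y(g))-(-1)^{\abs{X}\abs{Y}}Y(X(g))=\iota_{[X,Y]}dg$. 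Only after this does $[\Lie_X,\Lie_Y]=[\Lie_X,[\iota_Y,d]]=[[\Lie_X,\iota_Y],d]=\Lie_{[X,Y]}$ follow from Jacobi and $[\Lie_X,d]=0$.

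Second, the passage between the paper's global definitions and your chart derivations is circular. ``Show they are derivations on generators'' has no content, since the Leibniz rule is a statement about products. Checking that the invariant formula for $d$, or the implicit definition of $\Lie_X$, agrees with a derivation on $f$ and $dg$ identifies the two operators only if the former is already known to be a derivation, which is the point at issue. The same problem underlies your Step~2 check of $\Lie_X=[\iota_X,d]$.

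Reverse the logic. Let $d$ be the chart derivation with $d(f)=df$ as in \eqref{equation;df} and $d(df)=0$, glued by uniqueness. Put $\Lie_X:=[\iota_X,d]$ and prove the identities for these operators by generator checks. Only afterwards match the paper's definitions:
\begin{itemize}
\item Commute $\Lie_X$ through $\iota_{X_1}\cdots\iota_{X_k}$ using $[\Lie_X,\iota_{X_i}]=\iota_{[X,X_i]}$, each step costing $(-1)^{\abs{X}\abs{X_i}}$, and use $\Lie_Xf=X(f)$. This reproduces the paper's defining identity for $\Lie_X$ exactly, and that identity determines $\Lie_X\alpha$ because a form is determined by its full contractions.
\item For $d$, induction on $k$ using $\iota_X d=\Lie_X-d\iota_X$ and $[\Lie_X,\iota_Y]=\iota_{[X,Y]}$ produces the invariant formula. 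Derive its index range and signs this way rather than matching the printed display: its sums start at $1$, so at $k=0$ it does not even reproduce \eqref{equation;df}.
\end{itemize}
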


\subsubsection*{The de Rham double complex of a derived manifold}

If $\ca{M}=(M,\ca{S}_M^\bu,Q)$ is a derived manifold, it follows from Prop.~\ref{proposition;cartan} that
$d\Lie_Q=\Lie_Qd$ and $\Lie_Q^2=\Lie_{Q^2}=0$.  Thus $\Lie_Q$ is a differential, which we call the \emph{internal differential}, 
and $(\Omega^{\bu,\bu}(M),d,\Lie_Q)$ is a double complex equipped with the exterior derivative $d$ of bidegree $(1,0)$ and the internal differential 
$\Lie_Q$ of bidegree $(0,1)$.  For brevity we will denote this double complex by
\[\Omega^{\bu,\bu}(\ca{M})\]
and the corresponding double complex of sheaves by
$\Omega^{\bu,\bu}_{\ca{M}}$.

\begin{example}\label{example;derived-lie}
Let $\ca{Z}(M,E,s)$ be a derived zero locus (notation as in \S\,\ref{subsection;derived-zero}).  The cohomological vector field is $Q=\pardif{}{s}$.  For any section $f$ of the structure sheaf we have $\Lie_Qf=\pardif{f}{s}$.  In a graded chart $U$ with coordinates $x_i$, $\xi_j$ we express $s$ as a linear combination $s(x)=\sum_js_j(x)\xi_j^*$.  Then $\Lie_Qx_i=0$ and $\Lie_Q\xi_j=s_j=s^*\xi_j$, so $\Lie_Qdx_i=d\Lie_Qx_i=0$ and
\[
\Lie_Qd\xi_j=d\Lie_Q\xi_j=s^*d\xi_j=\sum_idx_i\,\pardif{s_j}{x_i}.
\]
\end{example}

\subsection{The category of fibrant objects \texorpdfstring{$\DMfd$}{DMfd}}
\label{subsection;fibrant}

In the following definitions $\ca{M}=(M,\ca{S}^\bu_M,Q_M)$ and
$\ca{N}=(N,\ca{S}^\bu_N,Q_N)$ denote derived manifolds.

\begin{definition}\label{def:fibDman}
A \emph{fibration} is a morphism $f\colon\ca{M}\to\ca{N}$ such that
the tangent map $$T^k_xf\colon T^k_x M\to T^k_{f(x)}N$$ is surjective for
all $x\in M$ and in every degree $k$.
\end{definition}

\begin{remarks}\label{remark;fibration}
\begin{enumerate}[wide,labelwidth=0pt,labelindent=0pt]
\item\label{item;fibration}
In particular ($k=0$) every fibration is a submersion.  
\item\label{item;compose}
If the composition $g\circ f$ of two morphisms is a fibration and $f$ is surjective, then $g$ is a fibration.
\item\label{item;mfd-dmfd}
A fibration between two manifolds is the same thing as a submersion.  (Here, as usual, we regard manifolds as a full subcategory of derived manifolds; see~\eqref{equation;mfd-dmfd}.)
\end{enumerate}
\end{remarks}

\begin{definition}
A \emph{weak equivalence} is a morphism $f\colon\ca{M}\to\ca{N}$ such
that the induced map on classical loci
$f\colon\pi_0(\ca{M})\to\pi_0(\ca{N})$ is a bijection and for each
$x\in\pi_0(\ca{M})$ the morphism of tangent complexes $$T^\bu_xf\colon
T^\bu_x\ca{M}\to T^\bu_{f(x)}\ca{N}$$ is a quasi-isomorphism,
i.e.\ induces isomorphisms $H^k(T^\bu_x\ca{M})\cong
H^k(T^\bu_{f(x)}\ca{N})$ for all $k$.
\end{definition}

We can now state the main result of~\cite{blx}.  See \S\,\ref{def:cfo} for the definition of a category of fibrant objects.

\begin{theorem}[\cite{blx}]\label{theorem;blx-cfo}
The category of derived manifolds $\DMfd$ with the fibrations and weak
equivalences just defined is a category of fibrant objects.
\end{theorem}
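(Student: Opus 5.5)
The plan is to verify the axioms (CFO1)--(CFO7) of \S\,\ref{def:cfo} directly for $\DMfd$, with fibrations as in Def.~\ref{def:fibDman} and weak equivalences defined via classical loci and tangent complexes. The two-out-of-three property and the requirement that isomorphisms be acyclic fibrations are formal: bijections of $\pi_0$ and quasi-isomorphisms of the complexes $T^\bu_x$ satisfy 2-out-of-3, and the chain rule $T^\bu(g\circ f)=T^\bu g\circ T^\bu f$ transports this to morphisms. Composition of fibrations is immediate from degreewise surjectivity of tangent maps. Every derived manifold is fibrant because the terminal object is the point, and $T^k$ of a point vanishes.

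For (CFO4), let $f\colon\ca{M}\to\ca{P}$ be a fibration and $g\colon\ca{N}\to\ca{P}$ arbitrary. Surjectivity of $T^\bu_xf$ in every degree gives the transversality condition \eqref{equation;transverse}, so Prop.~\ref{proposition;derived-fibred-product} produces $\ca{M}\times_{\ca{P}}\ca{N}$. The tangent space \eqref{equation;fibred-tangent} is a fibre product of graded vector spaces along a surjection, so the projection to $T^\bu_y N$ is degreewise surjective, and the base change is a fibration.

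For the acyclic case, I take $f$ to be an acyclic fibration. At classical points its tangent map is then a surjective quasi-isomorphism, with acyclic kernel $K^\bu$. The pullback's tangent complex maps onto $T^\bu_y\ca{N}$ with the same kernel $K^\bu$, hence is again a quasi-isomorphism. On classical loci, \eqref{equation;fibred-classical} shows that the bijection $\pi_0(\ca{M})\to\pi_0(\ca{P})$ pulls back to a bijection. One point needs care here: the fibre of $f$ over a classical point must contain no additional classical points. This follows from $\pi_0(f)$ being a bijection.

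For path objects (CFO6), I would first choose a local model and then globalize. The local model takes $\ca{M}^I$ to be $\ca{M}\times\ca{M}$ with extra coordinates: a copy of the tangent bundle shifted down by one, $T\ca{M}[-1]$, sitting over the diagonal. Globally, I would pick a connection or exponential map $\exp\colon U\subseteq TM\to M\times M$ near the zero section. I then set
\[
\ca{M}^I=\ca{M}\times\ca{M}\times_{\ca{M}\times\ca{M}} \ca{Z}\bigl(T^\bu\ca{M}\oplus T^\bu\ca{M}[-1],\ \text{section}\bigr),
\]
so that $\ca{M}\to\ca{M}^I$ (the zero section) is a weak equivalence and $\ca{M}^I\to\ca{M}\times\ca{M}$ is a fibration.

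The main obstacle is making this global construction rigorous, since an exponential map only exists on a neighbourhood of the zero section. I would try shrinking to an open neighbourhood of the diagonal in $M\times M$; this is permitted because the diagonal contains all classical points. The cohomological vector field also has to be extended compatibly on the graded part. Failing that, I would factor the diagonal $\ca{M}\to\ca{M}\times\ca{M}$ via a general factorization lemma: adjoin generators killing cohomology of the relative tangent complex, as in \cite{blx}.
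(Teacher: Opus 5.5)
Your checks of the routine axioms are correct. Degreewise surjectivity of $T^\bu f$ gives transversality, so Prop.~\ref{proposition;derived-fibred-product} yields the pullback and shows it is a fibration. In the acyclic case, the kernel of $T^\bu_{(x,y)}(\ca{M}\times_{\ca{P}}\ca{N})\to T^\bu_y\ca{N}$ is $\ker T^\bu_xf$, which is acyclic, and \eqref{equation;fibred-classical} handles the classical loci.

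The genuine gap is the path object, which is exactly what the paper flags as the hard content of \cite{blx}. The paper does not reprove it. It only records that the path object raises amplitude by one, and it writes the path object down explicitly only when $\ca{M}$ is an ordinary manifold (Lemma~\ref{lemma;path}).

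Your displayed formula is not yet a definition. $\ca{Z}(\cdot)$ takes a manifold, a vector bundle and a section. Your candidate instead has body a neighbourhood of the zero section of $TM$, and it carries two copies of the negative-degree coordinates of $\ca{M}$ plus the shifted tangent directions. On this you must construct a whole cohomological vector field $Q^I$, not a section, and that is the entire difficulty.

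Since $Q$ kills degree-$0$ functions, compatibility of $Q^I$ with $p\colon\ca{M}^I\to\ca{M}\times\ca{M}$ is automatic when $Q_M=0$; this is why Lemma~\ref{lemma;path} is easy. For a genuine derived manifold you need $Q^I(p^\sharp\xi)=p^\sharp(Q_M\xi)$ for the negative-degree coordinates $\xi$ of the second factor, whose right-hand side involves the coefficients of $Q_M$ evaluated at $\exp_x(v)$ rather than at $x$. Any naive identification of fibres, e.g.\ by parallel transport, reproduces the curvature and higher brackets of $Q_M$ at $x$, not at $\exp_x(v)$.

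This discrepancy vanishes along $v=0$. It must be absorbed, order by order, by correction terms in the shifted tangent coordinates (on which $Q$ is the tautological section). At the same time you must preserve $(Q^I)^2=0$, keep $\pi_0(\ca{M}^I)$ equal to the image of $\pi_0(\ca{M})$, and keep the zero section a quasi-isomorphism on tangent complexes. You name this step but give no mechanism for it. The issue you do address, that $\exp$ is only defined near the zero section, is harmless, since fibrations need not be surjective (Lemma~\ref{lemma;path}\eqref{item;submanifold}).

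Your fallback does not close the gap either. In a category of fibrant objects the factorization lemma is derived from path objects, so appealing to it to build $\ca{M}^I$ is circular. Likewise, ``adjoining generators to kill cohomology of the relative tangent complex'' in $\DMfd$ means extending a cohomological vector field smoothly and globally over new graded coordinates, which is the same extension problem as above.

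As written, the proposal establishes every axiom except the path-object axiom. The missing ingredient is the actual construction of the cohomological vector field on the path space, which is the substance of \cite{blx}.
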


The most difficult part of the proof is the construction of path objects.  It turns out that if a derived manifold $\ca{M}$ has amplitude $[0,n]$, then its path object has amplitude $[0,n+1]$.  In particular, if $\ca{M}=M$
is an ordinary manifold, then its path object is quasi-smooth.  The following simple construction of a path object of an ordinary manifold $M$ will be frequently useful in computations.

\begin{lemma}\label{lemma;path}
\begin{enumerate}
\item\label{item;vector-bundle}
Let $M$ be a manifold, let $\pi_E\colon E\to M$ be a vector bundle, and let
$\epsilon_E$ be the tautological section of the bundle $\pi_E^*E$. 
The zero section $i\colon M\to E$ defines a weak equivalence from $M$ to the quasi-smooth derived manifold
$\ca{Z}(\epsilon_E)=\ca{Z}(E,\pi_E^*E,\epsilon_E)$.
\item\label{item;submanifold}
Let $M$ be a submanifold of a manifold $N$ and let $E=(TN|_{TM})/TM$ be the normal bundle of $M$.  The morphism $p\colon\ca{Z}(\epsilon_E)\to N$ defined by a tubular neighbourhood embedding of $E$ into $N$ is a fibration, and the composition
\[
\begin{tikzcd}
M\ar[r,"i"]&\ca{Z}(\epsilon_E)\ar[r,"p"]&N,
\end{tikzcd}
\]
where $i$ is the weak equivalence defined in~\eqref{item;vector-bundle}, is equal to the inclusion of $M$ into $N$.
\item\label{item;path}
Let $\pi\colon TM\to M$ be the tangent bundle of a manifold $M$ and let
$\epsilon_M$ be the Euler vector field of $TM$, regarded as a section
of the vertical tangent bundle $\pi^*TM=\ker(T\pi)$.  The quasi-smooth derived manifold
\[\ca{P}M=\ca{Z}(\epsilon_M)=\ca{Z}(TM,\pi^*TM,\epsilon_M)\]
is a path object for $M$ in the category of derived manifolds.  The
weak equivalence $i\colon M\to\ca{P}M$ is the zero section of $TM$ and
the fibration $p\colon\ca{P}M\to M\times M$ is a tubular neighbourhood
embedding of the diagonal.
\end{enumerate}
\end{lemma}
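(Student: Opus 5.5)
The plan is to prove the three parts in order, with (2) and (3) reduced to (1) and Prop.~\ref{prop:quasi-smooth}. The only real work is in (1), where the tangent complex is computed using the formula of \S\,\ref{subsection;derived-zero}.

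For \eqref{item;vector-bundle}, view $M$ as the derived zero locus $\ca{Z}(M,0,0)$. The zero section $i\colon M\to E$, together with the zero bundle map $0\to\pi_E^*E$, intertwines the sections, since $\epsilon_E\circ i=0$. By Prop.~\ref{prop:quasi-smooth} it therefore defines a derived morphism $M\to\ca{Z}(\epsilon_E)$. The classical locus $Z(\epsilon_E)$ is exactly the zero section, so $i$ is a bijection on classical loci. At $x\in M$ the tangent complex is
\[
T_x E\cong T_xM\oplus E_x\xrightarrow{\;-d_x\epsilon_E\;}E_x .
\]
Since $\epsilon_E$ is tautological, $d_x\epsilon_E$ is the projection onto the vertical summand $E_x$, which is surjective with kernel $T_xM$. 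Hence $H^0=T_xM$ and $H^1=0$. The tangent map $T_xi$ is the inclusion $T_xM\to T_xE$, so it induces an isomorphism on cohomology. This shows $i$ is a weak equivalence.

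For \eqref{item;submanifold}, let $\phi\colon E\to N$ be a tubular neighbourhood embedding, an open embedding with $\phi\circ i$ equal to the inclusion. Define $p$ with body map $\phi$ and $p^\sharp$ the usual pullback of functions on $N$ into degree $0$. Relatedness $Q\circ p^\sharp=p^\sharp\circ Q_N=0$ holds because $\iota_{\epsilon_E}$ annihilates degree-$0$ functions; hence $p$ is a derived morphism. For the fibration property, $T^0p=T\phi$ is surjective because $\phi$ is a local diffeomorphism. In degrees $k\ge1$ we have $T^kN=0$, so surjectivity is automatic. Finally, $p\circ i=\phi\circ i$ is the inclusion.

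For \eqref{item;path}, apply \eqref{item;submanifold} to the diagonal $\Delta\subset M\times M$, whose normal bundle is identified with $TM$ via $v\mapsto(0,v)$. Under this identification $\pi^*TM=\ker T\pi$ and the tautological section becomes the Euler field $\epsilon_M$. This gives the weak equivalence $i\colon M\to\ca{P}M$ (the zero section, by \eqref{item;vector-bundle}) and the fibration $p\colon\ca{P}M\to M\times M$, whose composite is the diagonal as required by \ref{iCFO6}.

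The main obstacle I expect is that a tubular neighbourhood map such as $(x,v)\mapsto(x,\exp_x v)$ for a Riemannian metric is only an embedding on an open neighbourhood $U\subset TM$ of the zero section, not on all of $TM$. I would handle this by choosing a fibrewise radial diffeomorphism $TM\to U$ of the form $v\mapsto\rho(\abs{v})v$ with $\rho>0$. This map pulls the Euler field back to a positive function multiple of $\epsilon_M$, so it is covered by a vector bundle automorphism of $\pi^*TM$ intertwining the two sections. By Prop.~\ref{prop:quasi-smooth} it therefore induces an isomorphism of derived zero loci, and the global tubular embedding may be replaced by this composite without changing $\ca{Z}(\epsilon_M)$ up to isomorphism.
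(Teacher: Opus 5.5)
Your proposal is correct and follows essentially the same route as the paper: compute the tangent complex $T_xM\oplus E_x\to E_x$ of $\ca{Z}(\epsilon_E)$ along the zero section to get the quasi-isomorphism, check that $T^\bu p$ is surjective in every degree, and deduce (3) from (2) by identifying $TM$ with the normal bundle of the diagonal. Your extra checks fill in details the paper leaves implicit: that $i$ and $p$ are genuinely derived morphisms, that $T p$ is surjective at every body point rather than only along the zero section, and the radial rescaling that makes the tubular embedding defined on the whole bundle.
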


\begin{proof}
\eqref{item;vector-bundle}~ The classical locus of $\ca{Z}(\epsilon_E)$ is $\pi_0(\ca{Z}(\epsilon_E))=Z(\epsilon_E)=i(M)$, so the zero section $i$ defines a morphism $i\colon M\to\ca{Z}(\epsilon_E)$.  This morphism induces a bijection of classical loci
$i\colon\pi_0(M)=M\to\pi_0(\ca{Z}(\epsilon_E))=i(M)$.  Let $x\in M$.  The tangent
space to $E$ at $i(x)$ is the direct sum $T_{i(x)}E=T_xM\oplus E_x$.
The tangent map of $i$ at $x\in M$,
\[
\begin{tikzcd}[row sep=large,column sep=large]
T^\bu_xM\ar[d,"T^\bu_xi"']\colon&T_xM\ar[r]
\ar[d,"\bigl(\begin{smallmatrix}\id\\0\end{smallmatrix}\bigr)"']&
0\ar[d]\\
T^\bu_{i(x)}\ca{Z}(\epsilon_E)\colon&T_xM\oplus
E_x\ar[r,"-d_x\epsilon_E"]&E_x,
\end{tikzcd}
\]
is a quasi-isomorphism because $d_x\epsilon_E=(0,\id)$ is the projection onto $E_x$.  This shows that $i$ is a weak equivalence.  

\eqref{item;submanifold}~The tangent map of $p$ at $i(x)\in TM$,
\[
\begin{tikzcd}[row sep=large,column sep=large]
T^\bu_{i(x)}\ca{Z}(\epsilon_E)\ar[d,"T^\bu_{i(x)}p"']\colon&T_xM\oplus E_x
\ar[r,"-d_x\epsilon_E"]\ar[d]&E_x\ar[d]\\
T^\bu_xN\colon&T_xN\ar[r]&0,
\end{tikzcd}
\]
is a surjection in every degree, so $p$ is a fibration, and
$p\circ i\colon M\to N$ is the inclusion map.

\eqref{item;path}~Since the tangent bundle of $M$ is isomorphic to the normal bundle of the diagonal in $M\times M$, this is a special case of~\eqref{item;submanifold}.
\end{proof}

\begin{remarks}\label{remark;path}
\begin{enumerate}[wide,labelwidth=0pt,labelindent=0pt]
\item\label{item;functorial}
The weak equivalence of Lemma~\ref{lemma;path}\eqref{item;vector-bundle} has the following functorial property.  Let $f\colon P\to M$ be a smooth map.  Then we can form the derived manifold 
\[\huaZ(\epsilon_{f^*E})=\huaZ(f^*E,\pi_{f^*E}f^*E,\epsilon_{f^*E}),\]
and the natural vector bundle map $f^*E\to E$ induces a morphism of derived manifolds $f_E\colon\huaZ(\epsilon_{f^*E})\to\huaZ(\epsilon_E)$.  We have a commutative diagram
\[
\begin{tikzcd}[row sep=large,column sep=large]
P\ar[r,"\simeq"]\ar[d,"f"']&\huaZ(\epsilon_{f^*E})\ar[d,"f_E"]\\
M\ar[r,"\simeq"]&\huaZ(\epsilon_E),
\end{tikzcd}
\]
in which the horizontal arrows are weak equivalences.  The projection $\pi_E\colon E\to M$ induces a morphism $\huaZ(\epsilon_E)\to M$ which is a left inverse of the weak equivalence $M\to\huaZ(\epsilon_E)$.  The tautological section $\epsilon_{f^*E}=f_E^*\epsilon_E$ is the pullback of $\epsilon_E$ via the map $f_E$, and $\huaZ(\epsilon_{f^*E})$ is isomorphic to the fibred product $\huaZ(\epsilon_{f^*E})\cong\huaZ(\epsilon_E)\times_MP$.
\item\label{item;functorial-transverse}
In the setting of Lemma~\ref{lemma;path}\eqref{item;submanifold} let $g\colon Q\to N$ be a smooth map that is transverse to the submanifold $M$ of $N$.  Let $E$ be the normal bundle of $M$ in $N$.  Let $P$ be the submanifold $g^{-1}(M)$ of $Q$ and let $f=g|_P$.  Then $f^*E$ is isomorphic to the normal bundle of $P$ in $Q$.
\end{enumerate}
\end{remarks}

\subsubsection{Linear differential forms}\label{subsection;linear}

Let $\pi\colon E\to M$ be a vector bundle.  A differential form $\phi$
on (the total space of) $E$ is \emph{linear} if it satisfies $\Lie_v\phi=\phi$, where $v$ is the Euler vector field on $E$, or equivalently if
$m_t^*\phi=t\phi$ for all $t\in\R$, where $m_t\colon E\to E$ is multiplication by $t$; see e.g.\ \cite[\S\,2]{BuCa12}. The \emph{linear part} of a form $\phi\in\Omega^k(E)$ is the linear $k$-form
\begin{equation}\label{equation;linear}
\phi_\lin=\frac{d}{dt}m_t^*\phi\Big|_{t=0}.
\end{equation}
A linear differential form $\phi$ on $E$ can be expressed in a vector bundle
chart as
\[\phi=\sum_j\chi_jy_j+\sum_k\psi_k\wedge dy_k,\]
where $y_j$ are linear coordinates along the fibres of $E$ and
$\chi_j$ and $\psi_k$ are forms on $M$.  Let $\epsilon=\epsilon_E$ be the tautological section of $\pi^*E$.  To the linear
differential form $\phi\in\Omega^k(E)$ corresponds a $(k,-1)$-form
$\epsilon_*\phi$ on the derived manifold $\huaZ(E,\pi^*E,\epsilon)$
defined by
\[
\epsilon_*\phi=\sum_j\chi_j\eta_j+\sum_k\psi_k\wedge d\eta_k,
\]
where $\eta_j$ are the variables of degree $-1$ corresponding to
$y_j$.  It follows from Ex.~\ref{example;derived-lie} that
$\Lie_Q\eta_j=\epsilon^*\eta_j=y_j$, so
\begin{equation}\label{equation;primitive}
\Lie_Q\epsilon_*\phi=\phi,
\end{equation}
where $Q=\iota_\epsilon$.  Thus, for linear differential forms
$\epsilon_*$ is a right inverse of the derivation $\Lie_Q$.  For
instance, let $E=T^*M$ be the cotangent bundle of $M$ and let
$\phi=\omega_\can$ be the canonical symplectic form.  Then
$\phi=\sum_idx_i\wedge dy_i$, $\epsilon_*\phi=\sum_idx_i\wedge
d\eta_i$, and $\Lie_Q\epsilon_*\phi=\phi$.

\section{Derived higher Lie groupoids}\label{section;derived-higher}

In this section we introduce higher derived Lie groupoids and prove that they form an incomplete category of fibrant objects (iCFO).  The first step is to introduce a pretopology on the category of derived manifolds.  There are many different pretopologies on this category, but we are aware of only a few that satisfy our requirement that they should be stalkwise with respect to a suitable jointly conservative collection of points.  The most flexible such pretopology is the pretopology of locally split fibrations introduced in \S\,\ref{subsection;topology}.  We present a jointly conservative collection of points and show that this pretopology is stalkwise (Thm.~\ref{pro:LSW-pret-dmfd}).  In \S\,\ref{subsection;derived-higher} we introduce higher derived Lie groupoids.  It then follows immediately from Thm.~\ref{thm:RZ} and Thm.~\ref{pro:LSW-pret-dmfd} that they form an iCFO.  In \S\,\ref{subsection;ME} and \S\,\ref{subsection;pull} we define Morita equivalences and homotopy pullbacks of higher derived Lie groupoids.

\subsection{A pretopology and points for derived manifolds}\label{subsection;topology}

\subsubsection{Locally split fibrations}\label{subsubsection;lsf}

Let $f\colon\ca{M}\to\ca{N}$ be a morphism of derived manifolds $\huaM=(M,\huaS_M^\bu,Q_M)$ and $\huaN=(N,\huaS_N^\bu,Q_N)$.  A \emph{local splitting of $f$ at $y\in\huaN$} is a pair $(\ca{U},s)$, where $\ca{U}$ is an open neighbourhood of $y$ in $\ca{N}$ (equipped with the induced derived structure) and $s\colon\ca{U}\to\ca{M}$ is a morphism of derived manifolds such that $f\circ s=\id_{\ca{U}}$.  Following \cite[Definition 9.25]{meyer-zhu} we say that $f$ is \emph{locally split} if at every $y\in\huaN$ there exists a local splitting of $f$.

We say that $f$ is \emph{surjective} if $f$ is surjective as a map of sets $M\to N$.  We say that $f$ is \emph{essentially surjective} if the map of classical loci $\pi_0(\ca{M})\to\pi_0(\ca{N})$ induced by $f$ is surjective. If $f$ is locally split, then $f$ is both surjective and essentially surjective.  

A surjective fibration $f\colon M\to N$ between manifolds is the same thing as a surjective submersion (see Rem.~\ref{remark;fibration}\eqref{item;mfd-dmfd}), and therefore is locally split.  Indeed, for every $x\in M$ it has a local splitting $s$ at $y=f(x)$ satisfying $s(y)=x$.   Both these facts are false for derived manifolds for an obvious reason: if $f\colon\ca{M}\to\ca{N}$ is a surjective fibration of derived manifolds, $y\in\pi_0(\ca{N})$ is a classical point, and $s$ is a local splitting at $y$, then $s(y)$ must be a classical point of $\ca{M}$.  Thus there cannot exist a local splitting at $y$ if the fibre $f^{-1}(y)$ has no classical points, and if $x\in f^{-1}(y)$ is not a classical point, there cannot exist a local splitting which maps $y$ to $x$.  It is easy to produce examples of this.

\begin{example}\label{ep:counter-ep}   
Let $P$ be a manifold and $g\colon P\to\R$ a smooth function, viewed as a section of the trivial line bundle $\underline{\R}$ over $P$.  The quasi-smooth derived manifold $\ca{P}=\ca{Z}(P,\underline{\R},g)$ has classical locus equal to $Z(g)$, the zero locus of $g$.  Let $s\colon\R\to\R$ be the identity map, viewed as a section of the trivial line bundle $\underline{\R}$ over $\R$.  Let $\ca{N}$ be the quasismooth derived manifold $\ca{Z}(\R,\underline{\R},s)$, let $\ca{M}=\ca{P}\times\ca{N}$, and let $f\colon\ca{M}\to\ca{N}$ be the projection onto the second factor.  Then $f$ is a surjective fibration, the classical loci are 
\[
\pi_0(\ca{M})=Z(g)\times\{0\}\qquad\text{and}\qquad\pi_0(\ca{N})=\{0\},
\]
and the fibre over the classical point is $f^{-1}(0)=P\times\{0\}$.  Suppose $g$ has no zeroes. Then $\pi_0(\ca{M})$ is empty, so $f$ is not essentially surjective and does not have a local splitting at $0$.  Now suppose $g$ has an isolated zero at $p_0\in P$.  Let $p\in\ca{P}$.  Then $f(p,0)=0$, but $f$ has a local splitting at $0\in\ca{N}$ that maps $0$ to $(p,0)\in f^{-1}(0)$ if and only if $p=p_0$.
\end{example}

\begin{lemma}\label{lemma;cover-sections}\mbox{}
Locally split fibrations serve as covers for a pretopology $\covers_\lsf$ on $\DMfd$.
\end{lemma}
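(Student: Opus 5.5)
We verify axioms \ref{P1}--\ref{P4} for the class $\covers_\lsf$ of morphisms of derived manifolds that are both fibrations (Def.~\ref{def:fibDman}) and locally split.

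\ref{P1} is immediate. An isomorphism $f$ has bijective tangent maps in every degree, so it is a fibration. Its inverse $f^{-1}$, restricted to any open $\huaU$, is a global splitting.

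For \ref{P2}, let $f\colon\huaM\to\huaN$ and $g\colon\huaN\to\huaP$ be covers. Surjectivity of tangent maps is stable under composition, so $g\circ f$ is a fibration. For splittings, take $z\in\huaP$ and a local splitting $(\huaU,t)$ of $g$ at $z$. Set $y=t(z)$ and choose a local splitting $(\huaV,s)$ of $f$ at $y$. Then $\huaU'=t^{-1}(\huaV)$ is an open neighbourhood of $z$; we give it the induced (open) derived structure, so that $t$ restricts to a derived morphism $\huaU'\to\huaV$. The composite $s\circ t|_{\huaU'}$ is a splitting of $g\circ f$ on $\huaU'$.

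For \ref{P3}, let $f\colon\huaM\to\huaN$ be a cover and $h\colon\huaW\to\huaN$ arbitrary. Since $f$ is surjective in every degree of the tangent map, $f$ and $h$ are transverse in the sense of \eqref{equation;transverse}. By Prop.~\ref{proposition;derived-fibred-product} the fibred product $\huaW\times_\huaN\huaM$ therefore exists in $\DMfd$, with graded tangent spaces $T^\bu_w\huaW\times_{T^\bu\huaN}T^\bu\huaM$. Surjectivity of $T^\bu f$ gives surjectivity of the projection of this fibre product onto $T^\bu_w\huaW$, so the base change $\huaW\times_\huaN\huaM\to\huaW$ is a fibration. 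Local splittings also pull back. Given $w\in\huaW$, choose a splitting $(\huaU,s)$ of $f$ at $h(w)$ and put $\huaU_W=h^{-1}(\huaU)$. The pair $(\id,s\circ h)\colon\huaU_W\to\huaW\times_\huaN\huaM$ is a derived morphism by the universal property, and it is a section of the projection.

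For \ref{P4}, the product $f\times g$ exists by \S\,\ref{subsubsection;products}. By \eqref{equation;tangent-product} its tangent maps are direct sums of surjections, hence surjective. Products $\huaU\times\huaU'$ of local splitting domains are open neighbourhoods, and $s\times s'$ is a splitting there.

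The main obstacle is \ref{P3}: it requires that the fibred product actually exists as a derived manifold. This is exactly where we need the fibration condition (which gives transversality), not merely local splitness. We must also check carefully that restricting to open subsets in the splitting arguments respects the induced derived structures; this holds because open derived submanifolds carry the restricted structure sheaf and $Q$.
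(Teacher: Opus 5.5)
Your proof is correct and follows the paper's argument essentially step by step. Isomorphisms and composites of locally split fibrations are handled the same way, splittings are pulled back through the universal property of the fibred product, and products of splittings split products. The only difference is in (P3): the paper invokes the CFO structure of $\DMfd$ (Thm.~\ref{theorem;blx-cfo}) for the existence of the pullback and for fibrations being stable under base change, whereas you derive both directly from transversality and Prop.~\ref{proposition;derived-fibred-product}, which is exactly the content behind that CFO axiom.
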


\begin{proof}
This is a straightforward verification of the axioms \ref{P1}--\ref{P4} of \S\,\ref{sec:pre-points}.  An isomorphism is a locally split fibration, and the composition of two locally split fibrations with local splittings is locally split, so axioms \ref{P1} and \ref{P2} hold.  

Let $f\colon\ca{N}_1\to\ca{N}_2$ be a fibration and let $g\colon\ca{M}_2\to\ca{N}_2$ be any morphism, where $\ca{N}_1$, $\ca{N}_2$, and $\ca{M}_2$ are derived manifolds.  The fibred product $\ca{M}_1=\ca{M}_2\times_{\ca{N}_2}\ca{N}_1$ exists and the pullback morphism $\bar{f}=\pr_1\colon\ca{M}_1\to\ca{M}_2$ is a fibration, because $\DMfd$ is a CFO (Thm.~\ref{theorem;blx-cfo}).
Suppose $f$ is locally split.  Let $x\in M_2$ and let $s\colon\ca{V}\to\ca{N}_1$ be a local splitting of $f$ at $y=g(x)\in N_2$.  Let $\ca{U}=g^{-1}(\ca{V})$ and let $i\colon\ca{U}\to\ca{M}$ be the inclusion.  Then the morphism $i\times s\circ g\colon\ca{U}\to\ca{M}_2\times\ca{N}_1$ descends to a unique morphism $\bar{s}\colon\ca{U}\to\ca{M}_1$, which is a local splitting at $x$ of the fibration $\bar{f}\colon\ca{M}_1\to\ca{M}_2$. This proves that axiom \ref{P3} holds.

For $i=1$, $2$, let $f_i\colon\ca{M}_i\to\ca{N}_i$ be a fibration and let $s_i\colon\ca{U}_i\to\ca{M}_i$ be a local splitting of $f_i$ at $y_i\in M_i$.  Then $f=f_1\times f_2\colon\ca{M}_1\times\ca{M}_2\to\ca{N}_1\times\ca{N}_2$ is a fibration and $s_1\times s_2\colon\ca{U}_1\times\ca{U}_2\to\ca{M}_1\times\ca{M}_2$ is a local splitting of $f$ at $y=(y_1,y_2)$.  This proves that axiom \ref{P4} holds.
\end{proof}

\begin{caution}\label{remark;terminal}
The category of derived manifolds has a terminal object, namely the one-point space $*$, viewed as a $0$-dimensional manifold with trivial graded structure and zero cohomological vector field.  For every derived manifold $\ca{M}$ the unique morphism $\ca{M}\to*$ is a fibration, but it is not a cover (i.e.\ it does not have a splitting $*\to\ca{M}$) unless the classical locus of $\ca{M}$ is nonempty.  This is the reason why in the definition of a pretopology instead of the requirement \ref{P4'} we impose the weaker requirement \ref{P4} (see Rem.~\ref{rmk:p4}).
\end{caution}

\begin{remark}\label{rmk:man-to-dman}
Under the fully faithful embedding $i\colon\Mfd\to\DMfd$ (see \eqref{equation;mfd-dmfd}) the pretopology $\covers_\lsf$ restricts to the pretopology $\covers_\sursub$ whose covers are the surjective submersions (see Rem.~\ref{remark;fibration}).  The embedding $i$ \emph{reflects} and \emph{creates} covers in the sense that $i(f)$ is a cover if and only if $f$ is a cover.
\end{remark}

There is a more mundane pretopology on the category of derived manifolds, namely the pretopology $\covers_\open$ whose covers $c\colon\huaU\to\huaM$ are open covers in the usual sense: $\huaU$ is a disjoint union $\coprod_{i\in I}\huaU_i$ of derived manifolds $\huaU_i$ and for each $i$ we have an open embedding $c_i\colon\huaU_i\to\huaM$ such that the resulting morphism $c=\coprod_ic_i$ is surjective.  We will prefer the pretopology $\covers_\lsf$, as it appears to be more suitable for the construction of higher groupoids.  See App.~\ref{section;alternative} for a comparison with some other pretopologies.  However, the pretopology $\covers_\open$ has the following redeeming features.

\begin{lemma}\label{lemma;sheaves}
\begin{enumerate}
\item\label{refinement}
Every cover $f\colon\huaM\to\huaN$ in $\covers_\lsf$ has a refinement $c\colon\huaV\to\huaN$ in $\covers_\open$. Let $\bar{c}\colon\huaU=f^*\huaV\to\huaM$ be the pullback open cover and let $\bar{f}\colon\huaU\to\huaV$ the fibration induced by $f$.  We can choose $c$ in such a way that $\bar{f}$ has a global splitting $\bar{s}\colon\huaV\to\huaU$ and the morphism $s=\bar{c}\circ\bar{s}$ satisfies $c=f\circ s$, as in the following commutative diagram. 
\[
\begin{tikzcd}
\huaU\ar[r,shift left=0.6ex,"\bar{f}"]\ar[d,"\bar{c}"']&
\huaV\ar[d,"c"]\ar[l,shift left=0.6ex,"\bar{s}"near end]\ar[dl,"s"]\\
\huaM\ar[r,"f"']&\huaN
\end{tikzcd}
\]
\item\label{sheaves}
A presheaf of sets on derived manifolds is a sheaf with respect to $\covers_\lsf$ if and only if it is a sheaf with respect to $\covers_\open$\textup: $\Sh(\DMfd,\covers_\lsf)=\Sh(\DMfd,\covers_\open)$.
\item\label{subcanonical}
The pretopology $\covers_\lsf$ is subcanonical.
\end{enumerate}
\end{lemma}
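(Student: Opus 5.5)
For part~\eqref{refinement} I will use the local splittings directly. Let $f\colon\huaM\to\huaN$ be a locally split fibration. For each $y\in N$ choose a local splitting $(\huaV_y,s_y)$ with $\huaV_y$ an open derived submanifold of $\huaN$ containing $y$ and $f\circ s_y=\id_{\huaV_y}$. Put $\huaV=\coprod_y\huaV_y$, let $c\colon\huaV\to\huaN$ be the union of the open inclusions, and let $s=\coprod_y s_y\colon\huaV\to\huaM$. Then $c$ is surjective, so $c\in\covers_\open$. We also have $f\circ s=c$, so $c$ refines $f$.

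The pullback $\huaU=\huaM\times_\huaN\huaV$ exists because $c$ is an open embedding on each component. It equals $\coprod_y f^{-1}(\huaV_y)$, and the projection $\bar c\colon\huaU\to\huaM$ is an open cover. The pair $(s,\id_\huaV)$ satisfies $f\circ s=c\circ\id$, so by the universal property of the fibred product it induces a unique $\bar s\colon\huaV\to\huaU$ with $\bar f\circ\bar s=\id_\huaV$ and $\bar c\circ\bar s=s$. This gives the commutative diagram in the statement.

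For part~\eqref{sheaves} I will compare the two sheaf conditions.

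\emph{Sheaves for $\covers_\lsf$ are sheaves for $\covers_\open$.} Every open cover is a locally split fibration: it is a fibration because it is a local isomorphism, so the tangent maps are bijective in every degree, and the inclusions of the components are local splittings. Hence $\covers_\open\subseteq\covers_\lsf$ and this direction is immediate.

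\emph{Sheaves for $\covers_\open$ are sheaves for $\covers_\lsf$.} Let $F$ be an open-sheaf and $f\colon\huaM\to\huaN$ a cover in $\covers_\lsf$, with $c,s,\huaU,\bar f,\bar s$ as in part~\eqref{refinement}.
\begin{itemize}
\item Injectivity of $F(\huaN)\to F(\huaM)$: if $a,b\in F(\huaN)$ agree after pulling back along $f$, they agree after pulling back along $c=f\circ s$, hence $a=b$ by the open sheaf property.
\item Gluing: given $m\in F(\huaM)$ with $\pr_1^*m=\pr_2^*m$ on $\huaM\times_\huaN\huaM$, set $v=s^*m\in F(\huaV)$. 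We must show that $v$ satisfies the descent condition for $c$ and that the descended section $n$ pulls back to $m$.
\end{itemize}

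This gluing step is the expected obstacle. Two ingredients handle it.
\begin{itemize}
\item The compatibility $\pr_1^*m=\pr_2^*m$, pulled back along $(s\circ\pr_1,s\circ\pr_2)\colon\huaV\times_\huaN\huaV\to\huaM\times_\huaN\huaM$, gives the descent condition for $v$. So $v=c^*n$ for a unique $n\in F(\huaN)$.
\item To see $f^*n=m$, it suffices by the open sheaf property to check it after pulling back along $\bar c$. There $\bar c^*f^*n=\bar f^*c^*n=\bar f^*\bar s^*\bar c^*m$. Now use the morphism $(\bar c,\bar c\circ\bar s\circ\bar f)\colon\huaU\to\huaM\times_\huaN\huaM$, which is well defined since $f\bar c=c\bar f=f s\bar f$. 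The compatibility pulled back along it yields $\bar c^*m=(\bar c\bar s\bar f)^*m$, which is exactly what is needed.
\end{itemize}

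Part~\eqref{subcanonical} then follows. Representable presheaves $\yon(X)$ are sheaves for $\covers_\open$, because morphisms of derived manifolds glue along open covers, being morphisms of ringed spaces with compatible vector fields. By part~\eqref{sheaves} they are therefore sheaves for $\covers_\lsf$.
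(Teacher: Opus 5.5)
Your proof is correct and follows essentially the same route as the paper: the refinement is built from the local splittings, gluing goes through $v=s^*m$ descending along $c$, and $f^*n=m$ is checked after $\bar c^*$ using the map $(\bar c,\,s\circ\bar f)\colon\huaU\to\huaM\times_\huaN\huaM$ together with injectivity of $\bar c^*$. Subcanonicity is then deduced, as in the paper, from the open pretopology via part~(2); the only difference is that you spell out why open covers are locally split fibrations, which the paper simply asserts.
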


\begin{proof}
\eqref{refinement}~We follow the argument of \cite[Ex.~B.6]{Rogers-Zhu:2016}: since $f$ is a locally split fibration, there exists a cover $\{\huaV_i\}_{i\in I}$ of $\huaN$ by open derived submanifolds together with splittings $s_i\colon\huaV_i\to\huaM$ of $f$.  Let $\huaV$ be the disjoint union $\coprod_{i\in I}\huaV_i$.  The natural morphism $c\colon\huaV\to\huaN$ is an open cover, and the morphism $s\colon\huaV\to\huaM$ induced by the local splittings $s_i$ satisfies $c=f\circ s$.  Let $\huaU_i$ be the open derived submanifold $f^{-1}(\huaV_i)$ of $\huaM$ and let $\huaU=\coprod_i\huaU_i=f^*(\huaV)$.  The natural morphism $\bar{c}\colon\huaU\to\huaM$ is an open cover, the natural morphism $\bar{f}\colon\huaU\to\huaV$ is a fibration and has a global splitting $\bar{s}=\coprod_is_i$, and we have $c=f\circ s$.

\eqref{sheaves}~A sheaf with respect to $\covers_\lsf$ is a sheaf with respect to $\covers_\open$ because the pretopology $\covers_\lsf$ is stronger than $\covers_\open$
($\covers_\lsf\supseteq\covers_\open$).  Let $F$ be a sheaf with respect to $\covers_\open$ and let $f\colon\huaM\to\huaN$ be a locally split fibration.  Choose an open cover $c\colon\huaV\to\huaN$ and a factorization $c=f\circ s$ as in part~\eqref{refinement}.  We must show that the top line of the commutative diagram below is an equalizer diagram.  Since $c$ and $\bar{c}$ are open covers and $F$ is a sheaf with respect to open covers, we know that the middle and bottom lines are equalizer diagrams.  In particular $c^*$ and $\bar{c}^*$ are injective.
\begin{equation}\label{equation;equalizers}
\begin{tikzcd}
F(\huaN)\ar[r,"f^*"]\ar[d,"="']&F(\huaM)\ar[r,shift left=0.8ex,"\pr_1^*"]\ar[r,shift right=0.8ex,"\pr_2^*"']\ar[d,"s^*"']&F(\huaM\times_\huaN\huaM)\ar[d,"s^*"]
\\
F(\huaN)\ar[r,"c^*"]&F(\huaV)\ar[r,shift left=0.8ex,"\pr_1^*"]\ar[r,shift right=0.8ex,"\pr_2^*"']\ar[d,shift right=0.6ex,"\bar{f}^*"']&F(\huaV\times_\huaN\huaV)\ar[d,shift right=0.6ex,"\bar{f}^*"']
\\
F(\huaM)\ar[r,"\bar{c}^*"]&F(\huaU)\ar[r,shift left=0.8ex,"\pr_1^*"]\ar[r,shift right=0.8ex,"\pr_2^*"']\ar[u,shift right=0.6ex,"\bar{s}^*"']&F(\huaU\times_\huaM\huaU)\ar[u,shift right=0.6ex,"\bar{s}^*"']
\end{tikzcd}
\end{equation}
Let $A$ be a set and let $g\colon A\to F(\huaM)$ be a map with ${\pr_1^*}\circ g={\pr_2^*}\circ g$.  Then the map $s^*\circ g\colon A\to F(\huaV)$ satisfies
\[
{\pr_1^*}\circ s^*\circ g=s^*\circ{\pr_1^*}\circ g=s^*\circ{\pr_2}^*\circ g={\pr_2}^*\circ s^*\circ g,
\]
so by the equalizer property of the middle line of~\eqref{equation;equalizers} we obtain a unique map $\bar{g}\colon A\to F(\huaN)$ satisfying $c^*\circ\bar{g}=s^*\circ g$.  We assert that $f^*\circ\bar{g}=g$.  To see this, observe that the pair of morphisms $(\bar{c},s\circ\bar{f})\colon\huaU\to\ca{M}\times\ca{M}$ has the property 
\[
f\circ\bar{c}=c\circ\bar{f}=c\circ\bar{f}\circ\bar{s}\circ\bar{f}=f\circ\bar{c}\circ\bar{s}\circ\bar{f}=f\circ s\circ\bar{f},
\]
and therefore induces a morphism $h\colon\huaU\to\huaM\times_\huaN\huaM$.  Once again using ${\pr_1^*}\circ g={\pr_2^*}\circ g$ yields $h^*\circ{\pr_1^*}\circ g=h^*\circ{\pr_2^*}\circ g$; in other words $\bar{c}^*\circ g=\bar{f}^*\circ s^*\circ g$.  Substituting $s^*\circ g=c^*\circ\bar{g}$ gives
\[
\bar{c}^*\circ g=\bar{f}^*\circ c^*\circ\bar{g}=\bar{c}^*\circ f^*\circ\bar{g}.
\]
The map $\bar{c}^*$ is injective, so we obtain $g=f^*\circ\bar{g}$.  Finally, if $\bar{\bar{g}}\colon A\to F(\huaM)$ is any other map with $g=f^*\circ\bar{\bar{g}}$,
then we have $s^*\circ g=s^*\circ f^*\circ\bar{\bar{g}}$, so $c^*\circ\bar{g}=c^*\circ\bar{\bar{g}}$.  Since $c^*$ is injective, this implies $\bar{g}=\bar{\bar{g}}$.  Thus $f^*$ in the top line of~\eqref{equation;equalizers} is an equalizer.

\eqref{subcanonical}~The pretopology $\covers_\open$ is subcanonical (this is a consequence of the locality of morphisms: giving a morphism $f\colon\huaM\to\huaN$ of derived manifolds is tantamount to giving an open cover $\huaU_i$ of $\huaM$ and morphisms $f_i\colon\huaU_i\to\huaN$ that match on the overlaps) and therefore, by part~\eqref{sheaves}, $\covers_\lsf$ is subcanonical as well.
\end{proof}

\subsubsection{Points for derived manifolds}

We now introduce a jointly conservative collection of points (in the sense of \S\,\ref{sec:points}) for the category $\DMfd$ equipped with the pretopology $\covers_\lsf$.  Let $F$ be a presheaf of sets on $\DMfd$, let $\ca{X}$ be a derived manifold, and let $x\in\huaX$.  The \emph{stalk} of $F$ at $(\ca{X},x)$ is the set
\begin{equation}\label{eq:dm-point}
\ppt_{\ca{X},x}(F)=\colim_{\ca{U}\ni x}F(\ca{U}),
\end{equation}
where the colimit is taken over all open neighbourhoods $\ca{U}$ of $x$ (equipped with the induced derived structure).  The image of an element $\sigma\in F(\huaU)$ in $\ppt_{\huaX,x}(F)$ is the \emph{germ} $[\sigma]=[\sigma]_{\huaX,x}$ of $\sigma$.  The assignment $F\mapsto\ppt_{\ca{X},x}(F)$ defines a functor
\[\ppt_{\ca{X},x}\colon\PSh(\DMfd)\longto\Set.\]
We will denote the restriction of this functor to the category of sheaves $\Sh(\DMfd,\covers_\lsf)$ also by $\ppt_{\ca{X},x}$.

\begin{remark}\label{remark;ball}
Let $B^n_r$ be the open ball of radius $r$ about the origin in $\R^n$. Let $(B^n,\ca{S}_B^\bu)$ be a graded chart with body $B^n=B^n_1$ as in \S\,\ref{subsection;graded}, let $Q$ be a cohomological vector field on $B^n$, and let $\huaB^{n, Q}$ be the derived manifold $(B^n,\ca{S}_B^\bu,Q)$.  For each $r\le1$ let $\ca{B}^{n, Q}_r$ be the open submanifold $B^n_r$ furnished with the induced derived structure.  Then every functor $\ppt_{\ca{X},x}$ is isomorphic to a functor
\begin{equation} \label{eq:pt-ball}
\ppt_{\ca{B}^{n, Q}}(F)=\colim_{r\to0}F(\ca{B}^{n, Q}_r), 
\end{equation}
for a certain $\ca{B}^{n, Q}$.  Observe that the collection of functors~\eqref{eq:pt-ball} is a small set, whereas the collection~\eqref{eq:dm-point} is not.
\end{remark}
   
\begin{lemma}\label{lem:sheafification}
Let $F$ be a presheaf of sets on the category $\DMfd$ and let $F^\sharp$ be its sheafification with respect to the pretopology $\covers_\lsf$. Then for every derived manifold $\ca{X}$ and for every $x\in\ca{X}$ the natural map $F\to F^\sharp$ induces a bijection
\[\ppt_{\huaX,x}(F)\cong\ppt_{\huaX,x}(F^\sharp).\] 
\end{lemma}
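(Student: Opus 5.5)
The plan is to reduce to sheafification with respect to the open-cover pretopology $\covers_\open$. By Lemma~\ref{lemma;sheaves}\eqref{sheaves} the sheaves for $\covers_\lsf$ and for $\covers_\open$ coincide. Hence the sheafification functors agree: each is left adjoint to the same inclusion $\Sh(\DMfd,\covers_\lsf)=\Sh(\DMfd,\covers_\open)\hookrightarrow\PSh(\DMfd)$. So $F^\sharp$ may be computed by the plus construction for open covers, $F^\sharp=(F^+)^+$, where
\[
F^+(\huaU)=\colim_{c\colon\huaV\to\huaU}\ker\bigl(F(\huaV)\rightrightarrows F(\huaV\times_\huaU\huaV)\bigr),
\]
with the colimit taken over open covers $c$ of $\huaU$ ordered by refinement. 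One small point needs attention: open covers here are single morphisms $\coprod_i\huaU_i\to\huaU$. I would either assume or verify directly that $F$ behaves compatibly with these disjoint unions, or else work with matching families $(\sigma_i)\in\prod_iF(\huaU_i)$, which is what the plus construction really uses. Caution~\ref{caution;sheaf} warns of this subtlety, and I would handle it by phrasing the plus construction with families of open embeddings.

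It then suffices to show that $F\to F^+$ induces a bijection on stalks $\ppt_{\huaX,x}$ for every presheaf $F$. Applying this twice gives the claim.

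For surjectivity, take a germ of an element of $F^+(\huaU)$, where $\huaU\ni x$. It is represented by a matching family $(\sigma_i)$ on an open cover $\{\huaU_i\}$ of $\huaU$. Choose $i$ with $x\in\huaU_i$. The one-element cover $\{\huaU_i\}$ of $\huaU_i$ refines the restricted cover, and the restriction of our element to $\huaU_i$ is therefore the image of $\sigma_i\in F(\huaU_i)$. So the germ comes from $[\sigma_i]\in\ppt_{\huaX,x}(F)$.

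For injectivity, suppose $\sigma,\tau\in F(\huaU)$ have the same image in $F^+(\huaU')$ for some open $\huaU'\ni x$. Then there is an open cover $\{\huaW_j\}$ of $\huaU'$ on which $\sigma$ and $\tau$ restrict to equal sections. Picking $\huaW_j\ni x$ shows $[\sigma]=[\tau]$.

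Both steps rest on the fact that the stalk only sees open neighbourhoods of $x$, and that every open cover of such a neighbourhood contains a member through $x$. The open derived submanifolds used carry the induced derived structure, so restriction maps make sense.

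The main obstacle is the first step: justifying that sheafification for $\covers_\lsf$ may be computed via open covers, and in particular handling the singleton-cover convention with disjoint unions. If the plus-construction description is awkward, a fallback is a direct argument. Here I would show that the sheaf $\tilde F$ built from germs (sections that are locally given by $F$) satisfies $\ppt_{\huaX,x}(\tilde F)=\ppt_{\huaX,x}(F)$ by the same local arguments. I would then check that $\tilde F$ is an open-cover sheaf, hence by Lemma~\ref{lemma;sheaves}\eqref{sheaves} an $\covers_\lsf$-sheaf, and that it has the universal property, so that $\tilde F\cong F^\sharp$.
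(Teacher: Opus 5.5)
Your strategy is sound and uses the same mechanism as the paper, with a detour. The paper does not pass to $\covers_\open$. It works directly with the $\covers_\lsf$ plus construction: it represents a germ of $F^+$ by $\sigma_\huaU\in F(\huaU)$ for a cover $c\colon\huaU\to\huaB_r$ and pulls back along a local splitting $s\colon\huaB_{r'}\to\huaU$, for both surjectivity and injectivity. Your ``member of the open cover through $x$'' is exactly such a splitting, namely a summand inclusion. So the reduction via Lemma~\ref{lemma;sheaves}\eqref{sheaves} and uniqueness of left adjoints is valid, but not needed.

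There is, however, a real flaw in how you propose to handle the disjoint-union subtlety you flagged. None of your three options works.
\begin{itemize}
\item You cannot assume $F(\coprod_i\huaU_i)=\prod_iF(\huaU_i)$ for an arbitrary presheaf.
\item Replacing singleton covers by families of open embeddings changes the topology.
\item Your fallback sheaf $\tilde F$ of locally represented germs has the same problem.
\end{itemize}
Sheaves for the family topology must send $\huaU\sqcup\huaV$ to $F(\huaU)\times F(\huaV)$. By Caution~\ref{caution;sheaf}, the constant presheaf $\underline{S}$ is already a $\covers_\lsf$-sheaf, so for $F=\underline{S}$ with at least two elements, $F^\sharp=\underline{S}$. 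The family or germ construction instead produces locally constant $S$-valued functions. These differ on $\huaU\sqcup\huaV$, and there is no map $\tilde F\to\underline{S}$ extending the identity: naturality along each summand inclusion forces the value on a function taking different values $a\neq b$ to equal both $a$ and $b$. So the universal-property check in your fallback fails. Either route would prove a stalk statement about a different sheafification, not about $F^\sharp$.

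The fix is to keep singleton covers throughout.
\begin{itemize}
\item An element of $F^+(\huaU)$ is represented by $\sigma\in F(\coprod_i\huaU_i)$ with $\pr_1^*\sigma=\pr_2^*\sigma$. Define $\sigma_i:=\iota_i^*\sigma$, where $\iota_i$ is the summand inclusion.
\item The map $(\id,\iota_i)\colon\huaU_i\to\huaU_i\times_\huaU\coprod_j\huaU_j$ is a section of the pulled-back cover. Hence the trivial cover of $\huaU_i$ refines it, and the restriction of the class to $\huaU_i$ equals $\theta(\sigma_i)$.
\item For injectivity, pull back the equality $c^*\sigma=c^*\tau$ along $\iota_j$ for a summand containing $x$.
\end{itemize}
Neither step ever glues across components, so no compatibility of $F$ with disjoint unions is used. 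With this correction your argument is complete, and it is then literally the paper's splitting argument applied to open covers.
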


\begin{proof}
The sheafification of $F$ is the presheaf $F^\sharp=(F^+)^+$, where $F^+$ is the presheaf defined by
\[
F^+(\ca{M}) = \colim_{\ca{U}\to\ca{M}} \{\,\sigma\in F(\ca{U})\mid\pr_1^*\sigma=\pr_2^*\sigma\,\};
\]
see e.g.\ \cite[\href{https://stacks.math.columbia.edu/tag/00W1}{Tag 00W1}]{stacks-project}.  Here the colimit is taken over all covers $\ca{U}\to\ca{M}$ of $\ca{M}$, and $\pr_1$, $\pr_2\colon\huaU\times_\huaM\huaU\to\huaU$ denote the canonical projections.
Let $\theta\colon F\to F^+$ be the canonical map.  We must show that $\theta^2\colon F\to F^\sharp$ induces a bijection $\ppt_{\ca{X},x}(\theta^2)\colon\ppt_{\ca{X},x}(F)\to\ppt_{\ca{X},x}(F^\sharp)$.

We assert that in fact $\ppt_{\ca{X},x}(\theta)\colon\ppt_{\ca{X},x}(F)\to\ppt_{\ca{X},x}(F^+)$ itself is a bijection.  By Rem.\ \ref{remark;ball}, it is enough to show this for $(\huaX,x)=(\huaB,0)$, where $\huaB=\huaB^{n,Q}$.

To show surjectivity, consider a germ $[\sigma]\in \ppt_\huaB(F^+)$, which is represented by an element $\sigma\in F^+(\huaB_r)$ for some $r>0$.  By the definition of $F^+$, the element $\sigma$ is in turn represented by an element $\sigma_\huaU \in F(\huaU)$ for some cover $c\colon\huaU\to\huaB_r$.  Since $c$ is locally split, there exists a splitting $s\colon\huaB_{r'}\to\huaU$ of $c$ for some $r'<r$.  Let $\huaU'$ be the open derived submanifold $c^{-1}(\huaB_{r'})$ of $\huaU$.  It follows that
\[
\sigma|_{\huaB_{r'}} =s^*c^*\sigma|_{\huaB_{r'}} =s^*\theta(\sigma_{\huaU}|_{\huaU'})=\theta(s^*\sigma_\huaU|_{\huaU'})=\theta(\sigma'),
\]
where $\sigma'=s^*\sigma_\huaU|_{\huaU'}\in F(\ca{B}_{r'})$.  Thus $[\sigma]=\theta([\sigma'])$ with $[\sigma']\in\ppt_\huaB(F)$. 

To show injectivity, suppose we have two germs $[\sigma]$, $[\sigma']\in\ppt_\huaB(F)$ with $\ppt_\huaB(\theta)([\sigma])=\ppt_\huaB(\theta)([\sigma'])$.  Then for a sufficiently small $r$ we can pick representatives $\sigma$, $\sigma'\in F(\huaB_r)$ so that $\theta(\sigma)=\theta(\sigma')\in F^+(\huaB_r)$.  This means that there exists a cover $c\colon\huaU\to\huaB_r$ such that $c^*\sigma=c^*\sigma'\in F(\huaU)$.  Choose $r'<r$ and a splitting $s\colon\huaB_{r'}\to\huaU$ of the cover $c$.  Then 
\[
\sigma|_{\huaB_{r'}}=s^*c^*\sigma=s^*c^*\sigma'=\sigma'|_{\huaB_{r'}},
\]
which yields $[\sigma]=[\sigma']\in\ppt_\huaB(F)$. 
\end{proof}

\begin{proposition}\label{prop:points}
\mbox{}
\begin{enumerate}
\item\label{point} The functors
$\ppt_{\huaX,x}\colon\Sh(\DMfd,\covers_\lsf)\to\Set$ defined in \eqref{eq:dm-point} are points.
\item\label{conservative} The collection of points 
\[
\pts_{\DMfd}=\{\,\ppt_{\,\huaX,x}\mid\text{$\huaX\in\DMfd$, $x\in\huaX$}\,\} 
\]
is jointly conservative.
\end{enumerate}
\end{proposition}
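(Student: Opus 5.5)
For part~\eqref{point} I would verify separately that $\ppt_{\huaX,x}$ preserves finite limits and small colimits on $\Sh(\DMfd,\covers_\lsf)$. Finite limits of sheaves are computed objectwise, as in presheaves. On presheaves, $\ppt_{\huaX,x}$ is a colimit over the neighbourhood poset $\{\huaU\ni x\}$, which is directed since intersections of open neighbourhoods are open neighbourhoods. Filtered colimits in $\Set$ commute with finite limits, so $\ppt_{\huaX,x}$ preserves finite limits of sheaves.

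Colimits of sheaves are the sheafifications of presheaf colimits: $\colim^{\Sh}F_i=(\colim^{\PSh}F_i)^\sharp$. On presheaves, $\ppt_{\huaX,x}$ commutes with all small colimits, because colimits commute with colimits and presheaf colimits are objectwise. Lemma~\ref{lem:sheafification} then gives
\[
\ppt_{\huaX,x}(\colim^{\Sh}F_i)\cong\ppt_{\huaX,x}(\colim^{\PSh}F_i)\cong\colim\ppt_{\huaX,x}(F_i),
\]
and one checks that this composite is the canonical comparison map. This proves \eqref{point}.

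For part~\eqref{conservative}, one direction is trivial. Conversely, let $\phi\colon F\to G$ be a morphism of sheaves with all $\ppt_{\huaX,x}(\phi)$ bijective; I would show that $\phi_\huaM\colon F(\huaM)\to G(\huaM)$ is bijective for every $\huaM$. By Lemma~\ref{lemma;sheaves}\eqref{sheaves}, $F$ and $G$ are sheaves for $\covers_\open$, so we may argue exactly as for sheaves on a topological space.

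\emph{Injectivity.} Suppose $\phi(\sigma)=\phi(\sigma')$. Apply the stalk at $(\huaM,x)$ for each $x\in M$: the germs of $\sigma$ and $\sigma'$ agree, so they agree on some open neighbourhood $\huaU_x$ of $x$. The disjoint union of the $\huaU_x$ is an open cover of $\huaM$, and the sheaf property gives $\sigma=\sigma'$.

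\emph{Surjectivity.} Let $\tau\in G(\huaM)$. For each $x$ the germ of $\tau$ lifts to the germ of some $\sigma_x\in F(\huaU_x)$, and after shrinking we may assume $\phi(\sigma_x)=\tau|_{\huaU_x}$. On overlaps $\huaU_x\cap\huaU_y$, which carry the induced open derived structure, $\phi(\sigma_x)$ and $\phi(\sigma_y)$ agree. By the injectivity just proven, applied to the open derived submanifold $\huaU_x\cap\huaU_y$, we get $\sigma_x=\sigma_y$ there. The $\sigma_x$ therefore glue via the $\covers_\open$-sheaf property to some $\sigma$ with $\phi(\sigma)=\tau$, again by injectivity.

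The main subtlety is the colimit-preservation step, which rests on Lemma~\ref{lem:sheafification}. One must make sure that the stalk is computed on the open neighbourhoods of $x$ inside $\huaX$, including neighbourhoods of non-classical points, and that these form a filtered system. Everything else is the standard topological argument, available because of Lemma~\ref{lemma;sheaves}.
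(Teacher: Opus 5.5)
Your part (1) is the paper's argument. For part (2) you write out the ``usual'' stalkwise argument that the paper only invokes by citing Rogers--Zhu verbatim. The gap is the sentence ``we may argue exactly as for sheaves on a topological space.'' Lemma~\ref{lemma;sheaves}\eqref{sheaves} only gives you the equalizer condition for the \emph{single} morphism $c\colon\coprod_x\huaU_x\to\huaM$. Nothing in the singleton sheaf axiom makes the map $F(\coprod_x\huaU_x)\to\prod_xF(\huaU_x)$ injective or surjective; this is exactly the discrepancy in Caution~\ref{caution;sheaf}. Hence, in your injectivity step, $\sigma|_{\huaU_x}=\sigma'|_{\huaU_x}$ for all $x$ does not yield $c^*\sigma=c^*\sigma'$. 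In your surjectivity step, a compatible family $(\sigma_x)_x$ does not produce an element of $F(\coprod_x\huaU_x)$ to which the equalizer property could be applied.

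This cannot be patched: with the paper's definition of $\Sh(\DMfd,\covers_\lsf)$, statement (2) is false, and the paper's own proof has the same gap. Take a set $S$ with two elements, the constant sheaf $\underline{S}$, and $G(\huaM)=\{\,g\colon M\to S\text{ locally constant on the body}\,\}$. Then $G$ is a $\covers_\lsf$-sheaf: a locally split fibration is surjective with local sections on bodies, so compatible locally constant functions descend uniquely to locally constant functions. The morphism $\phi\colon\underline{S}\to G$ sending $s$ to the constant function $s$ induces the identity of $S$ on every stalk $\ppt_{\huaX,x}$. However, $\phi_\huaM$ is not surjective when $M$ is a two-point manifold.

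What your argument does prove is joint conservativity on the full subcategory of sheaves that also take disjoint unions to products. These are the sheaves for the Grothendieck topology of open covering families, and they include the representables and their finite limits. On that subcategory your injectivity and gluing steps are correct as written. Part (1) must then be rerun with the corresponding sheafification, where the analogue of Lemma~\ref{lem:sheafification} is the classical fact.
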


\begin{proof} 
\eqref{point}~Since the functor $\ppt_{\huaX,x}$ is a small filtered colimit and the limit of a diagram of sheaves is the same as its limit in the category of presheaves, the functor $\ppt_{\huaX,x}$ commutes with finite limits.

To show that $\ppt_{\huaX,x}$ 
also preserves small colimits of sheaves, consider a diagram of sheaves $F_i\in\Sh(\DMfd,\covers_\lsf)$ indexed by $i\in I$.  We have  
\begin{align*}
\colim_i\ppt_{\huaX,x}(F_i)&\cong\colim_i\colim_{\huaU\ni x}F_i(\huaU)\cong\colim_{\huaU\ni x}\colim_iF_i(\huaU)\cong\colim_{\huaU\ni x}\bigl(\colim_i{}^\PSh F_i\bigr)(\huaU)
\\
&\cong\ppt_{\huaX,x}\bigl(\colim_i{}^\PSh F_i\bigr)\cong\ppt_{\huaX,x}\bigl(\bigl(\colim_i{}^\PSh F_i\bigr)^\sharp\bigr)\cong\ppt_{\huaX,x}\bigl(\colim_i{}^\Sh F_i\bigr).
\end{align*}
Here we used that small colimits commute with each other and that the colimit in the category of sheaves (denoted by $\colim^{\Sh}$) is the sheafification of the colimit in the category of presheaves (denoted by $\colim^{\PSh}$).

\eqref{conservative}~It suffices to show that the collection of points $\{\ppt_{\huaB^{n,Q}}\}$ is jointly conservative.  Let $\phi\colon F\to G$ be a morphism of sheaves such that for all ${\huaB^{n, Q}}$ the map 
 \[\ppt_{\huaB^{n, Q}}(\phi)\colon\ppt_{\huaB^{n, Q}}(F)\longto\ppt_{\huaB^{n, Q}}(G)\]
is a bijection. We must show that $\phi_\huaM\colon F(\huaM)\to G(\huaM)$ is an isomorphism for all derived manifolds $\huaM$. This is the usual ``stalkwise isomorphism implies isomorphism of sheaves'' argument.  Full details in the setting of a pretopology on Banach spaces are given in \cite[Prop.~4.2]{Rogers-Zhu:2016}. The proof there carries over verbatim once one replaces the Banach open balls $B_V(r)$ by our open balls $\huaB^{n, Q}_r$. 
\end{proof}

\begin{caution}
Lemma~\ref{lem:sheafification} and Prop.~\ref{prop:points} are versions of well-known results of classical sheaf theory, but they are false for many singleton pretopologies other than $\covers_\lsf$.  See App.~\ref{section;alternative} for counterexamples.  
\end{caution}

\begin{remark}
Let us call a point $\ppt_{\huaX,x}$ \emph{classical} if $x$ is a classical point of $\huaX$.  It is plain that the collection of classical points is not jointly conservative: for a sheaf morphism to be an isomorphism it must induce an isomorphism of stalks at all points, not just the classical points. 
\end{remark}

\subsubsection{Locally stalkwise pretopology}
The main result of \S\,\ref{subsection;topology} is Thm.~\ref{pro:LSW-pret-dmfd} below.  The proof follows the pattern of \cite[\S\,6.2]{Rogers-Zhu:2016}. 

\begin{lemma}\label{lem:stalkwise-surj-prop}
A morphism of derived manifolds $f\colon\huaM\to\huaN$ is locally split if and only if the sheaf morphism $\yon(f)\colon\yon(\huaM)\to\yon(\huaN)$ is a stalkwise surjection.  In particular, if $\yon(f)$ is a stalkwise surjection, then $f$ is surjective and essentially surjective.
\end{lemma}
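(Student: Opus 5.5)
The plan is to unwind both sides at the level of stalks and germs, using only the definition \eqref{eq:dm-point} of the points $\ppt_{\huaX,x}$ and the Yoneda lemma. First, by Lemma~\ref{lemma;sheaves}\eqref{subcanonical} the presheaves $\yon(\huaM)$ and $\yon(\huaN)$ are sheaves, so $\yon(f)$ is a morphism in $\Sh(\DMfd,\covers_\lsf)$ and stalkwise surjectivity makes sense. For a test derived manifold $\huaX$ and $x\in\huaX$, an element of $\ppt_{\huaX,x}(\yon(\huaN))$ is a germ at $x$ of a derived morphism $g\colon\huaU\to\huaN$ defined on an open neighbourhood $\huaU$ of $x$. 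The map $\ppt_{\huaX,x}(\yon(f))$ is post-composition with $f$. Stalkwise surjectivity therefore says: for every such germ $[g]$ there exist a smaller open neighbourhood $\huaU'\subseteq\huaU$ of $x$ and a morphism $h\colon\huaU'\to\huaM$ with $f\circ h=g|_{\huaU'}$.

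For ``locally split $\Rightarrow$ stalkwise surjective'', I would take a germ $[g]$ with $g\colon\huaU\to\huaN$, put $y=g(x)$, and choose a local splitting $(\huaV,s)$ of $f$ at $y$. Then $\huaU'=g^{-1}(\huaV)\cap\huaU$ is an open neighbourhood of $x$ carrying the induced derived structure, and $h=s\circ g|_{\huaU'}$ gives $f\circ h=g|_{\huaU'}$. Hence $[g]=\ppt_{\huaX,x}(\yon(f))([h])$.

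For the converse, I would use the tautological test point. Given $y\in\huaN$, take $\huaX=\huaN$, $x=y$, and the germ of $\id_\huaN$ in $\ppt_{\huaN,y}(\yon(\huaN))$. Surjectivity of $\ppt_{\huaN,y}(\yon(f))$ gives an open neighbourhood $\huaU$ of $y$ and $s\colon\huaU\to\huaM$ with $f\circ s=\id_\huaN|_\huaU=\id_\huaU$, which is exactly a local splitting at $y$. The only subtlety is that the colimit in \eqref{eq:dm-point} runs over open neighbourhoods with the induced derived structure, and equality of germs means equality on some smaller neighbourhood. That is harmless here: the equality $f\circ s=\id$ only needs to hold after shrinking, and a splitting on a smaller neighbourhood is still a local splitting. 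The step that needs the most care is this bookkeeping, namely that restricting derived morphisms to open derived submanifolds is compatible with composition. That compatibility is immediate from the sheaf-theoretic definition of morphisms.

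The final sentence then follows from the remark made right after the definition of locally split morphisms. A splitting $s$ at $y$ yields the preimage $s(y)$, so $f$ is surjective. If $y$ is classical, then $s(y)$ is classical, since derived morphisms send classical points to classical points by \eqref{equation;morphism-loci}, so $f$ is essentially surjective.
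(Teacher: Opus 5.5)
Your proposal is correct and follows essentially the same route as the paper. For the converse you lift the germ of $\id_\huaN$ at $y$ through $\ppt_{\huaN,y}(\yon(f))$; for the forward direction you lift an arbitrary germ $[g]_{\huaX,x}$ to $[s\circ g]_{\huaX,x}$ using a local splitting $s$ at $g(x)$. Your explicit derivation of the ``in particular'' clause, via $s(y)$ being classical by \eqref{equation;morphism-loci}, matches the remark the paper relies on after the definition of locally split morphisms.
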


\begin{proof}
Suppose $\yon(f)$ is a stalkwise surjection.  Let $y\in\huaN$.  Then $\ppt_{\huaN,y}(f)\colon\ppt_{\huaN,y}(\huaM)\to\ppt_{\huaN,y}(\huaN)$ is surjective, which by definition (see~\eqref{eq:dm-point}) tells us that $y$ has an open neighbourhood $\huaV$ in $\huaN$ such that the inclusion $\huaV\to\huaN$ lifts to a morphism $s\colon\huaV\to\huaM$, i.e.\ a local splitting of $f$.  Thus $f$ is locally split.  Now suppose $f$ is locally split.  Let $\ppt_{\huaX,x}$ be a point of $\DMfd$, let $\huaU$ be an open neighbourhood of $x$ in $\huaX$, and let $g\colon\huaU\to\huaN$ be a morphism.  A local splitting $s\colon\huaV\to\huaM$ of $f$ defined in a neighbourhood $\huaV$ of $y=g(x)\in\huaN$ allows us to lift the germ $[g]_{\huaX,x}\in\yon(\huaN)(\huaU)$ to the germ $[s\circ g]_{\huaX,x}\in\yon(\huaM)(\huaU)$, showing that $\yon(f)$ is stalkwise surjective.
\end{proof}

\begin{theorem}\label{pro:LSW-pret-dmfd}
The pretopology $\covers_\lsf$ on the category of derived manifolds is locally stalkwise with respect to the jointly conservative collection of points $\pts_{\DMfd}$ defined in Prop.~\ref{prop:points}.
\end{theorem}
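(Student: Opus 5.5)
We must verify that $\covers_\lsf$ is subcanonical, which is Lemma~\ref{lemma;sheaves}\eqref{subcanonical}, and that \ref{LSP1} and \ref{LSP2} hold. The basic tool is Lemma~\ref{lem:stalkwise-surj-prop}: for a morphism $f$ of derived manifolds, $\yon(f)$ is a stalkwise surjection exactly when $f$ is locally split.

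For \ref{LSP1}, suppose $g\circ f$ is a cover and $\yon(f)$ is a stalkwise surjection. By Lemma~\ref{lem:stalkwise-surj-prop}, $f$ is locally split and in particular surjective. Since $g\circ f$ is a fibration, Rem.~\ref{remark;fibration}\eqref{item;compose} shows that $g$ is a fibration. For the splitting, take a local splitting $s$ of $g\circ f$ at a point $w$. Then $f\circ s$ is a local splitting of $g$ at $w$. So $g$ is a locally split fibration.

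For \ref{LSP2}, let $q\colon\huaV\to\huaW$ have $\yon(q)$ stalkwise surjective, so $q$ is locally split. Suppose also that $\yon(\huaU)\times_{\yon(\huaW)}\yon(\huaV)\to\yon(\huaV)$ is a locally stalkwise cover. By definition there is a cover $c\colon\huaX\to\huaV$ and a stalkwise surjection $\psi\colon\yon(\huaX)\to\yon(\huaU)\times_{\yon\huaW}\yon(\huaV)$ whose composite with the projection is $\yon(c)$. By Yoneda, $\psi$ is a pair of morphisms $a\colon\huaX\to\huaU$ and $c\colon\huaX\to\huaV$ with $p\circ a=q\circ c$.

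First we show $p$ is locally split. Fix $w\in\huaW$ and choose a local splitting $t\colon\huaW'\to\huaV$ of $q$ near $w$. Then choose a local splitting $\sigma$ of $c$ near $t(w)$, and shrink $\huaW'$ so that $\sigma\circ t$ is defined. The morphism $a\circ\sigma\circ t$ satisfies $p\circ a\circ\sigma\circ t=q\circ c\circ\sigma\circ t=q\circ t=\id$. Hence it is a local splitting of $p$ at $w$.

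Next, the fibration property of $p$, which I expect to be the main obstacle. Surjectivity of $T^k_up$ has to be checked at every point $u\in\huaU$, not just at points in the image of a splitting, and the splitting above only controls the tangent map at points of the form $a(\sigma(t(w)))$. The plan is to use that $\psi$ is stalkwise surjective at the point $(\huaU\times\huaV,(u,v))$, for any $v$ with $q(v)=p(u)$; such $v$ exists since $q$ is surjective. Stalkwise surjectivity means the germ of the generalized point $(u,v)$ of the sheaf fibre product lifts along $\psi$. Concretely, I would evaluate on graded-ball neighbourhoods $\huaB^{n,Q}_r$ and test on small "curves" $\huaB\to\huaU$ through $u$ in each graded tangent direction. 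The aim is to get a point $x\in\huaX$ with $a(x)=u$ and $c(x)=v$, and, more importantly, to lift germs of maps from balls into the fibre product.

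Now fix a tangent vector $\eta\in T^k_{p(u)}\huaW$. Lift it to $\huaV$ at $v$: in degree $0$ this uses that $q$ is a submersion at $v$ (shown below), and in higher degrees it uses that $q$ is a fibration. Then lift that vector along the fibration $c$ to $T^k_x\huaX$, and push it forward by $a$. This produces a vector in $T^k_u\huaU$ mapping to $\eta$, so $T^k_up$ is surjective.

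The point $v$ must be chosen in the image of the local splitting of $q$, or we must argue that $q$ is itself a submersion. The latter follows because $q$ is locally split with sections through every point in the $\lsf$ sense. If stalkwise lifting of points turns out to give only points in a neighbourhood, I would instead lift the full germ of $\id$-type maps from a ball modelling $\huaU\times_\huaW$ coordinates. The final clause, that $\huaU\times_\huaW\huaV$ exists and is represented by the sheaf fibre product, then follows from \ref{P3}.
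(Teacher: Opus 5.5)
Your proofs of \ref{LSP1} and of the local splitting of $p$ in \ref{LSP2} are correct and agree with the paper. The gap is in proving that $p$ is a fibration, and it has two parts.

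First, $q$ is only assumed to be locally split, not a fibration. So $T^k_vq$ is guaranteed surjective only at points $v=t(w)$ in the image of a local section $t$ of $q$. Your claim that local splitness makes $q$ a submersion everywhere is false. For example, $q=\id\sqcup(x\mapsto x^3)\colon\R\sqcup\R\to\R$ is locally split, but $T_0q=0$ on the second copy. Hence you cannot take an arbitrary $v\in q^{-1}(p(u))$, and your appeal to "$q$ is a fibration" in higher degrees is not available.

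Second, you never actually produce $x\in\huaX$ with $a(x)=u$ and $c(x)=v$. The pair $(\pr_1,\pr_2)$ on $\huaU\times\huaV$ is not a section of $F=\yon(\huaU)\times_{\yon(\huaW)}\yon(\huaV)$, because $p\circ\pr_1\neq q\circ\pr_2$. A non-classical $u$ also cannot be probed by a map from a point. Your fallback of lifting germs from a ball modelling $\huaU\times_\huaW\huaV$ presupposes the fibre product whose existence is part of what is being proved.

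One choice fixes both problems. Take a local section $t\colon\huaW'\to\huaV$ of $q$ at $p(u)$ and a neighbourhood $\huaU'\subseteq p^{-1}(\huaW')$ of $u$. Then $h=(\iota_{\huaU'},\,t\circ p|_{\huaU'})$ is a genuine element of $F(\huaU')$. Stalkwise surjectivity of $\psi$ at the point $(\huaU,u)$ lifts the germ of $h$. After shrinking, this gives $\tilde h\colon\huaU''\to\huaX$ with $a\circ\tilde h=\iota$ and $c\circ\tilde h=t\circ p$.

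Set $x=\tilde h(u)$ and $v=t(p(u))$. Then $T_up\circ T_xa=T_vq\circ T_xc$. Here $T_xc$ is surjective because $c$ is a fibration, and $T_vq$ is surjective because $q\circ t=\id$. Hence $T^k_up$ is surjective in every degree $k$. This is exactly the paper's argument, and your vector-lifting step goes through unchanged at this particular $v$.
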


\begin{proof}
To show that \ref{LSP1} is satisfied, consider morphisms of derived manifolds $f\colon\huaM\to\huaN$ and $g\colon\huaN\to\huaP$.  Suppose that $\yon(f)$ is a stalkwise surjection and that the composition $g\circ f$ is a cover.  Then $\yon(g)$ is a stalkwise surjection by Rem.~\ref{rk:2-out-of-3}\eqref{item;2-out-of-3} and $g$ is a fibration by Rem.~\ref{remark;fibration}\eqref{item;compose}.  It now follows from Lemma~\ref{lem:stalkwise-surj-prop} that $g$ is a cover.

Next we verify \ref{LSP2}.  Let $f\colon\huaM\to\huaP$ and $g\colon\huaN\to\huaP$ be morphisms of derived manifolds.  For every derived manifold $\huaU$ define $F(\huaU)$ to be the set of pairs $h=(h_\huaM,h_\huaN)$ of morphisms $h_\huaM\colon\huaU\to\huaM$ and $h_\huaN\colon\huaU\to\huaN$ satisfying $f\circ h_\huaM=g\circ h_\huaN$.  Then $F$ is a sheaf of sets on the category of derived manifolds and it is a fibred product in the category of sheaves,
\[
F=\yon(\huaM)\times_{\yon(\huaP)}\yon(\huaN).
\]
Suppose that $\yon(g)\colon\yon(\huaN)\to\yon(\huaP)$ is a  stalkwise surjection and that the morphism
\[
\phi\colon F\longto\yon(\huaN)
\]
induced by $\yon(f)\colon\yon(\huaM)\to\yon(\huaN)$ is a locally stalkwise cover, i.e.\ there exist a derived manifold $\huaQ$, a cover $k\colon\huaQ\to\huaN$ and a stalkwise surjective sheaf morphism $\zeta\colon\yon(\huaQ)\to F$ such that $\phi\circ\zeta=\yon(k)\colon\yon(\huaQ)\to\yon(\huaN)$.  We must prove that $f$ is a cover.

To simplify the notation, in the following commutative diagram we identify the category $\DMfd$ with its image under the Yoneda embedding $\yon$, which is a full subcategory of the category of sheaves $\Sh(\DMfd,\covers_\lsf)$ because the pretopology $\covers_\lsf$ is subcanonical (Lemma~\ref{lemma;cover-sections}).  The square in the middle is a pullback diagram.  We define $\tilde{g}\colon\huaQ\to\huaM$ to be the unique morphism satisfying $\yon(\tilde{g})=\psi\circ\zeta$.
\begin{equation}\label{equation;stalkwise}
\begin{tikzcd}[row sep=scriptsize]
\huaQ\arrow[drr,bend left,"k"]\arrow[ddr,bend right,"\tilde{g}"]\arrow[dr,"\zeta"]&&
\\
&F\arrow[r,"\phi"]\arrow[d,"\psi"]&
\huaN\arrow[d,"g"]
\\
&\huaM\arrow[r,"f"]&\huaP
\\
\huaU\arrow[ur,dashed,"j_\huaU"]\arrow[uuu,dashed,bend left,"\tilde{h}"]\arrow[rrr,dashed,"f_\huaU"]&&&
\huaW\arrow[ul,dashed,"j_\huaW"']\arrow[uul,dashed,bend right,"s"']
\end{tikzcd}
\end{equation}
Since $k$ is a cover and $g$ is locally split (Lemma~\ref{lem:stalkwise-surj-prop}), both $k$ and $g$ are surjective.  It follows that $f\circ\tilde{g}=g\circ k$ is surjective, and hence $f$ is surjective.  Let $x\in\huaM$ and let $p=f(x)\in\huaP$.  We show that $f$ is a fibration by showing that it is a graded submersion at $x$.  Choose an open neighbourhood $\huaW$ of $p$ (equipped with the induced derived structure) and a local splitting $s\colon\huaW\to\huaN$ of $g$.  Let $\huaU$ be any open neighbourhood of $x$ contained in $f^{-1}(\huaW)$ (again with the induced derived structure), let $j_\huaU\colon\huaU\to\huaM$, $j_\huaW\colon\huaW\to\huaP$ be the respective inclusions, and let $f_\huaU\colon\huaU\to\huaW$ be the restriction of $f$.  Let $h=(h_\huaM,h_\huaN)\colon\huaU\to\huaM\times\huaN$ be the morphism given by $h_\huaM=j_\huaU$, $h_\huaM=s\circ f_\huaU$.  Then $h$ is an element of $F(\huaU)$.  Since $\zeta$ is stalkwise surjective, there exists a germ $[\tilde{h}]_{\huaU,x}$ of $\yon(Q)$ such that $\zeta([\tilde{h}]_{\huaU,x})$ is equal to the germ $[h]_{\huaU,x}$.  This means that (after replacing $\huaU$ with a smaller open neighbourhood of $x$ if necessary) we have a morphism $\tilde{h}\colon\huaU\to\huaQ$ satisfying $\tilde{g}\circ\tilde{h}=j_\huaU$ and $k\circ\tilde{h}=s\circ f_\huaU$.  Let $q=\tilde{h}(x)\in\huaQ$; then $\tilde{g}(q)=x$.  Also let $y=s(p)=h(q)\in\huaN$.  Differentiating the identity $f\circ\tilde{g}=g\circ k$ at $q$ gives
\[
T_x^lf\circ T_q^l\tilde{g}=T_y^lg\circ T_q^lk\colon T_x^l\huaM\longto T_p^l\huaP
\]
for all $l\ge0$.  Since $k$ is a fibration and $g$ has a local splitting passing through $y$, both maps $T_q^lk$ and $T_y^lg$ are surjective.  It follows that $T_x^lf$ is surjective, i.e.\ $f$ is a fibration.

To see that $f$ has a local splitting at $p\in\huaP$, take an open neighbourhood $\huaV$ of $y$ in $\huaN$ and a local splitting $t\colon\huaV\to\huaQ$ of $k$.  Replace $\huaW$ with $\huaW\cap g^{-1}(\huaV)$ and $\huaU$ with $(s\circ f_\huaU)^{-1}(\huaV)$, and put $\bar{s}=\tilde{g}\circ t\circ s\colon\huaW\to\huaU$.  Using $f\circ\tilde{g}=g\circ k$ gives $f_\huaU\circ\bar{s}=\id_\huaW$.
\end{proof}

The following properties are immediate from the proof of Thm.~\ref{pro:LSW-pret-dmfd}: the fibred product $\huaF=\huaM\times_\huaP\huaN$ in \eqref{equation;stalkwise} exists, the sheaf $F$ is represented by the derived manifold $\huaF$, the morphism $\bar{f}=\yon^{-1}(\phi)\colon\huaF\to\huaN$ is a cover, and the morphism $\bar{g}=\yon^{-1}(\psi)\colon\huaF\to\huaM$ is locally split.  The morphisms $g$ and $\bar{g}$ are not necessarily fibrations.
 
\subsection{The iCFO of derived Lie \texorpdfstring{$n$}{n}-groupoids}\label{subsection;derived-higher}

We can now state one of the main definitions of this article.

\begin{definition}\label{def:derived-Lie-n-gpd-gp}
Let $n\in\N\cup\{\infty\}$.  A  \emph{derived Lie $n$-groupoid} is an $n$-groupoid object $\huaG_\bu$ in the category of derived manifolds $\DMfd$ equipped with the pretopology $\covers_\lsf$.  A \emph{derived Lie $n$-group} is a derived Lie $n$-groupoid $\huaG_\bu$ with $\huaG_0=*$, the terminal object.  We denote by $\mathsf{Gpd}_n[\DMfd, \covers_\lsf]$ 
the category whose objects are derived Lie $n$-groupoids and whose morphisms are morphisms of simplicial objects in $\DMfd$.    
\end{definition}

\begin{remark}\label{rmk:gp}
A \emph{derived Lie group} is a group object in $\DMfd$, that is a derived manifold $\huaG$ together with three morphisms: multiplication $m\colon\huaG\times\huaG\to\huaG$, inverse $i\colon\huaG\to\huaG$ and identity $e\colon*\to \huaG$, which satisfy the usual equations
\begin{align*}
       m\circ(\id_{\huaG} \times m)&=m\circ(m\times \id_{\huaG}),\\
        m\circ(\id_{\huaG}\times e)&=\id_{\huaG}=m\circ(e\times \id_{\huaG}),\\
        m\circ(\id_{\huaG}\times i)\circ\diag&=e\circ t=m\circ(i\times \id_{\huaG})\circ\diag,
\end{align*}
where $t\colon\huaG\to*$ is the terminal map and $\diag\colon\huaG\to \huaG\times\huaG$ is the diagonal map.  The simplicial nerve $N_\bu\huaG$ of a derived Lie group $\huaG$ is a derived Lie $1$-group: the only Kan condition to check is that $t\colon \huaG\to *$ is a locally split fibration, which is true because the identity $e\colon *\to\huaG$ provides a splitting; cf.\ Rem.~\ref{remark;terminal}.  Thus we have a one-to-one correspondence between derived Lie groups and derived Lie $1$-groups analogous to the correspondence between Lie groups and Lie $1$-groups (\cite[\S\,1]{z:tgpd-2}).
\end{remark}

\begin{remark}\label{ex:lie=dlie}
Via the fully faithful embedding $\Mfd\to\DMfd$ (see \eqref{equation;mfd-dmfd}) we can regard every simplicial manifold as a simplicial derived manifold.  It follows from Rem.~\ref{remark;fibration}\eqref{item;mfd-dmfd} and Rem.~\ref{rmk:man-to-dman} that a simplicial manifold is a derived Lie $n$-groupoid if and only if it is a Lie $n$-groupoid in the sense of \cite{z:tgpd-2}.
\end{remark}

Recall that a \emph{VB-groupoid}  is a Lie groupoid in the category of vector bundles; see \cite[\S 11.2]{MK2}.  It can be represented by a commutative diagram
\begin{equation}\label{equation;vb}
\begin{tikzcd}
E_1\ar[d,"\pi_1"']\ar[r,shift left]\ar[r,shift right]&E_0\ar[d,"\pi_0"]\\
G_1\ar[r,shift left]\ar[r,shift right]&G_0,
\end{tikzcd}
\end{equation}
where the columns are vector bundles, the rows are Lie groupoids, and the groupoid operations in the top row are linear with respect to the vector bundle structures.
A \emph{multiplicative section} of the VB-groupoid~\eqref{equation;vb} is a Lie groupoid morphism $s_1\colon G_1\to H_1$ covering $s_0\colon G_0\to E_0$ such that $s_1$ is a section of the vector bundle $E_1$ and $s_0$ of $E_0$. Let us denote by $\mathsf{VBms}$ the category whose objects are VB-groupoids equipped with a multiplicative section and the morphisms are VB-groupoid morphisms that commute the given multiplicative sections. Then we have the following groupoid version of Prop.~\ref{prop:quasi-smooth}.

\begin{proposition}\label{ex:VBgrpd}
The functor $\ca{Z}\colon\mathsf{VBms}\to\mathsf{Gpd}_1[\DMfd, \covers_\lsf]$ defined by
\[
\ca{Z}(G_\bu,E_\bu,s_\bu):=\big(\ca{Z}(G_1,E_1,s_1)\rightrightarrows\ca{Z}(G_0,E_0,s_0)\big)
\]
is fully faithful and its essential image is the category of quasi-smooth derived Lie $1$-groupoids.
\end{proposition}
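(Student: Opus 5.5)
The plan is to reduce everything to Prop.~\ref{prop:quasi-smooth}, applied levelwise. First I check that $\ca{Z}$ is well defined. The structure maps of a VB-groupoid are vector bundle maps (source, target, unit, inverse, multiplication), and they intertwine the multiplicative section. Hence they are morphisms in the category $\mathsf{C}$ of triples, and $\ca{Z}$ sends them to derived morphisms. The simplicial identities hold because $\ca{Z}$ is a functor.

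For the nerve I need $\ca{Z}(G_2,E_2,s_2)\cong\ca{Z}(G_1,E_1,s_1)\times_{\ca{Z}(G_0,E_0,s_0)}\ca{Z}(G_1,E_1,s_1)$, where $E_2=E_1\times_{E_0}E_1$. The source map of a VB-groupoid is a fibrewise surjective submersion (the core exact sequence), so $\ca{Z}(s)$ is surjective on $T^0$ and on $T^1$. It is therefore a fibration in the sense of Def.~\ref{def:fibDman}, and transverse fibred products exist by Prop.~\ref{proposition;derived-fibred-product}. These products coincide with $\ca{Z}$ of the fibred product triple, since the ideal generated by pulling back the diagonal cuts $E_1^*\oplus E_1^*$ down to the dual of $E_1\times_{E_0}E_1$. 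Inductively, $\ca{Z}$ preserves these fibred products, so the image is a simplicial derived manifold with $\Kan!(k,j)$ for $k\ge2$.

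The remaining condition is $\Kan(1,j)$: the source and target maps must be covers in $\covers_\lsf$. They are fibrations by the above. For the local splitting at a point $x\in G_0$, I use the unit. The unit $u\colon G_0\to G_1$ together with $E_0\to E_1$ commutes with the sections, so $\ca{Z}(u)$ is a global splitting of both $\ca{Z}(s)$ and $\ca{Z}(t)$. The Kan conditions for $k=1$ also involve $\Hom(\Horn{1}{j},-)=\ca{G}_0$, so this suffices. In addition, the fibrewise part of $\Kan(2,j)$ follows from the isomorphism $G_2\cong G_1\times_{G_0} G_1$ at the linear level. Thus the image is a derived Lie $1$-groupoid, and it is quasi-smooth by construction.

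Full faithfulness follows because a simplicial morphism of $1$-groupoids is determined by levels $0$ and $1$. By Prop.~\ref{prop:quasi-smooth}, derived morphisms $\ca{Z}(G_i,E_i,s_i)\to\ca{Z}(H_i,F_i,t_i)$ correspond bijectively to bundle maps intertwining the sections. Compatibility with faces and degeneracies at the derived level is equivalent, by faithfulness of $\ca{Z}$ at levels $0,1,2$, to compatibility of the bundle maps, which is exactly a VB-groupoid morphism.

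For the essential image, start with a quasi-smooth derived Lie $1$-groupoid $\ca{G}_\bu$. Prop.~\ref{prop:quasi-smooth} writes each level as $\ca{Z}(G_i,E_i,s_i)$, and fullness transports all structure maps to maps of triples satisfying the groupoid identities. The main obstacle is showing that the result is a genuine VB-groupoid. This requires three things:
\begin{enumerate}
\item the bodies $G_1\rightrightarrows G_0$ form a Lie groupoid;
\item $E_2\cong E_1\times_{E_0}E_1$ and $G_2\cong G_1\times_{G_0}G_1$;
\item the source map on $E$ is a surjective submersion.
\end{enumerate}
Each comes from the Kan conditions. Because $\ca{Z}(s)$ is a fibration, $T^0$ surjectivity gives that $s\colon G_1\to G_0$ is a submersion, and $T^1$ surjectivity gives that $E_1\to s^*E_0$ is surjective. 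Surjectivity of $s\colon G_1\to G_0$ comes from the local splitting. The derived fibred product of Prop.~\ref{proposition;derived-fibred-product} has body $G_1\times_{G_0}G_1$ and odd bundle $E_1\times_{E_0}E_1$. The unique Kan isomorphism $\ca{G}_2\cong\Hom(\Horn{2}{j},\ca{G})$, read through fullness of $\ca{Z}$ at the level of triples, then identifies bodies and bundles. Multiplication is linear because it is a vector bundle map in $\mathsf{C}$, so the groupoid axioms on bodies and bundles follow from the simplicial identities.
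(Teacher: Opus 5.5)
Your proposal is correct. For the part the paper's proof actually addresses, it follows the same line: the multiplicative section makes the VB-groupoid structure maps derived morphisms, and source and target are fibrations globally split by the unit, hence covers in $\covers_\lsf$. You additionally spell out, by levelwise reduction to Prop.~\ref{prop:quasi-smooth} together with the identification of derived fibred products with $\ca{Z}$ of fibred-product triples, the full faithfulness and essential-image claims that the paper's proof leaves implicit.
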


\begin{proof}
The structure maps of this groupoid are given by those of the VB-groupoid. The fact that $(s_1,s_0)$ is a multiplicative section ensures that the structure maps are morphisms of derived manifolds.  Therefore, the nerve of $\ca{Z}(G_\bu,E_\bu,s_\bu)$ is a simplicial derived manifold.  For the Kan conditions,  observe that the source and target maps are covers in $\covers_{\lsf}$ because they are surjective submersions of manifolds and the unit of the VB-groupoid provides the existence of local splittings at each point.   
\end{proof}

\begin{example}\label{ep:odd-line}
A particularly relevant example of a multiplicative section of a VB-groupoid is the zero section.  This special case includes 
\begin{itemize}
    \item the \emph{odd line} $\R[-1]\rightrightarrows*$, which is a derived Lie group;
    \item Severa's \emph{fat point} \cite{severa:diff, lrwz} $\R[-1]\times\R[-1]\rightrightarrows \R[-1]$, which is defined as the pair groupoid of the odd line $\R[-1]$.
\end{itemize}
More examples of derived Lie groupoids will appear in \S \ref{sec:red}.
\end{example}

An immediate consequence of  Thm.~\ref{thm:RZ} and Thm.~\ref{pro:LSW-pret-dmfd} is the following.

\begin{theorem}\label{thm:iCFO-n-details} 
Let $n\in\N\cup\{\infty\}$. The category $\mathsf{Gpd}_n[\DMfd, \covers_\lsf]$ is an iCFO in which:
\begin{itemize}
\item the weak equivalences are the stalkwise weak equivalences (Def.\
  \ref{def:stalk_weq}), or equivalently defined in Cor.~\ref{cor:w-eq-comb}; 

\item the fibrations are the Kan fibrations (Def.\ \ref{def:Kan_arrow}); 

\item the acyclic fibrations are hypercovers (Def.\ \ref{def:equivalence}), or equivalently defined in Cor.~\ref{cor:hypercover-n};

\item the path object of a derived Lie $n$-groupoid $\huaG_\bu$ is $\huaG_\bu^I$ defined by 
\begin{equation*}
    \huaG^I_k = \Hom(\Simp{k}\times \Simp{1}, \huaG_\bu). 
\end{equation*}
\end{itemize}
\end{theorem}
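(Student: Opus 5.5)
The plan is to deduce the statement directly from Thm.~\ref{thm:RZ}, once its hypotheses have been checked for $(\DMfd,\covers_\lsf,\pts_{\DMfd})$.

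First, Lemma~\ref{lemma;cover-sections} shows that $\covers_\lsf$ is a pretopology in the sense of \ref{P1}--\ref{P4}. Lemma~\ref{lemma;sheaves}\eqref{subcanonical} shows that it is subcanonical. Prop.~\ref{prop:points} shows that $\pts_{\DMfd}$ is a jointly conservative collection of points. Thm.~\ref{pro:LSW-pret-dmfd} shows that $\covers_\lsf$ is locally stalkwise with respect to $\pts_{\DMfd}$. Rem.~\ref{rmk:p4} explains why the weaker axiom \ref{P4}, rather than \ref{P4'}, suffices for all results of \cite{Rogers-Zhu:2016} used here. Applying Thm.~\ref{thm:RZ} then gives the iCFO structure on $\mathsf{Gpd}_n[\DMfd,\covers_\lsf]$. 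In it, weak equivalences are stalkwise weak equivalences, fibrations are Kan fibrations, and acyclic fibrations are hypercovers.

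The alternative descriptions stated in the theorem are formal. Prop.~\ref{prop:w-eq-comb} and Cor.~\ref{cor:w-eq-comb} give the combinatorial characterization of stalkwise weak equivalences. Cor.~\ref{cor:hypercover-n} gives the truncated characterization of hypercovers between $n$-groupoids.

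The one point needing separate comment is the explicit path object. I would check that it is exactly the one produced in the proof of \cite[Thm.~7.1]{Rogers-Zhu:2016}, namely $\huaG^I_k=\Hom(\Simp{k}\times\Simp{1},\huaG_\bu)$, with $s$ induced by $\Simp{k}\times\Simp{1}\to\Simp{k}$ and $(d_0,d_1)$ induced by the two inclusions of the endpoints of $\Simp{1}$. Three facts must be verified there.

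\begin{enumerate}
\item Representability of $\huaG^I_k$. This follows inductively by filling $\Simp{k}\times\Simp{1}$ by a sequence of horns, using the Kan conditions of $\huaG_\bu$ and \ref{P3}.
\item That $(d_0,d_1)$ is a Kan fibration. This uses the anodyne-extension argument for $\partial\Simp{1}\hookrightarrow\Simp{1}$.
\item That $s$ is a weak equivalence. This can be checked stalkwise: points preserve finite limits, so $\ppt(\huaG^I)$ is the simplicial path space of the Kan complex $\ppt(\huaG_\bu)$.
\end{enumerate}

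These arguments are purely combinatorial once covers are stable under pullback, composition and products. The main obstacle is bookkeeping: one must confirm that no step in \cite{Rogers-Zhu:2016} silently requires $X\to *$ to be a cover, which fails here by Rem.~\ref{remark;terminal}. This is exactly what Rem.~\ref{rmk:p4} addresses. The terminal object $*$ itself, as the constant simplicial object, is still a derived Lie $n$-groupoid, and every $\huaG_\bu\to *$ is a Kan fibration by definition of $n$-groupoid, so \ref{iCFO7} holds.
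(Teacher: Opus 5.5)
Your proposal is correct and follows the paper's route: the paper obtains the theorem as an immediate consequence of Thm.~\ref{thm:RZ} combined with Thm.~\ref{pro:LSW-pret-dmfd}, with Rem.~\ref{rmk:p4} covering the weakened axiom. The path object and its properties are likewise taken from \cite{Rogers-Zhu:2016}, as in Rem.~\ref{rmk:path}; the one step your sketch leaves implicit, that $\huaG^I_\bu$ is itself an $n$-groupoid (unique Kan conditions above level $n$), the paper also simply imports from \cite[Prop.~7.2, Rem.~7.6]{Rogers-Zhu:2016}.
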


\begin{remark}\label{rmk:path}
More explicitly, the path object of a derived Lie $n$-groupoid $\huaG_\bu$ is given by
\[
\huaG^I_k=\huaG_{k+1}\times_{d_1, \huaG_k, d_1} \huaG_{k+1} \times_{d_2, \huaG_k, d_2} \dots \times_{d_k, \huaG_k, d_k} \huaG_{k+1}.
\]
This is a derived manifold because the face maps $d_i$ of $\huaG_\bu$ are covers; see \cite[Lemmas~3.6--3.7]{Rogers-Zhu:2016} or \cite[Lemma~2.44]{li:thesis}.  It follows from \cite[Prop.~7.2, Rem.~7.6]{Rogers-Zhu:2016} that $\huaG_\bu^I$ is a derived Lie $n$-groupoid.
\end{remark}

Thus we have a tower of iCFO's,
\begin{equation*} 
    \DMfd=\mathsf{Gpd}_0[\DMfd, \covers_\lsf] \subset \mathsf{Gpd}_1[\DMfd, \covers_\lsf] 
\subset \dots \subset  \mathsf{Gpd}_n[\DMfd, \covers_\lsf] \subset \dots \subset \gpd{\DMfd, \covers_\lsf}. 
\end{equation*}

\begin{remark}\label{rmk:2icfo}
The category $\DMfd$ carries two completely different iCFO structures.  The first one is given by Thm.~\ref{theorem;blx-cfo} with:
    \begin{itemize}
        \item fibrations given by submersions; and
        \item the weak equivalences are the maps that induce a  bijection on classical loci and quasi-isomorphism between the tangent complexes,
    \end{itemize}
       while the second one is given by Thm.~\ref{thm:iCFO-n-details}, where
       \begin{itemize}
           \item any map is a fibration; and
           \item the weak equivalences are the isomorphisms between derived manifolds.
       \end{itemize}
The first iCFO is a CFO while the second one is not.  The second iCFO does not see weak equivalences in the first CFO.  This leads to the fact that our Morita equivalences of derived Lie $n$-groupoids defined in \S\,\ref{subsection;ME} contain both weak equivalences in the first CFO of $\DMfd$ as well as hypercovers in the iCFO of derived Lie $n$-groupoids.   Another possible way is to use Pridham's homotopy hypergroupoid \cite{Pridham:higher-stack} to integrate the weak equivalence directly into a homotopy version of hypercovers; see Rem.~\ref{rmk:ME}. 
\end{remark}

\subsection{Morita equivalence of derived Lie \texorpdfstring{$n$}{n}-groupoids}\label{subsection;ME}

Recall that for Lie $n$-groupoids, once we construct the iCFO,  we may define their Morita equivalence to be a span of acyclic fibrations. This follows from the fact that the simplicial localization (i.e. the underlying $\infty$-category) of an iCFO has a simple description in terms of the nerve of a category of spans \cite[Thm.~2.13]{Rogers-Zhu:2016}. 

However, in the case of derived Lie $n$-groupoids this is not enough. Since derived manifolds themselves form a CFO, see \S \ref{subsection;fibrant},  we want to incorporate the weak equivalence in $\DMfd$ into the Morita equivalence for derived Lie $n$-groupoids, see Rem.\ \ref{rmk:2icfo}.

\begin{definition}\label{def:Morita}
A morphism $f_\bu\colon \huaG_\bu\to\huaH_\bu$ between derived Lie $n$-groupoids is called a \emph{levelwise weak equivalence} if $f_i\colon  \huaG_i \to \huaH_i$  is a weak equivalence between derived manifolds for every $i\in\N$.  We call a morphism a \emph{Morita morphism} if it is either a levelwise weak equivalence or a weak equivalence in $\dnGpd$. Morita equivalence is an equivalence relation generated by Morita morphisms, that is, two derived Lie $n$-groupoids $\huaG_\bu, \huaH_\bu \in \dnGpd$ are \emph{Morita equivalent} if there is a sequence of maps in $\dnGpd$, 
\begin{equation}\label{eq:zig-zag}
\huaG_\bu \leftarrow  \huaG_\bu^1  \to  \huaG_\bu^2 \leftarrow\dots \leftarrow\huaG_\bu^k \to\huaH_\bu, 
\end{equation}
each of which is a Morita morphism towards the left or right. Further, a {\em generalised morphism} $\huaG_\bu \dashrightarrow \huaH_\bu$ is made of a sequence of maps as in \eqref{eq:zig-zag} such that the ones going left are Morita morphisms.   
\end{definition} 

\begin{remark}\label{rmk:ME}

To establish a good notion of Morita equivalence for \dgngpds\ is to make sure that Morita equivalent \dgngpds\ present the same derived $n$-stack. This in particular requires the theory of derived higher differentiable stacks, see \cite{Pridham:higher-stack, toen-vezzosi:hagI} for the algebraic case. 

One way to proceed it is to embed the CFO of $\DMfd$ into $\dCalg^\op$, the opposite category of non-positively graded dg $C^\infty$-algebras as introduced in \cite[Def.3.18]{carchedi:23}.  
It was shown in \cite{carchedi-roytenberg:2} that $\dCalg^\op$ forms a model category and  \cite{carchedi:23} demonstrates that weak equivalence in $\DMfd$ goes to weak equivalence in $\dCalg^\op$ \cite[Cor.~5.17]{carchedi:23} and the embedding mentioned above extends to an equivalence as $\infty$-categories \cite[Thm.~1.5]{carchedi:23}. 

Furthermore, if one modifies the category $\dCalg^\op$ a little bit to $\dCalg^\op_0$ according to \cite[Prop.~3.12]{pridham:derived-analytic}, one should expect that the embedding  $\DMfd \to \dCalg^\op_0$ is not only  fully faithful, preserving and detecting weak equivalences, but also carrying fibrations to fibrations \cite{steffens;private,pridham;private}.

Next, we may follow \cite{Pridham:higher-stack} and introduce homotopy hypergroupoid in $(\dCalg^\op_0, \covers_\new)$, for a suitable pretopology $\covers_\new$, and  homotopy hypercovers between them. Then the relative category of  homotopy derived  Lie $n$-hypergroupoids $\dgnGpdCscho$ localized at homotopy hypercovers should model the $\infty$-category of derived differentiable $n$-stacks.  

Using the technique of collapsible extensions, one shows that homotopy Kan conditions reduce to usual Kan conditions for $\dnGpd$. Thus based on the embedding of $\DMfd \to \dCalg^\op_0$, we can further embed $\dnGpd$ to $\dgnGpdCscho$. It turns out that, restricting to $\dnGpd$, a usual hypercover and a levelwise weak equivalence are both examples of homotopy hypercovers. 

In theory, Morita equivalence in $\dnGpd$ should be zigzags of homotopy hypercovers travelling though homotopy derived Lie $n$-hypergroupoids. The Morita equivalence would be significantly simplified thanks to Reedy fibrant objects \cite{getzler;private}. Hence, Morita equivalence for $\dnGpd$\;might be more general than Def.~\ref{def:Morita} above. Nevertheless, what we define is certainly right and is general enough to cover all equivalences in the examples. 

To work out the entire theory of derived  Lie $n$-hypergroupoids is beyond the scope of this work since our focus are shifted symplectic structures on derived Lie $n$-groupoids.  We thus delay the part on hypergroupoids and a more detailed study of general Morita equivalence to a future work. 
\end{remark}

\subsection{Coarse quotients and stabilizers}\label{subsection:coarse}

The simplest Morita invariants of a higher derived Lie groupoid are its coarse quotient and the stabilizers of its classical points.  Let $\huaG_\bu$ be a derived Lie $n$-groupoid.  For each $i$ we regard the classical locus $\pi_0(\huaG_i)$ of the derived manifold $\huaG_i$ as a topological subspace of the body of $\huaG_i$.  The object $\pi_0(\huaG_\bu)$ is then a topological $n$-groupoid.  The \emph{orbit space} or \emph{coarse quotient} of $\huaG_\bu$ is the set
\[\pi_0(\huaG_0)/\pi_0(\huaG_1)\]
obtained by identifying two classical points $x_0$ and $x_1\in\pi_0(\huaG_0)$ if there exists $g\in\pi_0(\huaG_1)$ with $d_0(g)=x_0$ and $d_1(g)=x_1$.  We equip the coarse quotient with the quotient topology defined by this equivalence relation (which will frequently be non-Hausdorff).

\begin{lemma}\label{lemma;coarse}
Let $\huaG_\bu$ be a derived Lie $n$-groupoid.  A Morita equivalence $\huaG_\bu\simeq\huaH_\bu$ of derived Lie $n$-groupoids induces a homeomorphism 
\[\pi_0(\huaG_0)/\pi_0(\huaG_1)\cong\pi_0(\huaH_0)/\pi_0(\huaH_1)\]
of coarse quotients.
\end{lemma}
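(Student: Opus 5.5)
Since Morita equivalence is generated by Morita morphisms, it suffices to show that a single Morita morphism $f_\bu\colon\huaG_\bu\to\huaH_\bu$ induces a homeomorphism of coarse quotients. First, any simplicial morphism $f_\bu$ maps classical loci to classical loci (see \eqref{equation;morphism-loci}). Because $f_\bu$ commutes with $d_0,d_1$, the map $\pi_0(f_0)\colon\pi_0(\huaG_0)\to\pi_0(\huaH_0)$ respects the equivalence relations, so it descends to a continuous map $\bar f$ of coarse quotients. It remains to show that $\bar f$ is bijective and open. I treat the two kinds of Morita morphism separately.

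\emph{Levelwise weak equivalences.} Here each $f_i$ induces a bijection $\pi_0(\huaG_i)\to\pi_0(\huaH_i)$, so $\bar f$ is surjective. For injectivity, suppose $f(x_0)$ and $f(x_1)$ are joined by $h\in\pi_0(\huaH_1)$. Write $h=f_1(g)$ with $g$ classical. Then $f_0(d_ig)=d_if_1(g)=f_0(x_i)$, and injectivity of $\pi_0(f_0)$ gives $d_ig=x_i$.

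For openness, I would show that $\pi_0(f_i)$ is a homeomorphism onto $\pi_0(\huaH_i)$. Continuity is clear. The concern is the inverse. Since $f_i$ is a smooth map of bodies that restricts to a bijection of classical loci with quasi-isomorphic tangent complexes, I would argue local homeomorphism via a local normal form; this step may be delicate. Alternatively, I would avoid it as follows. Saturations of open sets are open because $d_0,d_1$ are covers, hence open maps on bodies, and thus open on classical loci by local splittings. I would then deduce openness of $\bar f$ from the bijection on arrows together with the corresponding statement for $\huaH$. I expect this topological point to be the main obstacle.

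\emph{Stalkwise weak equivalences.} By Cor.~\ref{cor:w-eq-comb} (taking $j=0$), the map $d_0\circ\pr_2\colon\huaG_0\times_{\huaH_0}\huaH_1\to\huaH_0$ is a cover, hence locally split. A local splitting through a classical point $y\in\pi_0(\huaH_0)$ lands in a classical point $(x,h)$ of the fibred product. Then $h$ is an arrow from $f(x)$ to $y$, which gives surjectivity of $\bar f$.

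The local splittings are continuous maps defined on open sets. Using them, any open saturated set in $\pi_0(\huaG_0)$ is carried to an open set in $\pi_0(\huaH_0)$: locally around $y$ the image is the preimage, under the splitting composed with $\pr_1$, of the given open set. This gives openness.

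For injectivity, use the $j=1$ condition, i.e.\ surjectivity of $r(f)_1$ (a cover when $n\ge 2$, an isomorphism when $n=1$) onto $\partial_1(\huaG)\times_{\partial_1(\huaH)}\Lambda^2_2(\huaH)$. Given $x_0,x_1$ and a classical $h$ joining $f(x_0)$ to $f(x_1)$, form the horn consisting of $h$ and the degenerate arrow $s_0f(x_1)$. Lifting this classical point through the locally split map $r(f)_1$ yields a classical $g\in\pi_0(\huaG_1)$ with $d_0g=x_0$ and $d_1g=x_1$.

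Composing along the zig-zag \eqref{eq:zig-zag} then gives the homeomorphism.
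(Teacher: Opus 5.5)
The gap is the step you flag yourself in the levelwise case. Your bijectivity argument for $\bar f$ there is fine. Openness of $\bar f$ would follow at once if $\pi_0(f_0)\colon\pi_0(\huaG_0)\to\pi_0(\huaH_0)$ were a homeomorphism, since then $f_0(V)$ would be open and saturated for every open saturated $V$. But you do not prove this, and your fallback cannot work. Openness of saturations in $\huaG_\bu$ and $\huaH_\bu$, together with bijectivity on classical arrows, carries no information about the topology of $\pi_0(f_0)$.

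To see this, regard a weak equivalence $f\colon\huaM\to\huaN$ of derived manifolds as a morphism of constant simplicial objects. These are derived Lie $n$-groupoids for every $n$, because all horn maps are isomorphisms, and $f$ is a levelwise weak equivalence. Here $d_0=d_1=\id$, so the coarse quotients are just $\pi_0(\huaM)$ and $\pi_0(\huaN)$. In this case the lemma \emph{is} the statement that $\pi_0(f)$ is a homeomorphism, and no groupoid-level argument can produce it.

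The missing input is this derived-manifold fact, which rests on the inverse function theorem for derived manifolds \cite{blx:etale-linfty}. One way to get it:
\begin{enumerate}
\item By Brown's factorization lemma in the CFO $\DMfd$, write $f=p\circ i$ with $p$ a fibration and $q\circ i=\id$ for an acyclic fibration $q$. By two-out-of-three, $p$ is an acyclic fibration.
\item $\pi_0(i)$ is a continuous bijection with continuous inverse $\pi_0(q)$.
\item The inverse function theorem gives local sections of $p$ through every classical point of the target (cf.\ \S\,\ref{sec:comp}, after restricting $p$ to its open image). These sections are local continuous inverses of the bijection $\pi_0(p)$.
\end{enumerate}
The paper's proof uses this fact tacitly when it asserts that a levelwise weak equivalence induces an isomorphism of topological $n$-groupoids $\pi_0(\huaG_\bu)\to\pi_0(\huaH_\bu)$.

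Your treatment of stalkwise weak equivalences is correct and follows the paper's route: local splittings of $r(f)_0$ give surjectivity and openness, and a second lifting property gives injectivity. Two comments on it.

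First, your openness argument silently uses injectivity. To see that $y$ itself lies in $(\pr_1\circ s)^{-1}(V)$, you need $f(x)\sim f(v)$ with $v\in V$ to force $x\sim v$, and hence $x\in V$ by saturation. So injectivity should be proved first.

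Second, your use of $r(f)_1$ on the horn $(s_0f(x_1),h)\in\Horn{2}{2}(\huaH)$ is more careful than the paper. The paper invokes local splitting of $\huaG_1\to(\huaG_0\times\huaG_0)\times_{\huaH_0\times\huaH_0}\huaH_1$, which Cor.~\ref{cor:w-eq-comb} supplies only for $n=1$. For $n\ge2$ a weak equivalence need only lift arrows up to a $2$-simplex; an example is the unit map from $*$ to the nerve of the contractible Lie $2$-group of the crossed module $\R\xrightarrow{\id}\R$. Your horn argument works for every $n$.
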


\begin{proof}
By Def.~\ref{def:Morita} it suffices to consider the case of a single morphism $f_\bu\colon\huaG_\bu\to\huaH_\bu$ which is either a weak equivalence of derived $n$-groupoids or a levelwise weak equivalence.  By Cor.~\ref{cor:w-eq-comb}, if $f_\bu$ is a weak equivalence of derived $n$-groupoids, then $f_\bu$ induces locally split fibrations
\[
\bar{f}_0\colon\huaG_0\times_{\huaH_0}\huaH_1\longto\huaH_0,\qquad
\bar{f}_1\colon\huaG_1\longto(\huaG_0\times\huaG_0)
\times_{\huaH_0\times\huaH_0}\huaH_1.
\]
The fact that $\bar{f}_0$ is locally split implies that the continuous map
\[
\bar{f}\colon\pi_0(\huaG_0)/\pi_0(\huaG_1)\longto\pi_0(\huaH_0)/\pi_0(\huaH_1)
\]
induced by $f_\bu$ is topologically locally split; the fact that $\bar{f}_1$ is locally split implies that $\bar{f}$ is injective and hence a homeomorphism.  A levelwise weak equivalence $\huaG_\bu\to\huaH_\bu$ induces an isomorphism of topological $n$-groupoids $\pi_0(\huaG_\bu)\to\pi_0(\huaH_\bu)$ and hence a homeomorphism of coarse quotients.
\end{proof}

The coarse quotient, being a subquotient of a manifold, can also be regarded as a diffeological space (see e.g.\ \cite{barbieri-watts-ziegler;frobenius}), and the homeomorphism of Lemma~\ref{lemma;coarse} is in fact an isomorphism of diffeological spaces.

The \emph{stabilizer} (or \emph{isotropy} or \emph{inertia}) of a classical point $x\in\pi_0(\huaG_0)$ is the pullback
\[
\begin{tikzcd}
\Stab(\huaG_\bu,x)\ar[r]\ar[d]&\pi_0(\huaG_\bu^I)\ar[d]\\
*\ar[r,"{(x,x)}"]&\pi_0(\huaG_\bu)\times\pi_0(\huaG_\bu),
\end{tikzcd}
\]
which is an $n$-group object in the category of topological spaces.  For $n=1$ this is the usual stabilizer of $x$; for general $n$ we state without proof the following result.

\begin{lemma}\label{lemma;stabilizer}
Let $\huaG_\bu$ be a derived Lie $n$-groupoid.  For each $x\in\pi_0(\huaG_0)$ the stabilizer $\Stab(\huaG_\bu,x)$ is a Lie $n$-group.  Given a Morita equivalence $\huaG_\bu\simeq\huaH_\bu$ of derived Lie $n$-groupoids, if $[x]\in\pi_0(\huaG_0)/\pi_0(\huaG_1)$ corresponds to $[y]\in\pi_0(\huaH_0)/\pi_0(\huaH_1)$ under the homeomorphism $\pi_0(\huaG_0)/\pi_0(\huaG_1)\cong\pi_0(\huaH_0)/\pi_0(\huaH_1)$, then the stabilizers $\Stab(\huaG_\bu,x)$ and $\Stab(\huaH_\bu,y)$ have isomorphic homotopy groups.
\end{lemma}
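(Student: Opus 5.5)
The plan has two parts. First I show that $\Stab(\huaG_\bu,x)$ is an $n$-group in $\Mfd$. Then I show that its homotopy groups are Morita invariant, treating separately the two generating kinds of Morita morphism from Def.~\ref{def:Morita}.

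\emph{Lie structure.} The key observation is that taking classical loci turns covers of $\covers_\lsf$ into something usable on the classical side.
\begin{itemize}
\item A locally split fibration $f\colon\huaM\to\huaN$ restricts to a map $\pi_0(\huaM)\to\pi_0(\huaN)$ that admits local continuous sections through every classical point: a local splitting $s$ sends classical points to classical points.
\item Levelwise, $\pi_0(\huaG_\bu^I)$ is computed from the iterated fibre products of Rem.~\ref{rmk:path}, using \eqref{equation;fibred-classical}.
\item Hence $\Stab(\huaG_\bu,x)_k$ is the fibre over $(x,x)$ (more precisely over the degenerate simplices at $x$) of the restricted map $\pi_0(\huaG^I_k)\to\pi_0(\huaG_k)^2$.
\end{itemize}
Smoothness is the part I expect to be hardest, since classical loci of derived manifolds are in general singular. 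My first attempt is to show that the stabilizer is a homogeneous space for its own group of $1$-simplices acting through the Kan fillers. The fibre is a closed subgroup-like object of a topological $n$-group, and the Kan maps restricted to it are surjective and locally split. From there I would apply a Cartan-type closed subgroup theorem level by level, following the strategy used for $n$-groups in \cite{z:tgpd-2}, to conclude that each level is a manifold and the horn maps are surjective submersions. If that fails, I would fall back on a weaker conclusion: a topological $n$-group whose homotopy groups are computed from the tangent complexes.

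\emph{Morita invariance.} By Def.~\ref{def:Morita} it suffices to treat a single morphism $f_\bu\colon\huaG_\bu\to\huaH_\bu$ with $y=f_0(x)$. Lemma~\ref{lemma;coarse} reduces the choice of $y$ in the orbit to this case, because stabilizers at points of the same orbit are conjugate: conjugating by a classical $1$-simplex $g$ gives an isomorphism.
\begin{itemize}
\item \emph{Levelwise weak equivalence.} By definition $f_\bu$ gives an isomorphism $\pi_0(\huaG_\bu)\cong\pi_0(\huaH_\bu)$ of topological $n$-groupoids. Because $\pi_0$ commutes with the finite limits defining $\huaG^I$, $f_\bu$ also induces $\pi_0(\huaG^I_\bu)\cong\pi_0(\huaH^I_\bu)$, hence an isomorphism of stabilizers.
\item \emph{Stalkwise weak equivalence.} Apply Cor.~\ref{cor:w-eq-comb}. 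The maps $r(f)_j$ are covers for $j<n$ and an isomorphism for $j=n$. Restricting to classical loci over the degenerate simplices at $x$ and $y$, these maps become surjections of the relevant sets. This yields exactly the combinatorial conditions for $\Stab(\huaG_\bu,x)\to\Stab(\huaH_\bu,y)$ to be a weak equivalence of Kan simplicial sets, once we note that the classical fibres themselves satisfy the Kan condition.
\end{itemize}
Since a weak equivalence of Kan complexes induces isomorphisms on all homotopy groups, this gives the claimed isomorphism of homotopy groups.
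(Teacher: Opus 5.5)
The paper states this lemma without proof, so there is no argument to compare with. Judged on its own, your proposal has a genuine gap in the first half. The second half reaches the right conclusion, but through a step that does not do what you claim.

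\textbf{Smoothness.} What you give here is a plan, not an argument, and its mechanism is not available for $n\ge2$. The levels $\Stab(\huaG_\bu,x)_k$ are not groups and Kan fillers are not unique, so no group of $1$-simplices acts and there is no closed subgroup theorem to invoke. You also never name the ambient Lie group such a theorem requires.

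For $n=1$ the right ambient object is the body $G_\bu$ of $\huaG_\bu$, which is an ordinary Lie groupoid: locally split fibrations are surjective submersions on bodies, and fibred products along fibrations have the fibred products of the bodies as bodies. The classical isotropy $\pi_0(\huaG_1)\cap(d_0,d_1)^{-1}(x,x)$ is a closed subgroup of the isotropy Lie group of $G_\bu$ at $x$, and Cartan's theorem finishes.

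For $n\ge2$, however, no argument can give smoothness, because the levels of the stabilizer need not be manifolds. Here is a counterexample.
Let $\huaM=\huaZ(\R^2,\underline{\R},ab)$ with classical point $e=(0,0)$. Let $\huaG_\bu=\Cosk^1(\huaM\rightrightarrows *)$ be the $1$-coskeleton of this $1$-truncated object, with degeneracy $e$. Then $\huaG_k=\huaM^{\binom{k+1}{2}}$, with one factor per edge of $\Delta[k]$; faces are projections and degeneracies insert $e$.
The maps for $\Kan(1,j)$ and $\Kan(2,j)$ are projections, split by inserting $e$. For $k\ge3$ the horn maps are isomorphisms, since $\Horn{k}{j}$ contains every edge. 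So $\huaG_\bu$ is a derived Lie $n$-group for every $n\ge2$.
Yet $\Stab(\huaG_\bu,*)_0=\pi_0(\huaG_1)=\{ab=0\}$, which is not a manifold.

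So for $n\ge2$ only the weaker statement made just before the lemma is available: the stabilizer is an $n$-group object in topological spaces. Your fallback's claim that its homotopy groups are computed from tangent complexes is also off. Tangent complexes only see infinitesimal data; for $n=1$ they cannot detect the group $G_x$ itself.

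\textbf{Morita invariance.} The levelwise case is fine.

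In the stalkwise case, restricting $r(f)_j$ to the fibres over degenerate simplices does not give the conditions of Prop.~\ref{prop:w-eq-comb} for the induced map of stabilizers. The simplices of $\Stab$ are prisms $\Delta[k]\times\Delta[1]$ with degenerate ends, and lifting against those would still have to be assembled from the $r(f)_j$.

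The missing idea is cleaner. A morphism $*\to\huaM$ is exactly a classical point, so $\pi_0(\huaM)=\Hom(*,\huaM)=\ppt_{*,*}(\yon(\huaM))$. Taking classical loci is therefore one of the points in $\pts_{\DMfd}$. As a point, it preserves the finite limits $\Hom(\Horn{k}{j},\huaG_\bu)$ and $\huaG^I_k$, and it sends covers to surjections. Consequently:
\begin{itemize}
\item $\pi_0(\huaG_\bu)$ is a Kan complex;
\item $\Stab(\huaG_\bu,x)$ is its based loop space at $x$, so $\pi_i(\Stab(\huaG_\bu,x))\cong\pi_{i+1}(\pi_0(\huaG_\bu),x)$;
\item a stalkwise weak equivalence is, by Def.~\ref{def:stalk_weq} itself, a weak equivalence of Kan complexes $\pi_0(\huaG_\bu)\to\pi_0(\huaH_\bu)$.
\end{itemize}
No appeal to Cor.~\ref{cor:w-eq-comb} is needed.

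Finally, for $n\ge2$ conjugation by a $1$-simplex is only defined up to homotopy. To move the basepoint within an orbit, use instead the fact that the homotopy groups of a Kan complex do not depend on the vertex within a path component.
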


\subsection{Homotopy pullback}\label{subsection;pull}

Here is a useful weakening of the notion of an acyclic fibration.

\begin{definition}\label{definition;strong}
A morphism of derived Lie $n$-groupoids $\Phi_\bu\colon\huaH_\bu\to\huaG_\bu$ is a \emph{strong fibration} if it is a fibration of derived Lie $n$-groupoids with the additional property that $\Phi_0\colon\huaH_0\to\huaG_0$ is a fibration of derived manifolds.
\end{definition}

Given morphisms of derived Lie $n$-groupoids $f_\bu\colon \huaG_\bu\to \huaK_\bu$ and $g_\bu\colon \huaH_\bu\to \huaK_\bu$, a homotopy pullback $(\huaG\times^h_\huaK \huaH)_\bu$ is defined as
\[
(\huaG\times^h_\huaK \huaH)_\bu:=\huaG_\bu\times_{\huaK_\bu} \widetilde{\huaH}_\bu, 
\]where $\widetilde{\huaH}_\bu $ is a fibrant replacement of $\huaH_\bu$. That is, there exists a Morita morphism $ \sigma_\bu\colon \huaH_\bu \to\widetilde{\huaH}_\bu$ and a strong fibration $\tilde{g}_\bu\colon \widetilde{\huaH}_\bu \to\huaK_\bu$. 
Such a fibrant replacement might not exist. It is not guaranteed by factorization lemma because of the fibration requirement on the level 0. However, this requirement guarantees the existence of the fiber product $\huaG_\bu\times_{\huaK_\bu} \widetilde{\huaH}_\bu$ because it was shown in \cite[Prop.~7.10]{Rogers-Zhu:2016} that if 
$\huaG_0 \times_{\huaK_0} \widetilde{\huaH}_0 $ exists in $\DMfd$, the pullback $\huaG_\bu\times_{\huaK_\bu} \widetilde{\huaH}_\bu$ is a derived Lie $n$-groupoid.  In this article, for most of the examples we find explicit fibrant replacements. 

\begin{remark} \label{rmk:hom-pb}
The homotopy pullback defined above is not the most general. One should allow $\widetilde{\huaH}_\bu$ being simply a Morita equivalent copy of $\huaH_\bu$, that is, one should allow $\sigma_\bu\colon \huaH_\bu \dashrightarrow\widetilde{\huaH}_\bu$being a Morita equivalence instead of a Morita morphism. Then $g_\bu$ and $\tilde{g}_\bu\circ\sigma_\bu$ should differ by a 2-morphism. Moreover, we might need to adapt the more genenral concept of Morita equivalence traveling through a bigger world than just $\dnGpd$ to demonstrate that the concept of homotopy pullback is independent of the choice of fibrant replacement. Nevertheless, for the examples in this article, the homotopy pullback defined above is very helpful to guide and illustrate the nature of many concrete constructions. 
\end{remark}

\section{Shifted symplectic structures on derived Lie \texorpdfstring{$n$}{n}-groupoids}\label{sec:symp}

In this chapter we extend the $m$-shifted symplectic structures on Lie $n$-groupoids introduced in \cite{Lesdiablerets} (see also \cite{cueca-zhu}) to the realm of derived Lie $n$-groupoids. A similar approach in the setting of DG Artin $n$-hypergroupoids was developed in \cite{prid:shift}.

\subsection{The simplicial graded de Rham complex}\label{sec:sgdR}

Recall that differential forms  on a simplicial manifold $M_\bu$  are elements of the Bott-Shulman-Stasheff double complex $(\Omega^\bu(M_\bu),\delta, d)$, see e.g. \cite{bss:derham,cueca-zhu}. Here we introduce its analogue for a simplicial derived manifold.

Given a  simplicial derived manifold $\mathcal{M}_\bu=(M_\bu,\mathcal{S}^\bu_{M_\bu},Q_\bu)$, its \emph{simplicial graded de Rham complex} $$\Omega^{\bu,\bu,\bu}(\mathcal{M}):=(\Omega^{\bu,\bu}(\mathcal{M}_\bu),\ d, \ \widetilde{\Lie}_{Q},\  \delta)$$ is defined as the triple complex of graded differential forms on $\mathcal{M}_\bu$. Here, $\Omega^{a,b,c}(\mathcal{M})$ denotes the set of differential $a$-forms of internal degree $b\le 0$ on the derived manifold $\mathcal{M}_c$ as introduced in \S \ref{subsection;cartan}; the map $\delta$ is the simplicial differential
\[
\delta\colon \Omega^{a,b,c}(\mathcal{M})\rightarrow \Omega^{a,b,c+1}(\mathcal{M})\quad\text{}\quad \delta=\sum_{i=0}^{c+1}(-1)^id_i^{\ast};
\]
the differential $\widetilde{\Lie}_{Q}$ is defined as  $$\widetilde{\Lie}_{Q_c}=(-1)^b\Lie_{Q_c}\colon \Omega^{a,b,c}(\mathcal{M})\rightarrow \Omega^{a,b+1,c}(\mathcal{M}),$$ where $\Lie_{Q_c}$ stands for the Lie derivative along the homological vector field $Q_c$ on the derived manifold $\huaM_c$, and $d$ is the de Rham differential  
$$d\colon \Omega^{a,b,c}(\mathcal{M})\rightarrow \Omega^{a+1,b,c}(\mathcal{M})$$
on the derived manifold $\huaM_c$. Relevant for this article is the associated differential
    $$D:=\delta+(-1)^c \widetilde{\Lie}_{Q}+(-1)^{b+c}d$$
on the total complex of the triple complex  $\Omega^{\bu,\bu,\bu}(\mathcal{M}).$

Observe that so far we only used the face maps of the simplicial derived manifold $\huaM_\bu$. Now we also consider degeneracy maps and define the \emph{normalized subcomplex} as
 \begin{equation} \label{eq:norm-Ome}
     \hat{\Omega}^{\bu,\bu,\bu}(\mathcal{M}):=\{\alpha \in \Omega^{\bu,\bu,\bu}(\mathcal{M})\,|\,s_i^{\ast}\alpha=0 \:\text{for all possible $i$}\}
 \end{equation}
consisting of differential forms vanishing on degeneracies.  (One can show that the normalized subcomplex is quasi-isomorphic to the triple complex by a strategy similar to \cite[Prop.~2.5]{WoZh16} combined with the dual version of Dold-Kan as in \cite[Cor.~8.4.3]{wei:hom}.) 
 
With all the above,  we say that  $\alpha_\bu$ is an \emph{$m$-shifted $k$-form} if it is of the form 
\[
   \alpha_\bu=\bigoplus_{a+b+c=m+k, \ a\geq k}
   \alpha_{a,b,c} \qquad\text{with}\qquad\alpha_{a,b,c}\in \hat{\Omega}^{a,b,c}(\mathcal{M}).
\]
 It follows immediately from our conventions that $b\leq 0$ and $c\ge 0$.  
If $\alpha_\bu$ is an $m$-shifted $k$-form, then $D\alpha_\bu$ is an $m+1$-shifted $k$-form.  We say that an $m$-shifted $k$-form $\alpha_\bu$ is \emph{closed} if $D\alpha_\bu=0$.

\subsection{The tangent complex of a derived Lie \texorpdfstring{$n$}{n}-groupoid}
\label{tangentcomplex}

Recall that \cite{Lesdiablerets} introduced the non-degeneracy condition for $m$-shifted sympelctic structures on Lie $n$-groupoids using the tangent complex, see also \cite{cueca-zhu}. Here we introduce a tangent complex for derived Lie $n$-groupoids. The tangent complex of a derived Lie $n$-groupoid $\huaG_\bu$ will be  defined only  at classical points  of $\huaG_0$. Since the classical locus $\pi_0(\huaG_0)$ is in general not a smooth manifold, see \S \ref{subsection:coarse}, we confine ourselves to a pointwise definition of the tangent complex. 

Given a derived Lie $n$-groupoid $\mathcal{G}_\bu$ and a classical point $x_0\in \pi_0(\mathcal{G}_0)$, equation \eqref{equation;morphism-loci} implies that $s_0\circ\dots\circ s_0(x_0)\in\pi_0(\huaG_l)$ for all $l\in\N$. Hence, we  consider the tangent complex of each derived manifold $(T^\bu_{s_0\circ\dots\circ s_0(x_0)}\mathcal{G}_l, \ell_{Q_l})$ as introduced in  \eqref{notation;tangent-complex}. For now on, we denote $s_0\circ\dots\circ s_0(x_0)$ by $x_0$.  Since taking the tangent complex of each derived manifold is functorial, see \eqref{equation;morphism-tangent},
$$(T^\bu_{x_0}\mathcal{G}_\bu,\ell_{Q_\bu},T_{x_0}d_i, T_{x_0}s_j)$$
forms a simplicial cochain complex.

For each fix $k\in\Z$, we apply the normalization functor of the Dold-Kan correspondence  to the simplicial vector spaces $(T_{x_0}^k\mathcal{G}_\bu, T_{x_0}^kd_i, T_{x_0}^ks_j)$ and obtain a cochain complex concentrated in non-positive degrees $(N^\bu(T_{x_0}^k\mathcal{G}),\partial)$, where
$$N^{-l}(T_{x_0}^k\mathcal{G}) :=\Big( \bigcap_{i=0}^{l-1}\:\text{ker}\,T_{x_0}^kd_i^l\Big)\subseteq T_{x_0}^k\huaG_l
\quad\text{and}\quad \partial_{-l}:=(-1)^l d_l,$$ for $l\in\Z$.   Since $N^\bu$ is a functor, if we free $k\in\Z$, we obtain the following double cochain complex 
\begin{equation}\label{diag:double-tang}
\begin{tikzcd}
	& \vdots & \vdots & \vdots \\
	\cdots & {N^{-2}(T_{x_0}^2\mathcal{G}_2)} & {N^{-1}(T_{x_0}^2\mathcal{G}_1)} & {N^0(T_{x_0}^2\mathcal{G}_0)} \\
	\cdots & {N^{-2}(T_{x_0}^1\mathcal{G}_2)} & {N^{-1}(T_{x_0}^1\mathcal{G}_1)} & {N^0(T_{x_0}^1\mathcal{G}_0)} \\
	\cdots & {N^{-2}(T_{x_0}^0\mathcal{G}_2)} & {N^{-1}(T_{x_0}^0\mathcal{G}_1)} & {N^0(T_{x_0}^0\mathcal{G}_0).}
	\arrow["\partial", from=2-1, to=2-2]
	\arrow["{N(\ell_{Q_2})}", from=2-2, to=1-2]
	\arrow["\partial", from=2-2, to=2-3]
	\arrow["{N(\ell_{Q_1})}", from=2-3, to=1-3]
	\arrow["\partial", from=2-3, to=2-4]
	\arrow["{N(\ell_{Q_0})}"', from=2-4, to=1-4]
	\arrow["\partial", from=3-1, to=3-2]
	\arrow["{N(\ell_{Q_2})}", from=3-2, to=2-2]
	\arrow["\partial", from=3-2, to=3-3]
	\arrow["{N(\ell_{Q_1})}", from=3-3, to=2-3]
	\arrow["\partial", from=3-3, to=3-4]
	\arrow["{N(\ell_{Q_0})}"', from=3-4, to=2-4]
	\arrow["\partial", from=4-1, to=4-2]
	\arrow["{N(\ell_{Q_2})}", from=4-2, to=3-2]
	\arrow["\partial", from=4-2, to=4-3]
	\arrow["{N(\ell_{Q_1})}", from=4-3, to=3-3]
	\arrow["\partial", from=4-3, to=4-4]
	\arrow["{N(\ell_{Q_0})}"', from=4-4, to=3-4]
\end{tikzcd}    
\end{equation}

\begin{definition}
\label{tangentcomplexLiengroupoid}
   Let $\huaG_\bu$ be a derived Lie $n$-groupoid and $x_0\in\pi_0(\huaG_0)$ a classical point. The total complex of the double cochain complex $(N^\bu(T_{x_0}^\bu\mathcal{G}),\partial,N(\ell_Q))$, as in  \eqref{diag:double-tang},
    is called the \emph{tangent complex of $\mathcal{G}_\bu$ at $x_0$} and will be denoted by $\mathbb{T}_{x_0}^\bu\mathcal{G}$. More concretely,
    \begin{equation} \label{eq:T-total-G}
        \bT_{x_0}^i\huaG:=\bigoplus_{j+k=i}N^{j}(T^k_{x_0}\huaG)\quad\text{with differential}\quad
    \mathbb{D}=\partial+(-1)^jN(\ell_{Q_j}).
    \end{equation}
\end{definition}    

\subsection{IM-pairings and main definition}\label{subsection;IM}

In order to define shifted symplectic structures, we need a non-degeneracy condition. For $m$-shifted $2$-forms on Lie $n$-groupoids that was done in \cite{Lesdiablerets} by defining an \emph{infinitesimally multiplicative pairing}, IM-pairing for short. His construction could be understood in terms of the van Est map, see e.g. \cite{BuCa12, Stefiduals}.
Here we introduce the IM-pairing of an $m$-shifted $2$-form on a derived Lie $n$-groupoid. 

\subsubsection{An adjusted Eilenberg-Zilber map for simplicial cochain complexes}\label{EZ}

To define the IM-pairing induced by an $m$-shifted $2$-form on a derived Lie $n$-groupoid, we first need an adjusted Eilenberg-Zilber map for simplicial cochain complexes. 

Totally parallel to  \S \ref{tangentcomplex}, for a given simplicial cochain complex $(\mathcal{V}^\bu_\bu,D, d_i, s_j)$, the normalized functor produces a double cochain complex $(N^\bu(\mathcal{V}^\bu),\partial,N(D))$. We similarly define its total complex and denote it by $(\tot^\bu(N(\huaV)),\mathbb{D})$. In this notation, the tangent complex as in \eqref{eq:T-total-G} of a derived Lie $n$-groupoid $\huaG_\bu$ at a classical point $x_0$ reads as $\bT^\bu_{x_0}\huaG=\tot^\bu(N(T_{x_0}\huaG))$.

We first recall that the Dold-Kan correspondence holds in general for simplicial objects
in abelian categories \cite[\S\,8.4]{wei:hom}. The Eilenberg-Zilber theorem describes
how the Dold-Kan correspondence acts on the tensor products on each side. The usual Eilenberg-Zilber map \cite{eilenberg-zilber:53} (see also \cite[\S\,29]{may}) for simplicial abelian groups also works for simplicial cochain complexes.  If $(\mathcal{V}^\bu_\bu,D^{\huaV}, d_i^{\huaV}, s_j^{\huaV})$ and $(\mathcal{W}^\bu_\bu,D^{\huaW}, d_i^{\huaW}, s_j^{\huaW})$ are simplicial cochain complexes,  the \emph{Eilenberg-Zilber map}  $EZ\colon N^\bu(\mathcal{V}^\bu)\otimes N^\bu(\mathcal{W}^\bu)\rightarrow N^\bu((\mathcal{V}\otimes \mathcal{W})^\bu)$, given by 
\begin{equation}\label{eq:EZ}
    EZ(v\otimes w)=\sum_{\sigma\in \Sh(p,q)}\sgn(\sigma)\bigl(s_{\sigma(p+q-1)}\cdots s_{\sigma(p)}(v)\bigr)\otimes\bigl(s_{\sigma(p-1)}\cdots s_{\sigma(0)}(w)\bigr)
\end{equation}
for $v \in N^{-p}(\huaV)$ and $w\in N^{-q}(\huaW)$, is a map of double cochain complexes. 

\begin{remark}
    On the domain of the Eilenberg-Zilber map, the tensor product is that of cochain complexes in the category of cochain complexes, i.e. 
    \begin{equation}\label{eq:tensor-normal}
        \big(N(\mathcal{V})\otimes N(\mathcal{W})\big)^{i,\bu}=\bigoplus_{j+k=i} \big(N^j(\huaV)\otimes N^k(\huaW)\big)^\bu\quad\text{with}\quad \partial^{\otimes}=\partial^{\huaV}\otimes 1+(-1)^j1\otimes\partial^{\huaW},
    \end{equation}
    while on the codomain of the Eilenberg-Zilber map we have the simplicial tensor product in the category of cochain complexes, i.e. at simplicial degree $k$ we have
    $$\big((\huaV\otimes\huaW)_k^\bu, D^{\huaV\otimes\huaW}, d_i^{\huaV\otimes\huaW},s_i^{\huaV\otimes\huaW}\big)=\big((\huaV_k\otimes\huaW_k)^\bu, D^{\huaV_k\otimes\huaW_k}, d_i^{\huaV}\otimes d_i^{\huaW}, s_i^{\huaV}\otimes s_i^{\huaW}\big).$$
\end{remark}
The Eilenberg-Zilber map can be slightly adjusted, so that one obtains a cochain map between $\tot^\bu(N(\huaV))\otimes\tot^\bu(N(\huaW))$ and $\tot^\bu(N(\huaV\otimes\huaW))$.
For cochains $v^s_p\in N^{-p}(\mathcal{V}^s)$ and $w^t_q\in N^{-q}(\mathcal{W}^t)$ the \emph{adjusted Eilenberg-Zilber map} $\widetilde{EZ}$ is defined as $$\widetilde{EZ}(v^s_p\otimes w^t_q)=(-1)^{sq}EZ(v^s_p\otimes w^t_q).$$
\begin{proposition}\label{prop:aEZ}
    Given simplicial cochain complexes $(\mathcal{V}^\bu_\bu,D^{\huaV}, d_i^{\huaV}, s_j^{\huaV})$ and $(\mathcal{W}^\bu_\bu,D^{\huaW}, d_i^{\huaW}, s_j^{\huaW})$, the adjusted Eilenberg-Zilber map $$\widetilde{EZ}\colon \big(\tot^\bu(N(\huaV))\otimes\tot^\bu(N(\huaW)),\mathbb{D}^{\otimes}\big)\rightarrow \big(\tot^\bu(N(\huaV\otimes\huaW)),\mathbb{D}^{\huaV\otimes\huaW}\big)$$ is a cochain map. Here, $\mathbb{D}^\otimes$ is the differential of tensored cochain complexes defined similar to \eqref{eq:tensor-normal} and $\mathbb{D}^{\huaV \otimes \huaW}$ is the total differential defined similar to \eqref{eq:T-total-G}. 
\end{proposition}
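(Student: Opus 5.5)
The plan is to reduce the statement to two facts: the classical Eilenberg--Zilber map $EZ$ commutes with the simplicial (horizontal) differentials $\partial$, and it commutes with the internal (vertical) differentials $N(D)$. The sign twist $(-1)^{sq}$ in $\widetilde{EZ}$ will then be exactly what reconciles the Koszul signs of the two total differentials. Write $\mathbb{D}^{\huaV}=\partial^{\huaV}+(-1)^{j}N(D^{\huaV})$ on $N^j(\huaV^s)$, and similarly for $\huaW$ and for $\huaV\otimes\huaW$. For $v=v^s_p\in N^{-p}(\huaV^s)$ the total degree is $s-p$, so
\[
\mathbb{D}^\otimes(v\otimes w)=\mathbb{D}^{\huaV}v\otimes w+(-1)^{s-p}v\otimes\mathbb{D}^{\huaW}w .
\]
I would verify $\widetilde{EZ}\circ\mathbb{D}^\otimes=\mathbb{D}^{\huaV\otimes\huaW}\circ\widetilde{EZ}$ on such homogeneous elements, splitting each side into its simplicial part and its internal part and comparing the two parts separately.

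\emph{Simplicial part.} Fix an internal degree and ignore $D$. Then $EZ$ is the classical Eilenberg--Zilber map for the simplicial abelian groups $\huaV^s_\bu$ and $\huaW^t_\bu$. By the classical theorem \cite{eilenberg-zilber:53}, it intertwines $\partial^\huaV\otimes1+(-1)^{-p}1\otimes\partial^\huaW$ with $\partial^{\huaV\otimes\huaW}$. I take as given that this holds with the convention $\partial_{-l}=(-1)^ld_l$ used in \eqref{eq:tensor-normal}; the paper asserts that $EZ$ is a map of double complexes.

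The only sign issue is therefore that $\mathbb{D}^\otimes$ contributes $(-1)^{s-p}$ in front of $v\otimes\partial w$, whereas the classical formula has $(-1)^p$. Because $\partial w$ has simplicial degree $q-1$, the prefactor changes from $(-1)^{sq}$ to $(-1)^{s(q-1)}$. Hence $(-1)^{s-p}(-1)^{s(q-1)}=(-1)^{sq}(-1)^{p}$, which matches. The term $\partial v\otimes w$ keeps both $s$ and $q$, so its prefactor is unchanged.

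\emph{Internal part.} Since $D$ commutes with the face and degeneracy maps, it commutes with $s_i$, and $D^{\huaV\otimes\huaW}=D\otimes1+(-1)^{s}1\otimes D$ applies levelwise to each summand of \eqref{eq:EZ}. Thus $EZ(v\otimes w)$ has simplicial degree $-(p+q)$, and $\mathbb{D}^{\huaV\otimes\huaW}$ contributes
\[
(-1)^{p+q}\bigl(EZ(Dv\otimes w)+(-1)^{s}EZ(v\otimes Dw)\bigr),
\]
all multiplied by $(-1)^{sq}$. On the other side:
\begin{itemize}
\item The term $(-1)^{p}Dv\otimes w$ has $s\mapsto s+1$, so it acquires $(-1)^{(s+1)q}(-1)^{p}=(-1)^{sq}(-1)^{p+q}$, which matches.
\item The term $(-1)^{s-p}(-1)^{q}v\otimes Dw$ has $t\mapsto t+1$ but the exponent $sq$ does not change, giving $(-1)^{sq}(-1)^{s+p+q}$, which also matches.
\end{itemize}

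The main obstacle is the bookkeeping in the simplicial part. One must confirm that the classical Eilenberg--Zilber compatibility, with the paper's convention $\partial_{-l}=(-1)^l d_l$ and the sign in \eqref{eq:tensor-normal}, has exactly the form I assumed, namely with $(-1)^{-p}$ attached to the second factor. If the convention differs, the fix is a global sign that I would check first on the case $p=q=1$ by direct computation with the shuffle formula, and then absorb into the same parity argument.
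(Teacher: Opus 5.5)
Your proposal is correct and is essentially the paper's proof. The paper also expands $\mathbb{D}^\otimes$ term by term, uses that $EZ$ commutes with $\partial$ (classical Eilenberg--Zilber) and with $N(D)$ (because $D$ commutes with degeneracies), and checks that the twist $(-1)^{sq}$ absorbs the resulting sign discrepancies. Your four sign checks agree with the paper's, since your $p,q$ are the negatives of the paper's simplicial degrees and parities are unaffected.
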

\begin{proof}
Given cochains $x\in N^p(\mathcal{V}^s)$ and $y\in N^q(\mathcal{W}^t)$, we set $n=p+q$ and compute
\begin{align*}
\widetilde{EZ}(\mathbb{D}^{\otimes}(x\otimes y))&=\widetilde{EZ}\bigl(\mathbb{D}^{\huaV}(x)\otimes y+(-1)^{p+s}x\otimes \mathbb{D}^{\huaW}(y)\bigr)\\
&=\widetilde{EZ}\Bigl(\bigl(\partial^{\huaV}+(-1)^pN(D^{\huaV})\bigr) (x)\otimes y+(-1)^{s+p} x\otimes \bigl(\partial^{\huaW}+(-1)^{q} N(D^{\huaW})\bigr) (y)\Bigr)\\
&=(-1)^{sq} EZ(\partial^{\huaV} (x)\otimes y)+(-1)^{s(q+1)+s+p}EZ( x\otimes \partial^{\huaW} (y))\\
&\qquad+(-1)^{p+(s+1)q}EZ(N(D^{\huaV})(x)\otimes y)+(-1)^{p+s+q+sq}EZ(x\otimes N(D^{\huaW})(y))\\
&=(-1)^{sq}EZ(\partial^\huaV (x)\otimes y+(-1)^p x\otimes \partial^\huaW (y))\\
&\qquad+(-1)^{sq+n}EZ(N(D^{\huaV})(x)\otimes y+(-1)^{s}x\otimes N(D^{\huaW})(y))\\
&=(-1)^{sq}\partial^{\huaV\otimes\huaW} EZ(x\otimes y)+(-1)^{sq+n}N(D^{\huaV\otimes\huaW})EZ(x\otimes y)\\
&=\big(\partial^{\huaV\otimes\huaW} +(-1)^nN(D^{\huaV\otimes\huaW})\big)(-1)^{sq}EZ(x\otimes y)=\mathbb{D}^{\huaV\otimes\huaW}\widetilde{EZ}(x\otimes y).\qedhere
\end{align*}
\end{proof}

\subsubsection{IM-pairings}

Let $\huaG_\bu$ be a derived Lie $n$-groupoid and $\omega_\bu$ an $m$-shifted $2$-form. For each point $x_0\in \pi_0(\mathcal{G}_0)$ the evaluation of $\omega_\bu$ gives a morphism
\begin{equation} \label{eq:f-omega}
    \begin{split}
        f_{x_0}^{\omega_\bu}\colon  &\tot^\bu\big(N(T_{x_0}\mathcal{G}\otimes T_{x_0}\mathcal{G})\big)\rightarrow \mathbb{R}[m],\\ 
       & u^s_{n}\otimes v^t_{n}\mapsto
\begin{cases}
        {(\omega_{2,-(s+t),n})}_{x_0}(u^s_n,v^t_n)&\text{if $s+t-n=-m$}\\
        0&\text{else.}
    \end{cases}
    \end{split}
\end{equation} 
In general, $f^{\omega_\bu}_{x_0}$ is only a cochain morphism if $\omega_\bu$ is closed. However, as proven in Cor.~\ref{bluppblupp}, the map 
\begin{equation}\label{eq:f-omega-Domega}
    \begin{split}
        f_{x_0}^{\omega_\bu, D\omega_\bu}\colon &\tot^\bu\big(N(T_{x_0}\mathcal{G}\otimes T_{x_0}\mathcal{G})\big)\rightarrow \mathbb{R}[m,m+1],\\ 
        & u^s_{n}\otimes v^t_{n}\mapsto
\begin{cases}
        {(\omega_{2,-(s+t),n})}_{x_0}(u^s_n,v^t_n)&\text{if $s+t-n=-m$}\\
        {((D\omega)_{2,-(s+t),n})}_{x_0}(u^s_n,v^t_n)&\text{if $s+t-n=-(m+1)$}\\
        0&\text{else.}
    \end{cases}
    \end{split}
\end{equation} 
is always a cochain morphism, where $\mathbb{R}[m,m+1]$ denotes the cochain complex given by $\mathbb{R}$ concentrated in degrees $-(m+1)$ and $-m$ whose differential in degree $-(m+1)$ is $id_{\mathbb{R}}\colon \mathbb{R}\rightarrow \mathbb{R}$.

\begin{definition}
    Let $\huaG_\bu$ be a derived Lie $n$-groupoid and $x_0\in\pi_0(\huaG_0)$. The \emph{IM-pairing $\lambda^{\omega_{\bu}}_{x_0}$ induced by an $m$-shifted $2$-form $\omega_\bu$ at $x_0$}  is the composition 
\begin{gather*}
\lambda^{\omega_\bu}_{x_0}\colon    \bT_{x_0}^\bu\huaG \otimes \bT_{x_0}^\bu\huaG \xrightarrow[]{\widetilde{EZ}} \tot^\bu\big(N(T_{x_0}\mathcal{G}\otimes T_{x_0}\mathcal{G})\big) \xrightarrow[]{f_{x_0}^{\omega_\bu}} \mathbb{R}[m],  
\end{gather*}
where $\widetilde{EZ}$ is the adjusted Eilenberg-Zilber morphism introduced in \S \ref{EZ} and we used the identification $\bT_{x_0}^\bu\huaG=\tot^\bu\big(N(T_{x_0}\mathcal{G})\big)$. More explicitly, fix  $l,\Tilde{l}\in \mathbb{Z}$ such that $l+\Tilde{l}=m$ and choose tangent vectors $u\in\bT^{-l}_{x_0}\huaG$ and $v\in\bT^{-\tilde{l}}_{x_0}\huaG$. Writing 
$$u=\oplus_s u^s\quad \text{with}\quad u^s\in N^{-l-s}(T_{x_0}^s\huaG)
\quad \text{and}\quad v=\oplus_t v^t\quad \text{with}\quad v^t\in N^{-\tilde{l}-t}(T_{x_0}^t\huaG),$$
the IM-pairing $\lambda_{x_0}^{\omega_\bu}(u^s,v^t)$ of the components $u^s$ and $v^t$ is given by
\begin{gather*}
    \sum_{\sigma\in \Sh(l+s,\tilde{l}+t)}(-1)^{s(\tilde{l}+t)}\sgn(\sigma)({\omega_{2,-r,m+r}})_{x_0}\big(T(s_{\sigma(m-r-1)}\dots s_{\sigma(l+s)})u^s, T(s_{\sigma(l+s-1)}\dots s_{\sigma(0)})v^t\big),
\end{gather*}
with $r=(s+t)$.
\end{definition}

The IM-pairing $\lambda^{\omega_\bu}$ is easily seen to be graded anticommutative. In order to see other properties, first notice that for a closed $m$-shifted 2-form $\omega_\bu$ on a derived Lie $n$-groupoid $\mathcal{G}_\bu$, the $2$-form part satisfies
\[
0=D\Bigl(\sum_{r\ge 0}\omega_{(2,-r,m+r)}\Bigr)=\sum_{r\ge 0}(\delta+(-1)^{m}\Lie_Q)\omega_{(2,-r,m+r)}.
\]
Thus we get the following equation
\begin{equation}\label{eq:2fpart}
\delta\omega_{(2,-r,m+r)}=(-1)^{m+1}(\Lie_Q\omega)_{(2,-(r-1),m+(r-1))}.
\end{equation}
Secondly we can relate the Lie derivative along $Q$ and the differential for the tangent complex in the following way.

\begin{lemma}\label{pairingproperties}
Let $\huaM$ be a derived manifold, $x_0\in\pi_0(\huaM)$ a classical point, and $\omega\in \Omega^{2,i}(\mathcal{M})$. Then 
\[(\Lie_Q\omega)_{x_0}(u,v)= (-1)^{|u|+|v|+1}\omega_{x_0}(\ell_Qu,v)+(-1)^{|v|+1}\omega_{x_0}(u,\ell_Qv)\]
for all $u,v\in T^\bu_{x_0}\huaM$ with $|u|+|v|=-(i+1)$. 
\end{lemma}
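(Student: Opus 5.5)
The plan is to use the Cartan calculus of Prop.~\ref{proposition;cartan} to express $\Lie_Q$ on a $2$-form through contractions and brackets. Then evaluate at the classical point $x_0$, where every term containing an undifferentiated coefficient of $Q$ drops out.

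First extend $u$, $v$ to local vector fields $U$, $V$ of degrees $|u|$, $|v|$ near $x_0$. In a graded chart one can take constant-coefficient combinations of $\pardif{}{x_i}$, $\pardif{}{\xi_j}$. Since $\iota_V\iota_U\omega$ is a function of degree $i+|u|+|v|=-1$, it has no value at a point, so its evaluation at $x_0$ vanishes. We have $[\Lie_Q,\iota_X]=\iota_{[Q,X]}$, and the sign in the bigraded commutator is $(-1)^{|X|}$ because $\Lie_Q$ has bidegree $(0,1)$ and $\iota_X$ has bidegree $(-1,|X|)$. Applying this twice gives
\[
\Lie_Q(\iota_V\iota_U\omega)=\iota_{[Q,V]}\iota_U\omega+(-1)^{|V|}\iota_V\Lie_Q\iota_U\omega
=\iota_{[Q,V]}\iota_U\omega+(-1)^{|V|}\iota_V\iota_{[Q,U]}\omega+(-1)^{|V|+|U|}\iota_V\iota_U\Lie_Q\omega.
\]
The left-hand side is $Q$ applied to a function, and it vanishes at $x_0$ because $Q_{x_0}=0$. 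At a classical point, $[Q,X]_{x_0}=\ell_Q(X_{x_0})$ depends only on $X_{x_0}$; this is how $\ell_Q$ was defined in \eqref{notation;tangent-complex}. Hence evaluating at $x_0$ yields
\[
(-1)^{|u|+|v|}(\iota_v\iota_u\Lie_Q\omega)_{x_0}=-(\iota_{\ell_Qv}\iota_u\omega)_{x_0}-(-1)^{|v|}(\iota_v\iota_{\ell_Qu}\omega)_{x_0}.
\]

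The final step converts the iterated contractions into evaluations $\omega(\cdot,\cdot)$ using the definition $(\iota_X\alpha)(\dots)=(-1)^{|X|l}\alpha(X,\dots)$ and the graded antisymmetry of forms. I would track the signs with the degree constraint $|u|+|v|=-(i+1)$, which fixes the parity of $i$ in terms of $|u|,|v|$. I expect the $\iota_v\iota_u$ ordering to supply a symmetric sign factor on both sides, which then cancels, leaving $(-1)^{|u|+|v|+1}\omega(\ell_Qu,v)+(-1)^{|v|+1}\omega(u,\ell_Qv)$.

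The main obstacle is this sign bookkeeping. In particular, $\ell_Qu$ has degree $|u|+1$, so the Koszul signs from moving $\iota_{\ell_Qu}$ past $\iota_v$, and from the internal degree $i+1$ of $\Lie_Q\omega$ versus $i$ of $\omega$, must be matched carefully. I would verify the final signs in a local chart, for instance on $\huaZ(M,E,s)$ with $\omega=dx\wedge d\xi$ using Example~\ref{example;derived-lie}, as a consistency check.
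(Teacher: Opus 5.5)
Your strategy is the paper's. The paper uses the expansion $(\Lie_Q\omega)(X,Y)=\Lie_Q(\omega(X,Y))-(-1)^{i}\omega([Q,X],Y)-(-1)^{i+|X|}\omega(X,[Q,Y])$, which is what your two applications of $[\Lie_Q,\iota_X]=\iota_{[Q,X]}$ produce. It then kills the first term with $Q_{x_0}=0$ and replaces $[Q,X]_{x_0}$ by $\ell_Q(X_{x_0})$, exactly as you do.

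The one step you leave as an expectation does not come out as you predict. With the paper's conventions, $\iota_{X_1}\iota_{X_2}\alpha=-(-1)^{(|X_1|+|X_2|)l}\alpha(X_1,X_2)$ for $\alpha\in\Omega^{2,l}$. Substitute this into your identity at $x_0$ and use $(-1)^i=(-1)^{|u|+|v|+1}$. Every term acquires the common factor $(-1)^{|u||v|}$, so that part of your guess is right. What remains is
\[
(\Lie_Q\omega)_{x_0}(u,v)=(-1)^{|u|+|v|}\omega_{x_0}(\ell_Qu,v)+(-1)^{|v|}\omega_{x_0}(u,\ell_Qv),
\]
which is the negative of the displayed right-hand side. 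The paper's final equality has the same slip: in its expansion, $-(-1)^i=(-1)^{|u|+|v|}$ and $-(-1)^{i+|u|}=(-1)^{|v|}$.

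The chart check you propose would expose this. Work on $\huaZ(\R^2,\underline{\R},s)$ with $s(x,y)=x$, so $Q=x\pardif{}{\xi}$. Take $\omega=d\xi\wedge dy\in\Omega^{2,-1}$ and $u=\pardif{}{x}$, $v=\pardif{}{y}$ at the origin.
\begin{itemize}
\item Then $\Lie_Q\omega=dx\wedge dy$, so the left side is $1$.
\item But $\ell_Qu=-\pardif{}{\xi}$, $\ell_Qv=0$, and $\omega(\pardif{}{\xi},\pardif{}{y})=d\xi(\pardif{}{\xi})=-1$ by \eqref{equation;df}, so the displayed right side is $-1$.
\end{itemize}
Your argument is sound, but what it proves is the identity above. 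As written, the statement needs an overall sign change. You should carry out the bookkeeping explicitly rather than assume it lands on the stated signs.
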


\begin{proof}
Choose vector fields $X$, $Y\in \mathfrak{X}^1(\mathcal{M})$ such that $X_{x_0}=u$ and $Y_{x_0}=v$.  Then $(\Lie_Q\omega)_{x_0}(u,v)=((\Lie_Q\omega)(X,Y))_{x_0}$. Since
    \begin{gather*}
      (\Lie_Q\omega)(X,Y)=\Lie_Q(\omega(X,Y))-(-1)^{i}\omega([Q,X],Y)-(-1)^{i+|X|}\omega(X,[Q,Y]),
    \end{gather*}
    it follows that
    \begin{align*}
        (\Lie_Q\omega)_{x_0}(v,w)&=Q_{x_0}(\omega(X,Y))-(-1)^{i}\omega_{x_0}([Q,X]_{x_0},Y_{x_0})-(-1)^{i+|X|}\omega_{x_0}(X_{x_0},[Q,Y]_{x_0})\\
        &=(-1)^{|u|+|v|+1}\omega_{x_0}(\ell_Qu,v)+(-1)^{|v|+1}\omega_{x_0}(u,\ell_Qv),
    \end{align*}
where in the last equality we used the fact that $Q_{x_0}=0$ (since $x_0$ is a classical point), $[Q,X]_{x_0}=\ell_Qv$, and $[Q,Y]_{x_0}=\ell_Qw$ (since $\ell_Q$ is the linearization of $\ad_Q$).
\end{proof}

\begin{corollary}
\label{bluppblupp}
    Let $\omega_\bu$ be an $m$-shifted 2-form on a derived Lie $n$-groupoid $\mathcal{G}_\bu$. The map \eqref{eq:f-omega-Domega}
    \begin{gather*}
        f_{x_0}^{\omega_\bu, D\omega_\bu}\colon \tot^\bu\big(N(T_{x_0}\mathcal{G}\otimes T_{x_0}\mathcal{G})\big)\rightarrow \mathbb{R}[m,m+1]
    \end{gather*}
is a cochain morphism. If $\omega_\bu$ is closed, the map \eqref{eq:f-omega}
     \begin{gather*}
         f_{x_0}^{\omega_\bu}\colon \tot^\bu\big(N(T_{x_0}\mathcal{G}\otimes T_{x_0}\mathcal{G})\big)\rightarrow \mathbb{R}[m]
     \end{gather*}
     is a cochain morphism as well.
\end{corollary}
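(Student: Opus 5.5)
The plan is to check directly that $f:=f_{x_0}^{\omega_\bu,D\omega_\bu}$ intertwines the total differential $\mathbb{D}=\partial+(-1)^jN(\ell_Q)$ on $\tot^\bu\big(N(T_{x_0}\huaG\otimes T_{x_0}\huaG)\big)$ with the differential of $\mathbb{R}[m,m+1]$. Since $\mathbb{R}[m,m+1]$ is nonzero only in degrees $-(m+1)$ and $-m$, there are three degrees to examine. In degree $-(m+2)$ we need $f\circ\mathbb{D}=0$ on elements of total degree $-(m+2)$. In degree $-(m+1)$ we need $f\circ\mathbb{D}=f$. In degree $-m$ we need $0=0$, which is automatic because the target vanishes in degree $-m+1$. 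The second statement then follows at once: if $D\omega_\bu=0$, then $f_{x_0}^{\omega_\bu}$ is the composite of $f$ with the projection $\mathbb{R}[m,m+1]\to\mathbb{R}[m]$, and this projection is a cochain map. Alternatively, one can simply read off the degree $-(m+1)$ identity with its right-hand side equal to zero.

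For the degree $-(m+1)$ check, I would take a generator $u^s_n\otimes v^t_n$ in simplicial level $n$ with $s+t-n=-(m+1)$ and split $\mathbb{D}$ into its two parts.

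\emph{The simplicial part.} Here $\partial=(-1)^n d_n$ on normalized chains. The normalization conditions force $d_i$ to vanish for $i<n$. Hence evaluating $\omega_{2,-(s+t),n-1}$ on $d_n u\otimes d_n v$ agrees, up to the sign $(-1)^n$, with $(\delta\omega)_{x_0}(u,v)$. The point is that $\delta\omega$ evaluated on normalized vectors at the degenerate point $x_0$ only sees the last face, since $(d_i^*\omega)(u,v)=\omega(Td_i u,Td_i v)=0$ for $i<n$.

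\emph{The internal part.} Here Lemma~\ref{pairingproperties} converts $\omega(\ell_Q u,v)\pm\omega(u,\ell_Q v)$ into $(\Lie_Q\omega)_{x_0}(u,v)$.

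Combining the two parts, $f(\mathbb{D}(u\otimes v))$ becomes $\bigl(\delta\omega+(\pm)\Lie_Q\omega\bigr)_{x_0}(u,v)$. Restricted to the $(2,\bullet,\bullet)$ component, this is exactly $(D\omega)_{2,-(s+t),n}$ evaluated at $(u,v)$, which equals $f(u\otimes v)$ by definition. One point to note is that the de Rham part $(-1)^{b+c}d$ of $D$ raises form degree. It therefore contributes only to components with $a\ge3$, or maps $1$-form pieces into $2$-forms. However, an $m$-shifted $2$-form has $a\ge2$, so the $a=2$ part of $D\omega$ is precisely $(\delta\pm\widetilde{\Lie}_Q)\omega_{2,\bullet,\bullet}$.

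The degree $-(m+2)$ check is the identical computation applied one degree lower. It yields $(D\omega)_{2,\bullet,\bullet}$ on the image of $\mathbb{D}$, that is, the $a=2$ part of $D^2\omega=0$, which vanishes. Concretely, this is the same bookkeeping repeated with $D\omega$ in place of $\omega$, combined with $D^2=0$ and Lemma~\ref{pairingproperties} applied to $D\omega$.

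The main obstacle is the sign bookkeeping. Three sign sources must match: the total-complex sign $(-1)^j$ in $\mathbb{D}$, the signs $(-1)^c$ and $(-1)^b$ in $D=\delta+(-1)^c\widetilde{\Lie}_Q+\dots$ with $\widetilde{\Lie}_Q=(-1)^b\Lie_Q$, and the signs $(-1)^{|u|+|v|+1}$ and $(-1)^{|v|+1}$ from Lemma~\ref{pairingproperties}. I would fix the bidegree conventions first, tracking the internal degree $b=-(s+t)$, the simplicial level $c=n$, and the tensor sign for $u\otimes v$ in the tensor product of complexes. Then I would verify that the signs agree on a single homogeneous generator, using $|u|=s$, $|v|=t$ and $s+t=n-m-1$.
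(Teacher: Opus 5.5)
Your proof follows the paper's route. The degree $-(m+1)$ identity $\omega_{x_0}(\mathbb{D}w)=(D\omega_\bu)_{x_0}(w)$ comes from letting normalization kill every face except the last in $\delta\omega$, and from converting the $\ell_Q$-terms into $\Lie_Q\omega$ via Lemma~\ref{pairingproperties}. Your explicit degree $-(m+2)$ check is correct: apply the same identity to the $(m+1)$-shifted $2$-form $D\omega_\bu$ and use $D^2=0$. It also supplies a step the paper passes over when it says it suffices to verify degree $-(m+1)$.

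One justification is wrong. The projection $\mathbb{R}[m,m+1]\to\mathbb{R}[m]$ onto the degree $-m$ piece is \emph{not} a cochain map: on degree $-(m+1)$ one gets $p\circ d=\id\neq 0=d\circ p$. What is true is that when $D\omega_\bu=0$, the map $f^{\omega_\bu,D\omega_\bu}_{x_0}$ vanishes in degree $-(m+1)$. It therefore lands in the subcomplex $\mathbb{R}[m]\subset\mathbb{R}[m,m+1]$, whose inclusion is a cochain map, and $f^{\omega_\bu}_{x_0}$ is its corestriction. Your stated alternative (the degree $-(m+1)$ identity with $D\omega_\bu=0$ is exactly the cochain condition for $f^{\omega_\bu}_{x_0}$) is also correct.

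A minor precision: an element $w$ of $N^{-n}$ of the levelwise tensor product need not be a sum of tensors of normalized vectors. So phrase the face-vanishing as $(Td_i\otimes Td_i)w=0$ for $i<n$, not factorwise; the argument is otherwise unchanged.
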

 \begin{proof}
 It is enough to verify that $f^{\omega_\bu,D\omega_\bu}$ commutes with the differentials in degree $-(m+1)$, i.e. we want to show that  for all $x_0\in \pi_0(\mathcal{G}_0)$ and $u^s\otimes v^t\in  N^{-l}(T^s_{x_0} \huaG \otimes T^t_{x_0} \huaG) \subset \tot^{s+t-l}\big(N(T_{x_0}\mathcal{G}\otimes T_{x_0}\mathcal{G})\big)$ with   $s+t-l=-(m+1)$ then
 \begin{gather*}
    (D\omega)_{x_0}(u^s\otimes v^t)=\omega_{x_0}(\mathbb{D}^{T_{x_0}\huaG\otimes T_{x_0}\huaG}(u^s\otimes v^t)).
 \end{gather*}
Denote by $r=s+t$,  from Lemma~\ref{pairingproperties} it follows that 
\begin{equation*}
\begin{split}
(D\omega)_{x_0}(u^s\otimes v^t)&=(\delta\omega_{2,-r,l-1})_{x_0}(u^s\otimes v^t)+(-1)^{l+r}(\mathcal{L}_Q\omega_{2,-r-1,l})_{x_0}(u^s\otimes v^t)\\
&=\omega_{2,-r,l-1}(\partial u^s\otimes \partial v^t)+(-1)^{l+r+1}(-1)^{r+1}(\omega_{2,-r-1,l})_{x_0}(\ell_Qu^s,v^t)\\
&\qquad+(-1)^{l+r+1}(-1)^{t+1}(\omega_{2,-r-1,l})_{x_0}(u^s,\ell_Qv^t)\\
&=\omega_{x_0}\bigl((\partial^{T_{x_0}\huaG\otimes T_{x_0}\huaG}+(-1)^lN(\ell_Q\otimes 1+1\otimes \ell_Q))(u^s\otimes v^t)\bigr)\\
&=\omega_{x_0}(\mathbb{D}^{T_{x_0}\huaG\otimes T_{x_0}\huaG}(u^s\otimes v^t)),
\end{split}
\end{equation*}
so $f^{\omega_\bu,D\omega_\bu}$ is indeed a cochain morphism. The second statement can be proven similarly.
 \end{proof}

With this in mind, we are able to state the following essential property of the IM-pairing.

\begin{proposition}\label{proposition;IM}
    Let $\omega_\bu$ be an $m$-shifted 2-form on a derived Lie $n$-groupoid $\mathcal{G}_\bu$ and $x_0\in \pi_0(\huaG_0$) be a classical point.
    For $u\in \mathbb{T}_{x_0}^{-(l+1)}\mathcal{G}$, $v\in \mathbb{T}_{x_0}^{-\Tilde{l}}\mathcal{G}$ with $l+\Tilde{l}=m$ one has
        \begin{equation}\label{eq:IMchain}
             \lambda_{x_0}^{\omega_\bu}(\mathbb{D} u,v)+(-1)^{l+1}\lambda_{x_0}^{\omega_\bu}(u,\mathbb{D} v)=\lambda_{x_0}^{D\omega_\bu}(u,v).
        \end{equation}
\end{proposition}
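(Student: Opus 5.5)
The identity \eqref{eq:IMchain} should follow formally by composing the two cochain maps already at our disposal. By definition, $\lambda^{\omega_\bu}_{x_0}=f^{\omega_\bu}_{x_0}\circ\widetilde{EZ}$ and $\lambda^{D\omega_\bu}_{x_0}=f^{D\omega_\bu}_{x_0}\circ\widetilde{EZ}$. On the relevant graded pieces these can be packaged into the single map
\[
\Lambda_{x_0}:=f^{\omega_\bu,D\omega_\bu}_{x_0}\circ\widetilde{EZ}\colon\bT^\bu_{x_0}\huaG\otimes\bT^\bu_{x_0}\huaG\longto\R[m,m+1].
\]
By Prop.~\ref{prop:aEZ}, $\widetilde{EZ}$ is a cochain map from $(\bT_{x_0}\huaG\otimes\bT_{x_0}\huaG,\mathbb{D}^\otimes)$ to $\tot^\bu(N(T_{x_0}\huaG\otimes T_{x_0}\huaG))$. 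By Cor.~\ref{bluppblupp}, $f^{\omega_\bu,D\omega_\bu}_{x_0}$ is a cochain map to $\R[m,m+1]$. Hence $\Lambda_{x_0}$ is a cochain map.

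Next, evaluate the cochain map condition $\Lambda_{x_0}\circ\mathbb{D}^\otimes=d_{\R[m,m+1]}\circ\Lambda_{x_0}$ on $u\otimes v$ with $u\in\bT^{-(l+1)}_{x_0}\huaG$ and $v\in\bT^{-\tilde l}_{x_0}\huaG$. Then $u\otimes v$ has total degree $-(m+1)$.

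On the right-hand side, the differential of $\R[m,m+1]$ in degree $-(m+1)$ is the identity, and $\Lambda_{x_0}$ in that degree is evaluation with $D\omega_\bu$ composed with $\widetilde{EZ}$. So the right-hand side equals $\lambda^{D\omega_\bu}_{x_0}(u,v)$.

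On the left-hand side, $\mathbb{D}^\otimes(u\otimes v)=\mathbb{D}u\otimes v+(-1)^{-(l+1)}u\otimes\mathbb{D}v$, using the Koszul sign for the tensor differential as in \eqref{eq:tensor-normal}, with the total degree of $u$ being $-(l+1)$. This element lies in degree $-m$, where $\Lambda_{x_0}$ is evaluation with $\omega_\bu$. So the left-hand side equals $\lambda^{\omega_\bu}_{x_0}(\mathbb{D}u,v)+(-1)^{l+1}\lambda^{\omega_\bu}_{x_0}(u,\mathbb{D}v)$, which gives \eqref{eq:IMchain}.

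The main obstacle is sign bookkeeping. Three conventions must agree:
\begin{itemize}
\item the sign in $\mathbb{D}^\otimes$ on the total complexes, which Prop.~\ref{prop:aEZ} writes as $(-1)^{p+s}$ with $p$ the normalized degree and $s$ the internal degree;
\item the identification of $f^{\omega_\bu,D\omega_\bu}_{x_0}$ in degree $-(m+1)$ with the $D\omega_\bu$-evaluation that defines $\lambda^{D\omega_\bu}_{x_0}$;
\item the requirement that the differential of $\R[m,m+1]$ introduces no extra sign.
\end{itemize}
I would check the first point by confirming that the total degree of $u$ is $s+p=-(l+1)$, so the Koszul sign is $(-1)^{l+1}$ as claimed. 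For the second, I would also verify that only the components of $D\omega_\bu$ of de Rham degree $2$ contribute after evaluating at the classical point $x_0$. Higher de Rham components cannot pair with two tangent vectors, so the pairing $\lambda^{D\omega_\bu}_{x_0}$ is exactly the degree $-(m+1)$ part of $f^{\omega_\bu,D\omega_\bu}_{x_0}\circ\widetilde{EZ}$.
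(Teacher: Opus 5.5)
Your proposal is correct and is essentially the paper's own argument. The paper also composes the cochain map $\widetilde{EZ}$ of Prop.~\ref{prop:aEZ} with the cochain map $f^{\omega_\bu,D\omega_\bu}_{x_0}$ of Cor.~\ref{bluppblupp} and reads off the identity on $u\otimes v$ in total degree $-(m+1)$. The only difference is presentational: the paper passes between $f^{\omega_\bu}_{x_0}$ and $f^{\omega_\bu,D\omega_\bu}_{x_0}$ via the projection $p_m\colon\R[m,m+1]\to\R[m]$, whereas you package both into the single map $\Lambda_{x_0}$.
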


\begin{proof}
It is enough to consider the case $u\in N^{-(l+s+1)}(T^s_{x_0}\huaG)$ and $v\in N^{-(\tilde{l}+t)}(T^t_{x_0}\huaG)$. By the previous propositions we get
\begin{align*}
\lambda^{\omega_\bu}_{x_0}(\mathbb{D} u,v)+(-1)^{l+1}\lambda_{x_0}^{\omega_\bu}(u,\mathbb{D} v)&=f_{x_0}^{\omega_\bu}\widetilde{EZ}(\mathbb{D}^{\bT_{x_0}\huaG\otimes\bT_{x_0}\huaG}(u\otimes v))\\ 
&=f^{\omega_\bu}_{x_0}\mathbb{D}^{T_{x_0}\huaG\otimes T_{x_0}\huaG}\widetilde{EZ}(u\otimes v)\\
&=p_mf_{x_0}^{\omega_\bu,D\omega_\bu}\mathbb{D}^{T_{x_0}\huaG\otimes T_{x_0}\huaG}\widetilde{EZ}(u\otimes v)\\
&=p_m\mathbb{D}^{\mathbb{R}[m,m+1]}f_{x_0}^{\omega_\bu,D\omega_\bu}\widetilde{EZ}(u\otimes v)\\
&=f^{D\omega_\bu}_{x_0}\widetilde{EZ}(u\otimes v)=\lambda^{D\omega_\bu}(u\otimes v),
\end{align*}
where the first equality is the definition of the differential on the tensor product, the second equality is Prop.~\ref{prop:aEZ}, the fourth equality is Cor.~\ref{bluppblupp}, and $p_m$ denotes the obvious projection $p_m\colon \mathbb{R}[m,m+1]\rightarrow \mathbb{R}[m]$.
\end{proof}

Due to the above properties, we expect that the IM-pairing can be also obtained via a van Est map for derived Lie $n$-groupoids. A detailed discussion of this point of view for Lie $n$-groupoids can be found in \cite{Dorsch:thes}. 

\subsubsection{Main definition}

Let $\huaG_\bu$ be a derived Lie $n$-groupoid and pick a classical point $x_0\in\pi_0(\huaG)$. The \emph{$m$-shifted cotangent complex of $\huaG_\bu$ at $x_0$}   is the cochain complex $\bT^{*, \bu}_{x_0}\huaG[m]=\Hom(\bT_{x_0}\huaG, \R[m])$, more concretely
\begin{equation}\label{eq:co-tan}
    \bT^{*, k}_{x_0}\huaG[m] :=(\bT^{-k-m}_{x_0}\huaG)^*\quad\text{and}\quad  \mathbb{D}^*[m]^k=(-1)^{k+1}\mathbb{D}_{-k-1-m}^t\colon  \bT^{*, k}_{x_0}\huaG[m] \to \bT^{*, k+1}_{x_0}\huaG[m] .
\end{equation}
When the shift and degree is clear from context, the differential on the cotangent complex is denoted by $\mathbb{D}^*$ for simplicity.

With the above definition, given an $m$-shifted $2$-form $\omega_\bu$ on $\huaG_\bu$ and $x_0\in\pi_0(\huaG_0)$,  its IM-pairing defines a map by 
\begin{gather}\label{non-deg}
  \lambda_{x_0}^{\omega_\bu,\#}\colon \mathbb{T}^\bu_{x_0}\mathcal{G}\rightarrow \mathbb{T}_{x_0}^{\ast,\bu}\mathcal{G}[m],\qquad v\mapsto \lambda^{\omega_\bu,\#}(v)=\lambda^{\omega_\bu}(v,\cdot).
\end{gather}
Thus by Prop.~\ref{proposition;IM}, we immediately have
\begin{equation}\label{eq:lambda-sharp}
    \lambda^{\omega_\bu, \#} \mathbb D - \mathbb D^* \lambda^{\omega_\bu, \#} = \lambda^{D\omega_\bu, \#},
\end{equation} which holds for all $m$.

\begin{corollary}\label{gaugeinvar}
Let $\huaG_\bu$ be a derived Lie $n$-groupoid.  Let $\omega_\bu$ be a closed $m$-shifted $2$-form and $\eta_\bu$ an $(m-1)$-shifted $2$-form on $\huaG_\bu$.  For every classical point $x_0\in \pi_0(\huaG_0)$ the following statements hold:
\begin{enumerate}
\item\label{item;cochain} the IM-pairings 
\[
\lambda_{x_0}^{\omega_\bu,\#}, \lambda_{x_0}^{\omega_\bu+D\eta_\bu,\#}\colon \mathbb{T}^\bu_{x_0}\mathcal{G}\rightarrow \mathbb{T}_{x_0}^{\ast, \bu}\mathcal{G}[m]
\]
are cochain maps;
\item\label{item;homotopy} the IM-pairing $\lambda_{x_0}^{\eta_\bu,\#}\colon \mathbb{T}^\bu_{x_0}\mathcal{G}\rightarrow \mathbb{T}_{x_0}^{\ast, \bu}\mathcal{G}[m-1]$ provides a homotopy between the above cochain maps.
\end{enumerate}
\end{corollary}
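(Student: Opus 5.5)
Both parts follow directly from the identity \eqref{eq:lambda-sharp}, $\lambda^{\alpha_\bu,\#}\mathbb D-\mathbb D^*\lambda^{\alpha_\bu,\#}=\lambda^{D\alpha_\bu,\#}$, which holds for every shifted $2$-form $\alpha_\bu$. It rests on Prop.~\ref{proposition;IM}. Two further facts are needed. First, the assignment $\alpha_\bu\mapsto\lambda^{\alpha_\bu}_{x_0}$ is linear in $\alpha_\bu$: both $\widetilde{EZ}$ and the evaluation map $f^{\alpha_\bu}_{x_0}$ in \eqref{eq:f-omega} depend linearly on the form. Second, $D$ squares to zero on the total complex of the triple complex, because $\delta$, $\widetilde{\Lie}_Q$ and $d$ pairwise anticommute after the chosen signs.

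For part \eqref{item;cochain}, set $\alpha_\bu=\omega_\bu$ in \eqref{eq:lambda-sharp}. Since $D\omega_\bu=0$, the right-hand side is $\lambda^{0,\#}=0$, so $\lambda^{\omega_\bu,\#}_{x_0}\mathbb D=\mathbb D^*\lambda^{\omega_\bu,\#}_{x_0}$. For the second map, note that $\eta_\bu$ is $(m-1)$-shifted, so $D\eta_\bu$ is an $m$-shifted $2$-form. Moreover $D(\omega_\bu+D\eta_\bu)=D\omega_\bu+D^2\eta_\bu=0$. Applying the same argument gives that $\lambda^{\omega_\bu+D\eta_\bu,\#}_{x_0}$ is a cochain map.

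For part \eqref{item;homotopy}, apply \eqref{eq:lambda-sharp} to $\eta_\bu$ with shift $m-1$:
\[
\lambda^{\eta_\bu,\#}_{x_0}\mathbb D-\mathbb D^*\lambda^{\eta_\bu,\#}_{x_0}=\lambda^{D\eta_\bu,\#}_{x_0}=\lambda^{\omega_\bu+D\eta_\bu,\#}_{x_0}-\lambda^{\omega_\bu,\#}_{x_0},
\]
where the second equality uses linearity. Here $\lambda^{\eta_\bu,\#}_{x_0}$ maps $\bT^\bu_{x_0}\huaG$ to $\bT^{*,\bu}_{x_0}\huaG[m-1]$, which equals $\bT^{*,\bu-1}_{x_0}\huaG[m]$ as graded spaces, so it is a map of degree $-1$ into $\bT^{*,\bu}_{x_0}\huaG[m]$. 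The displayed identity is then exactly the statement that $\lambda^{\eta_\bu,\#}_{x_0}$ is a homotopy between the two cochain maps.

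The main thing to check is sign conventions. The differential $\mathbb D^*$ in \eqref{eq:lambda-sharp} is the one on the $[m-1]$-shifted cotangent complex. By \eqref{eq:co-tan} it differs from the differential on the $[m]$-shifted complex, after regrading, by the sign $(-1)$. I would verify directly from \eqref{eq:co-tan} that under this regrading the identity takes the form $h\mathbb D\pm\mathbb D^*h=g-f$ with a sign that is constant in each degree. Then either $h=\lambda^{\eta_\bu,\#}_{x_0}$ or $h=-\lambda^{\eta_\bu,\#}_{x_0}$ is a chain homotopy in the standard sense, which is all the statement requires.
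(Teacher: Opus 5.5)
Your proposal is correct and is essentially the paper's proof: item (1) is read off from \eqref{eq:lambda-sharp}, and item (2) is \eqref{eq:lambda-sharp} (i.e.\ Prop.~\ref{proposition;IM}) applied to $\eta_\bu$ together with linearity of $\alpha_\bu\mapsto\lambda^{\alpha_\bu}_{x_0}$. Your remark that $D^2=0$ makes $\omega_\bu+D\eta_\bu$ closed fills in a step the paper leaves implicit. On signs, the regrading identity $\mathbb{D}^*[m-1]^k=-\mathbb{D}^*[m]^{k-1}$ from \eqref{eq:co-tan}, which you already point to, directly gives the standard form $h\mathbb{D}+\mathbb{D}^*[m]h=\lambda^{\omega_\bu+D\eta_\bu,\#}_{x_0}-\lambda^{\omega_\bu,\#}_{x_0}$ with $h=\lambda^{\eta_\bu,\#}_{x_0}$. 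So the ``$h$ or $-h$'' hedge is unnecessary, and it would not help anyway: replacing $h$ by $-h$ only interchanges the two maps and cannot repair a wrong relative sign.
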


\begin{proof}
Item \eqref{item;cochain} follows from \eqref{eq:lambda-sharp}.  We deduce item \eqref{item;homotopy} from Prop.~\ref{proposition;IM} as follows: given $u\in\bT^l_{x_0}\huaG$ and $v\in\bT^{-m-l-1}_{x_0}\huaG$, we have
\begin{align*}
\big(\lambda_{x_0}^{\omega_\bu+D\eta_\bu,\#}(u)-\lambda_{x_0}^{\omega_\bu,\#}(u)\big)(v)&=\big(\lambda_{x_0}^{D\eta_\bu,\#}(u)\big)(v)=\lambda^{D\eta_\bu}_{x_0}(u,v)=\lambda^{\eta_\bu}_{x_0}(\mathbb{D}u,v)+(-1)^{l}\lambda^{\eta_\bu}_{x_0}(u,\mathbb{D}v)\\
&=\big(\lambda_{x_0}^{\eta_\bu,\#}(\mathbb{D}u)-\mathbb{D}^*\lambda_{x_0}^{\eta_\bu,\#}(u)\big)(v).\qedhere
\end{align*}
\end{proof}

\begin{definition}\label{definition;shifted-symplectic}
Let $\huaG_\bu$ be a derived Lie $n$-groupoid.  An \emph{$m$-shifted symplectic form on $\huaG_\bu$} is a closed $m$-shifted $2$-form $\omega_\bu$ which is \emph{cohomologically non-degenerate} in the sense that the cochain map $\lambda_{x_0}^{\omega_\bu,\#}$ is a quasi-isomorphism for all $x_0\in\pi_0(\huaG_0)$.  If $\omega_\bu$ is an $m$-shifted symplectic form on $\huaG_\bu$, we refer to  the pair $(\huaG_\bu,\omega_\bu)$ as an \emph{$m$-shifted symplectic derived Lie $n$-groupoid}.
\end{definition}

\begin{remark}\label{ex:sympnond}
Let $G_\bu$ be an ordinary Lie $n$-groupoid regarded as a derived Lie $n$-groupoid as in Ex.~\ref{ex:lie=dlie}.  Then the triple de Rham complex is just the usual Bott-Shulman-Stasheff double complex $(\Omega^\bu(G_\bu),\delta,d)$ of the simplicial manifold $G_\bu$.  The tangent complex at $x_0\in\pi_0(G_0)=G_0$ is
\[\bT^{-i}_{x_0}G=(\huaT_{i}G)_{|x_0},\]
where $\huaT_\bu G$ is the tangent complex of a Lie $n$-groupoid as introduced in \cite[Def 2.8]{cueca-zhu}; observe the shift between homological and cohomological conventions.  It follows that an $m$-shifted symplectic structure on $G_\bu$ viewed as a derived Lie $n$-groupoid is the same thing as an $m$-shifted symplectic structure on $G_\bu$ viewed as a Lie $n$-groupoid.  (For the latter see \cite{Lesdiablerets} and also \cite[\S 2.2]{cueca-zhu}.
\end{remark}

\begin{example}\label{ex:-1cot}
Let $M$ be a manifold and $f\colon M\to\R$ a smooth function.  The \emph{derived critical locus} of $f$ is the derived manifold
\[(T^*[-1]M,\iota_{df})=\huaZ(M, T^*M, df).\]
The canonical symplectic form $\omega_\can\in\Omega^{2,-1}(T^*[-1]M)$ is a  $-1$-shifted symplectic structure on the derived critical locus; see e.g.\ \cite{vezzosi;critical}.
\end{example}

\begin{example}\label{ex:0cot}
By combining the previous example with Prop.~\ref{ex:VBgrpd}, we can produce a $0$-shifted symplectic derived Lie $1$-groupoid as follows.  Let $G\rightrightarrows M$ be a Lie groupoid with Lie algebroid $A\to M$.  The \emph{cotangent VB-groupoid}
\[
\begin{tikzcd}[row sep=large]
T^*G\ar[d]\ar[r,shift left]\ar[r,shift right]&A^*\ar[d]\\
G\ar[r,shift left]\ar[r,shift right]&M,
\end{tikzcd}
\]
is the VB-groupoid dual to the tangent bundle, see e.g.\ \cite[\S 11.2]{MK2}.  Let $0_G$ be the zero section of the cotangent VB-groupoid.  From Prop.~\ref{ex:VBgrpd} we obtain a quasi-smooth derived Lie $1$-groupoid
\begin{equation}\label{eq:def0cot}
\huaC ot_\bu(G):=\huaZ(T^*G\rightrightarrows A^*,0_G)=\bigl(\huaZ(G,T^*G,0_G)\rightrightarrows \huaZ(M,A^*, 0_M)\bigr).        
\end{equation}
We can view the space of arrows $\huaC ot_1(G)=\huaZ(G,T^*G,0_G)$ as the derived critical locus of the zero function $G\to\R$, and Ex.~\ref{ex:-1cot} shows that the canonical symplectic form on $T^*G$ gives rise to an element
\[\omega_\can\in\Omega^{2,-1}(\huaZ(G,T^*G,0_G))=\Omega^{2,-1,1}(\huaC ot_\bu(G)).\]
Since $T^*G\rightrightarrows A^*$ is a symplectic groupoid with the canonical symplectic structure, we get that $D\omega_\can=0$.  The classical locus of $\huaC ot_0(G)$ is $\pi_0(\huaZ(M, A^*, 0_M))=M$. For any $x_0\in M$ the tangent complex is concentrated in degree $-1$ to $1$ and given by 
$$\bT_{x_0}^\bu\huaH\colon\quad A_{x_0}\xrightarrow{\bigl(\begin{smallmatrix}
    \rho_{x_0}\\ 0\end{smallmatrix}\bigr)} T_{x_0}M\oplus T_{x_0}^*M\xrightarrow{(\begin{smallmatrix}
        0& \rho_{x_0}^*
    \end{smallmatrix})} A_{x_0}^*.$$ 
An easy computation shows that $\lambda^{\omega_\can,\sharp}$ is the identity map and therefore $(\huaC ot_\bu(G),\omega_\can)$ is a $0$-shifted symplectic derived Lie $1$-groupoid.
\end{example}

\subsection{Symplectic Morita equivalence}\label{sec:symp-ME}
    
A \emph{strict symplectic equivalence} between two $m$-shifted symplectic derived Lie $n$-groupoids $(\mathcal{G}_\bu,\alpha_\bu)$ and $(\mathcal{H}_\bu,\beta\bu)$ is defined as a Morita morphism $\Phi_\bu\colon \huaG_\bu\to\huaH_\bu$  together with an $(m-1)$-shifted $2$-form $\eta_\bu$ in $\huaG_\bu$ such that     $$\alpha_\bu-\Phi^{\ast}\beta_\bu=D\eta_\bu.$$
{\em Symplectic Morita equivalence} is the equivalence relation generated by strict symplectic equivalence. That is, two $m$-shifted symplectic derived Lie $n$-groupoids $(\mathcal{G}_\bu,\alpha_\bu)$ and $(\mathcal{H}_\bu,\beta\bu)$ are \emph{symplectic Morita equivalent} if they are connected by a sequence of strict symplectic equivalences.
\begin{remark}\label{rmk:symp-ME}
   The symplectic Morita equivalence for $m$-shifted Lie $n$-groupoids in \cite{cueca-zhu} is given by a span of hypercovers. Notice that a hypercover is a weak equivalence\footnote{We need to weaken hypercover to weak equivalence in this article to make the embedding into the path object, e.g. $BG \xrightarrow[]{\sigma} BG^I$ into a Morita morphism. Otherwise it would be a span of hypercovers only. }, span of weak equivalence can be replaced by a span of hypercovers \cite[\S\,2]{Rogers-Zhu:2016} and a weak equivalence of manifolds viewed as derived manifolds is simply an isomorphism. Thus our Symplectic Morita equivalence defined above restricting to the world of $m$-shifted Lie $n$-groupoids coincides with the symplectic Morita equivalence defined therein. 
\end{remark}

In this work we will not show that $m$-shifted symplectic structures can be transported under Morita morphisms.  Nevertheless, we work towards it.  We begin with the invariance of tangent complexes under Morita morphisms.

\begin{lemma}\label{lem:MEtcom}
Let $\Phi_\bu\colon \mathcal{G}_\bu\to\mathcal{H}_\bu$ be a Morita morphism between derived Lie $n$-groupoids and $x\in \pi_0(\mathcal{G}_0)$. Then, the induced morphism of tangent complexes 
\begin{gather*}
    \mathbb{T}_{x}^\bu\Phi\colon \mathbb{T}^\bu_x\mathcal{G}\rightarrow \mathbb{T}^\bu_{\Phi_0(x)}\mathcal{H}
\end{gather*}
is a quasi-isomorphism if there are only finitely many non-zero $N^{p}(T^q_{x}(\huaG))$ and  $N^{p}(T^q_{\Phi_0(x)}(\huaH))$ with $p+q=l$ for all $l\in \Z$. In particular, if $n$ is finite, this condition is automatically satisfied since our derived manifolds have finite amplitudes. 
\end{lemma}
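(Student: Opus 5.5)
The proof splits according to the two kinds of Morita morphism in Def.~\ref{def:Morita}. In each case I will show that the induced map of double complexes $N^\bu(T^\bu_x\Phi)\colon N^\bu(T^\bu_x\huaG)\to N^\bu(T^\bu_{\Phi_0(x)}\huaH)$ is a quasi-isomorphism along one of the two directions. The standard spectral sequence (or filtration) argument for double complexes then gives the conclusion. The finiteness hypothesis is used exactly here: because each antidiagonal $p+q=l$ has only finitely many nonzero entries, the filtration by $p$ (or by $q$) on $\bT^l$ is finite. A map of filtered complexes that is a quasi-isomorphism on associated graded pieces is then a quasi-isomorphism on totalizations, with no convergence issues.

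\emph{Case 1: $\Phi_\bu$ is a levelwise weak equivalence.} Each $\Phi_l$ is a weak equivalence of derived manifolds, so $T^\bu_x\Phi_l\colon T^\bu_x\huaG_l\to T^\bu_x\huaH_l$ is a quasi-isomorphism in the $\ell_Q$ direction for every $l$. The normalized complex $N^{-l}$ is a natural direct summand of level $l$ (Dold–Kan), and it is preserved by $\ell_{Q_l}$, since $\ell_Q$ commutes with the face and degeneracy maps. Hence $N^{-l}(T^\bu_x\Phi)$ is a quasi-isomorphism in the vertical direction for each column $l$. Filtering by simplicial degree then finishes this case.

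\emph{Case 2: $\Phi_\bu$ is a stalkwise weak equivalence in $\dnGpd$.} Here I would use the combinatorial characterization of Cor.~\ref{cor:w-eq-comb}: the maps $r(\Phi)_j$ are covers for $j\le n-1$ and an isomorphism for $j=n$. Covers are locally split fibrations, hence surjective on each graded tangent space $T^k$ at points of the appropriate fibre products. Tangent spaces of transverse fibre products are fibre products of tangent spaces by Prop.~\ref{proposition;graded-fibred-product}, and the relevant fibre products are built from covers. So for each fixed internal degree $k$, the map of simplicial vector spaces $T^k_x\huaG_\bu\to T^k_x\huaH_\bu$ satisfies the linear analogue of \eqref{eq:w-eq-infty}: the relative horn-filling map is surjective in every degree and bijective from degree $n$ on.

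For simplicial vector spaces, equivalently Kan complexes, this relative condition is the standard criterion for a weak equivalence. This is the same argument as Prop.~\ref{prop:w-eq-comb} applied in the category of vector spaces, where all the points are trivial. Via Dold–Kan, each row map $N^\bu(T^k_x\Phi)$ is therefore a quasi-isomorphism in the $\partial$ direction. Filtering the total complex by internal degree $k$, which is again a finite filtration on each $\bT^l$, yields the claim.

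The main obstacle is the tangent-level translation in Case 2. One has to check that the fibre products appearing in $r(\Phi)_j$ are transverse, so that their graded tangent spaces are the linear fibre products. This should follow because the face maps and horn projections are covers, hence fibrations, which gives transversality. One also has to check that the basepoint $x$, degenerated to every level, lies in the relevant fibres. Finally, the finiteness remark for finite $n$ holds because the columns vanish in the normalized complex for $l>n+1$ by coskeletality, and each $T^\bu$ has finite amplitude; I would need to verify that this amplitude bound is uniform over the finitely many relevant levels.
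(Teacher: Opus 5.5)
Your proposal is correct and follows the paper's proof. It uses the same two cases: for a levelwise weak equivalence you pass to normalized columns via the functorial Dold--Kan splitting $A_l=N_l(A)\oplus D_l(A)$, and for a stalkwise weak equivalence you show each simplicial vector space map $T^k_x\huaG_\bu\to T^k_{\Phi_0(x)}\huaH_\bu$ is a weak equivalence and apply Dold--Kan. Both cases then finish with the same bounded-filtration/spectral-sequence argument. The only difference is that the paper just cites \cite[Lemma~2.27]{cueca-zhu} for the tangent-level weak equivalence, whereas you unwind it through Cor.~\ref{cor:w-eq-comb} and the transversality of fibre products along covers, which is a legitimate way to fill in that step.
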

\begin{proof}
    First assume that $\Phi_\bu\colon \mathcal{G}_\bu\rightarrow \mathcal{H}_\bu$ is a weak equivalence of higher derived Lie $n$-groupoids.  Using the forgetful functor from derived manifolds to nonpositively graded manifolds, we view $\huaG_\bu$ and $\huaH_\bu$ as $n$-groupoids in the category of graded manifolds. Applying the tangent functor, for a fixed $k$, $T_x^k\mathcal{G}_\bu$ and $T^k_{\Phi_0(x)}\mathcal{H}_\bu$ are still $n$-groupoids in the category of vector spaces with surjective maps as covers.  All face and degeneracy
maps come from taking tangent. 

For each $k$,  the map $T_x^k\Phi_\bu\colon T_x^k\mathcal{G}_\bu\rightarrow T^k_{\Phi_0(x)}\mathcal{H}_\bu$ is a weak equivalence of such $n$-groupoid objects by a similar argument as in the proof of \cite[Lemma~2.27]{cueca-zhu}.  Applying the Dold-Kan functor, we obtain a quasi-isomorphism $N^\bu (T_x^k\Phi)\colon N^\bu(T_x^k\mathcal{G})\rightarrow N^\bu(T^k_{\Phi_0(x)}\mathcal{H})$. 

If we free $k$, the induced morphism of double cochain complexes \eqref{diag:double-tang} is a column-wise quasi-isomorphism.  Therefore the map between the spectral sequences $$E_0^{p,q}=N^{p}(T^{q}_{x}\huaG)\quad\text{and}\quad {E'}_0^{p,q}=N^{p}(T^q_{\Phi(x)} \huaH)$$ associated to the two double complexes is an isomorphism after turning one page \cite[\href{https://stacks.math.columbia.edu/tag/012X}{Tag 012X}]{stacks-project}.  Such a spectral sequence converges to the associated graded of the cohomology of the total complex. A filtered morphism $f\colon  V\to W$ for bounded filtered vector spaces is an isomorphism if the associated graded is an isomorphism. Our finiteness condition implies that both column-wise filtrations and row-wise filtrations are bounded. Thus $\mathbb{T}_x^\bu \Phi$ is a quasi-isomorphism on the level of total complexes.  

Now assume that $\Phi_\bu\colon \mathcal{G}_\bu\rightarrow \mathcal{H}_\bu$ is a levelwise weak equivalence of derived manifolds. For each $l\in\N$, the simplicial morphism $T_x^\bu\Phi_{l}\colon T_x^\bu\mathcal{G}_{l}\rightarrow T_{\Phi_0(x)}^\bu\mathcal{H}_{l}$ restricts to a quasi-isomorphism of cochain complexes. 
    
Recall that the $n$-th level of a simplicial object $A_\bu$ in an abelian category can be splitted functorially as $$A_n=N^n(A)\oplus D^n(A),$$ where $N(A)$ and $D(A)$ denote the normalized and degenerate complex associated to $A$, respectively (cf.~\cite[Lemma~8.3.7]{wei:hom}). As a consequence 
$N^{-l}(T_x^\bu\Phi)\colon N^{-l}(T_x^\bu\mathcal{G})\rightarrow N^{-l}(T_{\Phi_0(x)}^\bu\mathcal{H})$ is a quasi-isomorphism for every $l\in \Z^{\ge 0}$. Since the induced morphism of double cochain complexes $$N^\bu(T^\bu_x\Phi)\colon N^\bu(T_x^\bu\mathcal{G})\rightarrow N^\bu(T^\bu_{\Phi_0(x)}\mathcal{H})$$ is a row-wise quasi-isomorphism, by a similar reason as above, it induces a quasi-isomorphism of the total complexes. 
\end{proof}

\begin{proposition}\label{prop:ME-symp-forms}
    Let $\Phi_\bu\colon \huaG_\bu\to\huaH_\bu$ be a Morita morphism of derived Lie $n$-groupoids satisfying the same finiteness condition as in Lemma~\ref{lem:MEtcom}, i.e. there are only finite many non-zero $N^{p}(T^q_{x}(\huaG))$ and  $N^{p}(T^q_{\Phi_0(x)}(\huaH))$ with $p+q=l$ for all $l\in \Z$.  Consider $\omega_\bu^\huaG$ and $\omega_\bu^\huaH$, closed $m$-shifted $2$-forms on $\huaG_\bu$ and $\huaH_\bu$, respectively, satisfying 
    \begin{equation}\label{eq:omega-GH}
        \omega_\bu^\huaG-\Phi_\bu^*\omega_\bu^\huaH=D\beta_\bu
    \end{equation}
    for some $(m-1)$-shifted $2$-form $\beta_\bu$ on $\huaG_\bu$. Then $\omega^\huaG_\bu$ is non-degenerate if and only if $\omega^\huaH_\bu$ is non-degenerate.
\end{proposition}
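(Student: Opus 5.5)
The plan is to compare the two IM-pairings through the commutative square
\[
\begin{tikzcd}
\bT^\bu_x\huaG\ar[r,"\lambda^{\Phi^*\omega^\huaH,\#}_x"]\ar[d,"\bT_x\Phi"']&\bT^{*,\bu}_x\huaG[m]\\
\bT^\bu_{\Phi_0(x)}\huaH\ar[r,"\lambda^{\omega^\huaH,\#}_{\Phi_0(x)}"']&\bT^{*,\bu}_{\Phi_0(x)}\huaH[m]\ar[u,"(\bT_x\Phi)^*"']
\end{tikzcd}
\]
for each classical point $x\in\pi_0(\huaG_0)$. Concretely, I will prove the naturality identity
\[
\lambda^{\Phi^*\omega^\huaH,\#}_x=(\bT_x\Phi)^*\circ\lambda^{\omega^\huaH,\#}_{\Phi_0(x)}\circ\bT_x\Phi .
\]
This is a formal consequence of the definitions. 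Pullback of forms commutes with evaluation on tangent vectors, so $(\Phi^*\omega)_x(u,v)=\omega_{\Phi_0(x)}(T\Phi\, u,T\Phi\, v)$ levelwise. The tangent maps $T_x\Phi_\bu$ commute with degeneracy maps because $\Phi_\bu$ is simplicial. Hence the Eilenberg--Zilber map \eqref{eq:EZ}, and so $\widetilde{EZ}$, is natural in the simplicial cochain complex. The signs in $\widetilde{EZ}$ depend only on degrees, which $\Phi$ preserves, and $f^{\omega}_{x}$ in \eqref{eq:f-omega} is visibly compatible with pullback.

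Next I handle the gauge term. By \eqref{eq:omega-GH} and Cor.~\ref{gaugeinvar}\eqref{item;homotopy}, the maps $\lambda^{\omega^\huaG,\#}_x$ and $\lambda^{\Phi^*\omega^\huaH,\#}_x$ are cochain homotopic via $\lambda^{\beta,\#}_x$; note that $\Phi^*\omega^\huaH$ is closed since $D$ commutes with pullback. Therefore $\lambda^{\omega^\huaG,\#}_x$ is a quasi-isomorphism if and only if $\lambda^{\Phi^*\omega^\huaH,\#}_x$ is.

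Now the finiteness hypothesis enters. By Lemma~\ref{lem:MEtcom}, $\bT_x\Phi$ is a quasi-isomorphism, and hence so is its dual $(\bT_x\Phi)^*$, because dualizing finite-dimensional pieces degreewise commutes with cohomology. Applying two-out-of-three for quasi-isomorphisms to the factorization above gives the following: $\lambda^{\Phi^*\omega^\huaH,\#}_x$ is a quasi-isomorphism if and only if $\lambda^{\omega^\huaH,\#}_{\Phi_0(x)}$ is. This yields non-degeneracy of $\omega^\huaG$ at $x$ if and only if non-degeneracy of $\omega^\huaH$ at $\Phi_0(x)$.

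The remaining step, which I expect to be the main obstacle, is to pass from ``at points in the image'' to ``at all classical points of $\huaH_0$''. For $\omega^\huaG$ non-degenerate implying $\omega^\huaH$ non-degenerate, I need every $y\in\pi_0(\huaH_0)$ to be handled. I will split by the type of Morita morphism.
\begin{enumerate}
\item If $\Phi$ is a levelwise weak equivalence, then $\Phi_0$ is bijective on classical loci, so every $y$ is some $\Phi_0(x)$.
\item If $\Phi$ is a stalkwise weak equivalence, Cor.~\ref{cor:w-eq-1gpd} (and its $n$-analogue Cor.~\ref{cor:w-eq-comb}) shows that $\huaG_0\times_{\huaH_0}\huaH_1\to\huaH_0$ is locally split, hence essentially surjective. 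Hence every $y$ is connected by a classical arrow $g\in\pi_0(\huaH_1)$ to some $\Phi_0(x)$.
\end{enumerate}
In the second case I must show that non-degeneracy of a closed shifted form is invariant along classical arrows. I would first try to deduce this from the path object $\huaH^I_\bu$. The two maps $d_0,d_1\colon\huaH^I_\bu\to\huaH_\bu$ are hypercovers after composing with the weak equivalence $s$, so by 2-out-of-3 they are weak equivalences. I will also need $d_0^*\omega^\huaH$ and $d_1^*\omega^\huaH$ to be closed forms differing by a $D$-exact term, via a homotopy operator on $\Delta[1]$. Granting that, the pointwise argument above, applied at the classical point $g$ of $\huaH^I_0=\huaH_1$, transports non-degeneracy from $d_1(g)$ to $d_0(g)$.
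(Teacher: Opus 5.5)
Your pointwise comparison is exactly the paper's. It uses the naturality square (the paper's diagram~\eqref{diag:lambda-pb}), Cor.~\ref{gaugeinvar} to absorb $D\beta_\bu$, and Lemma~\ref{lem:MEtcom} with two-out-of-three, giving non-degeneracy of $\omega^\huaG_\bu$ at $x$ iff of $\omega^\huaH_\bu$ at $\Phi_0(x)$. The levelwise case is also fine.

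\textbf{The gap.} It is the step you explicitly grant, namely that $d_0^*\omega^\huaH_\bu-d_1^*\omega^\huaH_\bu$ is $D$-exact on $\huaH^I_\bu$. For a stalkwise weak equivalence, this is the only thing controlling classical points of $\huaH_0$ that are merely joined to $\Phi_0(\pi_0(\huaG_0))$ by a classical arrow. Nothing in the pointwise argument compares $\lambda^{\omega^\huaH_\bu,\#}$ at two different points. So, as written, the implication ``$\omega^\huaG_\bu$ non-degenerate $\Rightarrow$ $\omega^\huaH_\bu$ non-degenerate'' is unproved.

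\textbf{How to fill it.} The $k+1$ non-degenerate $(k+1)$-simplices of $\Simp{k}\times\Simp{1}$ give morphisms of derived manifolds $h_i\colon\huaH^I_k\to\huaH_{k+1}$. These are the projections onto the factors in Rem.~\ref{rmk:path}, and they form a combinatorial simplicial homotopy between $d_0$ and $d_1$. Set $H=\sum_i(-1)^ih_i^*\colon\Omega^{a,b,c}(\huaH)\to\Omega^{a,b,c-1}(\huaH^I)$. Then:
\begin{enumerate}
\item $\delta H+H\delta=\pm(d_0^*-d_1^*)$.
\item Since the $h_i$ are derived morphisms, $h_i^*$ commutes with $d$ and $\Lie_Q$. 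The signs $(-1)^c$ and $(-1)^{b+c}$ in $D$ then make the cross terms cancel, so $DH+HD=\delta H+H\delta$.
\item The identities $h_is_j=s_{j+1}h_i$ ($i\le j$) and $h_is_j=s_jh_{i-1}$ ($i>j$) show that $H$ preserves normalized forms.
\item $H$ keeps the de Rham degree, so $H\omega^\huaH_\bu$ is an $(m-1)$-shifted $2$-form.
\end{enumerate}
Hence $d_0^*\omega^\huaH_\bu-d_1^*\omega^\huaH_\bu=\pm D(H\omega^\huaH_\bu)$. With this, Cor.~\ref{gaugeinvar} at $g$, together with Lemma~\ref{lem:MEtcom} for the weak equivalences $d_0,d_1$, completes your transport along $g$. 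For $n=\infty$ you also need the finiteness hypothesis for $\huaH^I_\bu$.

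\textbf{Comparison with the paper.} The paper treats the weak-equivalence case differently. It invokes \cite[Prop.~2.12]{Rogers-Zhu:2016} to replace $\Phi_\bu$ by a span of hypercovers $\huaG_\bu\xleftarrow{p}\huaG_\bu\times_{\huaH_\bu}\huaH^I_\bu\xrightarrow{q}\huaH_\bu$, for which every classical point of the target has a classical preimage, and then says ``combining with the result above''. But in that span one only has $\Phi_\bu\circ p=d_0\circ\pr_2$ and $q=d_1\circ\pr_2$. So relating $p^*\Phi^*\omega^\huaH_\bu$ to $q^*\omega^\huaH_\bu$ tacitly requires exactly the homotopy lemma you isolated. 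Your route is more direct: essential surjectivity up to classical arrows from Cor.~\ref{cor:w-eq-comb}, then invariance along arrows. It also makes that dependence explicit. Once the lemma above is proved, both arguments are complete.
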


\begin{proof}
Cor.~\ref{gaugeinvar} implies that $\omega_\bu^{\huaG}$ is non-degenerate if and only if $\Phi_\bu^*\omega_\bu^\huaH$ is non-degenerate. 

Now first suppose that $\Phi_\bu$ is level-wise equivalence or a hypercover. Then every classical point $y\in \pi_0(\huaH_0)$ has a preimage $x\in \huaG_0$.  We show that $\omega_\bu^\huaH|_y$ is non-degenerate at $y$ if and only if  $\Phi_\bu^*\omega_\bu^\huaH|_x$ is non-degenerate. This is clear from the following diagram and Lemma~\ref{lem:MEtcom}. 
\begin{equation}\label{diag:lambda-pb}
\begin{tikzcd}[row sep=large]
\mathbb{T}^\bu_y\huaH\ar[r,"\lambda^{\omega^\huaH_\bu}"] & \mathbb{T}_y^{*,\bu}\huaH[m]\ar[d,"{\bT^*\Phi_\bu[m]}"]\\
   \bT^\bu_x \huaG \ar[r,"\lambda^{\Phi^*\omega^\huaH_\bu}"]\ar[u,"\bT\Phi_\bu"] & \bT^{*,\bu}_x \huaG [m]
\end{tikzcd} 
\end{equation}
If $\Phi_\bu$ is a weak equivalence of derived higher groupoids, by the proof of \cite[Prop.~2.12]{Rogers-Zhu:2016}, a weak equivalence in an iCFO can always be replaced by a span of hypercovers. Combining with the result above,  $\omega_\bu^\huaH$ is non-degenerate if and only if  $\Phi_\bu^*\omega_\bu^\huaH$ is non-degenerate for all Morita morphism $\Phi_\bu$. 
\end{proof}

What remains open in order to conclude that $m$-shifted symplectic structures can be transported by Morita morphism is to show that Morita morphisms induce  quasi-isomorphisms between the simplicial graded de Rham complexes. See \cite{weiershausen2025} for a detailed proof of these facts in the realm of Lie $n$-groupoids as announced in \cite{Lesdiablerets}. For a related approach, see \cite{taroyan;de-rham}.

\section{Shifted lagrangian structures}\label{shifted-lagrangian}

Motivated by works in derived algebraic geometry, see e.g.\ \cite{cal:lag, ptvv, saf:qua}, a notion of lagrangian structures on $m$-shifted symplectic Lie groupoids was introduced in \cite{ABC:lag}. Here we extend that definition to the world of derived Lie $n$-groupoids and prove that, under transversality, shifted lagrangian correspondences can be composed. Thus it provides a transversal version of \cite[Thm.~4.4]{cal:lag}. 

\subsection{Isotropic and lagrangian morphisms}

In order to generalize the notion of a lagrangian submanifold from classical to shifted symplectic geometry we need the following definitions.

The \emph{normal complex} of a morphism $\Phi_\bu\colon \mathcal{H}_\bu\rightarrow\mathcal{G}_\bu$ between derived Lie $n$-groupoids at a classical point $x_0\in\pi_0(\huaH_0)$ is defined as the mapping cone of the induced cochain map $\bT^\bu_{x_0}\Phi\colon \bT_{x_0}^\bu\huaH\to\bT_{\Phi_0(x_0)}^\bu\huaG$. More explicitly, it is the cochain complex 
\begin{equation}\label{eq:normal-cx}
\N^{\Phi,\bu}_{x_0}:=\bT^\bu_{x_0}\mathcal{H}\oplus \bT^\bu_{\Phi_0(x_0)}\mathcal{G}[-1]\quad \text{with differential}\quad \mathbb{D}_\Phi=\begin{pmatrix}
\mathbb{D}_\huaH & 0\\
\bT^\bu_{x_0}\Phi &- \mathbb{D}_\huaG
\end{pmatrix}.
\end{equation}
   
Let $(\huaG_\bu,\omega_\bu)$ be an $m$-shifted symplectic derived Lie $n$-groupoid.  An \emph{$m$-shifted isotropic morphism} to $\huaG_\bu$ is a pair $(\Phi_\bu,\beta_\bu)$ consisting of a morphism of derived Lie $n$-groupoids $\Phi_\bu\colon\huaH_\bu\to\huaG_\bu$ and an $(m-1)$-shifted $2$-form $\beta_\bu$ on $\huaH_\bu$ such that $\Phi^*\omega_\bu=D\beta_\bu$.  We also call $\beta_\bu$ an \emph{$m$-shifted isotropic structure} on $\Phi_\bu$.

\begin{proposition}\label{proposition;cochain}
Let $(\huaG_\bu,\omega_\bu)$ be an $m$-shifted symplectic derived Lie $n$-groupoid and let $(\Phi_\bu\colon\huaH_\bu\to\huaG_\bu,\beta_\bu)$ be an $m$-shifted isotropic morphism.  For every $x_0 \in \pi_0(\huaH_0)$ the map 
\begin{equation}\label{eq:lambda-Phi}
\lambda^{\beta_\bu,\Phi,\omega_\bu}_{x_0}\colon\N^{\Phi,\bu}_{x_0}\longto\bT^{\ast,\bu}_{x_0}\huaH[m-1],\qquad
u\oplus v\longmapsto\lambda^{\beta_\bu,\sharp}_{x_0}(u)-(\bT_{x_0}^\bu \Phi)^*\circ\lambda_{\Phi_0(x_0)}^{\omega_\bu,\sharp}(v),
\end{equation}
for $u\in\bT_{x_0}^\bu\mathcal{H}$ and $v\in\bT_{\Phi_0(x_0)}^\bu\mathcal{G}[-1]$, is a cochain map.
\end{proposition}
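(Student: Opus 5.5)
We have to check that $\lambda:=\lambda^{\beta_\bu,\Phi,\omega_\bu}_{x_0}$ intertwines $\mathbb{D}_\Phi$ with $\mathbb{D}^*$ on $\bT^{*,\bu}_{x_0}\huaH[m-1]$. Write $\lambda$ as a row matrix $\bigl(\lambda^{\beta_\bu,\sharp}_{x_0}\ \ -(\bT^\bu_{x_0}\Phi)^*\lambda^{\omega_\bu,\sharp}_{\Phi_0(x_0)}\bigr)$ acting on $u\oplus v$. Expanding $\lambda\circ\mathbb{D}_\Phi-\mathbb{D}^*\circ\lambda$ with the matrix \eqref{eq:normal-cx} gives two components. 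On the $u$-component we get
\[
\lambda^{\beta_\bu,\sharp}\mathbb{D}_\huaH u-\mathbb{D}^*\lambda^{\beta_\bu,\sharp}u-(\bT\Phi)^*\lambda^{\omega_\bu,\sharp}(\bT\Phi\, u).
\]
On the $v$-component we get
\[
(\bT\Phi)^*\lambda^{\omega_\bu,\sharp}\mathbb{D}_\huaG v+\mathbb{D}^*(\bT\Phi)^*\lambda^{\omega_\bu,\sharp}v,
\]
with the signs to be fixed by the shift conventions of \eqref{eq:co-tan}.

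For the $v$-component, I would use that $\omega_\bu$ is closed. By \eqref{eq:lambda-sharp} (equivalently Cor.~\ref{gaugeinvar}\eqref{item;cochain}), $\lambda^{\omega_\bu,\sharp}$ is a cochain map $\bT\huaG\to\bT^*\huaG[m]$. Since $\bT\Phi$ is a cochain map, its dual $(\bT\Phi)^*$ is a cochain map $\bT^*\huaG[m]\to\bT^*\huaH[m]$. The remaining point is to compare the differentials on the $[m]$- and $[m-1]$-shifted cotangent complexes, and on $\bT\huaG[-1]$ versus $\bT\huaG$. The sign $(-1)^{k+1}$ in \eqref{eq:co-tan} changes under a shift by one, and the $-\mathbb{D}_\huaG$ in the cone absorbs exactly this sign. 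So the $v$-component vanishes.

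For the $u$-component, I would apply \eqref{eq:lambda-sharp} to the $(m-1)$-shifted form $\beta_\bu$. This gives $\lambda^{\beta_\bu,\sharp}\mathbb{D}-\mathbb{D}^*\lambda^{\beta_\bu,\sharp}=\lambda^{D\beta_\bu,\sharp}=\lambda^{\Phi^*\omega_\bu,\sharp}$. It then remains to prove the naturality identity
\[
\lambda^{\Phi^*\omega_\bu}_{x_0}(u,u')=\lambda^{\omega_\bu}_{\Phi_0(x_0)}(\bT\Phi\,u,\bT\Phi\,u'),
\]
which is the commutativity of diagram \eqref{diag:lambda-pb}. I would prove it directly from the definition: $\Phi_\bu$ commutes with degeneracies, so $\bT\Phi$ commutes with $\widetilde{EZ}$. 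Also $(\Phi_c^*\omega_{2,-r,c})_{x_0}(a,b)=(\omega_{2,-r,c})_{\Phi(x_0)}(T\Phi\, a,T\Phi\, b)$ by definition of the pullback. Hence $f^{\Phi^*\omega_\bu}_{x_0}=f^{\omega_\bu}_{\Phi_0(x_0)}\circ(T\Phi\otimes T\Phi)$. With this identity the two terms in the $u$-component cancel.

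The main obstacle is the sign bookkeeping. One has to verify that the conventions for the mapping cone, the shifted cotangent differential $(-1)^{k+1}\mathbb{D}^t$, and the shift $[m-1]$ versus $[m]$ produce exactly the signs needed in both components. I would fix homogeneous $u\in\bT^{l}_{x_0}\huaH$ and $v\in\bT^{l-1}\huaG$ and evaluate both sides on a test vector $w\in\bT^{-m-l}\huaH$, using Prop.~\ref{proposition;IM} in its pointwise form, as in the proof of Cor.~\ref{gaugeinvar}. If a global sign mismatch appears, it should be absorbed by the choice of sign in front of $(\bT\Phi)^*$ in \eqref{eq:lambda-Phi}, which is presumably why that sign is $-$.
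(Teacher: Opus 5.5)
Your proposal is correct and follows the paper's proof essentially step by step. The $u$-component is handled by \eqref{eq:lambda-sharp} for $\beta_\bu$, combined with $D\beta_\bu=\Phi^*\omega_\bu$ and the naturality $\lambda^{\Phi^*\omega_\bu,\sharp}_{x_0}=(\bT_{x_0}\Phi)^*\lambda^{\omega_\bu,\sharp}_{\Phi_0(x_0)}\bT_{x_0}\Phi$; the paper uses this naturality tacitly, whereas you justify it via $\widetilde{EZ}$ and the pullback of forms. The $v$-component is handled by closedness of $\omega_\bu$ together with $(\bT\Phi)^*\mathbb{D}^*_\huaG[m]=\mathbb{D}^*_\huaH[m](\bT\Phi)^*=-\mathbb{D}^*_\huaH[m-1](\bT\Phi)^*$.

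The signs work out exactly with the stated conventions, because $\bT^{*,k}\huaH[m]=\bT^{*,k+1}\huaH[m-1]$ and $\mathbb{D}^*[m]^k=-\mathbb{D}^*[m-1]^{k+1}$. So nothing needs to be ``absorbed'': the minus sign in \eqref{eq:lambda-Phi} is forced by the $u$-component rather than being a free choice.
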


\begin{proof}
Let $u\oplus v\in\bT_{x_0}^p\mathcal{H}\oplus\bT_{\Phi_0(x_0)}^p\mathcal{G}[-1]=\N^{\Phi,p}_{x_0}$.  By Prop.~\ref{proposition;IM} together with the fact that $\beta_\bu$ is an isotropic structure, we have
\begin{align*}
\mathbb{D}^*_\huaH\lambda_{x_0}^{\beta_\bu,\sharp}(u)&=\lambda_{x_0}^{\beta_\bu,\sharp}(\mathbb{D}_{\huaH} u)-\lambda_{x_0}^{D\beta_\bu,\sharp}(u)\\
&=\lambda_{x_0}^{\beta_\bu,\sharp}(\mathbb{D}_\huaH u)-\lambda^{\Phi^{\ast}\omega_\bu,\sharp}_{x_0}(u)=\lambda_{x_0}^{\beta_\bu,\sharp}(\mathbb{D}_\huaH u)-(\bT^\bu_{x_0}\Phi)^*\big(\lambda^{\omega_\bu,\sharp}(\bT_{x_0}\Phi u)\big).
\end{align*}
Therefore, by \eqref{eq:lambda-Phi} and \eqref{eq:lambda-sharp}, it follows that
\begin{align*}
\lambda^{\beta_\bu,\Phi,\omega_\bu}_{x_0}\mathbb{D}_\Phi(u\oplus v)&=\lambda^{\beta_\bu,\Phi,\omega_\bu}\bigl(\mathbb{D}_\huaH u\oplus( \bT^\bu_{x_0}\Phi(u)-\mathbb{D}_{\huaG} v) \bigr)\\
&=\lambda_{x_0}^{\beta_\bu,\sharp}(\mathbb{D}_\huaH u)-(\bT^\bu_{x_0}\Phi)^*\big(\lambda_{\Phi_0(x_0)}^{\omega_\bu,\sharp}(\bT^\bu_{x_0}\Phi u)\big)+(\bT_{x_0}^\bu\Phi)^*\big(\lambda^{\omega_\bu,\sharp}_{\Phi_0(x_0)}(\mathbb{D}_\huaG v)\big)\\
&=\mathbb{D}^*_\huaH[m-1]^p\lambda_{x_0}^{\beta_\bu,\sharp}(u)+(\bT_{x_0}^\bu\Phi)^*\big(\mathbb{D}^*_\huaG[m]^p\lambda_{\Phi_0(x_0)}^{\omega\bu,\sharp}(v)\big)\\
&=\mathbb{D}^*_\huaH[m-1]^p \lambda^{\beta_\bu,\Phi,\omega_\bu}_{x_0}(u\oplus v).
\end{align*}
In the last step we used that $(\bT^\bu_{x_0}\Phi)^*\mathbb{D}^*_\huaG[m]^p = \mathbb{D}_{\huaH}^*[m]^p(\bT_{x_0}^\bu \Phi)^*=-\mathbb{D}_{\huaH}^*[m-1]^p (\bT_{x_0}^\bu \Phi)^*$. 
\end{proof}

\begin{definition}\label{def:lag}
Let $(\huaG_\bu,\omega_\bu)$ be an $m$-shifted symplectic derived Lie $n$-groupoid.  An \emph{$m$-shifted lagrangian morphism} to $\huaG$ is an $m$-shifted isotropic morphism $(\Phi_\bu\colon\huaL_\bu\to\huaG_\bu,\beta_\bu)$ such that the map
\[
\lambda^{\beta_\bu,\Phi,\omega_\bu}_{x_0}\colon \N^{\Phi,\bu}_{x_0}\longto\bT^{*,\bu}_{x_0}\huaL[m-1]
\]
is a quasi-isomorphism for all $x_0\in\pi_0(\huaL_0)$.  We also say that the pair $(\Phi_\bu,\beta_\bu)$, or the triple $(\huaL_\bu,\Phi_\bu,\beta_\bu)$, is an \emph{$m$-shifted lagrangian} on $(\huaG_\bu,\omega_\bu)$, and that $\beta_\bu$ defines an \emph{$m$-shifted lagrangian structure} on $\Phi_\bu$.
\end{definition}

\begin{example}\label{ex:lagpt}
For every integer $m$ the zero form defines an $m$-shifted symplectic form on a point $*$, viewed as a derived Lie $0$-groupoid. Every derived Lie $n$-groupoid $\huaL_\bu$ has a unique morphism
$t\colon\huaL_\bu\to*$, and it follows directly from Prop.~\ref{proposition;cochain} and Def.~\ref{def:lag} that an $m$-shifted lagrangian structure on $t$ is the same thing as an $(m-1)$-shifted symplectic structure on $\huaL_\bu$.
\end{example}

In \S\,\ref{sec:red} we will see several examples of $0$-shifted lagrangian structures given by quasi-smooth derived Lie $1$-groupoids.  Let us give here one example between quasi-smooth derived manifolds.

The height function $\mu$ on the sphere $S^2$ generates a Hamiltonian circle action, whose orbits are the latitudinal circles.  The two poles are singular points of $\mu$ even though they should be lagrangians just like the other level sets of $\mu$.  The following example is an attempt to understand such singular lagrangians in terms of derived geometry. 

\begin{example}\label{ep:north-pole}
Let $M$ be the plane $\R^2$ equipped with the standard symplectic form $\omega_\can=dx\wedge dy$.  By Rem.\ \ref{ex:sympnond} we can view $M$ as a $0$-shifted derived symplectic $0$-groupoid.  Let $L=\R^2$ and let $\huaL$ be the derived manifold $\huaZ(L,\underline{\R},s)$, where $s$ is the function $s(u,v)=u^2+v^2$, regarded as a section of the trivial line bundle $\underline{\R}$.  The cohomological vector field on $\huaL$ corresponding to the section $s$ is $Q=(u^2+v^2)\pardif{}{\xi}$, where $\xi$ is the fibre coordinate on $\underline{\R}$, and the classical locus is $\pi_0(\huaL)=\{(0,0)\}$. Let $\Phi\colon\huaL\to M$ be the blow-down map defined by $\Phi(u,v)=(uv, u)$.  It is easy to verify that the $(-1)$-shifted $2$-form 
\[
\beta=\frac{1}{2}dv\wedge d\xi\in\Omega^{2,-1}(\huaL)
\]
defines a $0$-shifted isotropic structure on the morphism $\Phi$.  This isotropic structure is lagrangian: at the classical point $x_0=(0,0)\in\pi_0(\huaL)$ the lagrangian condition holds because the vertical maps in the diagram
\begin{equation*}\label{eq:lagexstrange}
\begin{tikzcd}[ampersand replacement=\&,column sep=large,row sep=large]
\N^{\Phi,\bu}_{x_0}\colon\ar[d,"\lambda^{\beta_\bu,\Phi,\omega_\bu}_{x_0}"']\&\R^2
\arrow[r,"{\Bigl(\begin{smallmatrix}0&0\\0&0\\1&0\end{smallmatrix}\Bigr)}"]
\arrow[d,"{(\begin{smallmatrix}0&\frac12\end{smallmatrix})}"']
\&\R^3
\arrow[d,"{\Bigl(\begin{smallmatrix}0&-1&0\\-\frac12&0&0\end{smallmatrix}\Bigr)}"]
\\
\bT^{\ast,\bu}_{x_0}\huaL[-1]\colon\&\R
\arrow[r,"{\bigl(\begin{smallmatrix}0\\0\end{smallmatrix}\bigr)}"]
\&\R^2
\end{tikzcd}
\end{equation*}
define a quasi-isomorphism from the complex in the top row to the complex in the bottom row.
\end{example}

Although our shifted lagrangians are not Morita invariant, it is still necessary to identify certain ones among them via equivalences.

\begin{definition}\label{def:eqSlag}
Let $(\huaG_\bu,\omega_\bu)$ be an $m$-shifted symplectic derived Lie $n$-groupoid and $(\huaL_\bu^1,\Phi_\bu^1,\beta_\bu^1)$, $(\huaL_\bu^2,\Phi_\bu^2,\beta_\bu^2)$ $m$-shifted lagrangians on it. We say that they are \emph{strictly equivalent} if there exists a Morita morphism $\Psi_\bu\colon \huaL_\bu^1\to\huaL_\bu^2$ together with an $(m-2)$-shifted $2$-form $\gamma_\bu$ on $\huaL_\bu^1$ such that
$$\Phi_\bu^1=\Phi_\bu^2\circ\Psi_\bu\quad\text{and}\quad \beta^1_\bu-\Psi_\bu^*\beta_\bu^2=D\gamma_\bu.$$ The {\em equivalence of $m$-shifted lagrangians} is the equivalence relation generated by these strict equivalences.  
\end{definition}

It follows from Lemma \ref{lem:MEtcom} that if we  have an $m$-shifted lagrangian $\mathcal{L}^2_\bu$, a Morita morphism $\Psi_\bu:\huaL^1_\bu\to\huaL_\bu^2$ and a $\gamma_\bu$ as in the previous definition, then $(\huaL^1_\bu, \Phi^2_\bu\circ\Psi_\bu,\Psi_\bu^*\beta_\bu^2+D\gamma_\bu)$ gives us an $m$-shifted Lagrangian. In order to illustrate the Def. \ref{def:eqSlag}, let us show how the lagrangian tubular neighborhood provides a strict equivalence between $0$-shifted symplectic derived manifolds (seen as derived Lie $0$-groupoids).

\begin{example}
Let $(M,\omega)$ be an ordinary symplectic manifold (viewed as a $0$-shifted symplectic derived manifold) and $j\colon L\to M$ a lagrangian submanifold (viewed as a $0$-shifted lagrangian morphism).  Let $\omega_\can$ be the canonical symplectic form on $T^*L$ and let $0_L\colon L\to T^*L$ be the zero section.  By Weinstein's lagrangian tubular neighbourhood theorem there exist open neighbourhoods $U$ of $L$ in $M$ and $V$ of the zero section in $T^*L$ and a diffeomorphism $\Phi\colon V\to U$ satisfying
\[\Phi^*\omega=\omega_\can\qquad\text{and}\qquad\Phi\circ 0_L=j.\]
Lemma~\ref{lemma;path}\eqref{item;submanifold} implies that the zero section provides a weak equivalence of derived manifolds
    $$0_L\colon  L\to \huaZ(T^*L,\pi^*_{T^*L}T^*L,\epsilon_{T^*L})$$
and since $V\subseteq T^*L$ is a neighborhood of the zero section we have that 
$$0_L\colon  L\to \huaZ(V,(\pi^*_{T^*L}T^*L)_{|V},\epsilon_{T^*L})$$
remains a weak equivalence. Denote this derived manifold by $\huaL= \huaZ(V,(\pi^*_{T^*L}T^*L)_{|V},\epsilon_{T^*L})$.

Let the composition of $\Phi\colon V\to U$ with the natural inclusion $U\hookrightarrow M$ again be denoted by $\Phi\colon V\to M$. This morphism can also be viewed as a morphism of derived manifolds 
$\widetilde{\Phi}\colon \huaL\to M$ 
by defining $\widetilde{\Phi}$ on the base as $\Phi\colon V\to M$ and sending the derived part of $\huaL$ to zero. 

If we define $\beta\in\Omega^{2,-1}(\huaL)$ as the linear form corresponding to $(\omega_\can)_{|V}$, see \S \ref{subsection;linear}, one has 
$$D\beta=(d+\Lie_{\epsilon_{T^*L}})\beta=(\omega_\can)_{|V}=\widetilde{\Phi}^*\omega,$$
thus obtaining an isotropic structure. We next show that
it is $0$-shifted lagrangian. Observe that $\pi_0(\huaL)=\{p\in V\ | \ p=0_L(l)\text{ for } l\in L\}$ and the tangent complex can be identified as
$$\bT_{0_L(l)}\huaL\equiv ( T_{0_L(l)}V\xrightarrow{-d\epsilon_{T^*L}} T^*L) \equiv (T_lL\oplus T^*_lL\xrightarrow{(0,-\id)}T^*_lL).$$
Hence the map $\lambda^{\beta,\widetilde{\Phi},\omega}\colon \mathbb{N}^{\widetilde{\Phi},\bu}_{0_L(l)}\to \bT^{*,\bu}_{0_{L}(l)}\huaL[-1]$ reads as the following commutative diagram
\begin{equation}\label{diag:lam}
\begin{tikzcd}[ampersand replacement=\&,column sep=large,row sep=large]
T_lL\oplus T^*_lL
\arrow[r,"{\bigl(\begin{smallmatrix}0&-\id\\T_lj&A\end{smallmatrix}\bigr)}"]
\arrow[d,"{(\begin{smallmatrix}\id&0\end{smallmatrix})}"']
\&T_l^*L\oplus T_{\Phi(l)}M
\arrow[d,"{\Bigl(\begin{smallmatrix}\id&T_l^*j\circ\omega^\sharp_l\\0&A^t\circ\omega^\sharp_l\end{smallmatrix}\Bigr)}"]
\\
T_lL
\arrow[r,"{\bigl(\begin{smallmatrix}0\\\id\end{smallmatrix}\bigr)}"]
\&T^*_lL\oplus T_lL,
\end{tikzcd}
\end{equation}
where $A$ denotes the restriction of $T_l\Phi$ to the vertical directions. One has $A^t\circ\omega^\sharp_l\circ A=0$ and $T^*j\circ \omega^\sharp_l\circ A=\id$.  A direct computation shows that the vertical arrows in \eqref{diag:lam} indeed form a quasisomorphism.

Lastly, we show that the two different $0$-shifted lagrangian structures on $(M,\omega)$ given by $(L,j,0)$ and $(\huaL,\widetilde{\Phi},\beta)$ are strictly equivalent. For that recall that the zero section is a weak equivalence of derived manifolds and makes the following diagram commutative.
\[\begin{tikzcd}                                                 & \huaL \arrow[rd, "\widetilde{\Phi}" description] & \\
L \arrow[rr, "j" description] \arrow[ru, "0_L" description] &   & M
\end{tikzcd}\]

Further, since $0_L^*\beta=0$ and $0_L$ is a weak equivalence of derived manifolds, we obtain that the lagrangian tubular neighborhood theorem provides a factorization of the morphism $j\colon L\to M$ into a weak equivalence followed by a fibration $\widetilde{\Phi}$, in the sense of \cite{blx}, in the category of shifted lagrangians. 
\end{example}

\subsection{Towards Weinstein's category for shifted symplectic structures}\label{subsection;symplectic-category}

Inspired by ideas from geometric quantization, it was proposed in \cite{wei:sympcat} that symplectic geometry should be encoded in a ``category'' whose objects are symplectic manifolds and morphisms are lagrangian correspondences.  However, the composition of morphisms was not always defined. More recently,  \cite{cal:lag, ptvv} where able to define a composition law for
 shifted symplectic derived Artin stacks using shifted lagrangian correspondences. Higher-categorical refinements of these are later developed by \cite{amorim-ben-bassat,cal:AKSZ}. Here we define a partial  composition law in our smooth setting of shifted symplectic structures on derived Lie $n$-groupoids.

Let $(\mathcal{G}^1_\bu,\omega^1_\bu)$ and $(\mathcal{G}^2_\bu,\omega^2_\bu)$ be two $m$-shifted symplectic derived Lie $n$-groupoids. An \emph{$m$-shifted lagrangian correspondence} between $(\mathcal{G}^1_\bu,\omega^1_\bu)$ and $(\mathcal{G}^2_\bu,\omega^2_\bu)$ is given by an $m$-shifted lagrangian $$(\mathcal{L}_\bu,\Phi^1_\bu\times\Phi^2_\bu,\beta_\bu)\quad\text{on the product}\quad(\mathcal{G}^1_\bu\times \mathcal{G}^2_\bu,-\pr_1^{\ast}\omega^1_\bu+\pr_2^{\ast}\omega^2_\bu).$$  We will denote an $m$-shifted lagrangian  correspondence by $$(\mathcal{L}_\bu,\Phi^1_\bu\times\Phi^2_\bu,\beta_\bu)\colon(\ca{G}^1_\bu,\omega^1_\bu)\dashrightarrow(\ca{G}^2_\bu,\omega^2_\bu),$$ or simply by $\ca{L}_\bu\colon\ca{G}^1_\bu\dashrightarrow\ca{G}^2_\bu$
if the morphisms and forms are clear from the context.

We shall see in \S\,\ref{subsection;epilogue} that derived symplectic reduction is a lagrangian correspondence.  Here are some further examples.

\begin{example}[Symplectic Morita equivalences gives rise to lagrangian correspondences]\label{ep:symp-ME-lag-corr}
Let 
\[
\begin{tikzcd}
(\mathcal{H}^1_\bu,\omega^1_\bu)&\ar[l,"\Phi_\bu"'](\mathcal{G}_\bu, \eta_\bu)\ar[r,"\Psi_\bu"]&(\mathcal{H}^2_\bu,\omega^2_\bu)
\end{tikzcd}
\]
be a symplectic Morita equivalence of $m$-shifted symplectic derived higher Lie groupoids, where $\eta_\bu$ is the $(m-1)$-shifted $2$-form on $\mathcal{G}_{\bu}$ such that $-\Phi^{\ast}\omega^1_\bu+\Psi^{\ast}\omega^2_\bu=D\eta_\bu$.  We assert that 
\[
\begin{tikzcd}
(\mathcal{G}_\bu,\Phi_\bu\times\Psi_\bu,\eta_\bu)\colon(\ca{H}^1_\bu,\omega_\bu^1)\ar[r,dashed]&(\ca{H}^2_\bu,\omega_\bu^2)
\end{tikzcd}
\]
is an $m$-shifted lagrangian correspondence.  To prove this we must show that the cochain map 
\[
\lambda=\lambda_{x_0}^{(\eta_\bu,\Phi\times \Psi,-\omega_\bu^1\times \omega^2_\bu)}\colon\mathbb{T}^\bu_{x_0}\mathcal{G}\oplus \mathbb{T}^\bu_{\Phi_0(x_0)}\mathcal{H}^1[-1]\oplus \mathbb{T}^\bu_{\Psi_0(x_0)}\mathcal{H}^2[-1]\longto \mathbb{T}_{x_0}^{\ast,\bu}\mathcal{G}[m-1]
\]
defined by 
\[
\lambda(u,v_1,v_2)=\lambda_{x_0}^{\eta_\bu,\sharp}(u)+(\bT_{x_0}^\bu\Phi)^*\lambda_{\Phi_0(x_0)}^{\omega^1_\bu,\sharp}(v_1)-(\bT^\bu_{x_0}\Psi)^*\lambda_{\Psi_0(x_0)}^{\omega_\bu^2,\sharp}(v_2)
\]
is a quasi-isomorphism for all classical points $x_0\in\pi_0(\huaG_0)$. 
Define the cochain maps 
\begin{align*}
\Gamma_{x_0}&\colon \mathbb{T}^\bu_{x_0}\mathcal{G}[-1]\longto \mathbb{T}^\bu_{x_0}\mathcal{G}\oplus \mathbb{T}^\bu_{x_0}\mathcal{G}[-1]\oplus \mathbb{T}^\bu_{x_0}\mathcal{G}[-1],\\
\alpha_{x_0}&\colon \mathbb{T}^\bu_{ x_0}\mathcal{G}\oplus( \mathbb{T}^\bu_{x_0}\mathcal{G}\oplus \mathbb{T}^\bu_{x_0}\mathcal{G})[-1]\longto\mathbb{T}^\bu_{x_0}\mathcal{G}\oplus( \mathbb{T}^\bu_{\Phi_0(x_0)}\mathcal{H}^1\oplus\mathbb{T}^\bu_{\Psi_0(x_0)}\mathcal{H}^2)[-1]
\end{align*}
by
\begin{align*}
\Gamma_{x_0}(v)&=\Bigl(0,\frac{v}{2},-\frac{v}{2}\Bigr)\\
\alpha_{x_0}&=\id\oplus (\mathbb{T}^\bu_{x_0}\Phi\oplus \mathbb{T}^\bu_{x_0}\Psi)[-1]
\end{align*}
These are quasi-isomorphisms by Rem.~\ref{rmk:homG} and Lemma~\ref{lem:MEtcom}, respectively.  Further define  
\begin{gather*}
       \tilde{\lambda}:= \lambda\circ\alpha_{x_0}\circ\Gamma_{x_0}=\lambda_{x_0}^{\frac{1}{2}\Phi^{\ast}\omega_\bu^1+\frac{1}{2}\Psi^{\ast}\omega_\bu^2,\sharp}[-1]
\end{gather*}
Since $\alpha\circ\Gamma$ is a quasi-isomorphism, it is sufficient to show that $\tilde{\lambda}$ is a quasi-isomorphism.  We note that 
    \begin{gather*}
        \tilde{\lambda}+\lambda_{x_0}^{\frac{1}{2}D\eta_\bu,\sharp}[-1]=\lambda_{x_0}^{\frac{1}{2}\Phi^{\ast}\omega_\bu^1+\frac{1}{2}\Psi^{\ast}\omega_\bu^2,\sharp}[-1]+\lambda_{x_0}^{-\frac{1}{2}\Phi^{\ast}\omega_\bu^1+\frac{1}{2}\Psi^{\ast}\omega_\bu^2,\sharp}[-1]=\lambda_{x_0}^{\Psi^{\ast}\omega_\bu^2,\sharp}[-1].
    \end{gather*}
    Because $\lambda_{x_0}^{\frac{1}{2}D\eta_\bu,\sharp}$ is nullhomotopic by Prop.~\ref{proposition;IM} and $\lambda_{x_0}^{\Psi^{\ast}\omega_\bu^2,\sharp}$ is a quasi-isomorphism, it follows that $\tilde{\lambda}$ is a quasi-isomorphism as well.
\end{example}

\begin{example}[Diagonal]\label{ep:diag} 
    Let $(\huaG_\bu, \omega_\bu)$ be an $m$-shifted symplectic derived Lie $n$-groupoid. The identity map $\id:(\huaG_\bu, \omega_\bu)\to (\huaG_\bu, \omega_\bu)$ gives rise to a  symplectic Morita equivalence. Then Ex.~\ref{ep:symp-ME-lag-corr} tells us that the diagonal map $\Delta:(\huaG_\bu, 0) \to (\huaG_\bu \times \huaG_\bu, -pr_1^*\omega+pr_2^*\omega ) $ is an $m$-shifted lagrangian. 
\end{example}

A classical example of a lagrangian correspondence is as follows: let $f\colon M\to N$ be a smooth map and $\Gamma_f\subset M\times N$ denote its graph, then
\[
f^*T^*N \cong \overline{\ann}(T\Gamma_f):=\{\,(\alpha, \beta)\mid\alpha(v)=\beta(Tf(v)) \text{ for all $v\in TM$}\,\}
\]
is a lagrangian correspondence $T^*M\dashrightarrow T^*N$. Notice that the usual annulator $\ann(T\Gamma_f)$ is a lagrangian in $T^*M \times T^*N$ and $\overline{\ann}(T\Gamma_f)$ is a lagrangian in $\overline{T^*M}\times T^*N$. A multiplicative analogue of this fact is as follows.

\begin{example}
Let $G\rightrightarrows M$ and $H\rightrightarrows N$ be Lie groupoids and $\Phi_\bu\colon G_\bu\to H_\bu$ a groupoid morphism then
$$\ann(T\Gamma_\Phi)\rightrightarrows \ann(\Gamma_{\text{Lie}(\Phi)})\subset T^*(G\times H)\rightrightarrows (A_G\times A_H)^*$$
is a lagrangian VB-subgroupoid with base given by the Lie groupoid $\Gamma_{\Phi_1}\rightrightarrows\Gamma_{\Phi_0}$. Using Prop.~\ref{ex:VBgrpd}, we obtain that 
    \begin{equation} \label{eq:gr-f-ann}
        \huaA nn(T\Gamma \Phi):= \Big(\huaZ\big(\overline{\ann}(T\Gamma_\Phi)\rightrightarrows \overline{\ann}(\Gamma_{\text{Lie}(\Phi)}), 0_{\Gamma_{\Phi_1}}\big), \pr_1\times \pr_2,0\Big)\colon \huaC ot_\bu(G)\dashrightarrow \huaC ot_\bu(H)
    \end{equation}
is a $0$-shifted lagrangian correspondence, where $\huaC ot_\bu(G)$ and $\huaC ot_\bu(H)$ denotes the $0$-shifted symplectic derived Lie groupoids defined by \eqref{eq:def0cot}. If $\Phi$ is an \'etale hypercover, that is $\Phi_0\colon  G_0\to H_0$ is \'etale (this implies that $\Phi_i\colon  G_i \to H_i$ is \'etale), then $\huaA nn(T\Gamma \Phi)$ provides a span of hypercovers between $\huaC ot_\bu(G)$ and $\huaC ot_\bu(H)$ by natural projections. If $pr_{pt}\colon  (V\times V \Rightarrow V)\to (pt \Rightarrow pt)$ is the projection from the pair groupoid to a point, then $\huaA nn( T\Gamma pr) = (0_{V \times V} \Rightarrow 0_V)$ provides a span of weak equivalence whose right leg is a hypercover. Since a hypercover $\Phi\colon  G\to H$ can be always decomposed to a composition of \'etale hypercovers and $\id \times \pr_\pt$, $\huaA nn(T\Gamma \Phi)$ is a composition of spans of hypercovers and a weak equivalences, thus a Morita equivalence. Since a Morita equivalence of Lie groupoids can be written as a span of hypercovers, this shows that if $G$ and $H$ Morita equivalent,  then $\huaC ot_\bu(G)$ and $\huaC ot_\bu(H)$ are also Morita equivalent. 
\end{example}

The main result of this section is the following lagrangian composition theorem.

\begin{theorem}\label{thm:complag}
Consider the $m$-shifted lagrangian correspondences
\[\begin{tikzcd}
	{(\mathcal{G}^1_\bu,\omega^1_\bu)} &&& {(\mathcal{G}^2_\bu,\omega^2_\bu)} &&& {(\mathcal{G}^3_\bu,\omega^3_\bu)}
	\arrow["{(\mathcal{L}_\bu,\Phi^1_\bu\times\Phi^2_\bu,\beta_\bu)}", dashed, from=1-1, to=1-4]
	\arrow["{(\mathcal{L}'_\bu,\Psi^2_\bu\times\Psi^3_\bu,\beta'_\bu)}", dashed, from=1-4, to=1-7]
\end{tikzcd}\]
between $m$-shifted symplectic derived Lie $n$-groupoids. If $\Phi^2_\bu$ and $\Psi_\bu^2$ are levelwise transversal and the derived simplicial manifold $\huaL_\bu\times_{\huaG^2}\huaL'_\bu$ is a derived Lie $n$-groupoid,  then 
\begin{equation}\label{eq:comp-lag}
    \big(\huaL_\bu\times_{\huaG^2_\bu}\huaL'_\bu, (\Phi^1_\bu\circ\pr_1)\times(\Psi^3_\bu\circ\pr_2), \pr^*_1\beta_\bu+\pr_2^*\beta'_\bu\big)\colon \huaG^1_\bu\dashrightarrow\huaG^3_\bu
\end{equation}
is an $m$-shifted lagrangian correspondence.
\end{theorem}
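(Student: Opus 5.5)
The plan has two parts: first check that the proposed form is an isotropic structure, then prove nondegeneracy at each classical point by a five-lemma argument on mapping cones. Write $\huaC_\bu=\huaL_\bu\times_{\huaG^2_\bu}\huaL'_\bu$, $\Theta_\bu=(\Phi^1_\bu\circ\pr_1)\times(\Psi^3_\bu\circ\pr_2)$ and $\gamma_\bu=\pr_1^*\beta_\bu+\pr_2^*\beta'_\bu$.

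\emph{Isotropy.} Pullback commutes with $D$, because $D$ is built from $\delta$, $\Lie_Q$ and $d$, and these are natural for simplicial morphisms of derived manifolds. The isotropy equations for $\huaL_\bu$ and $\huaL'_\bu$ then give
\[
D\gamma_\bu=-(\Phi^1\pr_1)^*\omega^1_\bu+(\Phi^2\pr_1)^*\omega^2_\bu-(\Psi^2\pr_2)^*\omega^2_\bu+(\Psi^3\pr_2)^*\omega^3_\bu .
\]
The middle terms cancel since $\Phi^2_\bu\circ\pr_1=\Psi^2_\bu\circ\pr_2$, so $D\gamma_\bu=\Theta^*_\bu(-\pr_1^*\omega^1_\bu+\pr_2^*\omega^3_\bu)$. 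Hence $(\Theta_\bu,\gamma_\bu)$ is isotropic, and Prop.~\ref{proposition;cochain} makes $\lambda^{\gamma_\bu,\Theta,\omega}_{x_0}$ a cochain map.

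\emph{Tangent complex of the fibre product.} Let $x_0=(a,b)\in\pi_0(\huaC_0)$. By levelwise transversality and Prop.~\ref{proposition;derived-fibred-product}, at each simplicial level $k$ we have $T^\bu_{x_0}\huaC_k=T^\bu_a\huaL_k\times_{T^\bu\huaG^2_k}T^\bu_b\huaL'_k$. Transversality (\ref{equation;transverse}) says the difference map onto $T^\bu\huaG^2_k$ is surjective in every degree. This gives degreewise short exact sequences of simplicial cochain complexes
\[
0\to T_{x_0}\huaC\to T_a\huaL\oplus T_b\huaL'\to T_z\huaG^2\to0 .
\]
The normalization functor is exact and the total complexes are finite in each degree (finite $n$, finite amplitude), so we obtain a short exact sequence $0\to\bT_{x_0}\huaC\to\bT_a\huaL\oplus\bT_b\huaL'\to\bT_z\huaG^2\to0$. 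Thus $\bT_{x_0}\huaC$ is quasi-isomorphic to the homotopy fibre product. From this I would write the normal complex $\N^{\Theta}_{x_0}$ as fitting, up to quasi-isomorphism, into an exact triangle whose other two terms are built from $\N^{\Phi^1\times\Phi^2}_a$ and $\N^{\Psi^2\times\Psi^3}_b$. One way to do this is to compare the cone of $\bT\huaC\to\bT\huaG^1\oplus\bT\huaG^3$ with the cone of $\bT\huaL\oplus\bT\huaL'\to\bT\huaG^1\oplus\bT\huaG^2\oplus\bT\huaG^2\oplus\bT\huaG^3$, modulo the diagonal copy of $\bT\huaG^2$. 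The cotangent side is dual: applying $\Hom(-,\R[m-1])$ turns the short exact sequence above into an exact sequence involving $\bT^*_{x_0}\huaC[m-1]$, $\bT^*\huaL[m-1]\oplus\bT^*\huaL'[m-1]$ and $\bT^*\huaG^2[m-1]$.

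\emph{Nondegeneracy and the main obstacle.} The IM-pairing is natural under pullback: $\lambda^{\pr_1^*\beta,\sharp}=(\bT\pr_1)^*\lambda^{\beta,\sharp}\bT\pr_1$, since the Eilenberg--Zilber map and the evaluation (\ref{eq:f-omega}) are functorial. The lagrangian maps $\lambda^{\beta,\Phi,\omega}_a$ and $\lambda^{\beta',\Psi,\omega}_b$ and the shifted map $\lambda^{\omega^2,\sharp}_z[-1]$ are all quasi-isomorphisms. Using them I would assemble a morphism from the triangle on the normal-complex side to the dual triangle on the cotangent side; the five lemma then shows $\lambda^{\gamma,\Theta,\omega}_{x_0}$ is a quasi-isomorphism. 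The main obstacle is showing that the squares of this morphism of triangles commute, presumably only up to homotopy. On the $\huaG^2$-component, the contributions of $\omega^2$ through $\Phi^2$ and through $\Psi^2$ must cancel against the $\gamma$-terms. I would handle this by using Prop.~\ref{proposition;IM} to write the discrepancy as $\lambda^{D(\cdots)}$, which is null-homotopic by Cor.~\ref{gaugeinvar}, as was done with $\Gamma_{x_0}$ in Ex.~\ref{ep:symp-ME-lag-corr}. This part requires careful sign bookkeeping with the mapping-cone differential (\ref{eq:normal-cx}) and the cotangent differential (\ref{eq:co-tan}).
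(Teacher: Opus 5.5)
Your plan is correct and is essentially the paper's proof. The paper's auxiliary complex $\mathbb{A}$ is exactly the mapping cone of the diagonal map $\bT_z\huaG^2[-1]\to\N^{\Phi^1\times\Phi^2}_a\oplus\N^{\Psi^2\times\Psi^3}_b$ (your ``modulo the diagonal copy''). It is compared with the normal complex of the composite through the maps $J$ of Lemma~\ref{lem:transversality} and $\Gamma$ of Rem.~\ref{rmk:homG}, and the ladder map $\Lambda$, assembled from the two lagrangian maps and $\lambda^{\omega^2_\bu,\sharp}$, is a quasi-isomorphism by the five lemma.

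The one point to adjust is your expectation that the squares commute only up to homotopy: they commute strictly. The diagonal contributes $\bigl(-(\bT\Phi^2)^*\lambda^{\omega^2_\bu,\sharp}_z(k),\,(\bT\Psi^2)^*\lambda^{\omega^2_\bu,\sharp}_z(k)\bigr)$. This is the image of $-\lambda^{\omega^2_\bu,\sharp}_z(k)$ under the dual of $(u,v)\mapsto\bT\Phi^2u-\bT\Psi^2v$, so it is killed by restriction to $\bT_{x_0}\huaC$, and no null-homotopy via Prop.~\ref{proposition;IM} is needed.
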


In \cite{Hor71} it was observed that, under transversality, the set-theoretic composition of canonical relations is again a canonical relation and represents the classical limit of the composition of Fourier integral operators. The work \cite{wei:sympcat} is a geometric abstraction of this principle and Thm.~\ref{thm:complag} fit in that framework.  There, as in the later Floer-theoretic treatment \cite{wehrheim-woodward:10}, composition is defined by first taking the fibre product over the middle symplectic object and then projecting to the outer factors.  Under suitable transversality and embedding assumptions \cite[Lemma~2.0.5]{wehrheim-woodward:10}, this projection is an isomorphism onto its image, so one can identify the composition with the fibre product itself.  Under the weaker hypotheses of Thm.~\ref{thm:complag} these assumptions typically fail, even in linear examples, which is why we deliberately do not build the projection into the definition of composition. 

Specializing to the case where $\huaG_\bu^1=\huaG_\bu^3=*$ is an $m$-shifted symplectic point (Rem.\ \ref{ex:lagpt}), we obtain the following lagrangian intersection theorem.

\begin{theorem}\label{theorem;lagintersection}
    Let $(\huaL_\bu, \Phi_\bu, \beta_\bu)$ and $(\huaL'_\bu, \Phi'_\bu, \beta'_\bu)$ two $m$-shifted lagrangians of the $m$-shifted sympelctic derived Lie $n$-groupoid $(\huaG_\bu, \omega_\bu)$. If $\Phi$ and $\Phi'$ are levelwise transversal and the fibre product $\huaL_\bu\times_{\huaG_\bu}\huaL'_\bu$ is a derived Lie $n$-groupoid, then $\pr_1^*\beta_\bu-\pr_2^*\beta'_\bu$ is an $(m-1)$-shifted symplectic structure on it.
\end{theorem}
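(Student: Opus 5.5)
We will deduce Theorem~\ref{theorem;lagintersection} from Thm.~\ref{thm:complag} by specializing $\huaG^1_\bu=\huaG^3_\bu=*$, equipped with the zero $m$-shifted symplectic form. The first step is to translate the data. By Ex.~\ref{ex:lagpt} style reasoning, since $*\times\huaG_\bu\cong\huaG_\bu$ and the form $-\pr_1^*0+\pr_2^*\omega_\bu$ is just $\omega_\bu$, the $m$-shifted lagrangian $(\huaL_\bu,\Phi_\bu,\beta_\bu)$ on $\huaG_\bu$ is the same as an $m$-shifted lagrangian correspondence $(\huaL_\bu,t\times\Phi_\bu,\beta_\bu)\colon *\dashrightarrow\huaG_\bu$. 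For the second leg we need a correspondence $\huaG_\bu\dashrightarrow *$, i.e.\ a lagrangian on $(\huaG_\bu\times *,-\pr_1^*\omega_\bu)$, so we must pass from $(\huaG_\bu,\omega_\bu)$ to $(\huaG_\bu,-\omega_\bu)$. We check directly from the definitions that $(\huaL'_\bu,\Phi'_\bu,-\beta'_\bu)$ is lagrangian on $(\huaG_\bu,-\omega_\bu)$: isotropy $\Phi'^*(-\omega_\bu)=D(-\beta'_\bu)$ is immediate, and the map $\lambda^{-\beta',\Phi',-\omega}_{x_0}$ of \eqref{eq:lambda-Phi} equals $-\lambda^{\beta',\Phi',\omega}_{x_0}$ by linearity of the IM-pairing in the form, hence is still a quasi-isomorphism.

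Next we apply Thm.~\ref{thm:complag} with $\Phi^2_\bu=\Phi_\bu$ and $\Psi^2_\bu=\Phi'_\bu$; its hypotheses (levelwise transversality and that $\huaL_\bu\times_{\huaG_\bu}\huaL'_\bu$ is a derived Lie $n$-groupoid) are exactly those assumed here. The conclusion is that $\bigl(\huaL_\bu\times_{\huaG_\bu}\huaL'_\bu,\ t\times t,\ \pr_1^*\beta_\bu-\pr_2^*\beta'_\bu\bigr)$ is an $m$-shifted lagrangian correspondence $*\dashrightarrow *$, i.e.\ an $m$-shifted lagrangian structure on the terminal morphism to the point $*\times *=*$ with form $0$.

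Finally, by Ex.~\ref{ex:lagpt}, an $m$-shifted lagrangian structure on $t\colon\huaK_\bu\to *$ is the same as an $(m-1)$-shifted symplectic structure on $\huaK_\bu$: the normal complex of $t$ is $\bT_{x_0}\huaK$ (the point has zero tangent complex), isotropy reads $D\gamma_\bu=0$, and the map \eqref{eq:lambda-Phi} reduces to $\lambda^{\gamma_\bu,\sharp}_{x_0}\colon\bT_{x_0}\huaK\to\bT^*_{x_0}\huaK[m-1]$. Applying this with $\gamma_\bu=\pr_1^*\beta_\bu-\pr_2^*\beta'_\bu$ gives the claim. The only point needing care is the sign bookkeeping in the identification of lagrangians on $\huaG_\bu$ with correspondences to and from the point (the $\overline{\huaG}$ twist); everything substantive is contained in Thm.~\ref{thm:complag}.
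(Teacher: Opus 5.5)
Your proposal is correct and follows the paper's own argument: the paper obtains this theorem by specializing Thm.~\ref{thm:complag} to $\huaG^1_\bu=\huaG^3_\bu=*$ and then invoking Ex.~\ref{ex:lagpt}. Your explicit sign bookkeeping is a step the paper leaves implicit: you regard $(\huaL'_\bu,\Phi'_\bu,-\beta'_\bu)$ as a lagrangian on $(\huaG_\bu,-\omega_\bu)$ so that it defines a correspondence $\huaG_\bu\dashrightarrow *$, and this is exactly where the minus sign in $\pr_1^*\beta_\bu-\pr_2^*\beta'_\bu$ comes from.
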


\begin{remark}
It is proved in \cite{Dorsch:24} that simplicial manifolds always satisfy Kan conditions locally, an analogous property holds for derived simplicial manifolds (cf. \cite{Dorsch:thes}). This suggests that the assumption that the derived simplicial manifold $\huaL_\bu\times_{\huaG^2_\bu}\huaL'_\bu$ is a derived Lie $n$-groupoid is not unrealistically strong.  Moreover, in the following situations 
\begin{enumerate}
    \item one of the maps $\Phi^2_\bu\colon \huaL_\bu \to \huaG_\bu^2$ and $\Psi^2_\bu\colon \huaL_\bu'\to \huaG_\bu^2$ is a fibration of derived Lie $n$-groupoids;
    \item $n=1$ and all derived manifolds involved are quasi-smooth;
\end{enumerate}
this added condition is automatically satisfied. If (1) holds, by level-wise transversality,  $\huaL_0\times_{\huaG_0^2} \huaL'_0$ is a derived manifold. Then a similar argument as in  \cite[Prop.~7.10]{Rogers-Zhu:2016} shows that $\huaL_\bu\times_{\huaG^2_\bu}\huaL'_\bu$ is a derived Lie $n$-groupoid. 

In situation (2), notice that by Prop.~\ref{ex:VBgrpd}, quasi-smooth Lie groupoids are simply VB-groupoids with multiplicative sections. Then we apply \cite[Prop.~A.1.4]{BuCaHo16} to both levels of the VB-groupoids. Notice that transversal pairs are special cases of good pairs therein. We obtain that the fibred product of level-wise transversal VB-groupoids is a VB-groupoid again. It is clear that the fibred product of sections is again a section, and the fibred product of multiplicative sections remains multiplicative. Thus the fibred product of quasi-smooth Lie groupoids stays a quasi-smooth Lie groupoid. 

Our examples all fall into one of the two situations. In fact, a situation we often meet in examples is that one of the maps  $\Phi_\bu^2\colon \huaL \to \huaG^2$ and $\Psi_\bu^2\colon \huaL'\to \huaG^2$ is a strong fibration. Then both conditions in the above Thm.~\ref{thm:complag} are implied. 
\end{remark}

\begin{remark}\label{rmk:ptvv-thm5}
In the case that the transversality condition is not satisfied in Thm.~\ref{thm:complag}, we may do a fibrant replacement: Using the same notation as in Thm.~\ref{thm:complag} suppose that $$\Phi^2_\bu \times \Psi^3_\bu\colon  \huaL'_\bu \to \huaG^2_\bu \times \huaG^3_\bu$$ 
splits into a Morita morphism $\sigma\colon \huaL'_\bu\to \tilde{\huaL}'_\bu$ followed by a strong fibration $p\colon \tilde{\huaL'}_\bu \to\huaG^2_\bu \times \huaG^3_\bu $ and further assume\footnote{We expect this to be automatic as remarked in \S~\ref{sec:symp-ME}.} that there exists an $(m-1)$-shifted 2-form $\tilde{\beta}'_\bu$ on $\tilde{\huaL'}_\bu$ and an $(m-2)$-shifted 2-form $\gamma'_\bu$ on $\huaL'_\bu$ such that $\beta'_\bu-\sigma^*\tilde{\beta}'_\bu = D\gamma_\bu$.  Notice that for a derived Lie $n$-groupoid $\huaG_\bu$, the projection to the terminal object $*$ is not just a fibration, but also a strong fibration because the terminal map in $\DMfd$ is a fibration. Thus $pr_1\circ p\colon \tilde{\huaL}'_\bu \to \huaG^2_\bu$ is also a strong fibration. Therefore, 
$\huaL_\bu \times_{\huaG^2} \tilde{\huaL}'_\bu $ is a homotopy pullback $ \huaL_\bu \times_{\huaG^2}^h \huaL'_\bu $. Clearly, the replacement $\tilde{\huaL}'_\bu $ satisfies all the conditions in Thm.~\ref{thm:complag} even though $\huaL'_\bu$ does not. Thus implied by Thm.~\ref{thm:complag}, when such fibrant replacement exists, the homotopy pullback $ \huaL_\bu \times_{\huaG^2}^h \huaL'_\bu $ (see \S\,\ref{subsection;pull}) is an $m$-shifted lagrangian correspondence. This confirms one of the main results in \cite[Thm.~0.5]{ptvv}. 
\end{remark}

\begin{remark}\label{rmk:compare-ptvv-comp}
    Despite Thm.~\ref{thm:complag} implies \cite[Thm.~0.5]{ptvv} as explained in Rem.\ \ref{rmk:ptvv-thm5}, the composition in Thm.~\ref{thm:complag} and the composition in \cite[Thm.~0.5]{ptvv} could be different. This is because,  in general, transversality is not strong enough to guarantee that usual pullbacks are the same as homotopy pullbacks. The following example illustrates this subtleness.  
    When $G$ is a discrete Lie group the zero is a $2$-shifted symplectic structure on the nerve  $N_\bu G$ as given in Rem. \ref{rmk:gp}, see \cite{saf:qua, cueca-zhu}. Thus the diagonal map $\Delta:(N_\bu G, 0) \to \overline{N_\bu G}\times N_\bu G $ provides a 2-shifted  lagrangian (see Ex.~\ref{ep:diag}). By dimensional reason, $\Delta$ is transversal to itself levelwise, and the (strict) fibre-product $$N_\bu G \times_{N_\bu G \times N_\bu G} N_\bu G = N_\bu G$$ is still a Lie 1-groupoid endowed with $0$ as a $1$-shifted symplectic structure. On the other hand,  a fibrant replacement of the diagonal is provided by the path object 
$(N_\bu G)^I$. A calculation shows that the homotopy pullback is 
$$N_\bu G\times^h _{N_\bu G\times N_\bu G} N_\bu G = N_\bu G \times _{N_\bu G\times N_\bu G} (N_\bu G)^I= (G\ltimes G)_\bu$$ 
where $(G\ltimes G)_\bu$ denotes the action groupoid of $G$ on itself by conjugation with the $0$ as a $1$-shifted symplectic structure. Therefore these two compositions are different even when all assumptions are satisfied. 
\end{remark}

In order to prove Thm.~\ref{thm:complag} we need the following auxiliary lemma that is of independent interest.

\begin{lemma}\label{lem:transversality}
Let $\Phi_\bu\colon \huaG_\bu\to\huaK_\bu$ and $\Psi_\bu\colon \huaH_\bu\to\huaK_\bu$ be morphisms of derived Lie $n$-groupoids.  If $\Phi_\bu$ and $\Psi_\bu$ are levelwise transversal and $\huaG_\bu\times_{\huaK_\bu}\huaH_\bu$ is a derived Lie $n$-groupoid, then its tangent complex is
\[
\bT^\bu_{(g_0,h_0)}(\huaG\times_{\huaK}\huaH)=\bT^\bu_{g_0}\huaG\times_{\bT^\bu_{k_0}\huaK}\bT^\bu_{h_0}\huaH
\]
which is quasi-isomorphic to
\begin{equation}\label{eq:fibre-pd-new}
\bT_{g_0}^\bu\huaG\oplus\bT_{h_0}^\bu\huaH\oplus\bT^\bu_{k_0}\huaK[-1]\quad\text{with differential}\quad 
\mathbb{D}=\begin{pmatrix}
        \mathbb{D}_\huaG &0&0\\
        0&\mathbb{D}_\huaH& 0\\
        \bT_{g_0}\Phi&-\bT_{h_0}\Psi&-\mathbb{D}_\huaK
    \end{pmatrix},
\end{equation}
for all $g_0\in\pi_0(\huaG_0)$, $h_0\in\pi_0(\huaH_0)$, and $k_0\in\pi_0(\huaK_0)$ such that $\Phi_0(g_0)=k_0=\Psi_0(h_0)$.  An explicit quasi-isomorphism is
    \begin{equation}\label{eq:jmap}
        J_{(g_0,h_0)}\colon \bT^\bu_{g_0}\huaG\times_{\bT^\bu_{k_0}\huaK}\bT^\bu_{h_0}\huaH\to  \bT_{g_0}^\bu\huaG\oplus\bT_{h_0}^\bu\huaH\oplus\bT^\bu_{k_0}\huaK[-1],\quad J_{(g_0,h_0)}(u,v)=(u,v,0).
    \end{equation}
\end{lemma}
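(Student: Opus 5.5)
The proof splits into two parts. First we identify the tangent complex of the fibre product with the fibre product of tangent complexes. Then we show that the inclusion $J$ into the mapping-cone-type complex \eqref{eq:fibre-pd-new} is a quasi-isomorphism.

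\emph{Step 1: identifying the tangent complex.} Write $\huaP_\bu=\huaG_\bu\times_{\huaK_\bu}\huaH_\bu$. By levelwise transversality, each $\huaP_l=\huaG_l\times_{\huaK_l}\huaH_l$ is the transverse fibred product of Prop.~\ref{proposition;derived-fibred-product}. That proposition gives, at the classical point $(g_0,h_0)$ (degenerated to level $l$), an identification of cochain complexes
\[
T^\bu_{(g_0,h_0)}\huaP_l=T^\bu_{g_0}\huaG_l\times_{T^\bu_{k_0}\huaK_l}T^\bu_{h_0}\huaH_l .
\]
It is compatible with $\ell_Q$. It is also compatible with the face and degeneracy maps, since these are induced componentwise and the identification is natural. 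So, as simplicial cochain complexes, $T_{(g_0,h_0)}\huaP$ is the levelwise fibre product. The normalization functor $N$ is a kernel construction, so it commutes with finite limits. Applying $N$ and then $\tot$ (which is additive and commutes with fibre products of double complexes) yields
\[
\bT^\bu_{(g_0,h_0)}\huaP=\bT^\bu_{g_0}\huaG\times_{\bT^\bu_{k_0}\huaK}\bT^\bu_{h_0}\huaH .
\]

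\emph{Step 2: $J$ is a chain map.} This is a direct check with the matrix \eqref{eq:fibre-pd-new}. For a pair $(u,v)$ in the fibre product, the third component of $\mathbb{D}(u,v,0)$ is $\bT\Phi(u)-\bT\Psi(v)$, which vanishes. Hence $\mathbb{D}J=J\mathbb{D}$.

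\emph{Step 3: $J$ is a quasi-isomorphism.} Consider the cochain map
\[
\phi=\bT\Phi-\bT\Psi\colon \bT_{g_0}\huaG\oplus\bT_{h_0}\huaH\to\bT_{k_0}\huaK .
\]
The complex \eqref{eq:fibre-pd-new} is (up to sign) its mapping cone shifted, $\mathrm{Cone}(\phi)[-1]$, and $\ker\phi$ is exactly the fibre product. If $\phi$ is surjective in every degree, we get a short exact sequence
\[
0\to\ker\phi\to A\to B\to 0,
\]
with $A=\bT_{g_0}\huaG\oplus\bT_{h_0}\huaH$ and $B=\bT_{k_0}\huaK$. The standard comparison then says that $\ker\phi\to\mathrm{Cone}(\phi)[-1]$, $x\mapsto(x,0)$, is a quasi-isomorphism. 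To see this, note its cokernel is $\mathrm{Cone}(A/\ker\phi\xrightarrow{\cong}B)[-1]$, which is acyclic, so the long exact sequence finishes the argument. Alternatively one can compare long exact sequences and apply the five lemma.

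\emph{Main obstacle: degreewise surjectivity of $\phi$.} Levelwise transversality gives, for each simplicial level $l$ and internal degree $k$, that
\[
T^k_{g_0}\huaG_l\oplus T^k_{h_0}\huaH_l\to T^k_{k_0}\huaK_l
\]
is surjective. What is needed, however, is surjectivity on the normalized pieces $N^{-l}$. For this we use the functorial splitting $A_l=N^{-l}(A)\oplus D^{-l}(A)$ from \cite[Lemma~8.3.7]{wei:hom}, already invoked in Lemma~\ref{lem:MEtcom}. The normalized complex is naturally isomorphic to the quotient $A_l/D_l$ by degenerate elements, and a surjection induces a surjection on these quotients. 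Transporting back through the natural isomorphism $N\cong A/D$, we get that $N^{-l}$ of a levelwise surjection of simplicial vector spaces is surjective. Taking direct sums over $j+k=i$ then makes $\phi$ surjective in every total degree, and Step 3 applies, concluding the proof.
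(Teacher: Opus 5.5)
Your proposal is correct and follows essentially the same route as the paper. You identify the tangent complex of the fibre product levelwise via Prop.~\ref{proposition;derived-fibred-product} and then use that $N$ and $\tot$ preserve fibre products. You then use the functorial normalized/degenerate splitting to pass levelwise surjectivity (from transversality) to degreewise surjectivity of $\bT\Phi-\bT\Psi$, so that its kernel is quasi-isomorphic to the shifted mapping cone. Your explicit chain-map check for $J$ and your cokernel argument only spell out steps the paper treats as standard.
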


\begin{proof}
We first prove that the tangent complex $\bT^\bu_{(g_0,h_0)}(\huaG\times_{\huaK}\huaH)$ is equal to the fiber product of cochain complexes $\bT^\bu_{g_0}\huaG\times_{\bT^\bu_{k_0}\huaK}\bT^\bu_{h_0}\huaH$. By Definition \ref{tangentcomplexLiengroupoid} and Prop.~\ref{proposition;derived-fibred-product} we have $$\bT^\bu_{(g_0,h_0)}(\huaG\times_{\huaK}\huaH)=\tot^\bu\Big(N\big(T_{(g_0,h_0)}(\mathcal{G}\times_{\mathcal{K}}\mathcal{H})\big)\Big)=\tot^\bu\big(N(T_{g_0}\mathcal{G}\times_{T_{k_0}\mathcal{K}}T_{h_0}\mathcal{H})\big).$$ So we need to show that the functor $\bT\colon \mathsf{sCoCh}\xrightarrow[]{N}\mathsf{BiCoCh}\xrightarrow[]{\tot}\mathsf{CoCh}$, defined in sec. \ref{EZ}, preserves fiber products. By the Dold-Kan adjunction,  the functor $N\colon \mathsf{sCoCh}\rightarrow \mathsf{BiCoCh}$ being a right adjoint preserves limits. It can be easily verified that the totalization functor $\tot\colon \mathsf{BiCoCh}\rightarrow \mathsf{CoCh}$ preserves products and equalizers, thus finite limits. Hence, the composition of these functors preserve finite limits and especially fiber products. 
    
We now prove that the tangent complex $\bT^\bu_{(g_0,h_0)}(\huaG\times_{\huaK}\huaH)=\bT^\bu_{g_0}\huaG\times_{\bT^\bu_{k_0}\huaK}\bT^\bu_{h_0}\huaH$ is quasi-isomorphic to the cochain complex given in the statement. By transversality (see \S\,\ref{subsection;transverse}), for any cochain degree $l\in \mathbb{N}$ and any $n\in \mathbb{N}$, the map 
\[
T_{g_0}^l\Phi_n-T_{h_0}^l\Psi_n\colon T_{g_0}^l\mathcal{G}_n\oplus T_{h_0}^l\mathcal{H}_n\rightarrow T_{k_0}^l\mathcal{K}_n
\]
is surjective. As in the proof of Lemma~\ref{lem:MEtcom}, we recall that the
$n$-th level of a simplicial object $A_\bu$ in an abelian category can be splitted functorially as $A_n=N_n(A)\oplus D_n(A)$, where $N(A)$ and $D(A)$ denote the normalized and degenerate complex associated to $A$, respectively. Hence, the maps $$N^n(T_{g_0}^l\Phi)-N^n(T_{h_0}^l\Psi)\colon N^n(T_{g_0}^l\mathcal{G}_\bu)\oplus N^n(T_{h_0}^l\mathcal{H}_\bu)\rightarrow N^n(T_{k_0}^l\mathcal{K}_\bu),\quad  l,n\in \mathbb{Z},$$ and consequently the map $\bT^\bu_{g_0}\Phi-\bT^\bu_{h_0}\Psi\colon \bT_{g_0}^\bu\huaG\oplus\bT_{h_0}^\bu\huaH\rightarrow \bT^\bu_{k_0}\mathcal{K}$ are surjective. As a result, $$\bT^\bu_{g_0}\huaG\times_{\bT^\bu_{k_0}\huaK}\bT^\bu_{h_0}\huaH=\text{ker}(\bT^\bu_{g_0}\Phi-\bT^\bu_{h_0}\Psi)$$ is quasi-isomorphic to the mapping cone of $\bT^\bu_{g_0}\Phi-\bT^\bu_{h_0}\Psi$, which is exactly \eqref{eq:fibre-pd-new}. 
\end{proof}

\begin{remark}\label{rmk:homG}
In the situation of Lemma~\ref{lem:transversality},  the fiber product $\huaG_\bu\times_{\huaK_\bu}\huaH_\bu$ is also given by
\[
(\huaG_\bu\times\huaH_\bu)\times_{(\huaK_\bu\times\huaK_\bu)}\huaK_\bu,
\]
where we used the diagonal.  Therefore, in terms of the tangent complexes we get the following quasi-isomorphisms
\begin{equation}\label{eq:another-qi}
\begin{tikzcd}[column sep=large]
\bT_{g_0}^\bu\huaG\oplus\bT_{h_0}^\bu\huaH\oplus\bT^\bu_{k_0}\huaK[-1]\ar[r,shift left,"\Gamma_{(g_0,h_0)}"]&
\bT_{g_0}^\bu\huaG\oplus\bT_{h_0}^\bu\huaH\oplus\bT^\bu_{k_0}\huaK\oplus \bT^\bu_{k_0}\huaK[-1]\oplus \bT^\bu_{k_0}\huaK[-1]\ar[l,shift left,"\Theta_{(g_0,h_0)}"],
\end{tikzcd}
\end{equation}
where
\begin{equation}\label{eq:Gmap}
    \Gamma_{(g_0,h_0)}(u,v,k)=\Bigl(u,v,\frac{\bT\Phi(u)+\bT\Psi(v)}{2}, \frac{k}{2}, -\frac{k}{2}\Bigr)
\end{equation}
and the $\Theta$ and the cochain homotopy $H$ are given by
\begin{equation*}
    \Theta_{(g_0,h_0)}(u,v,k_a, k_b, k_c)=(u,v, k_b-k_c)\quad\text{and}\quad H_{(g_0,h_0)}(u,v,k_a, k_b, k_c)=\Bigl(0,0, \frac{k_b+k_c}{2},0,0\Bigr)
\end{equation*}
and satiefy the equations
\begin{equation*}
    \Gamma_{(g_0,h_0)}\circ\Theta_{(g_0,h_0)}-\id=H_{(g_0,h_0)}\mathbb{D}+\mathbb{D}H_{(g_0,h_0)} \quad\text{and}\quad \Theta \circ \Gamma =\id.
\end{equation*}
\end{remark}

\begin{proof}[Proof of Thm.~\ref{thm:complag}]
It is clear that $\widehat{\beta}_\bu=\pr^*_1\beta_\bu+\pr_2^*\beta'_\bu$ provides an isotropic structure on the morphism 
\[
\widehat{\Phi}_\bu=(\Phi^1_\bu\circ\pr_1)\times(\Psi^3_\bu\circ\pr_2)\colon \huaL_\bu\times_{\huaG_\bu^2}\huaL'_\bu\to \huaG_\bu^1\times\huaG_\bu^3.
\]
It remains to prove that this structure is lagrangian, in other words that
\begin{equation}\label{eq:maplag1}
        \lambda_{\widehat{l}}^{\widehat{\beta}_\bu,\widehat{\Phi},-\omega^1_\bu+\omega^3_\bu}\colon \N^{\widehat{\Phi},\bu}_{\widehat{l}}\to \bT^{*,\bu}_{\widehat{l}}\widehat{\huaL}[m-1]
\end{equation}
is a quasi-isomorphism for all $\widehat{l}\in\pi_0(\widehat{\huaL}_0)$.  It follows from Lemma~\ref{lem:transversality} that
\begin{align*}
\N^{\widehat{\Phi},\bu}_{\widehat{l}}&=\bT^\bu_{l}\huaL\times_{\bT^\bu_{g^2_0}\huaG^2}\bT^\bu_{l'}\huaL'\oplus\bT^\bu_{\Phi^1(l)}\huaG^1[-1]\oplus\bT^\bu_{\Phi^3(l')}\huaG^3[-1],\\
\bT^{*,\bu}_{\widehat{l}}\widehat{\huaL}[m-1]&=\frac{\bT^{*,\bu}_{l}\huaL[m-1]\oplus\bT^{*,\bu}_{l'}\huaL'[m-1]}{\bT^{*,\bu}_{g^2_0}\huaG^2[m-1]}
\end{align*}
where $\widehat{l}=(l,l')$ with $l\in\pi_0(\huaL_0)$ and $ l'\in\pi_0(\huaL'_0)$ such that $\Phi^2_0(l)=\Psi^2_0(l')=g_0^2$.
Therefore, the map in \eqref{eq:maplag1} is explicitly given by
\begin{equation}\label{eq:lagmaplamb}
    \lambda_{\widehat{l}}^{\widehat{\beta}_\bu,\widehat{\Phi},-\omega^1_\bu+\omega^3_\bu}(u,v,k^1,k^3)=\left[\big(\lambda^{\beta_\bu,\sharp}_l(u)-\bT^*_l\Phi^1\circ\lambda^{-\omega_\bu^1,\sharp}_{\Phi^1(l)}(k^1),\ \lambda^{\beta'_\bu,\sharp}_{l'}(v)-\bT^*_l\Psi^3\circ\lambda^{\omega^3_\bu,\sharp}_{\Psi^3(l')}(k^3)\big) \right].
\end{equation}

If one defines the cochain complex $(\mathbb{A}_{\widehat{l}}^\bu,\mathbb{D_A})$ as 
\begin{equation*}
\mathbb{A}_{\widehat{l}}^\bu=\bT_{l}^\bu\huaL\oplus\bT_{l'}^\bu\huaL'\oplus\bT^\bu_{g^2}\huaG^2\oplus\bT^\bu_{g^2}\huaG^2[-1]\oplus\bT^\bu_{g^2}\huaG^2[-1]\oplus\bT^\bu_{\Phi^1(l)}\huaG^1[-1]\oplus\bT^\bu_{\Phi^3(l')}\huaG^3[-1] 
\end{equation*}
with differential given by the matrix
\begin{equation*}
        \mathbb{D_A}=\begin{pmatrix}
            \mathbb{D}_\huaL &0&0& 0&0&0&0\\
        0&\mathbb{D}_{\huaL'}& 0&0&0&0&0\\
        0&0&\mathbb{D}_{\huaG^2}& 0&0&0&0\\
        \bT_{l}\Phi^2& 0& -\id&-\mathbb{D}_{\huaG^2}& 0&0&0\\
        0&\bT_{l'}\Psi^2&-\id&0&-\mathbb{D}_{\huaG^2}&0&0\\
        \bT_{l}\Phi^1&0&0&0 &0&-\mathbb{D}_{\huaG^1}& 0 \\ 0&\bT_{l'}\Psi^3&0&0&0&0&-\mathbb{D}_{\huaG^3}
        \end{pmatrix},
    \end{equation*}
then, by using the quasi-isomorphisms \eqref{eq:jmap} and \eqref{eq:Gmap}, we obtain that 
$$\lambda_{\widehat{l}}^{\widehat{\beta}_\bu,\widehat{\Phi},-\omega^1_\bu+\omega^3_\bu}= J_{\widehat{l}}^*\circ\Lambda_{\widehat{l}}\circ(\Gamma_{\widehat{l}},\id,\id)\circ(J_{\widehat{l}},\id,\id).$$
Here, $\Lambda_{\widehat{l}}\colon \mathbb{A}^\bu_{\widehat{l}}\to \bT^{*,\bu}_{l}\huaL[m-1]\oplus\bT^{*,\bu}_{l'}\huaL'[m-1]\oplus\bT^{*,\bu}_{g_0^2}\huaG^2[m]$ is the cochain map given by
\begin{equation}\label{eq:Lmap}
\Lambda_{\widehat{l}}(\xi)=\big(\lambda_{l}^{\beta_\bu,\Phi^1\times\Phi^2,-\omega^1_\bu+\omega^2_\bu}(u,k^1,k^2_b),\ \lambda_{l'}^{\beta'_\bu,\Psi^2\times\Psi^3,-\omega^2_\bu+\omega^3_\bu}(v,k^2_c,k^3), \ \lambda^{\omega^2_\bu,\sharp}_{g^2_0}(k^2_a)\big),   
\end{equation}
where $\xi=(u,v,k^2_a,k^2_b,k^2_c,k^1,k^3)$.
Therefore, since $J_{\widehat{l}}$ and $\Gamma_{\widehat{l}}$ are quasi-isomorphisms, we get that \eqref{eq:lagmaplamb} is a quasi-isomorphism if and only if \eqref{eq:Lmap} is a quasi-isomorphism. But since $(\mathcal{L}_\bu,\Phi^1_\bu\times\Phi^2_\bu,\beta_\bu)$ and $(\mathcal{L}'_\bu,\Psi^2_\bu\times\Psi^3_\bu,\beta'_\bu)$ are shifted lagrangians and $(\huaG^2,\omega^2)$ is $m$-shifted symplectic by hypothesis, the maps  
$$\lambda_{l}^{\beta_\bu,\Phi^1\times\Phi^2,-\omega_\bu^1+\omega_\bu^2},\qquad \lambda_{l'}^{\beta'_\bu,\Psi^2\times\Psi^3,-\omega^2_\bu+\omega^3_\bu},\qquad\text{and}\qquad \lambda_{g_0^2}^{\omega^2_\bu,\sharp}$$
are quasi-ismorphisms. Hence, the five lemma implies $\Lambda_{\widehat{l}}$ is also a quasi-isomorphism and therefore so is the map $\lambda_{\widehat{l}}^{\widehat{\beta}_\bu,\widehat{\Phi},-\omega^1_\bu+\omega^3_\bu}$.
\end{proof}

\section{Symplectic reduction at critical values}\label{sec:red}

Treatments of singular symplectic reduction from the point of view of derived algebraic geometry can be found in \cite{Calaque:14,safronov;hamiltonian-reduction,saf:qua}.  In this section we offer a version for the smooth ($C^\infty$) category in terms of derived Lie groupoids that we hope will appeal to differential geometers.  There exist many different approaches to singular symplectic reduction, such as \cite{SjLe91}, \cite[Ch.~9]{ortega-ratiu;momentum-reduction}, and \cite{barbieri-watts-ziegler;frobenius}, but as far as we know derived symplectic reduction is the only approach that gives meaning to the reduced symplectic form being non-degenerate at the singular points.  Our treatment has much in common with that of \cite{sheshko;derived-symplectic}.  

Our result is stated in the context of hamiltonian actions of quasi-symplectic Lie groupoids in the sense of \cite{moxu}, or $1$-shifted symplectic Lie $1$-groupoids in the language of this paper.  This encompasses the familiar hamiltonian Lie group actions, but also hamiltonian actions of symplectic Lie groupoids in the sense of \cite{MiWe88}, Poisson-Lie group actions in the sense of \cite{lu:mom}, and quasi-hamiltonian actions in the sense of \cite{amm}.  The main result of this section, Thm.~\ref{theorem;reduction}, is an application of the lagrangian intersection theorem, Thm.~\ref{theorem;lagintersection}, and states that the symplectic quotient is a $0$-symplectic derived $1$-groupoid.  The theorem is valid under a linearization hypothesis.  It would be desirable to eliminate this hypothesis, possibly by replacing graded manifolds with cosimplicial manifolds, or groupoids by hypergroupoids, 
but so far we have not been able to make this work.

\subsection{The regular case}

Let $(\Gamma_\bu,\Omega_\bu)$ be a $1$-shifted symplectic Lie $1$-groupoid, referred to as a \emph{quasi-symplectic Lie groupoid} in \cite{moxu}, or as a \emph{twisted presymplectic groupoid} in \cite{BCWZ}.  This means that $\Gamma_\bu$ is an ordinary (underived) Lie groupoid and that $\Omega_1\in\Omega^2(\Gamma_1)$ and $\Omega_0\in\Omega^3(\Gamma_0)$ are forms satisfying 
\[d\Omega_0=0,\quad d\Omega_1=\delta\Omega_0,\quad\delta\Omega_1=0.\]
Let $A=\Alg(\Gamma_\bu)$ be the Lie algebroid of $\Gamma_\bu$ and let $\rho\colon A\to T\Gamma_0$ be its anchor.  For every $c\in\Gamma_0$ we can regard the fibre $A_c$ and the tangent space $T_c\Gamma_0$ as subspaces of $T_c\Gamma_1$.  The nondegeneracy of $\Omega_\bu$ (Def.~\ref{definition;shifted-symplectic}) amounts to the condition that the anchor $\rho$ induces a bijection $$A_c\cap\ker(\Omega_c)\to T_c\Gamma_0\cap\ker(\Omega_c)\qquad \text{for every } c\in \Gamma_0.$$  Following \cite{moxu} we define a \emph{hamiltonian $\Gamma_\bu$-space} to be a triple $(M,\mu,\omega)$, where $M$ is a manifold that carries a $\Gamma_\bu$-action with moment map $\mu\colon M\to\Gamma_0$ and $\omega$ is a $2$-form on $M$ that satisfies the following conditions:
\begin{enumerate}
    \item[\namedlabel{H1}{(H1)}] $d\omega=\mu^*\Omega_0$;
    \item[\namedlabel{H2}{(H2)}] the graph of the action $\{\,(g,m,gm)\mid t(g)=\mu(m)\,\}$ is isotropic in $\Gamma_1\times M\times M^-$;
    \item [\namedlabel{H3}{(H3)}] $\ker(\omega)\cap\ker(T\mu)=0$.
\end{enumerate}
Here $M^-$ denotes $M$ equipped with the $2$-form $-\omega$.

\begin{example}[{\cite[Prop.~3.8]{moxu}}]\label{example;orbit}
Let $c\in\Gamma_0$, let $\orbit=\Gamma_1\cdot c\subseteq \Gamma_0$ be the $\Gamma_\bu$-orbit of $c$, and let $i\colon\orbit\to\Gamma_0$ be the inclusion.  The restriction of $\Omega$ to the target fibre $t^{-1}(\orbit)$ is basic with respect to the action of the isotropy group $\Gamma_c=\Stab(\Gamma_\bu,c)$ and therefore descends to a $2$-form $\Omega_\orbit\in\Omega^2(\orbit)$.    The triple $(\orbit,i,\Omega_\orbit)$ is a hamiltonian $\Gamma_\bu$-space.
\end{example}

Let $(M,\mu,\omega)$ be a hamiltonian $\Gamma_\bu$-space and let $c\in\Gamma_0$.  Thm.~3.18 in \cite{moxu} states that, if $c$ is a regular value of $\mu$ and the isotropy group $\Stab(\Gamma_\bu,c)$ acts freely and properly on the fibre $\mu^{-1}(c)$, then the quotient space $\mu^{-1}(c)/\Gamma_c$ is a symplectic manifold.  If we assume only that $c$ is a regular value (equivalently, that $\mu$ is transverse to the orbit $\orbit$ of $c$), then $\mu^{-1}(c)$ is a submanifold of $M$, but the quotient space is in general not a Hausdorff topological space, let alone a manifold.  Upon inspecting the proof one sees that Xu's theorem extends to this case in the following form.

\begin{theorem}\label{theorem;regular-reduction}
Let $(\Gamma_\bu,\Omega_\bu)$ be a $1$-shifted symplectic Lie $1$-groupoid, $(M,\mu,\omega)$ a hamiltonian $\Gamma_\bu$-space and $\orbit$ an $\Gamma_\bu$-orbit in $\Gamma_0$.  If $\mu$ is transverse to $\orbit$, then the action groupoid 
\begin{equation}\label{eq:redreg}
\begin{tikzcd}
\Gamma_\bu\ltimes\mu^{-1}(\orbit)\ar[r,shift left]\ar[r,shift right]&\mu^{-1}(\orbit),
\end{tikzcd}
\end{equation}
equipped with the $2$-form $\omega|_{\mu^{-1}(\orbit)}-\mu^*\Omega_\orbit$, is a $0$-shifted symplectic Lie groupoid.
\end{theorem}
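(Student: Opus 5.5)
The plan is to deduce the theorem from the lagrangian intersection theorem, Thm.~\ref{theorem;lagintersection}, with $m=1$ and the $1$-shifted symplectic Lie $1$-groupoid $(\Gamma_\bu,\Omega_\bu)$ as ambient object. I will exhibit two $1$-shifted lagrangians on $\Gamma_\bu$ and identify their fibre product with the action groupoid \eqref{eq:redreg}.

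The first lagrangian is the moment map $\mu_\bu\colon\Gamma_\bu\ltimes M\to\Gamma_\bu$, given by $(g,m)\mapsto g$ on arrows and $\mu$ on objects, with isotropic structure $\beta_\bu=\omega\in\Omega^{2,0,0}$. The second is the inclusion $i_\bu\colon\Gamma_\bu|_\orbit\to\Gamma_\bu$ of the restricted groupoid, with $\beta'_\bu=\Omega_\orbit$; by Ex.~\ref{example;orbit} this is the special case $M=\orbit$. In each case the isotropy equation $\Phi^*\Omega_\bu=D\beta_\bu$ unpacks into $d\omega=\mu^*\Omega_0$, which is \ref{H1}, and $\delta\omega=\mu_1^*\Omega_1$ on $\Gamma_1\times_{\Gamma_0}M$, which is the isotropy of the graph \ref{H2}. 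The lagrangian condition is a pointwise statement at each $m\in M$. The normal complex has length three, $A_c\to T_mM\oplus A_c\oplus T_c\Gamma_0\to T_c\Gamma_0$ after normalization with $c=\mu(m)$, and the target is the dual of $\bT_m(\Gamma\ltimes M)$ shifted. Checking that $\lambda^{\beta,\mu,\Omega}$ is a quasi-isomorphism reduces, via the nondegeneracy of $\Omega_\bu$ (the anchor bijection $A_c\cap\ker\Omega_c\to T_c\Gamma_0\cap\ker\Omega_c$), to the minimal nondegeneracy \ref{H3}. This linear-algebra verification is where I expect the real work to be. I would first try to quote the equivalence ``hamiltonian spaces $=$ $1$-shifted lagrangians'' from \cite{ABC:lag, cal:lag}, and otherwise do the computation by splitting $T_c\Gamma_1=T_c\Gamma_0\oplus A_c$ and comparing cohomology degree by degree.

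Next I will verify the hypotheses of Thm.~\ref{theorem;lagintersection}. In level $0$, transversality of $\mu$ and $i_0\colon\orbit\to\Gamma_0$ is exactly the assumption that $\mu$ is transverse to $\orbit$. In level $1$, the arrow spaces $\Gamma_1\times_{\Gamma_0}M$ and $t^{-1}(\orbit)$ map to $\Gamma_1$. Since the target map is a submersion and $\Gamma_1\times_{\Gamma_0}M\to\Gamma_1$ is its pullback along $\mu$, transversality in level $1$ reduces to the level-$0$ transversality. Higher levels follow in the same way because they are iterated fibre products over $\Gamma_0$, each step being a pullback along a submersion.

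All spaces are ordinary manifolds, so transversal fibre products are manifolds (Prop.~\ref{proposition;derived-fibred-product}). The fibre product in level $0$ is $\mu^{-1}(\orbit)$, and in level $1$ it is $\{(g,m)\mid t(g)=\mu(m)\in\orbit\}=\Gamma_1\times_{\Gamma_0}\mu^{-1}(\orbit)$. This identifies the fibre product with the action groupoid \eqref{eq:redreg}, which is a Lie groupoid because the source is a surjective submersion onto $\mu^{-1}(\orbit)$. Thm.~\ref{theorem;lagintersection} then says that $\pr_1^*\omega-\pr_2^*\Omega_\orbit=\omega|_{\mu^{-1}(\orbit)}-\mu^*\Omega_\orbit$ is $0$-shifted symplectic. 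By Rem.~\ref{ex:sympnond} this is the same as an ordinary $0$-shifted symplectic Lie groupoid. Along the way I should double-check that both projections are the correct ones and that the form lives purely in simplicial degree $0$.
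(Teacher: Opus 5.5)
Your argument is correct, but it is not the route the paper takes.

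The paper does not derive Thm.~\ref{theorem;regular-reduction} from the lagrangian machinery. It asserts that the result follows by inspecting the proof of Xu's regular reduction theorem \cite[Thm.~3.18]{moxu}. That means a direct check, on the action groupoid $\Gamma_\bu\ltimes\mu^{-1}(\orbit)$, that $\omega|_{\mu^{-1}(\orbit)}-\mu^*\Omega_\orbit$ is closed for $D$ and nondegenerate. The step of passing to the quotient $\mu^{-1}(c)/\Gamma_c$ is dropped, and that is the only place where freeness and properness enter.

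You instead obtain the theorem as the transverse case of Thm.~\ref{theorem;lagintersection}, using the moment lagrangian of \S\,\ref{subsubsection;moment} and the orbit lagrangian of Ex.~\ref{example;orbit}. The paper uses this picture only later, in the proof of Thm.~\ref{theorem;reduction}\eqref{item;regular-singular}, where it notes that the transverse fibre product is this action groupoid.

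What your route buys:
\begin{itemize}
\item Closedness of the reduced form is automatic from $\Phi\circ\pr_1=\Phi'\circ\pr_2$.
\item Nondegeneracy comes from the five-lemma argument in the proof of Thm.~\ref{thm:complag}.
\item The regular theorem becomes literally the transverse instance of the intersection that defines $M\qu[\orbit]\Gamma_\bu$. This makes the comparison in Thm.~\ref{theorem;reduction}\eqref{item;regular-singular} transparent.
\end{itemize}

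What it costs:
\begin{itemize}
\item It depends on the IM-pairing and Eilenberg--Zilber machinery.
\item It depends on the dictionary ``hamiltonian space $=$ $1$-shifted lagrangian'', whose verification in the paper is only sketched. The linear algebra you postpone there (using \ref{H3} and the nondegeneracy of $\Omega_\bu$) is essentially the same pointwise computation Xu's proof performs. So the paper's route is more elementary and independent of the derived formalism.
\end{itemize}

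Two small slips to fix.

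First, the normal complex. After normalization $\bT_c\Gamma=(A_c\to T_c\Gamma_0)$, so the normal complex of $\mu_\bu$ is $A_c\to T_mM\oplus A_c\to T_c\Gamma_0$ in degrees $-1,0,1$. The extra $T_c\Gamma_0$ you placed in the middle is the degenerate part of $T_c\Gamma_1=T_c\Gamma_0\oplus A_c$. With it, the Euler characteristics could not match: you get $\dim M$ against $\dim M-\operatorname{rk}A$. The corrected complex gives $\dim M-\dim\Gamma_0$, which agrees because $\operatorname{rk}A=\dim\Gamma_0$ for a $1$-shifted symplectic groupoid.

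Second, the level-$1$ transversality. The map $\Gamma_1\times_{\Gamma_0}M\to\Gamma_1$ is the base change of $\mu$ along $t$, not of $t$ along $\mu$. The clean argument is as follows. Since $t$ is a submersion, $T_g(\Gamma_1|_\orbit)=(T_gt)^{-1}(T_c\orbit)$, and the image of $T\mu_1$ is $(T_gt)^{-1}(\operatorname{im}T_m\mu)$. Hence their sum is $(T_gt)^{-1}(T_c\Gamma_0)=T_g\Gamma_1$. The same argument with a vertex map $\Gamma_k\to\Gamma_0$ handles level $k$.
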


(For $c\in\orbit$ there is also the $0$-shifted symplectic groupoid $\Gamma_c\ltimes \mu^{-1}(c)\rightrightarrows\mu^{-1}(c)$ with the $2$-form $\omega|_{\mu^{-1}(c)}$, but this is Morita equivalent to \eqref{eq:redreg}, and we prefer to work with the orbit $\orbit$ rather than the point $c$.)
In a departure from standard terminology, for an orbit $\orbit$ in $\Gamma_0$ transverse to $\mu$ we will call the action groupoid $\Gamma_\bu\ltimes\mu^{-1}(\orbit)$ the \emph{reduced space} or \emph{symplectic quotient} of $M$ at the level $\orbit$.  If the $\Gamma_\bu$-action on $\mu^{-1}(\orbit)$ is also proper and free, then this groupoid is symplectically Morita equivalent to the usual symplectic quotient $\mu^{-1}(\orbit)/\Gamma_\bu=\mu^{-1}(c)/\Gamma_c$.

\subsection{The singular case}

In this section we fix
\begin{enumerate}
\item a $1$-shifted symplectic Lie $1$-groupoid $(\Gamma_\bu,\Omega_\bu)$,
\item a hamiltonian $\Gamma_\bu$-space $(M,\mu,\omega)$,
\item a $\Gamma_\bu$-orbit $\orbit$ in $\Gamma_0$ with inclusion map $i_\bu\colon\Gamma_\bu|_\orbit\to\Gamma_\bu$.
\end{enumerate}
We do not assume that $\mu$ is transverse to $\orbit$.  Thm.~\ref{theorem;reduction} states that under a linearization condition on the orbit $\orbit$  the reduced space of $M$ at $\orbit$ can be modelled by a $0$-shifted symplectic derived Lie $1$-groupoid, which is is symplectically Morita equivalent to \eqref{eq:redreg} whenever $\mu$ is transverse to $\orbit$.

\subsubsection{The moment lagrangian}\label{subsubsection;moment}

Weinstein \cite[Lect.~4]{weinstein;lectures-symplectic} observed that a hamiltonian Lie group action gives rise to a natural lagrangian embedding, the \emph{moment lagrangian}.  In the present context this observation can be reinterpreted as follows; see  \cite[Ex.~1.31]{cala:der}: a triple $(M,\mu,\omega)$ consisting of a $\Gamma_\bu$-space $M$ with moment $\mu\colon M\to\Gamma_0$ and a $2$-form $\omega\in\Omega^2(M)$ is a hamiltonian $\Gamma_\bu$-space if and only if the pair $(\mu_\bu,\omega)$ is a $1$-shifted lagrangian morphism, where $\mu_\bu\colon\Gamma_\bu\ltimes M\to\Gamma_\bu$ is the natural lift of $\mu$ to a groupoid morphism.  The argument is as follows.  Conditions \ref{H1} and \ref{H2} say that $(\mu_\bu,\omega)$ is an isotropic morphism, i.e.\ $\mu_\bu^*(\Omega_\bu)=D\omega$.  For the lagrangian condition we observe that, since $\Gamma_\bu\ltimes M$ is an action groupoid, its Lie algebroid is $\Alg(\Gamma_\bu\ltimes M)=\mu^*A$ and the anchor is the infinitesimal action $a$.  It follows that the map $\lambda^{\omega,\mu_\bu,\Omega_\bu}\colon\mathbb{N}^{\mu_\bu}\to\bT^*(\Gamma_\bu\ltimes M)$ is given by
\[
\begin{tikzcd}[ampersand replacement=\&,row sep=large,column sep=large]
\mu^*A\ar[d]\ar[r,"{\bigl(\begin{smallmatrix}a\\\id\end{smallmatrix}\bigr)}"]\&
TM\oplus \mu^*A\ar[r," {(\begin{smallmatrix}T\mu&\mu^*\rho\end{smallmatrix})}"]\ar[d," {(\begin{smallmatrix}\omega^\sharp&0\end{smallmatrix})}"']\&
\mu^*TX\ar[d,"\rho^*\circ\Omega^\sharp\circ Tu"]\\
0\ar[r]\&T^*M\ar[r,"a^*"]\&\mu^*A^*,
\end{tikzcd}
\]
where $u\colon\Gamma_0\to\Gamma_1$ is the unit bisection, and is therefore a quasi-isomorphism.

Combining this fact with Ex.~\ref{example;orbit} we obtain a pair of $1$-shifted lagrangians,
\begin{equation}\label{equation;moment-lagrangian}
\begin{tikzcd}
\Gamma_\bu\ltimes M\ar[r,"\mu_\bu"]&\Gamma_\bu&\Gamma_\bu|_\orbit\ar[l,"i_\bu"'].
\end{tikzcd}
\end{equation}
Symplectic reduction is a matter of intersecting these lagrangians, but first we must perform a fibrant replacement of the orbit $\orbit$. For an interpretation of the left shifted Lagrangian as a physical boundary and the right one as a symmetry boundary within the framework of continuous symmetry topological field theory (TFT), see \cite{HaoXu:26}.

\subsubsection{Linearizing the groupoid}

The normal bundle $\nu_\bu$ of the subgroupoid $\Gamma_\bu|_\orbit$ of
$\Gamma_\bu$ is a VB-groupoid with trivial core,
\begin{equation}\label{eq:norVB}
\begin{tikzcd}[row sep=large]
\nu_1\ar[d,"\pi_1"']\ar[r,shift left]\ar[r,shift
  right]&\nu_0\ar[d,"\pi_0"]\\
\Gamma_1|_\orbit\ar[r,shift left]\ar[r,shift right]&\orbit,
\end{tikzcd}
\end{equation}
where $\nu_0=\nu(\Gamma_0,\orbit)$ is the normal bundle of $\orbit$ in
$\Gamma_0$ and $\nu_1=\nu(\Gamma_1,\Gamma_1|_\orbit)$ the normal
bundle of $\Gamma_1|_\orbit$ in $\Gamma_1$.  We can also think of $\nu_\bu$ as the action groupoid $\Gamma_\bu\ltimes\nu_0$.  Following
\cite{del:riemgpd} we say that $\Gamma_\bu$ is \emph{weakly
linearizable at $\orbit$} if there exist an open groupoid
neighbourhood $U_\bu$ of the zero section $\Gamma_\bu|_\orbit$ of
$\nu_\bu$ and an open embedding of Lie groupoids $\Phi_\bu\colon
U_\bu\hookrightarrow\Gamma_\bu$, called a \emph{linearization} at $\orbit$, such that $\Phi_\bu$ is the identity along the zero section $\Gamma_\bu|_\orbit$ and the normal derivative of $\Phi_\bu$ is the identity at all points of the zero section.

\begin{definition}\label{definition;linear}
The $1$-shifted symplectic Lie $1$-groupoid $(\Gamma_\bu,\Omega_\bu)$
is \emph{weakly symplectically linearizable} at the orbit $\orbit$ if
\begin{enumerate}
\item[\namedlabel{L1}{(L1)}] 
$\Gamma_\bu$ is weakly linearizable at $\orbit$ with linearization given by $\Phi_\bu\colon U_\bu\to \Gamma_\bu$;
\item[\namedlabel{L2}{(L2)}]
there exist $\Omega_\lin\in\Omega^2(U_1)$ and 
$\eta\in\Omega^2(U_0)$ such that
\begin{itemize}
\item
$\Omega_\lin$ is linear and $\eta|_\orbit=\Omega_\orbit$;
\item
$\Phi_1^*\Omega_1=\Omega_\lin+\delta\eta$ and
  $\Phi_0^*\Omega_0=d\eta$.
\end{itemize}
\end{enumerate}
We call the triple $(\Phi_\bu,\Omega_\lin,\eta)$ \emph{linearization
data} at $\orbit$.
\end{definition}

In most examples in \S\S\,\ref{sec:proper}--\ref{subsection;lu} the form $\Omega_\lin$ is equal to the linear part (as defined in \S\,\ref{subsection;linear}) of the form $\Phi_1^*\Omega_1\in\Omega^2(U_1)$, but we do not require this to be the case.

\subsubsection{Replacing the orbit by a derived Lie groupoid}\label{sec:rep-dgpd}

By Prop.~\ref{ex:VBgrpd}, to the VB-groupoid \eqref{eq:norVB}
corresponds the derived Lie groupoid
\begin{equation}\label{eq:De-orbit}
\begin{tikzcd}
\huaZ(\nu_\bu,\pi_\bu^*\nu_\bu,\epsilon_{\nu_\bu})\colon\quad
\huaZ\bigl(\nu_1,\pi_1^*\nu_1,\epsilon_{\nu_1}\bigr)\ar[r,shift
  left]\ar[r,shift right]&
\huaZ\bigl(\nu_0,\pi_0^*\nu_0,\epsilon_{\nu_0}\bigr).
\end{tikzcd}
\end{equation}
It follows from Lemma~\ref{lemma;path}\eqref{item;submanifold} that the
zero section of the normal bundle $\nu_\bu$ induces a Morita morphism,
in fact a levelwise weak equivalence, $j_\bu$ from the groupoid
$\Gamma_\bu|_\orbit\rightrightarrows\orbit$ to~\eqref{eq:De-orbit}.
For any open subgroupoid $U_\bu$ of $\nu_\bu$ we will denote by
$\huaU_\bu$ the open derived Lie subgroupoid of \eqref{eq:De-orbit}
whose body is $U_\bu$.  If $U_\bu$ contains $\Gamma_\bu|_\orbit$, then
$j_\bu$ restricts to a levelwise weak equivalence which we will denote
by the same letter,
\begin{equation}\label{equation;levelwise}
j_\bu\colon\Gamma_\bu|_\orbit\longto\huaU_\bu.
\end{equation}

We denote by
$\eta_1=(\epsilon_{\nu_1})_*\Omega_\lin\in\Omega^{2,-1}(\huaU_1)$ the
form on the derived manifold $\huaU_1$ corresponding to the linear
differential form $\Omega_\lin\in\Omega^2(U_1)$ as in \S\,\ref{subsection;linear}.  If $(\Phi_\bu,\Omega_\lin,\eta)$ are linearization data for
$\Gamma_\bu$ at the orbit $\orbit$, we let $\eta_0=\eta$, regarded as an element of $\Omega^{2,0}(\huaU_0)$, and we define the $0$-shifted $2$-form $\eta_\bu$ on $\huaU_\bu$ by
\begin{equation}\label{equation;eta}
\eta_\bu=\eta_1+\eta_0.
\end{equation}

\begin{proposition}\label{prop:der-replace}
Let $(\Gamma_\bu,\Omega_\bu)$ be a $1$-shifted symplectic Lie
$1$-groupoid.  Let $\orbit$ be a $\Gamma_\bu$-orbit in~$\Gamma_0$, let
$i_\bu\colon\Gamma_\bu|_\orbit\to \Gamma_\bu$ be the inclusion, and
let $\nu_\bu$ be the normal bundle of $\Gamma_\bu|_\orbit$ in
$\Gamma_\bu$.
\begin{enumerate}
\item\label{item;linearizable}
Suppose $\Gamma_\bu$ is weakly linearizable at $\orbit$ with
linearization map $\Phi_\bu\colon U_\bu\to\Gamma_\bu$.  Let
$\huaU_\bu$ be the open derived Lie subgroupoid of
$\huaZ(\nu_\bu,\pi_\bu^*\nu_\bu,\epsilon_{\nu_\bu})$ whose body is
$U_\bu$.  The map $\Phi_\bu$, viewed as a morphism
$\huaU_\bu\to\Gamma_\bu$, is a strong fibration, the morphism
$j_\bu\colon\Gamma_\bu|_\orbit\to\huaU_\bu$ defined in
\eqref{equation;levelwise} is a levelwise weak equivalence, and
$i_\bu=\Phi_\bu\circ j_\bu$.
\[
\begin{tikzcd}
&\huaU_\bu\ar[dr,two heads,"\Phi_\bu"]&\\
\Gamma_\bu|_\orbit\ar[ur,"j_\bu","\simeq"']\ar[rr,"i_\bu"']&&\Gamma_\bu.
\end{tikzcd}
\]
\item\label{item;symplectically-linearizable}
Suppose $\Gamma_\bu$ is weakly symplectically linearizable at $\orbit$
with linearization data $(\Phi_\bu,\Omega_\lin,\eta)$.  Let $\eta_\bu$
be the $1$-shifted $2$-form defined in~\eqref{equation;eta}.  The pair
$(\Phi_\bu,\eta_\bu)$ is a lagrangian morphism
$\huaU_\bu\to\Gamma_\bu$.  The morphism $j_\bu$ induces an equivalence
of lagrangian morphisms between $i_\bu$ and~$\Phi_\bu$.
\end{enumerate}
\end{proposition}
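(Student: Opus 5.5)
For part (1), I would verify the three claims separately. The factorization $i_\bu=\Phi_\bu\circ j_\bu$ is immediate: $j_\bu$ is the zero section and $\Phi_\bu$ restricts to the identity on the zero section $\Gamma_\bu|_\orbit$. That $j_\bu$ is a levelwise weak equivalence follows from Lemma~\ref{lemma;path}\eqref{item;vector-bundle} applied at each level to $\nu_k$, restricted to the open set $U_k$ containing the zero section.

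For the strong fibration claim, the level $0$ part says $\Phi_0\colon\huaU_0\to\Gamma_0$ is a fibration of derived manifolds. This is Lemma~\ref{lemma;path}\eqref{item;submanifold}: $\Phi_0$ is an open embedding of $U_0\subseteq\nu_0$ and sends the derived part to zero, so on tangent complexes it is surjective in degree $0$ (it is a diffeomorphism onto its image) and trivially surjective in degree $1$, since the target is $0$. I also need $\Phi_\bu$ to be a Kan fibration of derived Lie $1$-groupoids. By Cor.~\ref{cor:hypercover-n}-type reasoning for $n=1$, only $\Kan(1,j)$ needs checking (the higher ones follow from uniqueness of compositions). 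The condition $\Kan(1,0)$ requires
\[
\huaU_1\to\huaU_0\times_{d_0,\Gamma_0}\Gamma_1
\]
to be a locally split fibration. Because $U_\bu$ is an open subgroupoid of the action groupoid $\Gamma_\bu\ltimes\nu_0$ and $\Phi_\bu$ is an open embedding, this map is (locally) an open embedding of bodies, with the derived part given by pullback of $\nu_1$ along a source map. I would check surjectivity of the tangent maps in both degrees using the core-trivial VB-groupoid structure: $\nu_1\cong s^*\nu_0$. Local splittings come from inverting the open embedding. I expect the main obstacle here to be the fibre product $\huaU_0\times_{\Gamma_0}\Gamma_1$, which is taken with $\Gamma_0$ having no derived part. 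I must check that it lands in the image of $\Phi_1$, which can fail because $\Phi_1(U_1)$ need not equal $t^{-1}(\Phi_0(U_0))$. I would first try shrinking $U_\bu$ to a saturated-type neighbourhood; if that fails, I would instead demand only a local splitting near classical points.

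For part (2), the isotropy $\Phi_\bu^*\Omega_\bu=D\eta_\bu$ is a direct computation. The components of $D\eta_\bu$ are:
\begin{itemize}
\item in bidegree $(3,0,0)$: $d\eta_0=\Phi_0^*\Omega_0$;
\item in bidegree $(2,0,1)$: $\delta\eta_0\pm\Lie_Q\eta_1=\delta\eta+\Omega_\lin=\Phi_1^*\Omega_1$, using \eqref{equation;primitive}, after fixing signs in $D$;
\item the remaining terms $d\eta_1$ and $\delta\eta_1$, which must vanish.
\end{itemize}
The term $d\eta_1$ is zero because $\Omega_\lin$ is closed linear, so $d(\epsilon_*\Omega_\lin)=\epsilon_*d\Omega_\lin$. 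The term $\delta\eta_1$ is zero by multiplicativity of $\Omega_\lin$, which follows from $\delta\Phi_1^*\Omega_1=0$ and $\delta^2\eta=0$, together with $\epsilon_*$ commuting with $\delta$ because the face maps are VB-maps. Finally, $\Phi_\bu^*\Omega_\bu$ has no $(2,-1)$ component since $\Gamma_\bu$ is underived.

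For the lagrangian condition and the equivalence, I would use Def.~\ref{def:eqSlag} with $\Psi=j_\bu$ and $\gamma=0$. The relation $j_\bu^*\eta_\bu=\Omega_\orbit$ holds because $\eta_1$ has negative internal degree and pulls back to zero, while $\eta_0|_\orbit=\Omega_\orbit$. Since $(\Gamma_\bu|_\orbit,i_\bu,\Omega_\orbit)$ is $1$-shifted lagrangian by Ex.~\ref{example;orbit} and the moment lagrangian discussion, I would transfer the quasi-isomorphism property via the commutative square comparing $\lambda$ for $i_\bu$ and $\Phi_\bu$. The vertical maps in this square are $\bT j$ and $\bT^* j$, which are quasi-isomorphisms by Lemma~\ref{lem:MEtcom}; the normal complexes are mapped by the cone of $(\bT j,\id)$. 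Since every classical point of $\huaU_0$ lies on the zero section, this covers all classical points.
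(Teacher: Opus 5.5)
Your part (2) is the paper's argument: the same components of $D\eta_\bu$, the identity $j_\bu^*\eta_\bu=\Omega_\orbit$, and the same comparison square via Lemma~\ref{lem:MEtcom}. The factorization $i_\bu=\Phi_\bu\circ j_\bu$, the levelwise weak equivalence and the level-$0$ fibration in part (1) also match the paper.

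One small omission: (L2) only says that $\Omega_\lin$ is linear, so you must derive closedness, as the paper does:
$d\Omega_\lin=\Phi_1^*d\Omega_1-\delta d\eta=\Phi_1^*\delta\Omega_0-\delta\Phi_0^*\Omega_0=0$.

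The genuine gap is the Kan-fibration half of ``strong fibration''. You leave it open, and neither fallback closes it.

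\begin{itemize}
\item Covers in $\covers_\lsf$ are surjective. Hence $\Kan(1,0)$ and $\Kan(1,1)$ for $\Phi_\bu$ force, with $V=\Phi_0(U_0)$,
\[
s^{-1}(V)\cup t^{-1}(V)\subseteq\Phi_1(U_1)\subseteq\Gamma_1|_V .
\]
So $V$ must be saturated and $\Phi_1$ must be onto the full restriction $\Gamma_1|_V$.
\item Shrinking $U_\bu$ helps only if a saturated open neighbourhood of $\orbit$ fits inside, and the definition of weak linearization does not provide one.
\item Asking only for local splittings near classical points changes the notion of cover. It does not prove $\Kan(1,j)$.
\end{itemize}

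The claim actually fails for some weak linearizations. Take the symplectic groupoid $G\ltimes\g^*$ with $G=\mathrm{SL}_2(\R)$ and $\orbit=\{0\}$. Let $U_\bu=(G\ltimes\g^*)|_B$ for a bounded ball $B\ni 0$, with $\Phi_\bu$ the inclusion, $\Omega_\lin=\omega_\can$ and $\eta=0$. This is a weak symplectic linearization. However, $U_1\to B\times_{\g^*}(G\times\g^*)\cong G\times B$ is not surjective, so $\Phi_\bu$ is not a Kan fibration. Since every nonzero coadjoint orbit is unbounded, no shrinking repairs this.

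The paper's proof of (1) only says it ``follows immediately from the definition of a strong fibration''. So it does not supply this step either. The statement needs an invariant linearization: $U_\bu=\nu_\bu|_{U_0}$ with $\Phi_0(U_0)$ saturated and $\Phi_1(U_1)=\Gamma_1|_{\Phi_0(U_0)}$. This is available for proper groupoids, and in the action-groupoid examples with $U_0$ invariant.

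Under that hypothesis your own computation finishes the argument:
\begin{itemize}
\item the horn map is a diffeomorphism on bodies;
\item on the degree $-1$ part it is the VB-groupoid source map $\nu_1\to\nu_0$, a fibrewise isomorphism (trivial core) sending $\epsilon_{\nu_1}$ to $\epsilon_{\nu_0}$;
\item by Prop.~\ref{prop:quasi-smooth} it is therefore an isomorphism of derived manifolds, hence a cover.
\end{itemize}
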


\begin{proof}
\eqref{item;linearizable}~follows immediately from the definition of a
strong fibration (Def.~\ref{definition;strong}) and from the preceding
discussion.

\eqref{item;symplectically-linearizable}~For the groupoid $U_\bu$ the
conditions \ref{L2} imply
\[
\delta\Omega_\lin=\Phi_2^*\delta\Omega_1=0\quad\text{and}\quad
d\Omega_\lin=\Phi_1^*d\Omega_1-\Phi_1^*\delta\Omega_0=0.
\]
Hence also $\delta\eta_1=d\eta_1=0$, and therefore
\begin{equation*}
D\eta_\bu=(\delta+\Lie_{Q_1}+d)\eta_1+(\delta+\Lie_{Q_0}+d)\eta_0=
\Omega_\lin+\delta\eta+d\eta=\Phi_\bu^*\Omega_\bu,
\end{equation*}
where $Q_i=\iota_{\epsilon_{U_i}}$ and where we used
$\Omega_\lin=\Lie_{Q_1}\eta_1$ (see~\eqref{equation;primitive}).  This shows that
$\eta_\bu$ is an isotropic structure for the morphism $\Phi_\bu$.  On
the other hand
\begin{equation}\label{eq:lagequiv}
    j_\bu^*\eta_\bu=j_0^*\eta=\Omega_\orbit,
\end{equation}
so the two isotropic structures are equivalent.  The lagrangian
condition for $\huaU_\bu$ follows from the fact that $j_\bu$ is a
level-wise weak equivalence: using that the classical locus of
$\huaU_0$ is $\pi_0(\huaU_0)=j_0(\orbit)$, we deduce from
Lemma~\ref{lem:MEtcom} that for all $c\in\orbit$ the vertical arrows in
the diagram
\[
\begin{tikzcd}[row sep=large,column sep=huge]
\mathbb{N}_{\Phi_\bu}=\bT_{j_0(c)}\huaU_\bu\oplus\bT_{i_0(c)}\Gamma_\bu
\ar[r,"\lambda_{j_0(c)}^{\eta_\bu,\Phi_\bu,\Omega_\bu}"]
&\bT^*_{j_0(c)}\huaU_\bu\ar[d,"\bT^*j_\bu"]\\
\mathbb{N}_{j_\bu}=\bT_c\Gamma_\bu|_\orbit\oplus\bT_{i_0(c)}\Gamma_\bu
\ar[r,"\lambda_c^{\Omega_\orbit,i_\bu,\Omega_\bu}"]\ar[u,"\bT
  j_\bu\oplus\id"] &\bT^*_c\Gamma_\bu|_\orbit
\end{tikzcd}
\]
are quasi-isomorphisms.  Since
$\lambda_c^{\Omega_\orbit,i_\bu,\Omega_\bu}$ is a quasi-isomorphism,
so is $\lambda_{j_0(c)}^{\eta_\bu,\Phi_\bu,\Omega_\bu}$.
\end{proof}

Replacing $\Gamma_\bu|_\orbit$ with $\huaU_\bu$ in
\eqref{equation;moment-lagrangian} yields the diagram
\begin{equation}\label{equation;replacement}
\begin{tikzcd}
(\Gamma_\bu\ltimes M,0)\ar[r,"\mu_\bu"]&(\Gamma_\bu,\Omega_\bu)&
  \bigl(\huaU_\bu,\eta_\bu\bigr)\ar[l,"\Phi_\bu"']
\end{tikzcd}
\end{equation}
where the middle column is a $1$-shifted symplectic Lie groupoid and
the two sides are shifted lagrangian morphisms with $\Phi_\bu$ being a
strong fibration.

\subsubsection{Computing the reduced space}\label{sec:redspace}

We can now form the fibred product $\huaG_\bu=(\Gamma_\bu\ltimes M)\times_{\Gamma_\bu}\huaU_\bu$, which is a derived Lie groupoid with
\begin{equation}\label{equation;symplectic-quotient}
\huaG_0=\huaZ(M\times_{\Gamma_0}U_0,E_0,\sigma_0),\qquad\huaG_1=\huaZ(M\times_{\Gamma_0}U_1,E_1,\sigma_1),
\end{equation}
where
\begin{alignat*}{10}
M\times_{\Gamma_0}U_0&=\{\,(x,u_0)\in M\times U_0\mid\mu(x)=\Phi_0(u_0)\,\},&
E_0&=\pr_2^*\pi_0^*\nu_0,&
\sigma_0(x,u_0)&=u_0,
\\
M\times_{\Gamma_0}U_1&=\{\,(x,u_1)\in M\times U_1\mid\mu(x)=s_\Gamma(\Phi_1(u_1))\,\},\qquad&
E_1&=\pr_2^*\pi_1^*\nu_1,\qquad&
\sigma_1(x,u_1)&=u_1.
\end{alignat*}
This derived groupoid is equipped with a $0$-shifted symplectic form $\omega^\red_\bu=\omega^\red_{2,-1,1}+\omega^\red_{2,0,0}$ with
\[
\omega^\red_{2,-1,1}=-\pr_2^*\eta_1\in\Omega^{2,-1}(\huaG_1),\qquad
\omega^\red_{2,0,0}=\pr_1^*\omega-\pr_2^*\eta_0\in\Omega^{2,0}(\huaG_0),
\]
where $\pr_1\colon\huaG_\bu\to\Gamma_\bu\ltimes M$ and $\pr_2\colon\huaG_\bu\to\huaU_\bu$ are the natural projections, and where $\eta_\bu=\eta_0+\eta_1$ is as in~\eqref{equation;eta}. We call the $0$-shifted symplectic derived Lie groupoid $(\huaG_\bu,\omega^\red_\bu)$ the \emph{reduced space} or \emph{symplectic quotient} of $M$ by the $\Gamma_\bu$-action.  We denote $\huaG_\bu$ by $M\qu[\orbit]\Gamma_\bu$ or $M\qu\Gamma_\bu$.

\begin{theorem}\label{theorem;reduction}
Let $(\Gamma_\bu,\Omega_\bu)$ be a $1$-shifted symplectic Lie $1$-groupoid and let $(M,\mu,\omega)$ be a hamiltonian $\Gamma_\bu$-space.
Let $\orbit$ be a $\Gamma_\bu$-orbit in $\Gamma_0$.  Suppose $\Gamma_\bu$ is weakly symplectically linearizable at $\orbit$ with linearization data $(\Phi_\bu,\Omega_\lin,\eta)$.
\begin{enumerate}
\item\label{item;reduction} The symplectic quotient $(M\qu[\orbit]\Gamma_\bu,\omega^\red_\bu)$ is a quasi-smooth $0$-shifted symplectic derived Lie groupoid.  It is the homotopy pullback of the lagrangian morphisms $\mu_\bu\colon\Gamma_\bu\ltimes M\to\Gamma_\bu$ and $i_\bu\colon\Gamma_\bu|_\orbit\to\Gamma_\bu$.
\item\label{item;coarse} The coarse quotient of $M\qu[\orbit]\Gamma_\bu$ is the topological space $\mu^{-1}(\orbit)/\Gamma_\bu$.  Let $x\in\mu^{-1}(\orbit)$.  The stabilizer of $M\qu[\orbit]\Gamma_\bu$ at $[x]$ is $\Stab(\Gamma_\bu\ltimes M,x)$.
\item\label{item;tangent} Let $A=\Alg(\Gamma_\bu)$ be the Lie algebroid of $\Gamma_\bu$ and $\rho\colon A\to T\Gamma_0$ its anchor.  Let $x\in\mu^{-1}(\orbit)$ and $c=\mu(x)$.  Let $a\colon A_c\to T_xM$ be the infinitesimal $\Gamma_\bu$-action and $E_c=T_c\Gamma_0/T_c\orbit$.  Let $f\colon A_c\to E_c^*$ be the map $T^*u\circ\Omega^\sharp\circ\rho$.  The symplectic pairing on $M\qu[\orbit]\Gamma_\bu$ at $[x]$ is
\[
\begin{tikzcd}
\bT_x(M\qu\Gamma_\bu)\ar[d,"(\omega^\red)_\bu^\sharp"']\colon&A_c\ar[d,"f"']\ar[r,"a"]&T_xM\ar[r,"-T\mu"]\ar[d,"\omega^\sharp"']&E_c\ar[d,"-f^*"]\\
\bT^*_x(M\qu\Gamma_\bu)\colon&E_c^*\ar[r,"-T^*\mu"]&T_x^*M\ar[r,"a^*"]&A^*_c.
\end{tikzcd}
\]
\item\label{item;regular-singular} If $\mu$ is transverse to $\orbit$, then $M\qu[\orbit]\Gamma$ is symplectically Morita equivalent to the $0$-shifted symplectic Lie groupoid $\Gamma|_\orbit\ltimes\mu^{-1}(\orbit)$ of Thm.~\textup{\ref{theorem;regular-reduction}}.
\end{enumerate}
\end{theorem}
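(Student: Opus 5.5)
Part \eqref{item;reduction} is an application of the lagrangian intersection theorem, Thm.~\ref{theorem;lagintersection}, to the diagram \eqref{equation;replacement}. By Prop.~\ref{prop:der-replace}, $(\Phi_\bu,\eta_\bu)$ is a $1$-shifted lagrangian on $(\Gamma_\bu,\Omega_\bu)$ and $\Phi_\bu$ is a strong fibration. By \S\,\ref{subsubsection;moment}, $(\mu_\bu,\omega)$ is a $1$-shifted lagrangian. Since $\Phi_0$ and $\Phi_1$ are open embeddings of bodies and surjective in degree $1$ (the derived directions map to zero, but the body map is \'etale), they are submersions. Hence $\mu_i$ and $\Phi_i$ are transverse for $i=0,1$. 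Because $\Phi_\bu$ is a fibration, the fibre product is a derived Lie $1$-groupoid by the remark following Thm.~\ref{theorem;lagintersection}, situation (1). Alternatively, we may use situation (2), since everything is quasi-smooth and the fibre product is the VB-groupoid $\pr_2^*\pi^*\nu_\bu$ with the section $\sigma_\bu$, as in \eqref{equation;symplectic-quotient}.

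Thm.~\ref{theorem;lagintersection} then shows that $\pr_1^*\omega-\pr_2^*\eta_\bu=\omega^\red_\bu$ is $0$-shifted symplectic. Since $j_\bu$ is a levelwise weak equivalence (a Morita morphism) and $\Phi_\bu$ is a strong fibration with $i_\bu=\Phi_\bu\circ j_\bu$, the fibre product is by definition (\S\,\ref{subsection;pull}) a homotopy pullback of $\mu_\bu$ and $i_\bu$.

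For \eqref{item;coarse}, I compute classical loci using \eqref{equation;fibred-classical}. We have $\pi_0(\huaG_0)=\{(x,u_0):\sigma_0=u_0=0\}$, which is the set of $x$ with $\mu(x)\in\orbit$, i.e.\ $\mu^{-1}(\orbit)$. Similarly $\pi_0(\huaG_1)=\{(x,g): g\in\Gamma_1|_\orbit\}$, which is $\Gamma_\bu\ltimes\mu^{-1}(\orbit)$. So $\pi_0(\huaG_\bu)$ is the topological action groupoid, and its orbit space and isotropy are as claimed. The stabilizer of $x$ in $\Gamma_\bu\ltimes M$ agrees with this isotropy, since $\mu(gx)=\mu(x)$ forces $g$ to lie in $\Gamma_1|_\orbit$.

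For \eqref{item;tangent}, I would use Lemma~\ref{lem:transversality} to write $\bT_x\huaG$ as the fibre product of $\bT_x(\Gamma\ltimes M)=(A_c\to T_xM)$ with $\bT\huaU=(A_c\to T_c\orbit\oplus E_c\to E_c)$ over $\bT\Gamma=(A_c\to T_c\Gamma_0)$. Choosing a splitting $T_c\Gamma_0=T_c\orbit\oplus E_c$, this collapses to $A_c\xrightarrow{a}T_xM\xrightarrow{-T\mu}E_c$ (the sign comes from $\ell_Q=-d\epsilon$). Next I evaluate the IM-pairing: the degree-$0$ part is $\omega^\sharp-\pr_2^*\eta_0$, where the $\eta_0$ term vanishes on the fibre-product directions modulo exact corrections. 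The off-diagonal part comes from $\eta_1=\epsilon_*\Omega_\lin$ paired via the adjusted Eilenberg--Zilber map between $A_c$ (degree $-1$) and $E_c$ (degree $1$). Because $\Omega_\lin$ is the linearization of $\Omega_1$ along the unit, this yields $T^*u\circ\Omega^\sharp\circ\rho=f$. I expect this sign and normalization bookkeeping in the IM-pairing to be the main obstacle. I would check it against the quasi-isomorphism diagrams in Prop.~\ref{prop:der-replace} and \S\,\ref{subsubsection;moment}, which pin down $f$ up to sign.

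For \eqref{item;regular-singular}, when $\mu$ is transverse to $\orbit$ I would compare $\huaG_\bu$ with the fibre product $(\Gamma\ltimes M)\times_{\Gamma}\Gamma|_\orbit=\Gamma|_\orbit\ltimes\mu^{-1}(\orbit)$, which is transverse by hypothesis. The map $j_\bu$ induces a morphism $\id\times j_\bu$ from this ordinary groupoid into $\huaG_\bu$. Levelwise it is the map $\mu^{-1}(\orbit)\to\huaZ(M\times_{\Gamma_0}U_0,E_0,\sigma_0)$, and transversality makes $\sigma_0$ a transverse section with zero locus $\mu^{-1}(\orbit)$. Thus the map is a bijection on classical loci and a quasi-isomorphism on tangent complexes (cohomology is concentrated in degree $0$). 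Hence it is a levelwise weak equivalence, i.e.\ a Morita morphism. The forms pull back correctly: $j_\bu^*\eta_\bu=\Omega_\orbit$ by \eqref{eq:lagequiv}, so the pullback of $\omega^\red_\bu$ is $\omega|_{\mu^{-1}(\orbit)}-\mu^*\Omega_\orbit$, with $\beta=0$. This is a strict symplectic equivalence.
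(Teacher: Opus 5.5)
Parts (1), (2) and (4) of your proposal follow the paper's proof step by step. For (1) you use Thm.~\ref{theorem;lagintersection} with Prop.~\ref{prop:der-replace} and $i_\bu=\Phi_\bu\circ j_\bu$. For (2) you compute the classical loci. In (4) you check directly that $\sigma_0$ is a transverse section with zero locus $\mu^{-1}(\orbit)$, which is what the paper extracts from Rem.~\ref{remark;path}. For the rows of the diagram in (3) you go through Lemma~\ref{lem:transversality}, whereas the paper reads them off the quasi-smooth presentation \eqref{equation;symplectic-quotient}. Both give $A_c\to T_xM\to E_c$, with the sign coming from $\ell_Q=-d\epsilon$.

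The gap is in your justification of the vertical maps in (3), which the paper's proof does not address at all.

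The degree-$0$ component of $\lambda^{\omega^\red_\bu,\sharp}_x$ is read off pointwise from $\omega^\red_{2,0,0}=\pr_1^*\omega-\pr_2^*\eta_0$, so it equals $\bigl(\omega-\mu^*(\Phi_0^{-1})^*\eta\bigr)^\sharp_x$. The $\eta$-term does not vanish on fibre-product directions. Your own part (4) shows that on vectors tangent to $\mu^{-1}(\orbit)$ it contributes $-\mu^*\Omega_\orbit$, which is nonzero in general. ``Modulo exact corrections'' does not rescue this: the IM-pairing of a given form is a specific map, and by Cor.~\ref{gaugeinvar} a $D$-exact change of the form alters it only up to homotopy.

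The off-diagonal component has the same problem. Up to sign it is $(\xi,e)\mapsto(\Omega_\lin)_{1_c}(\xi,Ts_0(e))$ for $\xi\in A_c$ and $e\in E_c$. The theorem does not assume that $\Omega_\lin$ is the linear part of $\Phi_1^*\Omega_1$. Instead, $\Phi_1^*\Omega_1=\Omega_\lin+\delta\eta$ gives $\Omega_1(\xi,Tu(T\Phi_0e))$ plus a correction $\pm\eta_c(\rho(\xi),e)$.

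So what the computation actually yields is the displayed diagram up to these $\eta$-terms. Both vanish when $\eta_c=0$, and the off-diagonal one also vanishes when $\rho_c=0$. This is the situation in Thm.~\ref{theorem;marsden-weinstein}, where $\eta=0$, and in Thm.~\ref{theorem;quasi}, where $\varpi$ vanishes at the origin and $1$ is a fixed point. In general the displayed $\omega^\sharp$ should be read as the degree-$0$ part of $(\omega^\red)^\sharp_\bu$. Nothing else in your argument depends on this step, since non-degeneracy is already supplied by part (1).
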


\begin{proof}
\eqref{item;reduction}~This follows from Thm.~\ref{theorem;lagintersection} and Prop.~\ref{prop:der-replace}.  Since $\Phi_\bu$ is a strong fibration, replacing $\orbit$ by $\huaU_\bu$ is a fibrant replacement as in Rem.~\ref{rmk:ptvv-thm5}. Therefore $(M\qu\Gamma_\bu,\omega^\red_\bu)$ is a homotopy pullback as in \S\,\ref{subsection;pull}.

\eqref{item;coarse}~The classical locus of $\huaG_0$ is $\pi_0(\huaG_0)=\mu^{-1}(\orbit)$.  The restriction of $\huaG_1$ to the classical locus is the (topological) action groupoid $\Gamma_\bu\ltimes\mu^{-1}(\orbit)$, so the quotient space $\pi_0(\huaG_0)/\huaG_1$ is $\mu^{-1}(\orbit)/\Gamma$.  Since $\Gamma$ and $\huaG_\bu$ are $1$-groupoids, the stabilizer of $x$ is the group of all $\gamma\in\Gamma$ with $s(\gamma)=\mu(x)$ and $\gamma\cdot x=x$.

\eqref{item;tangent}~Since $\huaG_\bu=M\qu\Gamma_\bu$ is a quasi-smooth derived groupoid, its tangent complex at a classical point $x\in\huaG_0$ is of the form $\Alg(\huaG_\bu)_x\to T_xM\to E_{0,x}$, where the vector bundle $E_0$ is as in \eqref{equation;symplectic-quotient}.  The body of $\huaG_\bu$ is isomorphic to an open Lie subgroupoid of $\Gamma_\bu\ltimes M$, so the fibre of its Lie algebroid at $x$ is $A_{\mu(x)}=A_c$.  The fibre $E_{0,x}$ is equal to the fibre of the normal bundle of $\orbit$ in $\Gamma_0$ at $\mu(x)=c$.

\eqref{item;regular-singular}~Suppose $\mu$ is transverse to $\orbit$.  Then the fibred product $(\Gamma_\bu\ltimes M)\times_{\Gamma_\bu}\Gamma_\bu|_\orbit$ is well-defined (Prop.~\ref{proposition;derived-fibred-product}) and is isomorphic to the action groupoid $\Gamma_\bu\ltimes\mu^{-1}(\orbit)$. Prop.~\ref{prop:der-replace}\eqref{item;linearizable} states  that the morphism $j_\bu\colon\Gamma_\bu\ltimes\orbit\to\huaU_\bu$ is a level-wise weak equivalence.  Pulling back via $\mu$ we obtain from $j_\bu$ a morphism  $\Gamma_\bu\ltimes\mu^{-1}(\orbit)\to\huaG_\bu$, which is likewise a level-wise weak equivalence (since $\mu$ is transverse to $\orbit$; see Remark~\ref{remark;path}), hence a Morita equivalence.  The form $\omega^\red_\bu$ pulls back to the form $\omega|_{\mu^{-1}(\orbit)}-\mu^*(\eta|_\orbit)=\omega|_{\mu^{-1}(\orbit)}-\mu^*\omega_\orbit$.
\end{proof}

In the remaining sections we will look into some interesting cases where the linearization hypothesis of Thm.~\ref{theorem;reduction} is satisfied.

\subsection{Symplectic linearization of proper groupoids}\label{sec:proper}

Let $(\Gamma_\bu,\Omega_\bu)$ be a proper $1$-shifted symplectic
groupoid.  A linearization theorem for such groupoids can be found in
\cite[\S\,2.2]{pmct3} (see also \cite[Prop.~8.7]{pmct1} for the
untwisted case $\Omega_0=0$).  Let $\orbit$ be a $\Gamma_\bu$-orbit in
$\Gamma_0$ and let $c\in\orbit$.  The stabilizer
$G=\Gamma_c=\Stab(\Gamma_\bu,c)$ of $c$ is a compact Lie group and the
source fibre $P=s^{-1}(c)$ is a principal $G$-bundle over $\orbit$,
the projection being the target map $t\colon P\to\orbit$.  The group
$G$ acts freely by groupoid automorphisms on
\[
\begin{tikzcd}
P\times P\times\g^*\ar[r,shift left]\ar[r,shift right]&P\times\g^*,
\end{tikzcd}
\]
the product of the pair groupoid $P\times P\rightrightarrows P$ with
the identity groupoid $\g^*\rightrightarrows\g^*$, where $\g$ is the
Lie algebra of $G$.  The quotient groupoid
\begin{equation}\label{pair-id-quotient}
\begin{tikzcd}
(P\times P\times\g^*)/G\ar[r,shift left]\ar[r,shift right]&
  (P\times\g^*)/G
\end{tikzcd}
\end{equation}
is isomorphic to the normal bundle $\nu_\bu$ of the subgroupoid
$\Gamma_\bu|_\orbit=\Gamma_\bu\ltimes\orbit$ of $\Gamma_\bu$.  Choose a principal $G$-connection $\theta\in\Omega^1(P,\g)^G$.  The
groupoid~\eqref{pair-id-quotient} carries a multiplicative $2$-form
$\Omega^\theta\in\Omega^2((P\times P\times\g^*)/G)$ induced by
the $G$-basic $2$-form
\begin{equation}\label{basform}
\widetilde{\Omega}^\theta=p^*_1\Omega_\orbit-
p^*_2\Omega_\orbit-d\inner{\theta_1,\pr_3}+
d\inner{\theta_2,\pr_3}\in\Omega^2(P\times P\times\g^*),
\end{equation}
where $\Omega_\orbit\in\Omega^2(\orbit)$ is the twisted presymplectic
form on the orbit (Ex.~\ref{example;orbit}), $p_i\colon P\times
P\times\g^*\to\orbit$ denotes the composition of the projection to the
$i$-th factor with the bundle projection $t\colon P\to\orbit$,
$\pr_3\colon P\times P\times\g^*\to\g^*$ denotes the projection to the
third factor, while $\inner{\theta_i,\pr_3}$ denotes the $1$-form
\[
\inner{\theta_i,\pr_3}_{(y_1,y_2,\xi)}(v_1,v_2,w)=
\inner{\theta_{y_i}(v_i),\xi}.
\]
Let $U_0$ be a star-shaped open neighbourhood of the zero section of
the vector bundle $(P\times\g^*)/G\to\orbit$, let $p\colon
U_0\to\orbit$ be the projection onto the zero section, and let
$\Phi_0\colon U_0\to M$ be a tubular neighbourhood embedding.  The
$3$-form $\gamma=\Phi_0^*\phi-dp^*\Omega_\orbit$ on $U_0$ satisfies
$d\gamma=0$ and $\gamma|_\orbit=0$ and is therefore exact.  Let $\beta$ be the primitive of $\gamma$ obtained by integrating $\gamma$
over the fibres of $p$.  Then $\beta$ satisfies
\begin{equation}\label{equation;model-primitive}
\beta|_\orbit=0,\qquad d\beta=\Phi_0^*\phi-dp^*\Omega_\orbit.
\end{equation}

\begin{theorem}[{\cite[Thm.~2.3, Thm.~2.8, Rem.~2.9]{pmct3}}]\label{theorem;proper}
Let $\orbit$ be an orbit of a proper $1$-shifted symplectic groupoid
$(\Gamma_\bu,\Omega_\bu)$.  Choose $c\in\orbit$ and a connection
$\theta$ on the principal $\Gamma_c$-bundle $s^{-1}(c)\to\orbit$.
There exist an open groupoid neighbourhood $\huaU_\bu$ of the zero
section $\Gamma_\bu\ltimes\orbit$ of the groupoid~\eqref{pair-id-quotient} and a weak
linearization $\Phi_\bu\colon\huaU_\bu\to\Gamma_\bu$ such that
$\Phi_1^*\Omega_1=\Omega^\theta+\delta\beta$.   Here
$\Omega^\theta\in\Omega^2((P\times P\times\g^*)/G)$ is the form
induced by~\eqref{basform} and $\beta\in\Omega^2(U_0)$ is a form
satisfying~\eqref{equation;model-primitive}, where $\Phi_0\colon U_0\to M$
is the base map of\/ $\Phi$ and $p\colon U_0\to\orbit$ is the natural
projection.
\end{theorem}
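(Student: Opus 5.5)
This statement is cited from \cite{pmct3}, so the plan is to reconstruct the argument in the present notation rather than prove it from scratch. I would proceed in three stages.

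First, I would build the groupoid linearization. Properness of $\Gamma_\bu$ lets me apply the linearization theorem for proper Lie groupoids (Zung, Crainic--Struchiner, del Hoyo--Fernandes \cite{del:riemgpd}). This theorem produces an open groupoid neighbourhood $U_\bu$ of $\Gamma_\bu|_\orbit$ in the normal VB-groupoid $\nu_\bu$, together with an open embedding $\Phi_\bu\colon U_\bu\to\Gamma_\bu$. The embedding is the identity on the zero section, and its normal derivative is the identity there. To match the statement, I would identify $\nu_\bu$ with the quotient groupoid \eqref{pair-id-quotient}. This uses $\nu_0\cong (P\times\g^*)/G$, which follows from the symplectic condition $\nu(\orbit)\cong A^*|_\orbit/\ldots\cong \mathfrak g_c^*$-bundle, coming from nondegeneracy via the anchor bijection. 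The arrows are identified as well. I would then shrink $U_0$ to be star-shaped, so that fibre integration is available.

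Second, I would compare forms. I set $\Omega'=\Phi_1^*\Omega_1$ and $\phi=\Omega_0$, so that $\Phi_\bu^*\Omega_\bu=\Omega'+\Phi_0^*\phi$ is a closed $1$-shifted $2$-form on $U_\bu$. The model $\Omega^\theta+p^*\Omega_\orbit$-type form $\Omega^\theta+(\text{3-form }dp^*\Omega_\orbit)$ is also closed, since $\Omega^\theta$ is multiplicative and $d\Omega^\theta=\delta(dp^*\Omega_\orbit)$-compatible by \eqref{basform}. Both forms agree along the zero section up to first order: they restrict to the same data on $\Gamma_\bu|_\orbit$, and they have the same IM-form, because the linear part of $\Phi_1^*\Omega_1$ is determined by $\Omega_\orbit$ and the canonical pairing $\nu_0\cong\g^*$-bundle. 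The form $\beta$ from \eqref{equation;model-primitive} absorbs the difference in the $3$-form. After subtracting $\delta\beta$, the remaining difference $\Omega'-\Omega^\theta-\delta\beta$ is a closed multiplicative $2$-form, with $d$ of it equal to zero, whose restriction and IM-data vanish along $\Gamma_\bu|_\orbit$.

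Third, and this is the main obstacle, I would run a groupoid-equivariant Moser argument. This is where $G$-averaging and properness enter. I would use the Euler flow $m_t$ on $\nu_\bu$, which acts by groupoid automorphisms, and apply a homotopy operator built from it. This shows that a closed multiplicative $2$-form vanishing to first order along the zero section is $\delta$-exact up to a multiplicative Moser isotopy. Concretely, I would write the difference as $\delta\alpha+$ (exact term) with $\alpha$ obtained from van Est / vanishing of differentiable cohomology for proper groupoids, in degrees $\ge1$. I would then integrate the time-dependent multiplicative vector field solving Moser's equation. Properness is what makes the flow exist on a smaller groupoid neighbourhood and yields differentiable cohomology vanishing. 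Composing $\Phi_\bu$ with this flow gives a new weak linearization for which $\Phi_1^*\Omega_1=\Omega^\theta+\delta\beta$ holds exactly. The resulting map is still the identity with identity normal derivative along $\orbit$, because the isotopy is stationary to first order there.
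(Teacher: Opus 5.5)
The paper gives no proof of this statement; it imports it from \cite{pmct3}, so your sketch has to stand on its own. The overall plan is the natural one: linearize the proper groupoid, match the forms along $\Gamma_\bu|_\orbit$, then run a multiplicative Moser argument with the Euler homotopy operator $H$. But the third stage has a genuine gap. It is the \emph{symplectic} Moser argument. Solving $\iota_{X_t}\Omega_t=-H\alpha$, where $\alpha$ is the closed multiplicative difference, requires the arrow forms $\Omega_t$ to be nondegenerate. In the $1$-shifted setting only the shifted structure is nondegenerate.

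Concretely, at a point of the zero section over $\orbit$, the form \eqref{basform} at $\xi=0$ kills every vector $(h,0,0)$ with $h$ a horizontal lift of an element of $\ker\Omega_\orbit$. Hence $\Omega^\theta$ is degenerate along $\Gamma_\bu|_\orbit$ whenever $\Omega_\orbit$ is. So is your starting form $\Omega^\theta+\delta\beta$, since the fibre-integration primitive $\beta$ vanishes along $\orbit$. This happens, for instance, for the conjugacy class $\{g\in SU(2)\mid g^2=-1\}$ of the proper quasi-symplectic groupoid $SU(2)\ltimes SU(2)$, where $\Omega_\orbit=0$. Gauge transformations do not cure this in general: for the pair groupoid of an odd-dimensional compact manifold with $\Omega_\bu=0$, every $\delta B$ is degenerate.

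What is needed is a \emph{shifted} Moser equation. You must find multiplicative $X_t$ and $\lambda_t\in\Omega^1(U_0)$, vanishing along $\Gamma_\bu|_\orbit$, such that $\iota_{X_t}\Omega_t=-H\alpha+\delta\lambda_t$. Then, with $\Omega_{t,\bu}=\Omega_t+\Phi_0^*\Omega_0$, one gets $\frac{d}{dt}\psi_t^*\Omega_{t,\bu}=D\bigl(\psi_{t,0}^*(\iota_{X_{t,0}}\Phi_0^*\Omega_0+d\lambda_t)\bigr)$. These gauge terms must be absorbed into $\beta$ while keeping \eqref{equation;model-primitive}. Solvability has to come from $1$-shifted nondegeneracy. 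At the units it is exactly the surjectivity of $(\sigma^t,\rho^t)\colon T_xU_0\oplus T^*_xU_0\to A_x^*$, i.e.\ $\ker\sigma\cap\ker\rho=0$. That pointwise statement must then be promoted to a smooth family of multiplicative solutions near $\Gamma_\bu|_\orbit$. This is the substance of the twisted case, and it is missing from your sketch.

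Two smaller points. First, your appeal to van Est and vanishing of differentiable cohomology is misplaced. The multiplicative primitive $H\alpha$ comes from the Euler homotopy operator alone, because the maps $m_t$ are groupoid endomorphisms of $\nu_\bu$. Vanishing results for proper groupoids do not give $\delta$-exactness of closed multiplicative $2$-forms. The IM-form $a\mapsto\iota_{\rho(a)}B$ of $\delta B$ vanishes on isotropy, whereas that of $\Omega^\theta-\delta p^*\Omega_\orbit$ is the canonical pairing $\g\times\g^*\to\R$. Properness enters through the linearization theorem.

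Second, first-order agreement of $\Phi_1^*\Omega_1$ with the model is not automatic. The normal-derivative condition fixes $T\Phi_\bu$ on normal vectors only modulo $T(\Gamma_\bu|_\orbit)$. So the mixed components along the zero section, and the IM-form on non-isotropic directions, depend on the choice of $\Phi_\bu$ through $\Omega_\orbit$; only the isotropy pairing is canonical. That suffices for shifted nondegeneracy along the path. But your claim that the corrected map still has identity normal derivative needs either an adjustment of the first jet of $\Phi_\bu$ or a separate check on the linear part of $X_t$.
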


The weak symplectic linearization result is as follows.

\begin{corollary}\label{corollary;linearization}
In the setting of Thm.~\textup{\ref{theorem;proper}} let $\Omega_\lin=\Omega^\theta-\delta p^*\Omega_\orbit$ and $\eta=\beta+p^*\Omega_\orbit$.  Then
$(\Gamma_\bu,\Omega_\bu)$ is weakly symplectically linearizable at the orbit $\orbit$ with
linearization data $(\Phi_\bu,\Omega_\lin,\eta)$.
\end{corollary}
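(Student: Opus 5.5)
We verify the conditions of Def.~\ref{definition;linear} one at a time, with $\Phi_\bu$ and $\beta$ taken from Thm.~\ref{theorem;proper}. Condition \ref{L1} is immediate, since Thm.~\ref{theorem;proper} supplies a weak linearization $\Phi_\bu$ of $\Gamma_\bu$ at $\orbit$ on a groupoid neighbourhood $U_\bu$ of the zero section of $\nu_\bu\cong(P\times P\times\g^*)/G$. It therefore remains to check the four items of \ref{L2}: that $\Omega_\lin$ is linear, that $\eta|_\orbit=\Omega_\orbit$, that $\Phi_1^*\Omega_1=\Omega_\lin+\delta\eta$, and that $\Phi_0^*\Omega_0=d\eta$.

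The identities are formal. Since $\beta|_\orbit=0$ and $p|_\orbit=\id$, we get $\eta|_\orbit=\Omega_\orbit$. Next,
\[
\Omega_\lin+\delta\eta=\Omega^\theta-\delta p^*\Omega_\orbit+\delta\beta+\delta p^*\Omega_\orbit=\Omega^\theta+\delta\beta=\Phi_1^*\Omega_1
\]
by Thm.~\ref{theorem;proper}. Finally, by \eqref{equation;model-primitive},
\[
d\eta=d\beta+dp^*\Omega_\orbit=\Phi_0^*\phi.
\]
Here $\phi$ denotes the $3$-form $\Omega_0$ on $\Gamma_0$; the $M$ in the statement of the theorem should be read as $\Gamma_0$. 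This gives $\Phi_0^*\Omega_0=d\eta$.

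The substantive step, and the one I expect to need care, is linearity of $\Omega_\lin$ on $U_1\subseteq(P\times P\times\g^*)/G$. I would check this upstairs on $P\times P\times\g^*$, where the vector bundle structure is scalar multiplication $m_t$ in the $\g^*$ factor, and $G$-basic forms descend compatibly with $m_t$. First, $\delta p^*\Omega_\orbit=s^*p^*\Omega_\orbit-t^*p^*\Omega_\orbit$ lifts to $\pm(p_1^*\Omega_\orbit-p_2^*\Omega_\orbit)$, because source and target of $\nu_\bu$ composed with $p$ are the two projections to $\orbit$. Choosing the sign convention consistently with \eqref{basform}, the difference $\Omega^\theta-\delta p^*\Omega_\orbit$ then lifts to
\[
-d\inner{\theta_1,\pr_3}+d\inner{\theta_2,\pr_3}.
\]
Second, $\inner{\theta_i,\pr_3}$ is fibrewise linear in $\xi\in\g^*$, so $m_t^*$ multiplies it by $t$; since $m_t^*$ commutes with $d$, its exterior derivative is also linear, and so $\Omega_\lin$ satisfies $m_t^*\Omega_\lin=t\,\Omega_\lin$.

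If the sign bookkeeping does not work out, the $p_i^*\Omega_\orbit$ terms would double rather than cancel. In that case I would fall back on the observation that $\delta p^*\Omega_\orbit$ is pulled back from the zero section, so it is $m_t$-invariant (degree $0$). One then checks that the degree-$0$ part of $\Omega^\theta$, namely $p_1^*\Omega_\orbit-p_2^*\Omega_\orbit$, equals $\delta p^*\Omega_\orbit$, which forces the sign so that the remaining part of $\Omega_\lin$ is the degree-$1$ linear term.
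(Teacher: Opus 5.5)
Your proposal is correct and follows the paper's own proof. You check the identities of \ref{L2} by the same one-line computations, and you read linearity off \eqref{basform}: the $p_i^*\Omega_\orbit$ terms cancel against $\delta p^*\Omega_\orbit$, and the $d\inner{\theta_i,\pr_3}$ terms are fibrewise linear.

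Your sign hedge can be settled without any convention bookkeeping. Restrict $\Phi_1^*\Omega_1=\Omega^\theta+\delta\beta$ to the zero section, where $\Phi_1$ is the inclusion $i_1$ and $\delta\beta$ restricts to $\delta(\beta|_\orbit)=0$. This gives $\Omega^\theta|_{\Gamma_1|_\orbit}=i_1^*\Omega_1=\delta\Omega_\orbit$, using the isotropy of $(i_\bu,\Omega_\orbit)$ (Ex.~\ref{example;orbit} with \S\,\ref{subsubsection;moment}). Hence the weight-$0$ part of $\Omega^\theta$ is $\pi_1^*\bigl(\Omega^\theta|_{\Gamma_1|_\orbit}\bigr)=\delta p^*\Omega_\orbit$, and $\Omega_\lin$ is exactly the weight-$1$ part.
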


\begin{proof}
It follows from~\eqref{basform} that $\Omega_\lin$ is linear.  The restriction of $\eta$ to the orbit is $\beta|_\orbit+(p^*\Omega_\orbit)|_\orbit=\Omega_\orbit$.  Thm.~\ref{theorem;proper} yields
\[
\Phi_1^*\Omega_1=\Omega^\theta+\delta\beta=\Omega^\theta+\delta\eta-\delta p^*\Omega_\orbit=\Omega_\lin+\delta\eta.
\]
Finally $d\eta=d\beta+dp^*\Omega_\orbit=\Phi_0^*\Omega_0$, so~\ref{L2} is satisfied.
\end{proof}

\subsection{Marsden-Weinstein reduction at the zero level}\label{sec:hamred}

If a symplectic groupoid is linear to begin with, we do not need a
linearization theorem and can apply the reduction theorem,
Thm.~\ref{theorem;reduction}, directly.  This is notably the case for
Marsden-Weinstein reduction at the zero orbit.  A treatment of this
case similar to ours can be found in~\cite{sheshko;derived-symplectic}.

Let $G$ be a Lie group.  We trivialize the cotangent bundle $T^*G\cong
G\times\g^*$ by left translations.  Under this identification the
coadjoint action groupoid is a symplectic groupoid
$(G\ltimes\g^*\rightrightarrows\g^*,\omega_\can)$, and a hamiltonian
$G\ltimes\g^*$-space is nothing other than a symplectic manifold
equipped with a hamiltonian $G$-action $(M,\mu,\omega)$ in the usual
sense.  Let $\orbit=\{0\}$ be the zero orbit.  Then
$(G\ltimes\g^*)|_\orbit=G\ltimes\{0\}=G$ and the normal bundle of $G$
is $G\ltimes\g^*$ itself.  The symplectic form $\omega_\can$ is linear
and $\Omega_\orbit=0$, so in Thm.~\ref{theorem;reduction} we can take
$\Omega_\lin=\omega_\can$, $\eta=0$, and $U_\bu=G\ltimes\g^*$, which
gives the following result.

\begin{theorem}\label{theorem;marsden-weinstein}
Let $G$ be a Lie group and $(M,\mu,\omega)$ a hamiltonian $G$-space.
The symplectic quotient $M\qu[0]G$ at the zero orbit is the
quasi-smooth action groupoid
\[G\ltimes\huaZ(M,M\times\g^*,\id\times\mu),\]
the reduced symplectic form has two components
\[
\omega^\red_{2,-1,1}=-\omega_\can\in\Omega^{2,-1,1}(M\qu[0]G)\quad\text{and}\quad
\omega^\red_{2,0,0}=\omega\in\Omega^{2,0,0}(M\qu[0]G),
\]
and the symplectic pairing at a point $x\in\mu^{-1}(0)$ is
\[
\begin{tikzcd}
\bT_x(M\qu G)\ar[d,"(\omega^\red)_\bu^\sharp"']\colon&\g\ar[d,"\id"']\ar[r,"a"]&T_xM\ar[r,"-T\mu"]\ar[d,"\omega^\sharp"']&\g^*\ar[d,"-\id"]\\
\bT^*_x(M\qu G)\colon&\g\ar[r,"-T^*\mu"]&T_x^*M\ar[r,"a^*"]&\g^*.
\end{tikzcd}
\]
\end{theorem}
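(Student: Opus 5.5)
The plan is to specialize Thm.~\ref{theorem;reduction} to the case $\Gamma_\bu=G\ltimes\g^*$, $\Omega_\bu=\omega_\can$ (with $\Omega_0=0$), $\orbit=\{0\}$, taking the linearization data $\Phi_\bu=\id$, $U_\bu=G\ltimes\g^*$, $\Omega_\lin=\omega_\can$, $\eta=0$. I then read off each ingredient of the general construction in \S\,\ref{sec:redspace}.

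First, I check \ref{L1}--\ref{L2}. The normal bundle of $\Gamma_\bu|_{\{0\}}=G\rightrightarrows\{0\}$ in $G\ltimes\g^*$ is canonically $G\ltimes\g^*$ itself, viewed as a VB-groupoid over $G\rightrightarrows *$. Hence $\Phi_\bu=\id$ is a weak linearization. Under the left trivialization $T^*G\cong G\times\g^*$, the form $\omega_\can$ is linear in the fibre variable $\xi\in\g^*$: it is $-d\inner{\theta_L,\xi}$, which is homogeneous of degree one in $\xi$. Since $\Omega_0=0$ and $\Omega_\orbit=0$, the conditions $\Phi_1^*\Omega_1=\Omega_\lin+\delta\eta$ and $\Phi_0^*\Omega_0=d\eta$ hold with $\eta=0$. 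The only point needing care is that the linear structure used in \S\,\ref{subsection;linear} on $\nu_1=G\times\g^*\to G$ agrees with the one making $\omega_\can$ linear. This reduces to homogeneity in $\xi$, which I would verify directly.

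Second, I compute $\huaG_\bu$ from \eqref{equation;symplectic-quotient}.
\begin{itemize}
\item At level $0$: $M\times_{\g^*}U_0=\{(x,\xi)\mid\mu(x)=\xi\}\cong M$, with $E_0=\pr_2^*\pi_0^*\nu_0=M\times\g^*$ and $\sigma_0(x)=\mu(x)$. This gives $\huaG_0=\huaZ(M,M\times\g^*,\id\times\mu)$.
\item At level $1$: $M\times_{\g^*}U_1\cong G\times M$ via the source condition, with $E_1$ pulled back from $\g^*$ and $\sigma_1(g,x)=\mu(x)$. This identifies $\huaG_\bu$ with the action groupoid $G\ltimes\huaZ(M,M\times\g^*,\id\times\mu)$. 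Here $G$ acts on the bundle $M\times\g^*$ via the coadjoint action, and $\mu$ is an equivariant section, as required for Prop.~\ref{ex:VBgrpd}.
\end{itemize}
The reduced form is then $\omega^\red_{2,-1,1}=-\pr_2^*\eta_1=-(\epsilon_{\nu_1})_*\omega_\can$, which in the $\huaZ$-notation is the canonical form with the odd fibre coordinate. The other component is $\omega^\red_{2,0,0}=\pr_1^*\omega-0=\omega$. Theorem~\ref{theorem;reduction}\eqref{item;reduction} then gives the $0$-shifted symplectic property.

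Third, for the pairing I apply Thm.~\ref{theorem;reduction}\eqref{item;tangent} with $A_c=\g$ and $E_c=\g^*$, and then compute $f=T^*u\circ\Omega^\sharp\circ\rho$ for $c=0$. The anchor of $G\ltimes\g^*$ at $0$ is the coadjoint infinitesimal action, $\rho(X)=-\ad^*_X 0=0$. So I cannot take this formula literally; instead I compute $f$ as the pairing $\lambda^{\eta_1}$ of $\g=A_0$ against the normal direction $\g^*$. This is the IM-pairing of $\omega_\can$ restricted to $\g\times\g^*$, which is the canonical duality, so $f=\id$ with the sign fixed by the orientation conventions.

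I expect this sign and identification bookkeeping to be the main obstacle. I would check it by evaluating $\lambda^{\omega_\can}$ on $(\xi,X)\in T_e^*G\oplus T_eG$ using the explicit Eilenberg--Zilber formula. The remaining vertical maps, $\omega^\sharp$ and $-f^*=-\id$, are then forced by graded antisymmetry.
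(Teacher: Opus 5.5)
Your proposal is correct and takes the paper's route: the paper gets this theorem by taking $\Gamma_\bu=G\ltimes\g^*$, $\orbit=\{0\}$ and the trivial linearization data $U_\bu=G\ltimes\g^*$, $\Phi_\bu=\id$, $\Omega_\lin=\omega_\can$, $\eta=0$ in Thm.~\ref{theorem;reduction}, exactly as you do. Your caution about the pairing is also justified: read literally, $f=T^*u\circ\Omega^\sharp\circ\rho$ in Thm.~\ref{theorem;reduction}\eqref{item;tangent} vanishes at $c=0$ because $\rho_0=0$, and the right way to get the degree $-1$ component is your approach, namely evaluating $\omega^\red_{2,-1,1}=-(\epsilon_{\nu_1})_*\omega_\can$ on $\widetilde{EZ}(u\otimes v)=u\otimes Ts_0(v)$ for $u\in\g$ and $v\in\g^*$, which yields the canonical duality.
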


\subsubsection{Reduction by using the diagonal}

An alternative way of computing the fibred product of the $1$-shifted lagrangian morphisms
\[
\begin{tikzcd}
G\ltimes M\ar[r,"\id\times\mu"]&G\ltimes\g^*&G\ar[l,"\id\times i_0"']
\end{tikzcd}
\]
is by using the diagonal
\[
\begin{tikzcd}[column sep=6em]
(G\ltimes M)\times(G\ltimes\{0\})\ar[r,"(\id\times\mu)\times(\id\times
    i_0)"]& (G\ltimes \g^*)\times (G\ltimes
  \g^*)&G\ltimes\g^*\ar[l,"\Delta"'].
\end{tikzcd}
\]
The diagonal map for smooth manifolds can be resolved by means of the
path space construction of Lemma \ref{lemma;path}.  However, this
construction is not functorial and there is in general no way of
producing a groupoid structure on $\huaP(G\ltimes\g^*)$ or a groupoid
morphism
\[
p\colon\huaP(G\ltimes\g^*)\longto(G\ltimes\g^*)\times(G\ltimes\g^*).
\]
Instead we propose the following ad hoc ``half-path object''
construction.  Letting $G$ act on $\g^*$ by the coadjoint action and
diagonally on $\g^*\times\g^*$ we obtain a $G$-action on the path
space $\huaP(\g^*)=\huaZ(\g^*\times\g^*,\g^*,\epsilon)$, and we form
the derived Lie groupoid $G\ltimes\huaP(\g^*)$.  The map
\[
p_{\g^*}\colon\huaP(\g^*)\longto\g^*\times\g^*
\]
defined by $p_{\g^*}(\xi_1,\xi_2)=(\xi_1,\xi_1-\xi_2)$ is equivariant
and therefore defines a groupoid morphism
\[
G\ltimes\huaP(\g^*)\longto(G\ltimes \g^*)\times(G\ltimes\g^*).
\]
One can show that this morphism is lagrangian.  It is not a fibration,
but it is transverse to $(\id\times\mu)\times(\id\times i_0)$, so
Thm.~\ref{theorem;lagintersection} tells us that we get a
$0$-shifted symplectic derived Lie groupoid
\[
(G\ltimes M)\times
G\ltimes\{0\}\times_{(G\ltimes\g^*)\times(G\ltimes\g^*)}G\ltimes\huaP(\g^*),
\]
which one can verify is isomorphic to $M\qu[0]G$.

\subsubsection{Differentiation and the BFV model}

Let $(M,\mu,\omega)$ be a hamiltonian $G$-space.  The
\emph{cohomological reduced space}, or \emph{BFV model}, or \emph{BRS
model} is the $\Z$-graded manifold
\[T^*\g[1]\times M=\g^*[-1]\times\g[1]\times M\]
equipped with the $0$-shifted symplectic form $\omega_\can+\omega$ and
the cohomological vector field
\[
Q=\frac12c_{\alpha\beta}^\gamma
\xi^\alpha\xi^\beta\pardif{}{\xi^\gamma}+
a^i_\alpha\xi^\alpha\pardif{}{x^i}+\mu^i_\alpha\pardif{}{p_\alpha};
\]
see e.g.~\cite{ks:brs}.  Here $\xi^\alpha$ are coordinates on $\g[1]$
with dual coordinates $p_\alpha$, $x^i$ are coordinates on $M$, $a^i$
are the components of the action, and $\mu^i$ are the components of
$\mu$.  This model differs from the derived groupoid $M\qu[0]G$ only
in that the latter has the Lie group in degree $1$ instead of the Lie
algebra.  We suspect that the BFV model is in some sense the Lie
algebroid of $M\qu[0]G$ and that its symplectic form is the image of
$\omega_\bu^\red$ under an appropriate van Est type map.

\subsection{Quasi-hamiltonian reduction at the unit}\label{sec:quamat1}

Let $G$ be a \emph{quadratic} Lie group, i.e.\ a Lie group whose Lie algebra $\g$ is equipped with an $\Ad$-invariant non-degenerate symmetric bilinear form $\langle\cdot,\cdot\rangle\colon \g\times\g\to\mathbb{R}$.  It was shown in \cite{amm,moxu} that the conjugation action groupoid $G\ltimes G\rightrightarrows G$ carries a $1$-shifted symplectic structure $\Omega_\bu$ given by
\begin{align*}
\Omega_{1,(g,x)}&=\frac{1}{2}\big(\langle\Ad_x \pr^*_1 \theta^l, \pr^*_1 \theta^l\rangle + \langle \pr^*_1 \theta^l , \pr^*_2(\theta^l + \theta^r)\rangle\big)\in\Omega^2(G\times G),\\
\Omega_0&=-\frac{1}{12}\biginner{\theta^l,[\theta^l,\theta^l]}\in\Omega^3(G),
\end{align*}
where $\theta^l,\theta^r\in\Omega^1(G,\g)$ denote respectively the left and right Maurer-Cartan $1$-forms on $G$.  As explained in \cite{moxu}, the hamiltonian $G\ltimes G$-spaces are exactly the quasi-hamiltonian $G$-spaces $(M,\mu,\omega)$ introduced in \cite{amm}.  It was pointed out in  \cite{amm} that $G\ltimes G$ is weakly symplectically linearizable at the unit element $1\in G$.  The normal bundle of $(G\ltimes G)|_{\{1\}}=G\ltimes\{1\}$ is $G\ltimes\g$, the adjoint action groupoid, and the linearization map $\Phi\colon G\ltimes\g\to G\ltimes G$ is $\Phi=\id\times\exp$.  Let $\varphi\colon\g\to\g^*$ be the equivariant isomorphism induced by the invariant bilinear form on $\g$.  The linear part of $\Omega_1$ is the $2$-form
\[\Omega_\lin=(\id\times\varphi)^*\omega_\can\in\Omega^2(G\ltimes\g),\]
where $\omega_\can$ is the canonical symplectic form on $T^*G$.  Let $\varpi\in \Omega^2(\g)$ be the image of $\exp^*H$ under the standard homotopy operator $\Omega^\bu(\g)\to \Omega^{\bu-1}(\g)$ for the linear retraction onto the origin, i.e.
\begin{equation}\label{eq:varpi}
\varpi=\frac{1}{2}\int^1_0\Biginner{\exp_s^*\theta^r, \pardif{}{s}\exp^*_s\theta^r}\,ds\in\Omega^2(\g).
\end{equation}
The results of \cite[Lemma~3.3, Prop.~3.4]{amm} say that, in our language, $(\Phi,\Omega_\lin,\varpi)$ is a weak symplectic linearization of $(G\ltimes G,\Omega_\bu)$.  We may take $U_\bu=G\ltimes\g$ and Thm.~\ref{theorem;reduction} yields the following result.

\begin{theorem}\label{theorem;quasi}
Let $G$ be a quadratic Lie group and $(M,\mu,\omega)$ a quasi-hamiltonian $G$-space.  The symplectic quotient $M\qu[1]G$ at the unit element is the quasi-smooth action groupoid $G\ltimes\huaZ(\widehat{M},\widehat{M}\times\g,s)$, where $\widehat{M}$ is the fibred product 
\[
M\times_G\g=\{\,(x,\xi)\in M\times\g\mid\mu(x)=\exp(\xi)\,\},
\]
$G$ acts on $\widehat{M}$ by the diagonal action, and $s$ is the section $s(x,\xi)=(x,\xi,\xi)$.  The reduced symplectic form is $\omega^\red_\bu$ with 
\[
\omega^\red_{2,-1,1}=-(\id\times\varphi)^*\omega_\can\in\Omega^{2,-1,1}(M\qu[1]G),\qquad\omega^\red_{2,0,0}=\pr_1^*\omega-\pr_2^*\varpi\in\Omega^{2,0,0}(M\qu[1]G),
\]
and the symplectic pairing at a point $x\in\mu^{-1}(1)$ is
\[
\begin{tikzcd}[column sep=large]
\bT_x(M\qu[1]G)\ar[d,"(\omega^\red)_\bu^\sharp"']\colon&\g\ar[d,"\id"']\ar[r,"a"]&T_xM\ar[r,"-T\mu"]\ar[d,"\omega^\sharp"']&\g\ar[d,"-\id"]\\
\bT^*_x(M\qu[1]G)\colon&\g\ar[r,"-T^*\mu\circ\varphi"]&T_x^*M\ar[r,"\varphi^{-1}\circ a^*"]&\g.
\end{tikzcd}
\]
\end{theorem}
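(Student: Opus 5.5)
The plan is to apply Thm.~\ref{theorem;reduction} with the linearization data $(\Phi_\bu,\Omega_\lin,\eta)=(\id\times\exp,(\id\times\varphi)^*\omega_\can,\varpi)$ on $U_\bu=G\ltimes\g$, and then to make the output of that theorem explicit. Two things have to be checked. First, the triple must satisfy Def.~\ref{definition;linear}. Second, the abstract formulas \eqref{equation;symplectic-quotient} and item \eqref{item;tangent} of Thm.~\ref{theorem;reduction} must specialize to the stated groupoid, forms and pairing.

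\emph{Step 1: linearization.} Condition \ref{L1} holds as follows. The orbit is $\orbit=\{1\}$, and its normal VB-groupoid is $G\ltimes\g\rightrightarrows\g$ with the adjoint action. The map $\id\times\exp$ is a groupoid morphism, since $\exp(\Ad_g\xi)=g\exp(\xi)g^{-1}$. It restricts to an open embedding on a $G$-invariant neighbourhood of $0$ on which $\exp$ is a diffeomorphism, and its normal derivative at the zero section is the identity. Since the statement takes $U_\bu=G\ltimes\g$, I would either shrink to such a neighbourhood or observe that Thm.~\ref{theorem;reduction} only uses that $\Phi_\bu$ is a strong fibration together with the identities in \ref{L2}; the map $\id\times\exp$ is a submersion near the classical locus, which is what the fibred product needs. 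For \ref{L2} I would cite \cite[Lemma~3.3, Prop.~3.4]{amm}, which give $\exp^*\Omega_0=d\varpi$ and $\Phi_1^*\Omega_1=\Omega_\lin+\delta\varpi$. Here $\Omega_\lin$ is linear because $\omega_\can$ is linear on $T^*G\cong G\times\g^*$ and $\varphi$ is linear, and $\varpi|_{0}=0=\Omega_\orbit$. The main obstacle is matching the sign conventions of $\delta$ and of the left trivialization in \cite{amm} with ours. I would check $\Phi_1^*\Omega_1-\delta\varpi$ at the zero section by direct expansion of $\Ad_{\exp\xi}$ and $\exp^*\theta^{l,r}$ to first order in $\xi$.

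\emph{Step 2: the fibred product.} By \eqref{equation;symplectic-quotient}, $\huaG_0=\huaZ(M\times_G\g,\pr_2^*\pi_0^*\nu_0,\sigma_0)$. Here $\nu_0=\g\times\g\to\g$, so $E_0=\widehat M\times\g$ and $\sigma_0(x,\xi)=\xi$, which is the section $s$. Similarly $\huaG_1$ is the space of arrows of the action groupoid $G\ltimes\widehat M$, with the pulled-back bundle and section. The groupoid structure is the diagonal action because $U_1=G\times\g$ has trivial core. Hence $\huaG_\bu\cong G\ltimes\huaZ(\widehat M,\widehat M\times\g,s)$. The forms are then read off from the definition: $\omega^\red_{2,-1,1}=-\pr_2^*\eta_1$ with $\eta_1=(\epsilon)_*\Omega_\lin$, which in the derived coordinates is $-(\id\times\varphi)^*\omega_\can$, and $\omega^\red_{2,0,0}=\pr_1^*\omega-\pr_2^*\varpi$.

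\emph{Step 3: the pairing.} I would apply Thm.~\ref{theorem;reduction}\eqref{item;tangent} with $c=1$ and $E_c=\g$. It remains to compute $f=T^*u\circ\Omega^\sharp\circ\rho$ in this setting. One can either differentiate $\Omega_1$ at $(g,x)=(1,1)$, or equivalently use that the linear part near the unit is $(\id\times\varphi)^*\omega_\can$. Either way $f$ becomes $\varphi$, up to the identification $E_c^*\cong\g$ via $\varphi$. This converts $T^*\mu$ into $T^*\mu\circ\varphi$ and $a^*$ into $\varphi^{-1}\circ a^*$, giving the displayed diagram. The computation is done at $\xi=0$, where $\varpi$ vanishes to second order, so it does not contribute.
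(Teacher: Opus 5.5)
Your proposal is correct and follows the paper's own argument. The paper cites \cite[Lemma~3.3, Prop.~3.4]{amm} for the linearization data $(\id\times\exp,(\id\times\varphi)^*\omega_\can,\varpi)$ with $U_\bu=G\ltimes\g$, and reads the groupoid, the forms and the pairing off Thm.~\ref{theorem;reduction}; your Steps 1--3 do exactly this in more detail.

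There are two minor points to correct:
\begin{enumerate}
\item Of your two options in Step 1, only shrinking works. The strong fibration property and the transversality of $\mu$ and $\exp$ are needed on all of $U_0$, not just near the classical locus, and for compact $G$, for example, $\exp$ is not a submersion on all of $\g$. The paper's choice $U_\bu=G\ltimes\g$ also glosses over this.
\item $\varpi$ vanishes at $0$ only to first order, since its leading term is a multiple of $\inner{[\xi,v],w}$. This does not hurt you, because your pairing computation only uses that $\varpi$ vanishes at $0$.
\end{enumerate}
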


\subsection{Lu reduction for Poisson-Lie group actions}\label{subsection;lu}

Let $(\mathfrak{d},\g,\g^*)$ be a Manin triple.  Let $D$ be a Lie group integrating $\mathfrak{d}$ and let $(G,\pi)$ and $(G^*,\pi^*)$ be Poisson Lie groups integrating $\g$, resp.\ $\g^*$, along with morphisms $\phi_G\colon G\to D$ and  $\phi_{G^*}\colon G^*\to D$ integrating the inclusions $\g\to\mathfrak{d}$ and $\g^*\to\mathfrak{d}$.  We call the triple $(D,G,G^*)$ a \emph{Manin triple of Lie groups} integrating $(\mathfrak{d},\g,\g^*)$.  We say the triple $(D,G,G^*)$ is \emph{complete} if the map
\begin{equation}\label{eq:complete}
\psi\colon G^*\times G\longto D,\qquad\psi(z,g)=\phi_{G^*}(z)\phi_G(g)^{-1}
\end{equation}
is a diffeomorphism.  If the triple is complete we obtain \emph{dressing actions} of $G^*$ on $G$ and of $G$ on~$G^*$, 
\[
G\times G^*\longto G,\quad(g,z)\longmapsto g^z,\qquad G\times G^*\longto G^*,\quad(g,z)\longmapsto{}^gz,
\]
determined by the equation
\[
gz={}^gz g^z\qquad\text{for all $(g,z)\in G \times G^*$}.
\]
It was shown in \cite{Lu-we1} (see also \cite[Thm.~4.2]{lu:the}) that the product $G\ltimes G^*$ carries a symplectic form $\Omega$ which makes the dressing action groupoid
\[
\begin{tikzcd}
G\ltimes G^*\ar[r,shift left]\ar[r,shift right]&G^*
\end{tikzcd}
\]
a symplectic groupoid that integrates the Poisson structure $\pi^*$ on $G^*$.  (See \cite[Thm.~3]{am:sympl} or \cite[Ex.~1.2]{libland-severa1} for a formula for $\Omega$.)  As explained in \cite[Ex.~3.10]{moxu}, a hamiltonian $G\ltimes G^*$-space is the same thing as a symplectic manifold $M$ equipped with a Poisson $G$-action $G\times M\to M$ that admits a $G^*$-valued moment map $\mu\colon M\to G^*$ in the sense of \cite{lu:mom}.  Let us consider reduction of $M$ at the identity element $1\in G^*$.  
The normal bundle of $(G\ltimes G^*)|_{\{1\}}=G\ltimes\{1\}$ is $G\times\g^*$, the coadjoint action groupoid of $G$.  But the exponential map of $G^*$ is not equivariant with respect to the dressing action of $G$ and therefore does not give us a weak linearization of the dressing action groupoid.  A replacement in some cases was found in \cite{am:linpl} and is as follows.

Let us say that $\g$ is a \emph{coboundary Lie bialgebra} if the Manin triple admits a $G$-equivariant splitting $j\colon\g^*\to\mathfrak{d}$ (which is usually different from the inclusion map $\g^*\to\mathfrak{d}$).  If $\g$ is a coboundary Lie bialgebra we define the map $\Exp\colon \g^*\to G^*$ to be the following composition
\[
\begin{tikzcd}
\Exp\colon\quad\g^*\ar[r,"j"]&\mathfrak{d}\ar[r,"\exp"]&D\ar[r,"\psi^{-1}"]&G^*\times G\ar[r,"\pr_{G^*}"]&G^*.
\end{tikzcd}
\]
This map is $G$-equivariant (\cite[\S 3.3]{am:linpl}) and satisfies $T_0\Exp=\id_{\g^*}$, and therefore provides a linearization map $\Phi=\id\times\Exp\colon G\ltimes\g^*\to G\ltimes G^*$ at $G\ltimes\{1\}$.  Let
\[
\Omega_\lin=\omega_\can\in\Omega^2(G\ltimes \g^*)\quad\text{and}\quad \eta=j^*\varpi_D-j^*\exp^*(\psi^{-1})^*\frac{1}{2}\langle \theta^l_{G^*},\theta^l_G\rangle\in\Omega^2(\g^*),
\]
where $\varpi_D\in\Omega^2(\mathfrak{d})$ is the form given in \eqref{eq:varpi} for the group $D$.   Prop.~3.6 in \cite{am:linpl} says that the dressing action groupoid $G\ltimes G^*$ is weakly symplectically linearizable at the orbit $G\ltimes\{1\}$ with linearization data $(\Phi,\Omega_\lin,\eta)$.  Taking $U_\bu=G\ltimes\g$ we deduce from Thm.~\ref{theorem;reduction} the following result.

\begin{theorem}\label{theorem;lu}
Let $\g$ be a coboundary Lie bialgebra and suppose that the Manin triple $(\mathfrak{d},\g,\g^*)$ is integrable to a complete Manin triple of Lie groups $(D,G,G^*)$.  Let $(M,\omega)$ be a symplectic manifold equipped with a Poisson $G$-action and a moment map $\mu\colon M\to G^*$.  The symplectic quotient $M\qu[1]G$ at the unit element $1\in G^*$ is the quasi-smooth action groupoid $G\ltimes\huaZ(\widehat{M},\widehat{M}\times\g,s)$, where $\widehat{M}$ is the fibred product 
\[
M\times_{G^*}\g^*=\{\,(x,\alpha)\in M\times\g^*\mid\mu(x)=\Exp(\alpha)\,\},
\]
$G$ acts on $\widehat{M}$ by the diagonal action, and $s$ is the section $s(x,\alpha)=(x,\alpha,\alpha)$.  The reduced symplectic form is $\omega^\red_\bu$ with 
\[
\omega^\red_{2,-1,1}=-\omega_\can\in\Omega^{2,-1,1}(M\qu[1]G),\qquad\omega^\red_{2,0,0}=\pr_1^*\omega-\pr_2^*\eta\in\Omega^{2,0,0}(M\qu[1]G),
\]
and the symplectic pairing at a point $x\in\mu^{-1}(1)$ is
\[
\begin{tikzcd}[column sep=large]
\bT_x(M\qu[1]G)\ar[d,"(\omega^\red)_\bu^\sharp"']\colon&\g\ar[d,"\id"']\ar[r,"a"]&T_xM\ar[r,"-T\mu"]\ar[d,"\omega^\sharp"']&\g^*\ar[d,"-\id"]\\
\bT^*_x(M\qu[1]G)\colon&\g\ar[r,"-T^*\mu"]&T_x^*M\ar[r,"a^*"]&\g^*.
\end{tikzcd}
\]
\end{theorem}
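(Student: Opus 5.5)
The plan is to deduce Thm.~\ref{theorem;lu} as a direct specialization of Thm.~\ref{theorem;reduction}, exactly as was done for Thm.~\ref{theorem;marsden-weinstein} and Thm.~\ref{theorem;quasi}. The input is the weak symplectic linearization $(\Phi,\Omega_\lin,\eta)$ of the dressing action groupoid $(G\ltimes G^*,\Omega)$ at the orbit $\orbit=\{1\}$, provided by \cite[Prop.~3.6]{am:linpl}. Here $\Phi=\id\times\Exp$ and $U_\bu=G\ltimes\g^*$ (taking $U_\bu$ to be the whole normal bundle; the ``$G\ltimes\g$'' in the preceding text should read $G\ltimes\g^*$). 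Before using it, I would check that this is linearization data in the sense of Def.~\ref{definition;linear}.

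For \ref{L1}, $\Exp$ is $G$-equivariant and satisfies $T_0\Exp=\id$, so $\id\times\Exp$ is a groupoid morphism that is the identity along $G\ltimes\{1\}$ with identity normal derivative. It may fail to be injective globally; if so, I would shrink to an invariant star-shaped neighbourhood of $0$ on which $\Exp$ is an open embedding. For \ref{L2}, $\omega_\can$ on $G\times\g^*\cong T^*G$ is linear, and since $\Omega_\orbit=0$ at the point orbit, I must verify $\eta|_0=0$. This is presumably automatic from the formula: $\varpi_D$ vanishes at the origin, and the second term is a pullback along $\exp\circ j$, whose differential at $0$ composed with $\psi^{-1}$ lands in the isotropic splitting of the pairing. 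The identities $\Phi_1^*\Omega=\omega_\can+\delta\eta$ and $\Phi_0^*\Omega_0=d\eta=0$ (note $\Omega_0=0$, since the groupoid is genuinely symplectic) are the content of the cited proposition. I expect this step to be the only real obstacle, because one has to match Alekseev--Meinrenken's sign and identification conventions (left trivialization of $T^*G$, the minus sign in $\psi$) with the paper's. I would resolve this by checking both sides at the unit section.

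Next I would instantiate \eqref{equation;symplectic-quotient}. The normal bundle $\nu_0$ of $\{1\}$ is $\g^*$ and $\nu_1=G\times\g^*$, with tautological sections $\alpha\mapsto\alpha$. The fibred product $M\times_{G^*}U_0$ is therefore $\widehat M=\{(x,\alpha)\mid\mu(x)=\Exp(\alpha)\}$, which is the underlying manifold of $\huaZ(\widehat M,\widehat M\times\g^*,s)$ with $s(x,\alpha)=\alpha$. Level $1$ is $G\times\widehat M$ by the same computation, so $\huaG_\bu$ is the action groupoid of the diagonal $G$-action. The forms come directly from the definition of $\omega^\red_\bu$: $\eta_1=(\epsilon)_*\omega_\can$ is the form written $\omega_\can$ in degree $(2,-1)$, and $\eta_0=\eta$.

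Finally, the pairing diagram follows from Thm.~\ref{theorem;reduction}\eqref{item;tangent}, using $A_1=\g$, $E_1=\g^*$, and $\rho=0$ at the fixed point $1$. We still need $f=T^*u\circ\Omega^\sharp\circ\rho$ to be the identity $\g\to\g$ under $E_1^*=\g$. I would get this from the linearization rather than from $\rho$ literally: pulling back along $\Phi$, $\Omega_1$ at the unit becomes $\omega_\can$ plus $\delta\eta$, and $\delta\eta$ vanishes at the units since $\eta|_0=0$. The canonical form then pairs $\g$ with $\g^*$ by the identity, as in Thm.~\ref{theorem;marsden-weinstein}.
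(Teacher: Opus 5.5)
Your route is the paper's: the theorem is Thm.~\ref{theorem;reduction} applied to the Alekseev--Meinrenken data $(\id\times\Exp,\omega_\can,\eta)$. You are also right that $U_\bu$ should read $G\ltimes\g^*$ and that the bundle should be $\widehat M\times\g^*$.

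The step that needs correcting is your treatment of $\eta$ at the origin.
\begin{itemize}
\item In \ref{L2}, the condition $\eta|_\orbit=\Omega_\orbit$ concerns the pullback of $\eta$ to the submanifold $\orbit$. For $\orbit=\{1\}$ both sides are $2$-forms on a point, so the condition is vacuous.
\item The pointwise vanishing of $\eta$ at $0\in\g^*$ that you propose to check is therefore not needed. With $\eta$ as written it is also false in general. The term $\varpi_D$ does vanish at $0$. But write $j(\alpha)=\alpha+R\alpha$ with $R\colon\g^*\to\g$; then $T_0(\psi^{-1}\circ\exp\circ j)(\alpha)=(\alpha,-R\alpha)$. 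So the second term gives $\eta_0(\alpha,\beta)=\pm\tfrac12\bigl(\langle\alpha,R\beta\rangle-\langle\beta,R\alpha\rangle\bigr)$. This is the skew part of $R$ (the $r$-matrix), which is nonzero whenever the cobracket is.
\item Isotropy of $j(\g^*)$ does not help here. It is a statement about the symmetric pairing on $\mathfrak{d}$, whereas the $2$-form is the antisymmetrized cross-pairing.
\item Consequently your justification of $(\delta\eta)_{(1,0)}=0$ fails, although the conclusion holds for a different reason. Both face maps $G\ltimes\g^*\to\g^*$ send $(1,0)$ to $0$ with the same differential $(\xi,\beta)\mapsto\beta$, because the coadjoint action fixes $0$ with zero infinitesimal action. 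Hence $(\delta\eta)_{(1,0)}=0$ for any $\eta$.
\item More directly, the $\g$--$\g^*$ block of the pairing is computed from $\omega^\red_{2,-1,1}=-\pr_2^*(\epsilon_{\nu_1})_*\omega_\can$ alone. That gives $f=\id$ without going through $\Phi_1^*\Omega_1$ at all.
\end{itemize}

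The value $\eta_0$ does enter the middle vertical arrow. Evaluate $\omega^\red_{2,0,0}=\pr_1^*\omega-\pr_2^*\eta$ at $(x,0)$ and transport it to $T_xM\cong T_{(x,0)}\widehat M$ using $T_0\Exp=\id$. The result is $\omega^\sharp-T^*\mu\circ\eta_0^\sharp\circ T\mu$, which in general differs from $\omega^\sharp$. The unit is fixed by the dressing action, so $T\mu\circ a=0$ at points of $\mu^{-1}(1)$. Therefore the correction term is null-homotopic, via the homotopy $\g^*\to T^*_xM$, $\beta\mapsto\pm T^*\mu(\eta_0^\sharp\beta)$. So the displayed pairing is correct up to chain homotopy, which is all that non-degeneracy requires. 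Justify the diagram this way rather than by appeal to $\eta_0=0$.
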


\begin{remark}
If the map $\psi$ in~\eqref{eq:complete} is not a global diffeomorphism, the map $\Exp\colon\g^*\to G^*$ is defined only in a neighbourhood of the origin and therefore the fiber replacement is not as good.  We suspect in this case it may still be possible to make sense of the $0$-shifted symplectic derived Lie groupoid modelling the reduced space.  
\end{remark}

\subsection{Epilogue}\label{subsection;epilogue}

We finish with some commentary and supplementary results, mostly stated without proof.

\subsubsection{The symplectic reduction cube}

Let $G$ be a Lie group and $(M,\mu,\omega)$ a hamiltonian $G$-manifold.  As in \S\,\ref{sec:hamred} let 
\[\huaZ=\huaZ(M,M\times\g^*,\mu\times\id)\]
be the derived zero fibre of $\mu$ and $M\qu G=G\ltimes\huaZ$ the symplectic quotient of $M$.  The Marsden-Weinstein reduction theorem can be interpreted as saying that in the case of regular reduction the natural map $\huaZ\to M^-\times M\qu G$ is a lagrangian embedding, in other words that the zero fibre defines a lagrangian correspondence $M\dashrightarrow M\qu G$.  This statement generalizes to the singular case as follows.  The \emph{symplectic reduction cube} is the commutative diagram of derived $1$-groupoids
\begin{equation}\label{equation;cube}
\begin{tikzcd}[column sep=small]
&\huaZ\arrow[dl]\arrow[rr]\arrow[dd]&&0\arrow[dl]\arrow[dd]\\
M\qu G\arrow[rr,crossing over]\arrow[dd]&&G\ltimes0\\
&M\arrow[dl]\arrow[rr,"\mu"near start]&&\g^*\arrow[dl]\\
G\ltimes M\arrow[rr]&&G\ltimes\g^*\arrow[from=uu,crossing over]\\
\end{tikzcd}
\end{equation}
The three arrows pointing to $G\ltimes\g^*$ are lagrangian morphisms (\S\,\ref{subsubsection;moment}) and the front face of the cube expresses the fact that $M\qu G$ is the intersection of the lagrangians $0$ and $G\ltimes M$ (Thm.~\ref{theorem;marsden-weinstein}).  The next proposition says that the three faces adjacent to $\huaZ$ are lagrangian correspondences.

\begin{proposition}\label{proposition;cube}
The morphisms
\[
\ca{Z}\longto M^-\times M\qu G,\qquad\ca{Z}\longto0\times M,\qquad\ca{Z}\longto 0\times M\qu G
\]
define $0$-shifted lagrangian correspondences
\[
\ca{Z}\colon M\dashrightarrow M\qu G,\qquad\ca{Z}\colon0\dashrightarrow M,\qquad\ca{Z}\colon0\dashrightarrow M\qu G.
\]
Each of the faces of the cube~\eqref{equation;cube} is a cartesian square.
\end{proposition}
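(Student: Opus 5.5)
The plan is to derive all three lagrangian statements from the composition theorem (Thm.~\ref{thm:complag}) and the intersection theorem (Thm.~\ref{theorem;lagintersection}), after first setting up the cube explicitly. Write $\huaZ=\huaZ(M,M\times\g^*,\mu\times\id)$. It is the derived fibre product $M\times_{\g^*}\huaP_0$, where $\huaP_0=\huaZ(\g^*,\g^*\times\g^*,\epsilon)\simeq 0$ is the fibrant replacement of $0\to\g^*$ from Lemma~\ref{lemma;path}\eqref{item;vector-bundle}. This fibre product is transverse because $\huaP_0\to\g^*$ is a submersion. The same replacement, with $G$ acting, gives $\huaU_\bu=G\ltimes\g^*$ viewed as the derived groupoid $G\ltimes\huaP_0$, as in \S\,\ref{sec:hamred}. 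With this description, the cartesian property of each face is a direct levelwise computation with Prop.~\ref{proposition;derived-fibred-product}. For the top face, $\huaZ=(M\qu G)\times_{G\ltimes0}0$. For the back face, $\huaZ=M\times_{\g^*}0$. For the left face, $\huaZ=M\times_{G\ltimes M}M\qu G$, where $M$ and $0$ are viewed as unit groupoids. In every case the only thing to check is that the relevant maps at level $0$ are submersions or are pulled back from $\huaP_0\to\g^*$, so the fibre products are honest derived manifolds and the groupoid levels match.

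For the lagrangian correspondences, the key input is that $0\to G\ltimes0$ and $M\to G\ltimes M$ (the unit inclusions of the object manifold) are $1$-shifted lagrangians with zero isotropic structure. This reduces to Ex.~\ref{ep:diag}-type statements: the inclusion of objects into an action groupoid with zero form is lagrangian exactly when the normal complex, which is $A[-1]$ in the relevant degrees, pairs with the cotangent complex. For $M\to G\ltimes M$ with the $0$-shifted form $\omega$ one must actually use $M^-\times M\qu G$, and here I would proceed differently. $\huaZ\to G\ltimes\g^*$ and the maps of the back face are $1$-shifted lagrangians (\S\,\ref{subsubsection;moment}). Composing the correspondence $M\dashrightarrow$ ``point'' given by $(M,\mu,\omega)$ viewed as a $1$-shifted lagrangian with the graph correspondence then yields, via Thm.~\ref{thm:complag}, the $0$-shifted correspondence $\huaZ\colon 0\dashrightarrow M$ with isotropic structure $\pr^*\omega+$ the linear form of $\omega_\can$. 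The other two correspondences are obtained similarly. For $\huaZ\colon 0\dashrightarrow M\qu G$ we compose the lagrangian $0\to G\ltimes\g^*$ (replaced by $\huaU_\bu$) with the lagrangian correspondence $G\ltimes\g^*\dashrightarrow M\qu G$ given by the front face. For $\huaZ\colon M\dashrightarrow M\qu G$ we use the product of the two previous structures and check that the isotropic form is $\pr_1^*\omega-\pr_2^*\omega^\red_\bu$ up to $D$-exact terms.

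The main obstacle is the nondegeneracy check that cannot be reduced to an earlier theorem, namely that the quotient map $\huaZ\to M\qu G$, which is the inclusion of objects into the groupoid, interacts correctly with $\omega^\red_{2,-1,1}$. I would verify this directly at a classical point $x\in\mu^{-1}(0)$. There $\bT_x\huaZ$ is $T_xM\to\g^*$, and $\bT_x(M\qu G)$ is the three-term complex of Thm.~\ref{theorem;marsden-weinstein}. The normal complex of $\huaZ\to M^-\times M\qu G$ is then an explicit cone. I would write the map $\lambda$ of Def.~\ref{def:lag} as a block matrix, as in Ex.~\ref{ep:north-pole}, and show that its cone is acyclic by filtering so that the blocks $\id_\g$ and $-\id_{\g^*}$ appearing in the pairing diagram cancel the $\g$ and $\g^*$ terms. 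What remains is $\omega^\sharp$ on the doubled $T_xM$, which is handled by the diagonal argument of Ex.~\ref{ep:symp-ME-lag-corr}.
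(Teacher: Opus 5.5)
Your treatment of the faces is fine, and it goes beyond the paper, whose outline does not discuss them. Reading $0$ as $\huaP_0$ and $G\ltimes 0$ as $\huaU_\bu=G\ltimes\huaP_0$, every face is a levelwise transverse fibre product, so Prop.~\ref{proposition;derived-fibred-product} applies.

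The lagrangian part does not work as planned, because the inputs you feed into Thm.~\ref{thm:complag} do not exist.
\begin{itemize}
\item $G\ltimes 0$, $G\ltimes M$, $M$, $0$ and $\g^*$ are not $1$-shifted symplectic. So ``$0\to G\ltimes 0$, $M\to G\ltimes M$ and the maps of the back face are $1$-shifted lagrangians'' has no meaning. The only $1$-shifted lagrangians in sight are the three arrows into $G\ltimes\g^*$.
\item $\huaZ\to G\ltimes\g^*$ is not lagrangian either. At $x\in\mu^{-1}(0)$ its normal complex has nonzero $H^1$, of dimension $2\dim\g-\mathrm{rk}\,T_x\mu$, while $\bT^{*,\bu}_x\huaZ[0]$ lives in degrees $-1,0$.
\item Thm.~\ref{thm:complag} preserves the shift, so composing $1$-shifted correspondences never gives a $0$-shifted lagrangian morphism into $M$ or $M\qu G$. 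Thm.~\ref{theorem;lagintersection} lowers the shift, but its output is a symplectic object (this is how $M\qu G$ arises), not a lagrangian map.
\item Your candidate isotropic structure ``$\pr^*\omega+\dots$'' has the wrong shift: for a $0$-shifted target it must be a $(-1)$-shifted form.
\item Composing $\huaZ\colon M\dashrightarrow 0$ with $\huaZ\colon 0\dashrightarrow M\qu G$ over a point gives $\huaZ\times\huaZ$, not $\huaZ$.
\end{itemize}

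More decisively, when $0$ is a point with the zero form, the second and third correspondences fail in general, so no argument will produce them. A quasi-isomorphism $\N^{f,\bu}_x\simeq\bT^{*,\bu}_x\huaZ[-1]$ forces $2\chi(\bT_x\huaZ)=\chi(\bT X)$ for the target $X$. We have $\chi(\bT_x\huaZ)=\dim M-\dim\g$, $\chi(\bT M)=\dim M$ and $\chi(\bT(M\qu G))=\dim M-2\dim\g$. So you would need $\dim M=2\dim\g$, resp.\ $\dim M=0$. For example, take $S^1$ acting diagonally on $\mathbb{C}^2$: at $x=0$ the normal complex of $\huaZ\to M$ has $H^0=0$, while $H^0(\bT^{*,\bu}_0\huaZ[-1])=\g_x=\R$.

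Only $\huaZ\to M^-\times M\qu G$ passes this test, and it is the only one the paper's outline checks. The other two are presumably meant as regroupings of the single lagrangian $\huaZ\to 0\times M^-\times M\qu G$.

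For that correspondence your last paragraph is the paper's argument, and it should be the whole proof:
\begin{itemize}
\item The isotropic structure is $0$. Indeed $\omega^\red_{2,0,0}$ restricts to $\omega$ on $\huaZ$, and $\omega^\red_{2,-1,1}$ pulls back to zero along the unit inclusion, since normalized forms vanish on degeneracies.
\item The normal complex $T_xM\oplus\g\to\g^*\oplus T_xM\oplus T_xM\to\g^*$ has cohomology $\g_x$, $T_xM/T_x(Gx)$, $0$.
\item The target $\g\to T_x^*M$, with differential $T_x^*\mu$, has cohomology $\g_x$ and $T_x^*M/\omega^\sharp(T_x(Gx))$.
\item The maps $\lambda_0=(0\ \id)$ and $\lambda_1=(0\ {-\omega^\sharp}\ \omega^\sharp)$ induce $\id_{\g_x}$ and $\omega^\sharp$.
\end{itemize}
What makes the degree-one cohomologies match is the moment map identity $T_x^*\mu=\pm\,\omega^\sharp\circ a_x$, not the argument of Ex.~\ref{ep:symp-ME-lag-corr}.
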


\begin{proof}[Outline of proof]
This is a groupoid version of \cite[\S\,2.2]{cal:crit}.  We verify here that the morphism $f\colon\ca{Z}\longto M^-\times M\qu G$, equipped with the zero isotropic structure, is lagrangian.  Let $x\in\mu^{-1}(0)$ and let $\N_x^\bu$ be the normal cone of $f$ at $x$.  Let $\Omega=-\omega\oplus\omega_\bu^\red$ be the symplectic form on $M^-\times M\qu G$.  We must show that the chain map $\lambda=\lambda^{0,f,\Omega}\colon\N_x^\bu\to\bT^{*,\bu}_x\ca{Z}[-1]$ is a quasi-isomorphism.  A computation shows that $\N_x^\bu$ is the three-term complex
\[
\begin{tikzcd}
T_xM\oplus\g\ar[r,"d_0"]&\g^*\oplus T_xM\oplus T_xM\ar[r,"d_1"]&\g^*
\end{tikzcd}
\]
with differential given by 
\[
d_0=\begin{pmatrix}
    T_x\mu&0\\-\id&0\\-\id&a_x
\end{pmatrix},\qquad
d_1=\begin{pmatrix}
    \id&0&T_x\mu
\end{pmatrix},
\]
where $T_x\mu\colon T_xM\to\g^*$ is the derivative of $\mu$ and $a_x\colon\g\to T_xM$ is the infinitesimal action at $x$.  The cohomology of the complex is $H^0=\ker(a_x)=\g_x$ (the stabilizer subalgebra of $x$), $H^1=T_xM/T_x(Gx)$ (the slice to the $G$-action at $x$), and $H^2=0$.
The complex $\bT^{*,\bu}_x\ca{Z}[-1]$ is the two-term complex
\[
\begin{tikzcd}
\g\ar[r,"d_0"]&T^*_xM
\end{tikzcd}
\]
with differential given by $d_0=T^*_x\mu$, the transpose of $T_x\mu$.  The cohomology of the complex is $H^0=\ker(T^*_x\mu)=\g_x$ and $H^1=T^*_xM/\omega^\sharp(T_x(Gx))$.  The chain map $\lambda$ has two nonzero components,
\[
\lambda_0=\begin{pmatrix}
    0&\id   
\end{pmatrix},\qquad
\lambda_1=\begin{pmatrix}
   0&-\omega^\sharp&\omega^\sharp
\end{pmatrix}.
\]
One verifies that $\lambda_0$ induces $\id\colon\g_x\to\g_x$ and  that $\lambda_1$ induces the isomorphism $\omega^\sharp\colon T_xM/T_x(Gx)\to T^*_xM/\omega^\sharp(T_x(Gx))$.
\end{proof}

\begin{remark}\label{remark;principal}
Let $\huaM$ be a derived manifold and $G\times\huaM\to\huaM$ a $G$-action on $\huaM$.  Then the action groupoid $G\ltimes\huaM$ is a derived Lie groupoid and the identity bisection $\huaM\to G\ltimes\huaM$ is a principal $G$-bundle in the $2$-category of derived Lie groupoids as in \cite{bnz}.  In particular the arrows from the back face to the front face of the cube~\eqref{equation;cube} are $2$-categorical principal $G$-bundles.
\end{remark}

\subsubsection{The stratification}

Let $G$ be a Lie group and let $(M,\mu,\omega)$ be a hamiltonian $G$-space.  Suppose the $G$-action on $M$ is proper.  Then for each closed subgroup $H$ of $G$ the orbit type stratum
\[M_{(H)}=\{\,x\in M\mid\text{$G_x$ is conjugate to $H$}\,\}\]
is a submanifold of $M$.  The main result of~\cite{SjLe91} states that the intersection
\[Z_{(H)}=\{\,x\in M_{(H)}\mid\mu(x)=0\,\}\]
is likewise a submanifold of $M$, that the quotient 
\[S_{(H)}=Z_{(H)}/G\]
is a manifold, and that the form $\omega|_{Z_{(H)}}$ descends to a symplectic form $\omega_{(H)}$ on $S_{(H)}$.  Thus the coarse quotient space $\mu^{-1}(0)/G$ of the derived symplectic quotient $M\qu G$
has a natural stratification into symplectic manifolds.  The following theorem explains how the symplectic strata $S_{(H)}$ relate to the derived symplectic quotient.  Recall (Thm.~\ref{theorem;marsden-weinstein}) that $M\qu G$ is the derived Lie groupoid $G\ltimes\huaZ$, where $\huaZ$ is the derived manifold $\huaZ(M,M\times\g^*,\id\times\mu)$.  Let $i\colon G\ltimes Z_{(H)}\to G\ltimes\huaZ$ be the inclusion and let $p\colon G\ltimes Z_{(H)}\to S_{(H)}$ be the projection to (the identity groupoid of) $S_{(H)}$.

\begin{theorem}\label{theorem;strata}
Let $G$ be a Lie group and let $(M,\mu,\omega)$ be a proper hamiltonian $G$-space.  For every closed subgroup $H$ of $G$ the morphism $p\times i\colon G\ltimes Z_{(H)}\to S_{(H)}^-\times M\qu G$, equipped with the zero isotropic structure, defines a $0$-shifted lagrangian correspondence $S_{(H)}\dashrightarrow M\qu G$.
\end{theorem}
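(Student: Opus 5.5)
The plan is to verify directly the lagrangian condition of Def.~\ref{def:lag} for the morphism
\[
f=p\times i\colon G\ltimes Z_{(H)}\to S_{(H)}^-\times M\qu G
\]
with zero isotropic structure. This is modelled on the outline of proof of Prop.~\ref{proposition;cube}.

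\emph{Isotropy.} The target carries the $0$-shifted form $-\omega_{(H)}\oplus\omega^\red_\bu$, whose components are $\omega^\red_{2,0,0}=\omega$ on $\huaZ$ and $\omega^\red_{2,-1,1}=-\omega_\can$ on arrows (Thm.~\ref{theorem;marsden-weinstein}). The pullback along $i$ of the arrow component vanishes, because $Z_{(H)}$ is an honest manifold: the degree $-1$ coordinates of $\huaZ$ restrict to zero on the body $G\times Z_{(H)}$. On objects, $i_0^*\omega=\omega|_{Z_{(H)}}=p^*\omega_{(H)}$ by the Sjamaar--Lerman theorem. Hence $f^*(-\omega_{(H)}\oplus\omega^\red_\bu)=0=D(0)$, and $f$ is isotropic.

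\emph{Lagrangian condition.} Fix $x\in Z_{(H)}$. We compute both the normal complex $\N^{f,\bu}_x$ and $\bT^{*,\bu}_x(G\ltimes Z_{(H)})[-1]$ explicitly.

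The tangent complex of $G\ltimes Z_{(H)}$ is $\g\xrightarrow{a}T_xZ_{(H)}$. The tangent complex of $S_{(H)}$ is $T_{[x]}S_{(H)}=T_xZ_{(H)}/T_x(Gx)$ in degree $0$. The tangent complex of $M\qu G$ is $\g\to T_xM\to\g^*$ (Thm.~\ref{theorem;marsden-weinstein}).

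Using the local normal form $M\cong G\times_H(\mathfrak m^*\times V)$ with $\g=\h\oplus\mathfrak m$, $V$ the symplectic slice, we have $T_xZ_{(H)}=T_x(Gx)\oplus V^H$ and $\g_x=\h$. In this model we can read off cohomology.

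The normal complex, as the cone of $\bT f$, has cohomology fitting into a long exact sequence
\[
H^k(\bT_x(G\ltimes Z_{(H)}))\to H^k(\bT_{[x]}S_{(H)}\oplus\bT_x(M\qu G))\to\cdots
\]
with $H^0(\bT_x(M\qu G))=\h$, $H^1=V\oplus(\text{annihilator part})/\ldots$; more precisely $H^1=\ker T\mu/T(Gx)\cong V$ modulo $V^H$-type pieces, and $H^2=\h^*$.

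The target complex $\bT^*[-1]$ is $\g^*\to T^*_xZ_{(H)}$ shifted by one, with cohomology $(T_xZ_{(H)}/T(Gx))^*\cong(V^H)^*$ in degree $1$ and $\h^*$ in degree $2$.

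The map $\lambda^{0,f,\Omega}$ is $-(\bT f)^*\circ(\omega_{(H)}^\sharp\oplus(\omega^\red)^\sharp)$. We check it in each degree. On the $\h^*$ part it is the identity-type map coming from the $-\id$ component of $(\omega^\red)^\sharp$. In degree $1$ it is $\omega^\sharp$ restricted, combined with $\omega_{(H)}^\sharp$ on $V^H$. The key point is that the contributions $V^H\subset V$ and $T_{[x]}S_{(H)}\cong V^H$ cancel through the cone, leaving $\omega^\sharp\colon V\to V^*$ matched with $(V^H)^*$. We expect this to require the splitting $V=V^H\oplus(V^H)^{\omega}$, which holds because $V^H$ is a symplectic subspace of $V$.

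\emph{Main obstacle.} The hardest step is the careful bookkeeping of the cohomology of the cone and showing that the induced map is an isomorphism in every degree. The degree-$1$ term, which mixes $V$, $V^H$ and $T_{[x]}S_{(H)}$, is the delicate one. Here we will use the symplecticity of $V^H$ and the $H$-invariance of $\omega$ to identify the map with a nondegenerate pairing, in the same way the final step of Prop.~\ref{proposition;cube} identifies $\omega^\sharp$ on slices. If this pointwise linear algebra closes, Def.~\ref{def:lag} gives the result.
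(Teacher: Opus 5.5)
Your isotropy argument is fine, and your plan (Guillemin--Sternberg normal form, then a degree-by-degree comparison of cohomology) is the one the paper's sketch follows. The gap is exactly the step you flag: the ``cancellation'' in degree $1$ does not happen, and no linear algebra can make it happen. At $x\in Z_{(H)}$ one has $H^0(\bT_x(M\qu G))=\ker T_x\mu/T_x(Gx)=(T_x(Gx))^{\omega}/T_x(Gx)=V$, which is the \emph{full} symplectic slice. On the other hand $H^0(\bT_x(G\ltimes Z_{(H)}))=T_xZ_{(H)}/T_x(Gx)\cong V^H$, and $H^0(\bT f)\colon V^H\to V^H\oplus V$ is $v\mapsto(v,v)$. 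The long exact sequence of the cone then gives $H^{-1}(\N^{f,\bu}_x)=H^0(\N^{f,\bu}_x)=0$, $H^2(\N^{f,\bu}_x)\cong\h^*$, and
\[
H^1(\N^{f,\bu}_x)\cong\operatorname{coker}\bigl(V^H\to V^H\oplus V\bigr)\cong V,
\]
whereas $H^1(\bT^{*,\bu}_x(G\ltimes Z_{(H)})[-1])\cong(V^H)^*$. The induced map on $H^1$ is, up to sign, $u\mapsto\omega(u,\cdot)|_{V^H}$. Its kernel is the symplectic complement of $V^H$ in $V$, so the splitting $V=V^H\oplus(V^H)^\omega$ you intend to use exhibits this kernel rather than removing it. The same obstruction is visible in Euler characteristics: a lagrangian $L\to X$ forces $2\chi(\bT L)=\chi(\bT X)$, but here $2\chi(\bT_x(G\ltimes Z_{(H)}))-\chi(\bT(S_{(H)}\times M\qu G))=\dim V^H-\dim V$. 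For a concrete instance take $G=H=S^1$ acting on $\mathbb{C}$ by rotation and $x=0$. Then $Z_{(H)}=\{0\}$ and $S_{(H)}=*$, and the normal complex has $H^1=\mathbb{C}$ while the target has $H^1=0$.

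So the statement, as formulated for every closed subgroup $H$, holds only when $H$ acts trivially on the symplectic slice $V$. In that case your degree-$1$ map is $\omega^\sharp$ on $V=V^H$, which is nondegenerate, the degree-$2$ map is the identity of $\h^*$, and your argument closes. The paper's sketch reaches the general conclusion because in its diagram the degree-$1$ term of the normal complex is written $V^H\oplus\m\oplus\m^*\oplus V^H$, i.e.\ $T_xM$ is recorded as $\m\oplus\m^*\oplus V^H$. In the model $G\times^H(\m^*\times V)$ one actually has $T_xM=\m\oplus\m^*\oplus V$, and with this correction the paper's verification fails at the same place as yours.
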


\begin{proof}[Sketch of proof]
It follows from
\[
-i^*\omega_{(H)}+p^*\omega^\red_\bu=-\omega|_{Z_{(H)}}+\omega|_{Z_{(H)}}=0
\]
that the morphism $f=p\times i$ is isotropic.  It remains to show that the cochain map 
\[
\lambda=\lambda^{0,f,-\omega_{(H)}\oplus\omega_\bu^\red}_x\colon
\N^{f,\bu}_x\longto\bT^{*,\bu}_x(G\ltimes Z_{(H)})[-1]
\]
is a quasi-isomorphism for all $x$ with $\mu(x)=0$ and $G_x=H$.  To calculate this map we use the local model for hamiltonian $G$-spaces of \cite{guillemin-sternberg;normal}, according to which $x$ has a $G$-invariant open neighbourhood in $M$ that is isomorphic to a neighbourhood of the zero section of the homogeneous vector bundle $E=G\times^H(\m^*\times V)$.  Here $\m=T_{\bar1}(G/H)=\g/\h$ and $V$ is the symplectic slice
\[
V=\bigl(T_x(G\cdot x)\bigl)^\omega\big/\bigl((T_x(G\cdot x))^\omega\cap T_x(G\cdot x)\bigr).
\]
Replacing $M$ with the bundle $E$, we find $Z_{(H)}=G/H\times V^H$ and $S_{(H)}=V^H$, so $T_xZ_{(H)}=\m\oplus V^H$ and $T_{\bar{x}}S_{(H)}=V^H$, where $\bar{x}=p(x)$ and $V^H$ is the subspace of $H$-fixed vectors.  Using Thm.~\ref{theorem;marsden-weinstein}\eqref{item;tangent} we obtain the following diagram, where the top row is the normal complex (of amplitude $[-1,2]$), the bottom row is the cotangent complex $\bT^{*,\bu}_x(G\ltimes Z_{(H)})[-1]$ (of amplitude $[1,2]$), the vertical map is $\lambda$, and $\pr_\m\colon\g\to\m$ is the projection.
\[
\begin{tikzcd}[ampersand replacement=\&,column sep=huge,row sep=huge]
\g\ar[r,"{\Bigl(\begin{smallmatrix}\pr_\m\\0\\\id\end{smallmatrix}\Bigr)}"]
\ar[d]\&
\m\oplus V^H\oplus\g\ar[r,"{\biggl(\begin{smallmatrix}0&\id&0\\\id&0&-\pr_\m\\0&0&0\\0&\id&0\end{smallmatrix}\biggr)}"]
\ar[d]\&
V^H\oplus\m\oplus\m^*\oplus V^H\ar[r,"{(\begin{smallmatrix}0&0&-\pr_\m^*&0\end{smallmatrix})}"]
\ar[d,"{\Bigl(\begin{smallmatrix}0&0&\id&0\\\omega_{(H)}^\sharp&0&0&-\omega_{(H)}^\sharp\end{smallmatrix}\Bigr)}"']\&
\g^*\ar[d,"\id"]
\\
0\ar[r]\&
0\ar[r]\&
\m^*\oplus(V^H)^*\ar[r,"{(\begin{smallmatrix}-\pr_\m^*&0\end{smallmatrix})}"]\&
\g^*
\end{tikzcd}
\]
A straightforward verification shows that $\lambda$ is a quasi-isomorphism.
\end{proof}

\appendix

\section{Stalkwise weak equivalence}\label{app:comb}

In this appendix we provide proofs of Prop.~\ref{prop:w-eq-comb} and Cor.~\ref{cor:w-eq-comb}.

Given a morphism $f_\bu\colon X_\bu\rightarrow Y_\bu$ in $\gpd{\Cat, \covers}$, we can form the pullback diagram
\[
\begin{tikzcd}
X_\bu\times_{f, Y_\bu, d_0}Y_\bu^I\ar[d,"\pr_1"']\ar[r,"\pr_2"]&Y_\bu^I\ar[d,"d_0"]\\
X_\bu\ar[r,"f"]&Y_\bu.
\end{tikzcd}    
\]
It follows from \cite[Lemma 2.4, Prop.~2.5, Thm.~7.1]{Rogers-Zhu:2016} that $f_\bu\colon  X_\bu\to Y_\bu$ is a stalkwise weak equivalence if and only if the composition
\begin{equation} \label{eq:pf}
p(f)\colon X_\bu\times_{f, Y_\bu, d_0 }Y_\bu^I \xto{\pr_2} Y_\bu^I \xto{d_1} Y_\bu 
\end{equation} 
is a hypercover. 

It is easy to see that each level of the left hand side of \eqref{eq:pf} is 
\[
(X\times_{Y} Y^I)_k=X_k \times_{Y_k} (Y^I)_k = \Hom(\Delta[k]\xto{\delta^0} \Delta[1]\times\Delta[k], X\xto{f}Y),
\]
where $\delta^0$ is the coface map $\Delta[0]\to \Delta[1]$. Similarly, we see that 
\begin{align*}
\Hom(\partial\Delta[k] \to \Delta[k], p(f))&=\partial_k (X) \times_{\partial_k(Y)} \Hom(\partial\Delta[k]\times \Delta[1], Y) \times_{\partial_k(Y)} Y_k\\
&=\partial_k (X)\times_{\partial_k(Y)}  \Hom( \Delta[k] \sqcup_{0\times \partial \Delta[k]} \Delta[1]\times \partial\Delta[k], Y).
\end{align*}
    Thus, by Cor.~\ref{cor:hypercover-n} the map $f_\bu$ is a stalkwise weak equivalence if and only if the natural map
    \begin{equation} \label{eq:w-eq-first-step}
        \Hom\bigl(\Delta[k]\xto{d^0} \Delta[1]\times\Delta[k], X\xto{f}Y\bigr) \longto \partial_k (X)\times_{\partial_k(Y)}  \Hom\bigl( \Delta[k] \sqcup_{0\times \partial \Delta[k]} \Delta[1]\times \partial\Delta[k], Y\bigr)    
    \end{equation}
    is a cover for all $k\le n-1$ and an isomorphism for $k= n$.  (For $n=\infty$ we understand this to mean that \eqref{eq:w-eq-first-step} is a cover for all $k$.) The right hand side of \eqref{eq:w-eq-first-step} is representable by Rem.\ \ref{rm:h-cover-rep} when $f_\bu$ is a stalkwise weak equivalence.  
    
    In the following we show that this is equivalent to the fact that the maps  \eqref{eq:w-eq-infty} 
   \begin{equation*} 
    r(f)_j:X_j\times_{Y_j, d_{j+1}} Y_{j+1} \xrightarrow{} \partial_j(X)\times_{\partial_j(Y)} \Lambda^{j+1}_{j+1}(Y)
\end{equation*}
described in Prop.~\ref{prop:w-eq-comb} are well-defined covers for $j\geq 0$ and an isomorphism for $j=n$, which proves the first statement of Cor.~\ref{cor:w-eq-comb}. Notice that the right hand side $R_j$ of \eqref{eq:w-eq-infty}, is not automatically representable because $\partial_j(X) $ might not be representable for $\infty$-groupoid objects.  But $R_0=Y_0$ is representable, thus $R_1 = L_0 \times_{r_1, R_0, r_1} L_0$ is also representable, where $L_j$ denotes the left hand side of \eqref{eq:w-eq-infty}. We should see $L_j$ as a bi-colored $(j, 0)$-simplex (with $j+1$ white points and $1$ black point) and $R_j$ as a bi-colored $(j, 0)$-horn  (with $j+1$ white points and $1$ black point and the white face missing). Then the representability follows inductively just as in the case of usual Kan condition \cite[Cor.2.5]{henriques}. This is explained in \cite[Prop.4.4]{blohmann-krishna-zhu} and \cite[Lemma 6.28]{li:thesis} for Lie 2-groupoids.

\begin{figure}[ht]
\includegraphics{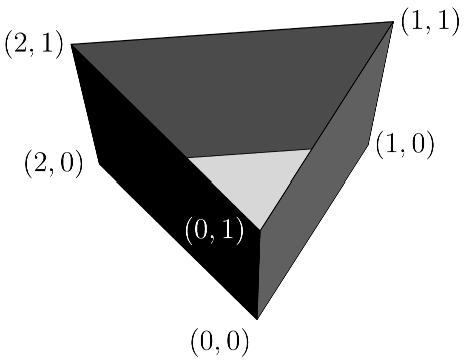}
\caption{The bucket
  $\Delta[2]\sqcup_{0\times\partial\Delta[2]}(\Delta[1]\times\partial\Delta[2])$.}
\label{figure;bucket-cylinder}
\end{figure}

The natural inclusion of the bucket shape into the cylinder
\begin{equation}\label{eq:bucket-inclusion}
        \Delta[k] \sqcup_{0\times \partial \Delta[k]} \Delta[1]\times \partial\Delta[k] \to \Delta[1] \times \Delta[k],
\end{equation}
is a collapsible extension (see \cite[Def.~3.5]{Rogers-Zhu:2016}) by \cite[Lemma 3.3.3]{hovey}. In fact, the statement therein says that \eqref{eq:bucket-inclusion} is an anodyne extension. However the construction done in the proof, as explained in details in  \cite[Lemma 7.14]{Rogers-Zhu:2016}, gives exactly a collapsible extension\footnote{This also is exactly the filtration given in the proof of \cite[Thm.~5.1]{Behrend-Getzler:2015}}
    \[ T_0 =  \Delta[k] \sqcup_{0\times \partial \Delta[k]} \Delta[1]\times \partial\Delta[k] \to T_1 \to \dots \to T_k \to T_{k+1} = \Delta[1] \times \Delta[k], \]
    where each step $T_{j+1} = T_j \sqcup_{\Horn{k+1}{j+1}} \Delta[k+1]$ is obtained by a push-out of a horn-extension for $j=0, \dots, k$. 
    
Thus the sequence of maps $\Hom(T_0, Y) \to \Hom(T_1, Y) \to \dots \to \Hom(T_{k+1}, Y)$ consists of covers for all $k$ and isomorphisms for $k\ge n$.  Since being a cover or isomorphism is stable under pullbacks and compositions\footnote{The argument is identical to the proof of \cite[Thm.~5.1]{Behrend-Getzler:2015}, except for the step in \eqref{diag:j-j+1}, where the following modification is required:
The right vertical map is a horn projection instead of a face map. With this,  we are able to obtain acyclic fibrations on higher levels instead of having only covers. }, and the following diagrams are pullback diagrams
\begin{equation}\label{diag:j-j+1}
\begin{tikzcd}
\partial_k (X)\times_{\partial_k(Y)}\Hom(T_{j+1},Y)\ar[d]\ar[r]&Y_{k+1}\ar[d,"p^{k+1}_{j+1}"]\\
\partial_k (X)\times_{\partial_k(Y)}\Hom(T_{j},Y)\ar[r]&\Horn{k+1}{j+1}(Y),  
\end{tikzcd}
\end{equation}
we have that
\begin{equation}\label{eq:missing-a-horn}
\partial_k (X)\times_{\partial_k(Y)} \Hom(T_k, Y)  \longto    \partial_k (X)\times_{\partial_k(Y)}  \Hom( \Delta[k] \sqcup_{0\times \partial \Delta[k]} \Delta[1]\times \partial\Delta[k], Y) 
\end{equation}
is a cover for all $k$ and an isomorphism for $k\ge n$. Especially the left hand side of \eqref{eq:missing-a-horn} is representable. Then filling the last horn as in $T_k \to T_{k+1} =\Delta[1] \times \Delta[k]$, we obtain the following pullback diagram:
\[
\begin{tikzcd}
X_k\times_{Y_k} (Y^I)_k  \ar[r] \ar[d,"r'_k"']& X_k \times_{Y_k, d_{k+1}} Y_{k+1} \ar[d,"r_k=(\partial_k\text{,}p^{k+1}_{k+1} )"]\\
\partial_k (X)\times_{\partial_k(Y)}  \Hom(T_k , Y) \ar[r] & \partial_k (X)\times_{\partial_k(Y)} \Horn{k+1}{k+1} (Y).
\end{tikzcd}
\]
If $r_k=(\partial_k, p^{k+1}_{k+1})$ is a cover for all $0\le k\le n-1$ and an isomorphism for $k= n$, the pullback map $r'_k$ is also a cover for all $0\le k\le n-1$ and an isomorphism for $k= n$. Since the map \eqref{eq:w-eq-first-step} is the composition of \eqref{eq:missing-a-horn} and $r'_k$, we conclude that $f_\bu$ is a weak equivalence if $r_k$ is a cover for all $0\le k\le n-1$ and an isomorphism for $k= n$.  The reverse implication now follows from the following pullback diagram:
\[
\begin{tikzcd}
X_k\times_{Y_k} Y_{k+1} \ar[r] \ar[d,"r_k"'] & X_k \times_{Y_k} (Y^I)_k \ar[d,"\eqref{eq:w-eq-first-step}"]\\
       \partial_k (X)\times_{\partial_k(Y)} \Horn{k+1}{k+1}(Y) \ar[r] & \partial_k (X)\times_{\partial_k(Y)}   \Hom( \Delta[k] \sqcup_{0\times \partial \Delta[k]} \Delta[1]\times \partial\Delta[k], Y).
\end{tikzcd}
\]

It remains to prove the second statement of Cor.~\ref{cor:w-eq-comb}. Let $f_\bu:X_\bu\to Y\bu$ be a weak equivalence in $\mathsf{Gpd}_n[\Cat, \covers]$. Notice that for $k\geq n$ the map 
$$\tau:=d_{k+1}\circ (p^{k+1}_{k+1})^{-1}:\Lambda^{k+1}_{k+1}(Y)\to Y_k$$ is a cover since $d_{k+1}$ is a cover.
Chasing through the following diagram
\[
\begin{tikzcd}[
  column sep=large,
  row sep=large,
  cells={nodes={inner sep=2pt}}
]
X_k\times_{Y_k}Y_{k+1}
  \arrow[r, two heads, "\widetilde p"]
&
X_k\times_{Y_k}\Lambda^{k+1}_{k+1}(Y)
  \arrow[r, "\widetilde r_k"]
  \arrow[d, two heads, "\mathrm{pr}_1"']
  \arrow[dr, phantom, "\lrcorner", very near start]
&
\partial_k(X)\times_{\partial_k(Y)}\Lambda^{k+1}_{k+1}(Y)
  \arrow[r, "\mathrm{pr}_2"]
  \arrow[d, two heads, "{\mathrm{id}\times\tau}"']
  \arrow[dr, phantom, "\lrcorner", very near start]
&
\Lambda^{k+1}_{k+1}(Y)
  \arrow[d, two heads, "\tau"]
\\
{}
&
X_k
  \arrow[r, "{q(f)_k=(\partial_k^X,f_k)}"']
&
\partial_k(X)\times_{\partial_k(Y)}Y_k
  \arrow[r, "\mathrm{pr}_2"']
  \arrow[d,  "\mathrm{pr}_1"]
  \arrow[dr, phantom, "\lrcorner", very near start]
&
Y_k
  \arrow[d, "\partial_k^Y"]
\\
{}
&%
{}
&
\partial_k(X)
  \arrow[r, "\partial_k(f)"']
&
\partial_k(Y),
\end{tikzcd}
\]
\noindent it follows that $r_k= \widetilde r_k \circ \widetilde p$ is a cover (resp. an isomorphism) iff $\widetilde r_k$ is a cover (an isomorphism) iff $q(f)_k=(\partial_k^X, f_k)$ is a cover (an isomorphism). Thus in these cases, our condition that $r_k$ is a cover (resp. an isomorphism) can be simplified to $q(f)_k$ being a cover (resp. an isomorphism). This is exactly the condition \eqref{eq:hyper-n} in hypercover.

\section{Other pretopologies}\label{section;alternative}

A disadvantage of the pretopology  $\covers_{\lsf}$ introduced in \S\,\ref{subsection;topology} is that in the absence of an implicit function theorem for derived manifolds it is hard to recognize when a fibration is locally split.  This does not matter for the purposes of this paper, since the fibrations of interest to us (namely the face maps of certain simplicial derived manifolds) are automatically locally split (by the degeneracy maps).  Nevertheless we wish to show in this appendix that the results of \S\,\ref{subsection;topology} go wrong if one relaxes the requirement that covers be locally split.  In particular we show that the pretopology of surjective fibrations, although it is subcanonical (Lemma~\ref{lem:eff-epi}), does \emph{not} meet our purposes.  On a positive note we point out that all the main results of \S\,\ref{subsection;topology} hold for the pretopology $\covers_\etale$ consisting of all surjective and essentially surjective \'etale fibrations (Thm.~\ref{theorem;etale-pretopology}).

\begin{lemma}\label{lemma;split}
Let $\covers$ be a subcanonical pretopology on $\DMfd$ with the property that the stalk functors $\ppt_{\huaX,x}\colon\Sh(\DMfd,\covers)\to\Set$ defined in \eqref{eq:dm-point} are points.  Then every cover in $\covers$ is locally split.
\end{lemma}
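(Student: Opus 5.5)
The plan is to exploit the fact that, for a subcanonical pretopology, a cover is an effective epimorphism. I would first show that if the stalk functors are points, the Yoneda image of any cover is a stalkwise surjection. Then the argument in the first half of the proof of Lemma~\ref{lem:stalkwise-surj-prop}, which uses only the definition of stalks, turns stalkwise surjectivity into local splittings.

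Let $c\colon\huaU\to\huaX$ be a cover in $\covers$. Since $\covers$ is subcanonical, $\yon(\huaX)$ and $\yon(\huaU)$ are sheaves and $c$ is an effective epimorphism. I would check that the diagram
\[
\yon(\huaU\times_\huaX\huaU)\rightrightarrows\yon(\huaU)\to\yon(\huaX)
\]
is a coequalizer in $\Sh(\DMfd,\covers)$. Concretely, given a sheaf $F$ and $\sigma\in F(\huaU)$ with $\pr_1^*\sigma=\pr_2^*\sigma$, the sheaf condition for the cover $c$ produces a unique descent to $F(\huaX)$. By Yoneda this is exactly the universal property of the coequalizer. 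Note that $\yon(\huaU\times_\huaX\huaU)=\yon(\huaU)\times_{\yon(\huaX)}\yon(\huaU)$, since the fibre product exists by \ref{P3}.

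Now apply a point $\ppt=\ppt_{\huaY,y}$. By hypothesis $\ppt$ preserves small colimits, so $\ppt(\yon(\huaX))$ is the coequalizer of sets of $\ppt(\yon(\huaU\times_\huaX\huaU))\rightrightarrows\ppt(\yon(\huaU))$. Any map of sets onto a coequalizer is surjective, so $\ppt(\yon(c))$ is surjective. Hence $\yon(c)$ is stalkwise surjective.

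Finally I would take $(\huaY,y)=(\huaX,x)$ for any $x\in\huaX$. The germ of $\id_\huaX$ lies in $\ppt_{\huaX,x}(\yon(\huaX))$, so by surjectivity it lifts to a germ $[s]$ with $s\colon\huaV\to\huaU$ defined on some open neighbourhood $\huaV\ni x$, such that the germs of $c\circ s$ and the inclusion $\huaV\hookrightarrow\huaX$ agree. Agreement of germs means equality on a smaller open neighbourhood $\huaV'$, so $s|_{\huaV'}$ is a local splitting at $x$. This holds for every $x$, so $c$ is locally split.

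The main obstacle is the coequalizer identification in the category of sheaves. One must make sure that colimits in $\Sh$ are taken there and not in presheaves; the hypothesis that $\ppt$ preserves colimits of sheaves is exactly what is needed. Subcanonicity is used both to make the Yoneda images sheaves and to supply the descent property.
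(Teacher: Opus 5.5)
Your proposal is correct and follows the same route as the paper: show that $\yon(c)$ is an epimorphism in $\Sh(\DMfd,\covers)$, use that points preserve colimits to get stalkwise surjectivity, and then extract local splittings from germs exactly as in the first half of Lemma~\ref{lem:stalkwise-surj-prop}. The only difference is how the epimorphism is obtained. The paper checks that $c_*$ is a local surjection and cites \cite[Lemma~4.8]{Rogers-Zhu:2016}, whereas you read off directly from the sheaf axiom that $\yon(c)$ is the coequalizer of its kernel pair in sheaves, which is slightly more self-contained.
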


\begin{proof}
Let $c\colon\huaU\to\huaM$ be a cover in $\covers$.  Since $\covers$ is subcanonical, the presheaves $\yon(\huaU)$ and $\yon(\huaM)$ are sheaves.  We claim that for every point $\ppt$ of the category of sheaves $\Sh(\DMfd,\covers)$ the map
\begin{equation}\label{equation;surjective}
\ppt(c_*)\colon\ppt(\yon(\huaU))\longto\ppt(\yon(\huaM))
\end{equation}
induced by the sheaf morphism $c_*\colon\yon(\huaU)\to\yon(\huaM)$ is surjective.  To see this, let $f\in\yon(\huaM)(\huaX)$, i.e.\ $f\colon\huaX\to\huaM$ is a morphism, and form the pullback diagram
\[
\begin{tikzcd}
\huaX\times_\huaM\huaU\arrow[r,"\bar{f}"]\arrow[d,"\bar{c}"]&
\huaU\arrow[d,"c"]
\\
\huaX\arrow[r,"f"]&\huaM.
\end{tikzcd}
\]
Then $\bar{f}\in\yon(\huaM)(\huaX\times_\huaM\huaU)$ and
\[c_*(\bar{f})=c\circ\bar{f}=f\circ\bar{c}=\bar{c}^*(f).\]
This shows that $c_*\colon\yon(\huaU)\to\yon(\huaM)$ is a local surjection in the sense of \cite[Def.~2.7]{Rogers-Zhu:2016}, and hence an epimorphism of sheaves by \cite[Lemma~4.8]{Rogers-Zhu:2016}.  Because $\ppt$ is a point, it preserves colimits, and in particular it sends epimorphisms to epimorphisms, so~\eqref{equation;surjective} is surjective.  Since the functors $\ppt_{\huaX,x}$ are points it follows that $\ppt_{\huaX,x}(c_*)$ is surjective for all $(\huaX,x)$.  By Lemma~\ref{lem:stalkwise-surj-prop} this implies that $c$ is locally split.
\end{proof}

Here are some obvious necessary conditions for a morphism to be
locally split.

\begin{lemma}\label{lemma;locally-split}
Let $f\colon\ca{M}\to\ca{N}$ be a morphism of derived manifolds.  If $f$ is locally split, then
\begin{enumerate}
\item\label{item;top-split}
the map $\pi_0(\ca{M})\to\pi_0(\ca{N})$ induced by $f$ is locally
split as a map of topological spaces;
\item\label{item;submersion}
for every $y\in\ca{N}$ there exists $x\in f^{-1}(y)$ such that the
tangent map $T_x^kf\colon T_x^k\ca{M}\to T_y^k\ca{N}$ is surjective
for all $k\ge0$;
\item\label{item;coh-surjective}
for every $y\in\pi_0(\ca{N})$ there exists $x\in
f^{-1}(y)\cap\pi_0(\ca{M})$ such that the induced map in cohomology
$H^k(T_x^\bu f)\colon H^k(T_x^\bu\ca{M})\to H^k(T_y^\bu\ca{N})$ is
surjective for all $k\ge0$.
\end{enumerate}
\end{lemma}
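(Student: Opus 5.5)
The plan is to work locally with a single splitting and read off each of the three properties from it. Since $f$ is locally split, for $y\in\ca{N}$ choose an open neighbourhood $\ca{U}$ of $y$ in $\ca{N}$ with the induced derived structure, and a morphism of derived manifolds $s\colon\ca{U}\to\ca{M}$ with $f\circ s=\id_{\ca{U}}$. Put $x=s(y)\in f^{-1}(y)$. The tools are functoriality of classical loci \eqref{equation;morphism-loci}, of graded tangent maps, and of tangent complexes at classical points \eqref{equation;morphism-tangent}.

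For \eqref{item;submersion}, apply the chain rule in each degree to $f\circ s=\id_\ca{U}$, using that $T^k_y\ca{U}=T^k_y\ca{N}$ because $\ca{U}$ is open (see \S\,\ref{subsubsection;submanifolds}). This gives
\[
T^k_xf\circ T^k_ys=\id_{T^k_y\ca{N}},
\]
so $T^k_xf$ is surjective for every $k\ge0$.

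For \eqref{item;top-split}, note that $s$, being a derived morphism, maps $\pi_0(\ca{U})=\pi_0(\ca{N})\cap U$ into $\pi_0(\ca{M})$ by \eqref{equation;morphism-loci} and \eqref{equation;classical-submanifold}. The underlying map is continuous, and restricting $s$ gives a continuous section of $\pi_0(f)$ over the open neighbourhood $\pi_0(\ca{N})\cap U$ of $y$ in $\pi_0(\ca{N})$. Since $y$ was arbitrary, $\pi_0(f)$ is locally split as a map of topological spaces.

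For \eqref{item;coh-surjective}, take $y\in\pi_0(\ca{N})$. Then $x=s(y)$ is classical by \eqref{equation;morphism-loci}. The tangent maps $T^\bu_ys$ and $T^\bu_xf$ are cochain maps between tangent complexes, and we have $T^\bu_xf\circ T^\bu_ys=\id$. Applying $H^k$ gives $H^k(T^\bu_xf)\circ H^k(T^\bu_ys)=\id$, so $H^k(T^\bu_xf)$ is surjective for all $k\ge0$.

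There is no real obstacle. The only point to check is that restricting to the open derived submanifold $\ca{U}$ does not change the tangent complex at $y$, which holds because the induced structure on an open set is just the restriction $\ca{S}^\bu_N|_U$.
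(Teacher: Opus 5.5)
Your proposal is correct and follows the paper's own argument: take a local splitting $s$ at $y$ and set $x=s(y)$. Then $T^k_ys$, the restriction $\pi_0(s)$, and $H^k(T^\bu_ys)$ split $T^k_xf$, $\pi_0(f)$, and $H^k(T^\bu_xf)$ respectively, and for the last two you rightly use $y\in\pi_0(\ca{N})$, so that $x$ is classical.
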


\begin{proof}
Let $y\in\huaN$, let $s\colon\huaW\to\huaM$ be a local splitting of $f$ at $y$, and let $x=s(y)$.  The derivative $T^k_ys$ is a splitting of $T^k_xf$, so $T^k_xf$ is surjective.  Suppose $y\in\pi_0(\huaN)$.  Then $\pi_0(s)\colon\pi_0(\huaW)\to\pi_0(\huaM)$ defines a local splitting of the continuous map $\pi_0(f)$.  Moreover, $H^k(T^\bu_ys)$ is a splitting of $H^k(T^\bu_xf)$, so $H^k(T^\bu_xf)$ is surjective.
\end{proof}

In general the conditions of Lemma~\ref{lemma;locally-split} are not sufficient for $f$ to be locally split, but the following lemma shows that at non-classical points condition \ref{lemma;locally-split}\eqref{item;submersion} alone is sufficient for $f$ to be locally split.

\begin{lemma}\label{lemma;non-classical-split}
Let $\ca{M}=(M,\ca{S}_M^\bu,Q_M)$ and $\ca{N}=(N,\ca{S}_N^\bu,Q_N)$ be derived manifolds.
\begin{enumerate}
\item\label{item;constant}
Let $x\in\ca{M}$ be a non-classical point.  There exists a graded
chart at $x$ in which $Q_M$ is a constant vector field,
$Q_M=\pardif{}{\xi}$, where $\xi$ is a coordinate of degree $-1$.
\item\label{item;split}
Let $f\colon\ca{M}\to\ca{N}$ be a derived morphism and let
$x\in\ca{M}$ be a point such that $y=f(x)$ is not a classical point of
$\ca{N}$ and the tangent map $T_x^\bu f\colon T_x^\bu\ca{M}\to
T_y^\bu\ca{N}$ is surjective in every degree.  There exists a local
splitting $s$ of $f$ at $y$ with $s(y)=x$.
\end{enumerate}
\end{lemma}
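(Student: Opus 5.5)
For part \eqref{item;constant} I would work in a graded chart $(O,\ca{S}^\bu_O)$ centred at $x$ with coordinates $x_i$ of degree $0$ and $\xi_j$ of negative degree, as in \eqref{notation;chart}. Since $Q_M$ has degree $+1$, in \eqref{notation;vector-field} the coefficients of $\pardif{}{x_i}$ have degree $1$. Such coefficients lie in the ideal generated by the $\xi$'s and vanish after setting $\xi=0$. The pointwise derivation $Q_x$ is therefore determined by the degree-$0$ coefficients $g_j(x,0)$ of $\pardif{}{\xi_j}$ for the degree $-1$ coordinates $\xi_j$. That $x$ is non-classical means $Q_x\neq0$, so after a linear change of the degree $-1$ coordinates some $g_1$ does not vanish at $x$.

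The statement is then a graded flow-box (rectification) theorem for an odd vector field $Q$ with $Q^2=0$ and $Q_x\neq0$. I would argue by induction on the polynomial degree in $\xi$, or construct the new coordinates directly. Set $\xi:=\xi_1/g_1$ near $x$, or more invariantly pick a degree $-1$ function $\xi$ with $Q\xi=1$, which is possible because $Q\xi_1=g_1$ is invertible near $x$. Then, for each remaining coordinate $y$, replace $y$ by $y'=y-\xi\,Q(y)$. Using $Q\xi=1$, $Q^2=0$ and the Koszul signs, one checks $Q(y')=Q y-Q y+\xi Q^2y=0$, up to sign bookkeeping. The functions $(\xi, y')$ still form a graded coordinate system, since the change is identity modulo $\xi$ together with an invertible linear part. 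In these coordinates $Q=\pardif{}{\xi}$. I expect the main obstacle to be verifying the sign conventions and the invertibility of the Jacobian. The latter reduces to the observation that $\partial(\xi,y')/\partial(\xi_1,y)$ is triangular with invertible diagonal at $x$.

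For part \eqref{item;split}, I would use \eqref{item;constant} at $y$ to write $\ca{N}$ locally as $\R[-1]\times\ca{N}'$. Here $\R[-1]$ is the odd line with $Q=\pardif{}{\xi}$, and $\ca{N}'$ is a graded chart with zero differential: the other coordinates $y'$ are $Q$-closed and do not involve $\xi$. A splitting of $f$ should then correspond to a splitting of the underlying graded map that is compatible with $Q$. Since $T^\bu_xf$ is surjective, $f$ is a graded submersion at $x$. The graded implicit function theorem (graded submersions are locally projections) gives a graded-manifold section $s_0$ through $x$. To correct $s_0$ to a derived morphism, I would instead choose coordinates on $\ca{M}$ near $x$ in which $f^\sharp\xi$ and $f^\sharp y'$ are coordinates, completed by coordinates $z$ so that $Q_M$ still satisfies $Q_M(f^\sharp\xi)=1$ and $Q_M(f^\sharp y')=0$. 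Applying the same rectification trick to the $z$'s, replacing each $z$ by $z-f^\sharp\xi\,Q_M z$, makes them $Q_M$-closed too. In these coordinates $\ca{M}\cong\R[-1]\times\ca{N}'\times\ca{Z}$ with $Q_M=\pardif{}{\xi}$ and $f$ the projection. The section $z=z(x)$ is then a derived morphism with $s(y)=x$. The subtle point is ensuring that the modified $z$'s, which now involve $\xi$, still give a chart; this again follows because the modification is the identity modulo $\xi$.
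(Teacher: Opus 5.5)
Your proof is correct. In part~(1) your chart is, in fact, the paper's chart written out explicitly. The paper restricts the flow $F\colon\R[-1]\times\ca{M}\to\ca{M}$ of $Q_M$, for which $F^\sharp y=y+\tau\,Q_My$, to $\R[-1]\times\ca{V}$, where $\ca{V}$ is a graded hypersurface transverse to $Q_{M,x}$, and then applies the graded inverse function theorem. Taking $\ca{V}=\{\xi_1=0\}$, $F$ pulls your coordinates $(\xi,y')$ back to $(\tau,y|_{\ca{V}})$. So $y'=y-\xi\,Q_My$ is exactly the inverse of that flow chart. Because your change of variables fixes the body coordinates, its invertibility is purely algebraic and holds wherever $g_1\neq0$. (A small precision: the coefficients of $\pardif{}{x_i}$ in $Q_M$ vanish identically, since there are no functions of degree~$1$. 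This, and not mere membership in the ideal of the $\xi$'s, is what gives $Q_M(1/g_1)=0$ and hence $Q_M\xi=1$ exactly.)

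Part~(2) follows a genuinely different route. The paper rectifies on the source at $x$, chooses a linear graded slice containing $Q_{M,x}$ and complementary to $\ker T^\bu_xf$, and inverts $f$ on it by the inverse function theorem. That slice is automatically $Q_M$-invariant because $Q_M$ is constant there. You instead rectify on the target at $y$ and complete the pulled-back coordinates via the graded submersion theorem; the identities $Q_M(f^\sharp\xi)=1$ and $Q_M(f^\sharp y')=0$ come for free from $Q_M\sim_fQ_N$. You then re-rectify the complementary coordinates and reach the normal form $\ca{M}\cong\R[-1]\times\ca{N}'\times\ca{Z}$ with $f$ the projection. The paper's argument is shorter once part~(1) is in hand. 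Yours costs one more coordinate change but proves more: away from the classical locus, a derived morphism with surjective tangent maps is locally a product projection, and the splitting is read off directly.
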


\begin{proof}
\eqref{item;constant}~Let $\R[-1]=\ca{Z}(*,\R,\pardif{}{\tau})$ be the
$-1$-shifted real line.  Let $F\colon\R[-1]\times\ca{M}\to\ca{M}$ be
the flow of the cohomological vector field $Q_M$, i.e.\ the morphism
of graded manifolds characterized by the following two properties: the
restriction of $F$ to ${\{0\}\times\ca{M}}\cong\ca{M}$ is the
identity, and the vector fields $\pardif{}{\tau}$ and $Q_M$ are
$F$-related.  (See
e.g.\ \cite[Rem.~3.8]{cattaneo-schatz;supergeometry}.) Choose a graded chart $\ca{U}$ centred at $x$ modelled on a graded
vector space $U_\bu$.  Choose a hyperplane $V_1$ in the vector space
$U_1$ complementary to the nonzero vector $Q_{M,x}$, and put $V_i=U_i$
for $i\ne1$.  The graded subspace $V_\bu$ of $U_\bu$ defines a graded
submanifold $\ca{V}$ of $\ca{U}$ of codimension~$1$ and the flow
restricts to a graded morphism
$f\colon\R[-1]\times\ca{V}\to\ca{U}\subseteq\ca{M}$ whose tangent map
at $x$ is bijective.  By the inverse function theorem for graded
manifolds (\cite[Thm. 4.30]{vysoky;global-graded}), after shrinking $\ca{V}$ if necessary, $f$ defines a graded
chart at $x$.  In this chart $\tau$ is a coordinate of degree $-1$ and
we have $Q_M=\pardif{}{\tau}$.

\eqref{item;split}~We have $Q_{N,y}\ne0$ and hence $Q_{M,x}\ne0$
because $Q_M\sim_fQ_N$.  By part~\eqref{item;constant} we can find a
graded chart $\ca{U}$ centred at $x$ modelled on a graded vector space
$U_\bu$ in which $Q_M=\pardif{}{\xi}$ is constant.  Choose a graded
subspace $V_\bu$ of $U_\bu$ such that $Q_{M,x}\in V_1$ and $V_\bu$ is
complementary to the kernel of $T_x^\bu f$.  Then $V_\bu$ defines a
derived submanifold $\ca{V}$ of $\ca{U}$ which contains $x$ and is
transverse to the fibres of $f$.  The restriction of $f$ to $\ca{V}$
is a derived morphism $g\colon\ca{V}\to\ca{N}$ whose tangent map at
$x$ is bijective.  By the inverse function theorem $g$ has a local
inverse $s\colon\ca{W}\to\ca{V}$, which is a derived morphism defined
in an open neighbourhood $\ca{W}$ of $y$ and is a local splitting
of~$f$.
\end{proof}

\subsection{Surjective fibrations}\label{sec:sf}

Surjective fibrations of derived manifolds satisfy axioms \ref{P1}-\ref{P4} of \S\,\ref{sec:pre-points} and therefore define a pretopology on the category $\DMfd$, which we denote by $\covers_\sfib$.  In Lemma~\ref{lem:eff-epi} we show that the pretopology $\covers_\sfib$ is subcanonical.  However, not all surjective fibrations are locally split 
(Ex.~\ref{ep:counter-ep}), so Lemma~\ref{lemma;split} tells us that the functors $\ppt_{\huaX,x}$ fail to be points of the category of sheaves $\Sh(\DMfd,\covers_\sfib)$ (see example below)!  This difficulty renders surjective fibrations useless for the purpose of building an iCFO of derived groupoids.  (The necessity of covers having local splittings appears to be overlooked in \cite{Zeng}.) 

\begin{example}\label{example;not-point}
Let $s\colon\R\to\R$ be the identity map, viewed as a section of the trivial line bundle $\underline{\R}$ over $\R$.  Let $\ca{N}$ be the quasismooth derived manifold $\ca{Z}(\R,\underline{\R},s)$.  Let $f\colon\huaM\to\huaN$ be a surjective fibration, where $\huaM$ is a derived manifold with empty classical locus.  (For instance, we can take $\huaM=\huaP\times\huaN$ and $f=\pr_2$, where $\huaP$ is any derived manifold with empty classical locus, as in Ex.~\ref{ep:counter-ep}.)  The classical locus of $\huaN$ is $\pi_0(\ca{N})=\{0\}$.  Let $\huaX=\huaN$ and $x=0$.  Let $\huaU$ be an open neighbourhood of $x$ in $\huaX$.  There are no morphisms $\huaU\to\huaM$, so the stalk $\ppt_{\huaX,x}(\yon(\huaM))$ is the empty set.  Contrariwise, the stalk $\ppt_{\huaX,x}(\yon(\huaN))$ contains the germ of the identity morphism $\huaX\to\huaN$ and therefore is nonempty.  (In fact, $\ppt_{\huaX,x}(\yon(\huaN))$ is isomorphic to $\huaC^\infty_{\R,0}$, the algebra of germs of smooth functions on $\R$ at the origin.)  Thus the map $\ppt_{\huaX,x}(\yon(f))\colon\ppt_{\huaX,x}(\yon(\huaM))\to\ppt_{\huaX,x}(\yon(\huaN))$ fails to be surjective and therefore $\ppt_{\huaX,x}$ is not a point of $(\DMfd,\covers_\sfib)$.
\end{example}

\begin{lemma}\label{lem:eff-epi}
The pretopology $\covers_\sfib$ on the category of derived manifolds is subcanonical.
\end{lemma}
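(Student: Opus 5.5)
Since a pretopology is subcanonical if and only if every cover is an effective epimorphism (\cite[Def.-Lemma~2.2]{meyer-zhu}), I will show that every surjective fibration $f\colon\huaM\to\huaN$ is an effective epimorphism in $\DMfd$. The fibred product $\huaM\times_\huaN\huaM$ exists by Thm.~\ref{theorem;blx-cfo}, since $f$ is a fibration. So the task is: given a derived morphism $g\colon\huaM\to\huaP$ with $g\circ\pr_1=g\circ\pr_2$ on $\huaM\times_\huaN\huaM$, produce a unique derived morphism $h\colon\huaN\to\huaP$ with $h\circ f=g$.

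Uniqueness will come from showing that $f^\sharp\colon\ca{S}^\bu_\huaN\to f_*\ca{S}^\bu_\huaM$ is injective. A fibration is in particular a graded submersion: its body map is a surjective submersion, and $T^kf$ is surjective in every degree. By the graded implicit function theorem (the inverse function theorem \cite[Thm.~4.30]{vysoky;global-graded}, applied as in Lemma~\ref{lemma;non-classical-split}), around each $x\in M$ there are graded charts in which $f$ is a linear coordinate projection $(x,\xi,y,\zeta)\mapsto(x,\xi)$. In particular $f$ admits local sections as a morphism of graded manifolds, though not necessarily as a derived morphism; this distinction is exactly what separates this lemma from the locally split case.

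Existence will proceed in three steps.
\begin{enumerate}
\item Construct $h$ on bodies. The body of $\huaM\times_\huaN\huaM$ is $M\times_NM$, so the body maps satisfy $g\circ\pr_1=g\circ\pr_2$ on $M\times_NM$. Since $M\to N$ is a surjective submersion, $g$ descends to a smooth map $h\colon N\to P$.
\item Construct $h$ at the graded level. Locally on $N$, choose a graded section $\sigma$ of $f$ and set $h=g\circ\sigma$. To show independence of the choice, use the equalizer condition. Two graded sections $\sigma,\sigma'$ over the same open set give a graded map $(\sigma,\sigma')$ into $M\times_NM$ (the graded fibred product of Prop.~\ref{proposition;graded-fibred-product}), so $g\sigma=g\pr_1(\sigma,\sigma')=g\pr_2(\sigma,\sigma')=g\sigma'$. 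The local definitions therefore glue to a graded morphism $h$. The identity $h\circ f=g$ follows in the submersion charts by comparing $g$ with $g\circ\sigma\circ f$ via the map $(\id,\sigma f)\colon M\to M\times_NM$.
\item Show that $h$ is derived, i.e.\ $Q_N\sim_hQ_P$. Both $Q_P(h^\sharp a)$ and $h^\sharp(Q_Na)$ pull back along $f^\sharp$ to $Q_M(g^\sharp a)=g^\sharp(Q_Pa)$, using that $f$ and $g$ are derived. Injectivity of $f^\sharp$ then gives equality.
\end{enumerate}

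I expect the main obstacle to be the identification of the fibred product in the category of derived manifolds with the graded fibred product, equipped with the induced $Q$, so that the equalizer hypothesis can be applied to purely graded maps such as $(\sigma,\sigma')$. These maps need not be derived, so I would apply the equality $g\pr_1=g\pr_2$ as an identity of underlying graded morphisms; this is legitimate because the forgetful functor commutes with these transverse fibred products (Prop.~\ref{proposition;derived-fibred-product}).
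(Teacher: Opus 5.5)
Your proof is correct and follows the same outline as the paper: show that a surjective fibration is an effective epimorphism, descend $g$ to a graded morphism using local submersion charts, and then check $Q$-relatedness. Your injectivity-of-$f^\sharp$ argument is the coordinate-free form of the paper's product-chart computation, and your gluing via local graded sections spells out what the paper delegates to \cite[Thm.~4.36]{vysoky;global-graded}; the only slip is a typo in step~3, where the expressions to compare are $Q_N(h^\sharp a)$ and $h^\sharp(Q_P a)$, not $Q_P(h^\sharp a)$ and $h^\sharp(Q_N a)$.
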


\begin{proof}
We prove that every surjective fibration of derived manifolds $f\colon\huaM\to\huaN$ is an effective epimorphism, i.e. 
\[
\begin{tikzcd}
\huaM\times_\huaN\huaM\ar[r,shift left=0.8ex,"\pr_1"]\ar[r,shift right=0.8ex,"\pr_2"']&\huaM\ar[r,"f"]&\huaN
\end{tikzcd}
\]
is a coequalizer diagram.  Let $g\colon\huaM\to\huaP$ be a morphism of derived manifolds such that $g\circ\pr_1=g\circ\pr_2$.  We must show that there exists a unique morphism of derived manifolds $\tilde{g}\colon\huaN\to\huaP$ such that $g=\tilde{g}\circ f$.  It follows from the submersion theorem for graded manifolds (\cite[Thm.~4.36]{vysoky;global-graded}) that there exists a unique morphism of \emph{graded} manifolds $\tilde{g}\colon N\to P$ such that $g=\tilde{g}\circ f$.  It remains to show that $\tilde{g}$ is a morphism of derived manifolds.  This is a local question, so by once again using the graded submersion theorem, we can reduce it to the case where $\huaM=\huaU\times\huaV$ is a product of two graded charts $\huaU$ and $\huaV$, $\huaN=\huaU$, $f\colon\huaU\times\huaV\to\huaU$ is the projection onto the first factor, and $\huaP$ is a graded chart $\huaW$.  Denoting the coordinates on $\huaU$ by $x_i$, $\xi_j$, on $\huaV$ by $y_k$, $\eta_l$, and on $\huaW$ by $z_p$, $\zeta_q$, we have local expressions for the cohomological vector fields $Q_N$ and $Q_P$,
\[Q_N=\sum_ja_j(x,\xi)\pardif{}{\xi_j},\qquad Q_P=\sum_qc_q(z,\zeta)\pardif{}{\zeta_q}.\]
Since $Q_M$ and $Q_N$ are $f$-related, we have $Q_M\circ f^\sharp(\xi_j)=f^\sharp\circ Q_N(\xi_j)$ for all $j$, which tells us that $Q_M$ is of the form
\[Q_M=\sum_ja_j(x,\xi)\pardif{}{\xi_j}+\sum_lb_l(x,\xi,y,\eta)\pardif{}{\eta_l}.\]
Similarly, the fact that $Q_M$ and $Q_P$ are $g$-related amounts to the identities
\begin{equation}\label{equation;g}
\sum_ja_j(x,\xi)\pardif{g^\sharp(\zeta_q)}{\xi_j}+\sum_lb_l(x,\xi,y,\eta)\pardif{g^\sharp(\zeta_q)}{\eta_l}=
g^\sharp(c_q)
\end{equation}
for all $q$.  The identity $g^\sharp=f^\sharp\circ\tilde{g}^\sharp$ yields
\[
g^\sharp(\zeta_q)=\tilde{g}^\sharp(\zeta_q),\qquad g^\sharp(c_q)=\tilde{g}^\sharp(c_q).
\]
In particular, the functions $g^\sharp(\zeta_q)$ do not depend on $y_k$ and $\eta_l$.  Substituting this into~\eqref{equation;g} we obtain
\[
\sum_ja_j(x,\xi)\pardif{\tilde{g}^\sharp(\zeta_q)}{\xi_j}=\tilde{g}^\sharp(c_q)
\]
for all $q$, which means that $Q_N$ is $\tilde{g}$-related to $Q_P$.  Thus $\tilde{g}$ is a morphism of derived manifolds.
\end{proof}

\subsection{\'Etale and coconnective fibrations}\label{sec:comp}

Let $\ca{M}=(M,\ca{S}_M^\bu,Q_M)$ and $\ca{N}=(N,\ca{S}_N^\bu,Q_N)$ be derived manifolds.  A morphism $f\colon\huaM\to\huaN$ is \emph{\'etale} if for every classical point $x\in\pi_0(\huaM)$ the tangent map $T^\bu_xf\colon T^\bu_x\huaM\to T^\bu_{f(x)}\huaN$ is a quasi-isomorphism of complexes, i.e.\ induces isomorphisms in cohomology
\begin{equation}\label{equation;quasi}
\begin{tikzcd}
H^k(T^\bu_xf)\colon H^k(T^\bu_x\huaM)\ar[r,"\cong"]&H^k(T^\bu_{f(x)}\huaN)
\end{tikzcd}
\end{equation}
for all $k\ge0$.  Let $f\colon\huaM\to\huaN$ be a surjective and essentially surjective \'etale fibration.  Lemma~\ref{lemma;locally-split} says that $f$ is locally split at all non-classical points of $\huaN$.  The (much harder) inverse function theorem for derived manifolds \cite[Cor.~3.2]{blx:etale-linfty} says that $f$ is also locally split at all classical points of $\huaN$.  Using these facts one can establish the following result.  As we will not use this result, we omit the proof, which is analogous to that of Thm.~\ref{pro:LSW-pret-dmfd}.

\begin{theorem}\label{theorem;etale-pretopology}
Surjective and essentially surjective \'etale fibrations serve as covers for a subcanonical pretopology $\covers_\et$ on the category $\DMfd$.  A presheaf of sets on $\DMfd$ is a sheaf with respect to $\covers_\et$ if and only if it is a sheaf with respect to $\covers_\lsf$ or $\covers_\open$. The stalk functors $\ppt_{\ca{X},x}$ defined in \eqref{eq:dm-point}  are points, the collection $\{\ppt_{\ca{X},x}\}$ is jointly conservative, and the pretopology $\covers_\et$ is locally stalkwise with respect to this collection.
\end{theorem}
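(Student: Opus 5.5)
The argument reduces everything to properties already checked for $\covers_\lsf$, transferred through the comparison $\covers_\open\subseteq\covers_\et\subseteq\covers_\lsf$.

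\emph{Step 1: the pretopology axioms.} For \ref{P1}--\ref{P4} I follow the pattern of Lemma~\ref{lemma;cover-sections}. Isomorphisms are surjective, essentially surjective \'etale fibrations. Compositions are fibrations, and tangent quasi-isomorphisms compose. Surjectivity and essential surjectivity both compose.

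For \ref{P3}, pullbacks along fibrations exist by Thm.~\ref{theorem;blx-cfo} and are fibrations. The key fact is that the tangent complex of a transverse fibre product is the fibre product of complexes (Prop.~\ref{proposition;derived-fibred-product}). Since $T^\bu_xf$ is surjective in each degree, the projection $T^\bu\huaM_2\times_{T^\bu\huaN_2}T^\bu\huaN_1\to T^\bu\huaM_2$ has kernel equal to $\ker T^\bu_xf$. That kernel is acyclic because $T^\bu_xf$ is a surjective quasi-isomorphism, so the pulled-back map is \'etale. Surjectivity and essential surjectivity pass to pullbacks by \eqref{equation;fibred-classical}. Products in \ref{P4} are handled by \eqref{notation;tangent-complex-product}.

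\emph{Step 2: local splitting.} This is where the discussion preceding the theorem enters: Lemma~\ref{lemma;non-classical-split} together with \cite[Cor.~3.2]{blx:etale-linfty} shows that every cover in $\covers_\et$ is locally split. Combined with Step~1, this gives $\covers_\open\subseteq\covers_\et\subseteq\covers_\lsf$.

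The main obstacle is the classical points. There one must make sure the derived inverse function theorem produces a splitting defined on an open neighbourhood in the \emph{derived} sense, i.e.\ a morphism of derived manifolds. I would apply it to a restriction of $f$ to a derived submanifold through a classical preimage, which exists by essential surjectivity, and on which $T^\bu f$ becomes an isomorphism rather than just a quasi-isomorphism.

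\emph{Step 3: sheaves, subcanonicity and points.} Once the sandwich of Step~2 is in place, the sheaf conditions coincide: a $\covers_\lsf$-sheaf is a $\covers_\et$-sheaf, which in turn is a $\covers_\open$-sheaf, and Lemma~\ref{lemma;sheaves}\eqref{sheaves} closes the loop. Hence $\Sh(\DMfd,\covers_\et)=\Sh(\DMfd,\covers_\lsf)$.

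Subcanonicity then follows from Lemma~\ref{lemma;sheaves}\eqref{subcanonical}. Because the sheaf categories are equal, Prop.~\ref{prop:points} applies verbatim: the stalk functors are points and form a jointly conservative family.

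\emph{Step 4: the locally stalkwise conditions.} I rerun the proof of Thm.~\ref{pro:LSW-pret-dmfd}, replacing "fibration" by "\'etale fibration".

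For \ref{LSP1}, suppose $g\circ f$ is an \'etale cover and $f$ is locally split, using Lemma~\ref{lem:stalkwise-surj-prop}. Then $g$ is a surjective, essentially surjective fibration. At a classical $y$, a splitting $s$ through $y$ gives $T^\bu g\circ T^\bu(f\circ s)$ with $f\circ s$ \'etale near $y$, and two-out-of-three for quasi-isomorphisms shows $g$ is \'etale.

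For \ref{LSP2}, I use the same differentiated identity $f\circ\tilde g=g\circ k$ at classical points. Here $k$ is \'etale and $g$ is locally split, so $T^\bu g$ is a split surjection that is a quasi-isomorphism after composing with the splitting. This forces $T^\bu_xf$ to be a quasi-isomorphism.

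The delicate point in Step~4 is justifying that $\tilde g$ is \'etale, or at least that $H(T^\bu\tilde g)$ is surjective. I would obtain this from the local section $\tilde h$ constructed in that proof, which gives $\tilde g\circ\tilde h=j_\huaU$.
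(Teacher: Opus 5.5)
Your route is the one the paper indicates: every $\covers_\et$-cover is locally split by Lemma~\ref{lemma;non-classical-split} and \cite[Cor.~3.2]{blx:etale-linfty}, and then one argues as for $\covers_\lsf$. Steps~1--3 are fine, and the sandwich $\covers_\open\subseteq\covers_\et\subseteq\covers_\lsf$ is an efficient way to get the sheaf, subcanonicity and point statements together. You do not need, and should not rely on, your reduction at classical points to a derived submanifold on which $T^\bu f$ is an isomorphism. At a classical point $Q$ cannot be straightened as in Lemma~\ref{lemma;non-classical-split}, so a linear complement to $\ker T^\bu_xf$ need not be $Q$-invariant. Just cite the corollary.

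The gap is in the \'etale part of \ref{LSP2}, where both of your proposed justifications fail. First, $\tilde g$ need not be \'etale: take $\huaM=\huaP=*$, $f=\id$, $\huaN=\huaQ=\R$, $g\colon\R\to*$ and $k=\id$; then $\tilde g=g$. Second, consider $H(T_xf)\circ H(T_q\tilde g)=H(T_yg)\circ H(T_qk)$ with $H(T_qk)$ an isomorphism and $H(T_yg)$ split surjective. Adding surjectivity of $H(T_q\tilde g)$ still only yields surjectivity of $H(T_xf)$, not injectivity, because $H(T_yg)$ generally has a kernel. Your remark that $T^\bu g$ is a quasi-isomorphism after composing with the splitting is vacuous, since that composite is the identity.

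The missing ingredient is the second identity produced in that proof, $k\circ\tilde h=s\circ f_\huaU$. Put $q=\tilde h(x)$, which is classical, and $y=s(p)=k(q)$. Then you get the commutative diagram
\[
\begin{tikzcd}
T^\bu_x\huaM\ar[r,"T^\bu_x\tilde h"]\ar[d,"T^\bu_xf"']&T^\bu_q\huaQ\ar[r,"T^\bu_q\tilde g"]\ar[d,"T^\bu_qk"]&T^\bu_x\huaM\ar[d,"T^\bu_xf"]\\
T^\bu_p\huaP\ar[r,"T^\bu_ps"']&T^\bu_y\huaN\ar[r,"T^\bu_yg"']&T^\bu_p\huaP
\end{tikzcd}
\]
whose rows compose to identities. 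So $T^\bu_xf$ is a retract of the quasi-isomorphism $T^\bu_qk$, hence a quasi-isomorphism.

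The same device repairs \ref{LSP1}, where two-out-of-three does not apply as you wrote it, since $T^\bu(f\circ s)=\id$ carries no information. With $x=s(y)$, the maps $T^\bu_ys$ and $T^\bu_xf$ exhibit $T^\bu_yg$ as a retract of the quasi-isomorphism $T^\bu_x(g\circ f)$.
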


A morphism $f\colon\huaM\to\huaN$ is \emph{coconnective} (\cite{getzler;private}) if it is essentially surjective and the condition~\eqref{equation;quasi} holds for all $k\ge1$.  Coconnectivity is not enough to guarantee that $f$ is locally split, not even if $f$ is also a surjective fibration.  Indeed, Ex.~\ref{example;coconnective} below gives an instance of a surjective coconnective fibration $f$ together with a classical point $y\in\pi_0(\huaN)$ such that $H^0(T^\bu_xf)\colon H^0(T^\bu_x\huaM)\to H^0(T^\bu_y\huaN)$ is not surjective for any $x\in\pi_0(\huaM)$ with $f(x)=y$.  It follows from Lemma~\ref{lemma;locally-split}\eqref{item;coh-surjective} that such an $f$ is not locally split.

\begin{example}\label{example;coconnective}
Let $a\colon\R\to\R$ and $b\colon\R^2\to\R$ be smooth functions.  Let $M=\R^2$ and define $s_M\colon M\to\R^2$ by $s_M(x,y)=(a(x),b(x,y))$, regarded as a section of the trivial bundle $E_M=\underline{\R}^2$.  Let $\huaM$ be the quasismooth derived manifold $\huaZ(M,E_M,s_M)$.  Let $N=\R$ and define $s_N\colon N\to\R$ by $s_N(x)=a(x)$, regarded as a section of the trivial bundle $E_N=\underline{\R}$.  Let $\huaN$ be the quasismooth derived manifold $\huaZ(N,E_N,s_N)$.  The projection onto the first factor $M\to N$, $E_M\to E_N$ defines a surjective fibration of derived manifolds $f\colon\huaM\to\huaN$.  Choose $a$ and $b$ such that $a(0)=b(0,0)=0$ and $a(x)\ne0$ for $x\ne0$.  Then $\pi_0(\huaM)=\{(0,0)\}$ and $\pi_0(\huaN)=\{0\}$ and $f$ is essentially surjective.  The tangent map of $f$ at the classical point $(0,0)$ is
\[
\begin{tikzcd}[ampersand replacement=\&,column sep=7em,row sep=large]
T^\bu_{(0,0)}\huaM\colon\ar[d,"T^\bu_{(0,0)}f"']\&\R^2\ar[r," {\Bigl(\begin{smallmatrix}a'(0)&0\\\pardif{b}{x}(0,0)&\pardif{b}{y}(0,0)\end{smallmatrix}\Bigr)}"]\ar[d," {(\begin{smallmatrix}1&0\end{smallmatrix})}"']\&\R^2\ar[d,"{(\begin{smallmatrix}1&0\end{smallmatrix})}"]\\
T^\bu_0\huaN\colon\&\R\ar[r,"a'(0)"]\&\R
\end{tikzcd}
\]
Choose $a$ such that $a'(0)=0$ and $b$ such that $\pardif{b}{x}(0,0)=1$ and $\pardif{b}{y}(0,0)=0$.  For instance, take $a(x)=x^2$, $b(x,y)=x+y^2$.  Then $H^1(T_x^\bu f)$ is surjective, so $f$ is a surjective coconnective fibration, but $H^0(T_x^\bu f)=0$ is not surjective, so $f$ is not locally split.
\end{example}


\bibliographystyle{amsalpha}
\bibliography{bibz,morerefs}


\end{document}